\documentclass[12pt]{amsart}
\usepackage[margin=1in]{geometry}
\usepackage{amsmath,amssymb,setspace}
\usepackage{indentfirst}
\usepackage{comment}
\usepackage{booktabs}
\usepackage{wrapfig}
\usepackage{longtable}
\usepackage{lipsum}
\setstretch{1}

\newcommand{\ints}{\mathbb{Z}}
\newcommand{\real}{\mathbb{R}}

\newcommand{\cplx}{\mathbb{C}}

\newcommand{\hcal}{\mathcal{H}}

\newcommand{\supp}{\text{Supp}}

\newcommand{\fiber}{\text{Fiber}}
\newcommand{\mon}{\text{Mon}}
\newcommand{\sgn}{\text{sgn}}

\renewcommand{\phi}{\varphi}

\renewcommand{\bar}{\overline}
\newcommand{\im}{\text{im }}
\renewcommand{\hom}{\text{Hom}}
\newcommand{\aut}{\text{Aut}}

\newcommand{\End}{\text{End}}

\newcommand{\ext}{\text{Ext}}

\newcommand{\ann}{\text{Ann}}

\newcommand{\id}{\text{Id}}

\newcommand{\stab}{\text{Stab}}

\newcommand{\res}{\text{Res}}
\newcommand{\ind}{\text{Ind}}

\newcommand{\eu}{\text{eu}}
\newcommand{\irr}{\text{Irr}}
\newcommand{\hres}{{ }^H\text{Res}}
\newcommand{\hind}{{ }^H\text{Ind}}
\newcommand{\htw}{{ }^H\text{tw}}

\newcommand{\cz}{{ }^Cz}
\newcommand{\cT}{{ }^CT}
\newcommand{\tw}{\text{tw}}
\newcommand{\loc}{\text{Loc}}
\newcommand{\triv}{\text{triv}}

\newcommand{\oscr}{\mathcal{O}}

\newcommand{\bfr}{\mathfrak{b}}

\newcommand{\gfr}{\mathfrak{g}}
\newcommand{\hfr}{\mathfrak{h}}

\newcommand{\nfr}{\mathfrak{n}}

\DeclareFontFamily{OT1}{rsfs}{}
\DeclareFontShape{OT1}{rsfs}{n}{it}{<-> rsfs10}{}
\DeclareMathAlphabet{\mathscr}{OT1}{rsfs}{n}{it}

\usepackage[small,nohug,heads=vee]{diagrams}
\diagramstyle[labelstyle=\scriptstyle]

\raggedbottom

\mathchardef\mhyphen="2D 

\newtheorem{theorem}{Theorem}
\newtheorem{corollary}[theorem]{Corollary}
\newtheorem{lemma}[theorem]{Lemma}
\newtheorem{proposition}[theorem]{Proposition}
\newtheorem{definition}[theorem]{Definition}
\newtheorem{remark}[theorem]{Remark}

\numberwithin{theorem}{subsection}

\begin{document}

\title [On Refined Filtration By Supports for Rational Cherednik Categories $\oscr$]{On Refined Filtration By Supports for Rational Cherednik Categories $\oscr$} \author{Ivan Losev and Seth Shelley-Abrahamson}\date{\today}\maketitle

\begin{abstract} For a complex reflection group $W$ with reflection representation $\hfr$, we define and study a natural filtration by Serre subcategories of the category $\oscr_c(W, \hfr)$ of representations of the rational Cherednik algebra $H_c(W, \hfr)$.  This filtration refines the filtration by supports and is analogous to the Harish-Chandra series appearing in the representation theory of finite groups of Lie type.  Using the monodromy of the Bezrukavnikov-Etingof parabolic restriction functors, we show that the subquotients of this filtration are equivalent to categories of finite-dimensional representations over generalized Hecke algebras.  When $W$ is a finite Coxeter group, we give a method for producing explicit presentations of these generalized Hecke algebras in terms of finite-type Iwahori-Hecke algebras.  This yields a method for counting the number of irreducible objects in $\oscr_c(W, \hfr)$ of given support.  We apply these techniques to count the number of irreducible representations in $\oscr_c(W, \hfr)$ of given support for all exceptional Coxeter groups $W$ and all parameters $c$, including the unequal parameter case.  This completes the classification of the finite-dimensional irreducible representations of $\oscr_c(W, \hfr)$ for exceptional Coxeter groups $W$ in many new cases.

\end{abstract}

\tableofcontents

\section{Introduction}  Let $W$ be a finite complex reflection group with reflection representation $\hfr$ and let $c : S \rightarrow \cplx$ be a $W$-invariant function on the set of reflections $S \subset W$.  To this data one can associate the rational Cherednik algebra $H_c(W, \hfr)$, an infinite-dimensional noncommutative $\cplx$-algebra introduced by Etingof and Ginzburg \cite{EG}.  The family of algebras $H_c(W, \hfr)$ parameterized by such class functions $c$ forms a flat deformation of the semidirect product algebra $\cplx W \ltimes D(\hfr)$ of the reflection group $W$ with the algebra $D(\hfr)$ of polynomial differential operators on $\hfr$; as a vector space $H_c(W, \hfr)$ is isomorphic to $S\hfr^* \otimes \cplx W \otimes S\hfr$ in a natural way, and the multiplication in $H_c(W, \hfr)$ depends on $c$ in a polynomial manner.  Ginzburg, Guay, Opdam, and Rouquier \cite{GGOR} defined a certain category $\oscr_c(W, \hfr)$ of representations of $H_c(W, \hfr)$ that shares many properties with the classical BGG category $\oscr$ associated to a complex semisimple Lie algebra $\gfr$.  The category $\oscr_c(W, \hfr)$ is defined as the full subcategory of $H_c(W, \hfr)$-modules that are finitely generated over $S\hfr^*$ and for which the action of any element $y \in \hfr$ is locally finite.  In particular, modules in $\oscr_c(W, \hfr)$ can be viewed as coherent sheaves on $\hfr$ with additional structure, and this geometric perspective is crucial in the study of $\oscr_c(W, \hfr)$.

A key technical tool for studying $H_c(W, \hfr)$ and the category $\oscr_c(W, \hfr)$ is the Knizhnik-Zamolodchikov (KZ) functor also introduced in \cite{GGOR}.   The KZ functor $KZ: \oscr_c(W, \hfr) \rightarrow \mathsf{H}_q(W)\mhyphen\text{mod}_{f.d.}$ is an exact functor, defined via monodromy, from $\oscr_c(W, \hfr)$ to the category $\mathsf{H}_q(W)\mhyphen\text{mod}_{f.d.}$ of finite-dimensional modules over the Hecke algebra $\mathsf{H}_q(W)$ associated to the reflection group $W$.  The parameter $q$ of the Hecke algebra $\mathsf{H}_q(W)$ depends on the parameter $c$ of the rational Cherednik algebra $H_c(W, \hfr)$ in an exponential manner.  $KZ$ induces an equivalence of categories $\oscr_c(W, \hfr)/\oscr_c(W, \hfr)^{tor} \cong \mathsf{H}_q(W)\mhyphen\text{mod}_{f.d}$ \cite{GGOR, Losevfinitedimensional}, where $\oscr_c(W, \hfr)^{tor}$ denotes the Serre subcategory of modules in $\oscr_c(W, \hfr)$ supported on the union of the reflection hyperplanes $\cup_{s \in S}\ker(s)$.  In this way, $KZ$ establishes a bijection between the irreducible representations in $\oscr_c(W, \hfr)$ of full support in $\hfr$ and the finite-dimensional irreducible representations of $\mathsf{H}_q(W)$.  Following previous work of Etingof-Rains \cite{Etingof-Rains}, Marin-Pfeiffer \cite{Marin-Pfeiffer}, Losev \cite{Losevfinitedimensional}, Chavli \cite{Chavli}, and others towards proving the Brou\'{e}-Malle-Rouquier conjecture \cite{BMR}, Etingof \cite{Et-BMR} recently showed that the Hecke algebra $\mathsf{H}_q(W)$ is always finite-dimensional with dimension $\#W$, even in the case of complex reflection groups.

In this paper, in the case that $W$ is a Coxeter group with complexified reflection representation $\hfr$, we extend this correspondence between irreducible representations $L$ in $\oscr_c(W, \hfr)$ and irreducible representations of finite-type Hecke algebras to include all cases in which the support of $L$ is not equal to $\{0\} \subset \hfr$, i.e. all cases in which $L$ is not finite-dimensional over $\cplx$.  Our approach is inspired by the Harish-Chandra series appearing in the representation theory of finite groups of Lie type.  In place of the parabolic induction and restriction functors defined for finite groups of Lie type, in the setting of rational Cherednik algebras one has analogous parabolic induction and restriction functors introduced by Bezrukavnikov and Etingof \cite{BE}.  In particular, suppose $(W', S') \subset (W, S)$ is a parabolic Coxeter subsystem of $(W, S)$ with complexified reflection representation $\hfr_{W'}$.  Restricting the parameter $c$ to $S'$ we may form the rational Cherednik algebra $H_c(W', \hfr_{W'})$ and the associated category of representations $\oscr_c(W', \hfr_{W'})$.  The algebra $H_c(W', \hfr_{W'})$ does not naturally embed as a subalgebra of the algebra $H_c(W, \hfr)$ as is the case for Hecke algebras, so there is no naive definition of parabolic induction and restriction functors as there is for Hecke algebras.  Rather, the Bezrukavinikov-Etingof parabolic induction functor $\ind_{W'}^W : \oscr_c(W', \hfr_{W'}) \rightarrow \oscr_c(W, \hfr)$ and restriction functor $\res_{W'}^W : \oscr_c(W, \hfr) \rightarrow \oscr_c(W', \hfr_{W'})$ are more technical and are defined using the geometric interpretation of rational Cherednik algebras and a certain isomorphism involving completions of $H_c(W, \hfr)$ and $H_c(W', \hfr_{W'})$.  The functors $\res_{W'}^W$ and $\ind_{W'}^W$ are exact \cite{BE} and biadjoint \cite{Losevbiadjoint, Shan}.

As for representations of finite groups of Lie type, one defines the cuspidal representations of a rational Cherednik algebra $H_c(W, \hfr)$ to be those irreducible representations $L$ in $\oscr_c(W, \hfr)$ such that $\res_{W'}^W L = 0$ for all proper parabolic subgroups $W' \subset W$.  For rational Cherednik algebras, the cuspidal representations are precisely the finite-dimensional irreducible representations.  The class of cuspidal representations is the smallest class of representations in the categories $\oscr_c(W, \hfr)$ such that any irreducible representation in a category $\oscr_c(W, \hfr)$ appears as a subobject (or, as a quotient) of a representation induced from a representation in that class.  From this perspective, the study of irreducible representations in $\oscr_c(W, \hfr)$ is largely reduced to the study of cuspidal representations $L$ in categories $\oscr_c(W', \hfr_{W'})$ for $W' \subset W$ a parabolic subgroup and of the structure and decomposition of the induced representations $\ind_{W'}^W L$.

The most basic finite-dimensional irreducible representation of a rational Cherednik algebra is the trivial representation $\cplx$ of the trivial rational Cherednik algebra $H_c(1, \{0\}) = \cplx$.  The induced representation $\ind_1^W \cplx$, denoted $P_{KZ}$, is a remarkable projective object in $\oscr_c(W, \hfr)$.  In particular, there is an algebra isomorphism $\End_{H_c(W, \hfr)}(P_{KZ})^{opp} \cong \mathsf{H}_q(W)$ with respect to which $P_{KZ}$ represents the $KZ$ functor.  The equivalence of categories $\oscr_c(W, \hfr)/\oscr_c(W, \hfr)^{tor} \cong \mathsf{H}_q(W)\mhyphen\text{mod}_{f.d.}$ induced by $KZ$ therefore establishes a correspondence between the irreducible representations in $\oscr_c(W, \hfr)$ of full support in $\hfr$ and the irreducible representations of $\End_{H_c(W, \hfr)}(\ind_1^W \cplx)$.

In this paper, we study the endomorphism algebras $\End_{H_c(W, \hfr)}(\ind_{W'}^W L)$ of induced cuspidal representations $L$ in the general case that $L$ is not necessarily the trivial representation $\cplx$ in $\oscr_c(1, \{0\})$ and provide results analogous to and generalizing those summarized above for the case $L = \cplx$.  In the parallel setting of representations of finite groups of Lie type, endomorphism algebras of induced cuspidal representations have been studied in great detail and are closely related to Hecke algebras of finite type.  For example, in the most basic case one considers the parabolic induction $\ind_B^G \cplx$ of the trivial representation $\cplx$ of the trivial group 1 to the finite general linear group $G := GL_n(\mathbb{F}_q)$, where $q$ is a prime power and $B$ is the standard Borel subgroup of upper triangular matrices.  In that case, the endomorphism algebra is precisely the Hecke algebra $\mathsf{H}_q(S_n)$ where $q$ is the order of the finite field, exactly analogous to the case of $P_{KZ} = \ind_1^W \cplx$ for rational Cherednik algebras.  In the general case, Howlett and Lehrer \cite[Theorem 4.14]{HoLe} showed that, in characteristic 0, the endomorphism algebra of a parabolically induced cuspidal representation of a finite group of Lie type can be described as a semidirect product of a finite type Hecke algebra by a finite group acting by a diagram automorphism, twisted by a certain 2-cocycle (see, for instance, \cite[Theorem 10.8.5]{Car}).  We obtain an exactly analogous result for the endomorphism algebras of induced representations $\ind_{W'}^W L$ of finite-dimensional irreducible representations $L$ of $H_c(W', \hfr_{W'})$.  Howlett and Lehrer conjectured \cite[Conjecture 6.3]{HoLe} that the 2-cocycle appearing in the description of the endomorphism algebra was trivial, as was later proved (see Lusztig \cite[Theorem 8.6]{Lu} and Geck \cite{Geck}).  Appealing to the classification of irreducible finite Coxeter groups and to previous results about finite-dimensional irreducible representations of rational Cherednik algebras, we find that the 2-cocycles appearing in our setting are trivial as well.  The connections between the categories $\oscr_c(W, \hfr)$ and the modular representation theory of finite groups of Lie type is more than an analogy; Norton \cite{Norton} explains various facts and conjectures relating these two subjects, as well as ways in which these connections break down.

The methods used in the setting of finite groups of Lie type ultimately rely on the comparatively simple and explicit definition of parabolic induction, which allows for a basis of intertwining operators to be written down in closed form.  Instead, in our setting of the more complicated Bezrukavnikov-Etingof parabolic induction functors for rational Cherednik algebras, we consider for each finite-dimensional irreducible representation $L$ in $\oscr_c(W', \hfr_{W'})$ the functor $KZ_L$ represented by $\ind_{W'}^W L$, analogous to the $KZ$ functor.  Recall that $KZ$ induces an equivalence of categories $\oscr_c(W, \hfr)/\oscr_c(W, \hfr)^{tor} \cong \mathsf{H}_q(W)\mhyphen\text{mod}_{f.d.}$; the functor $KZ_L$ induces an analogous equivalence.  Specifically, let $\oscr_c(W, \hfr)_{(W', L)}$ be the Serre subcategory of $\oscr_c(W, \hfr)$ generated by those irreducible representations $M$ supported on $W\hfr^{W'}$ as coherent sheaves on $\hfr$ and with $L$ appearing as a composition factor of $\res_{W'}^W M$, and let $\oscr_c(W, \hfr)_{(W', L)}^{tor}$ be the kernel of $\res_{W'}^W$ in $\oscr_c(W, \hfr)_{(W', L)}.$  Then $KZ_L$ induces an equivalence of categories $$\oscr_c(W, \hfr)_{(W', L)}/\oscr_c(W, \hfr)_{(W', L)}^{tor} \cong \End_{H_c(W, \hfr)}(\ind_{W'}^W L)^{opp}\mhyphen\text{mod}_{f.d}.$$  As with the $KZ$ functor, we provide a geometric interpretation of the functor $KZ_L$ in terms of the monodromy of an equivariant local system on $\hfr^{W'}_{reg}$, the open locus of points inside the fixed space $\hfr^{W'}$ with stabilizer precisely equal to $W'$.  Using a transitivity result for local systems of parabolic restriction functors due to Gordon and Martino \cite{GoMa}, we reduce the problem of computing the eigenvalues of the monodromy around the missing hyperplanes, i.e. the parameters of the underlying ``Hecke algebra'' $\End_{H_c(W, \hfr)}(\ind_{W'}^W L)^{opp}$, to the case in which $W'$ is a corank-1 parabolic subgroup of $W$.  In that case, we use a different application of the Gordon-Martino transitivity result and the fact that the usual $KZ$ functor is fully faithful on projective objects to further reduce the problem of computing eigenvalues of monodromy to concrete, although often involved, computations inside the Hecke algebra $\mathsf{H}_q(W)$ itself.

As a corollary of our explicit descriptions of the endomorphism algebras $\End_{H_c(W, \hfr)}(\ind_{W'}^W L)$, we are able to count the number of irreducible representations in $\oscr_c(W, \hfr)$ with any given support variety in $\hfr$.  In particular, as an application we calculate the number of finite-dimensional irreducible representations of $H_c(W, \hfr)$ for all exceptional Coxeter groups $W$ and all parameters $c$.

This paper is organized as follows.  In Section \ref{background-section}, we recall the necessary background and definitions needed in later sections, emphasizing the local systems of Bezrukavnikov-Etingof parabolic induction and restriction functors and an important transitivity result of Gordon-Martino \cite{GoMa} for these local systems.  In Section \ref{generalities-section}, we introduce the filtration on $\oscr_c(W, \hfr)$ that we study in this paper.  We construct the functor $KZ_L$ using the monodromy of the Bezrukavnikov-Etingof parabolic restriction functors, providing a description of the algebra $\End_{H_c(W, \hfr)}(\ind_{W'}^W L)^{opp}$ as a quotient of a twisted group algebra of a fundamental group.  We use the Gordon-Martino transitivity result to show that the generator of monodromy around a deleted hyperplane satisfies a polynomial relation of degree equal to the order of the stabilizer of that hyperplane in the inertia group of $L$ (see Definition \ref{inertia-definition}).  In Section \ref{coxeter-section}, we specialize to the case in which $W$ is a Coxeter group.  In this case, we provide a presentation of the endomorphism algebra as a generalized Hecke algebra directly analogous to the presentation by Howlett and Lehrer \cite[Theorem 4.14]{HoLe}.  Using the $KZ$ functor, we provide a method for computing the parameters of these generalized Hecke algebras.  We apply our methods systematically, using the classification of finite Coxeter groups and previously known results about finite-dimensional representations, to count the number of irreducible representations in $\oscr_c(W, \hfr)$ of given support for all exceptional Coxeter groups $W$ and parameters $c$.  We describe the generalized Hecke algebras appearing in types $E$ and $H$ explicitly.

The results we obtain for finite Coxeter groups confirm, unify, and extend many previously known results in both classical and exceptional types.  In type A, we recover Wilcox's description \cite[Theorem 1.2]{Wilcox} of the subquotients of the filtration by supports of the categories $\oscr_c(S_n, \hfr)$.  In type $B$, we show that the subquotients of our refined filtration are equivalent to categories of finite-dimensional modules over tensor products of Iwahori-Hecke algebras of type $B$, and we obtain similar descriptions in type $D$.  These facts follows from results of Shan-Vasserot \cite{SV}, although their results do not generalize to the exceptional types.

In the exceptional types, however, our methods provide many new results.  Among the exceptional Coxeter types, complete knowledge of character formulas for all irreducible representations in $\oscr_c(W, \hfr)$ for all parameters $c$ is only known for the dihedral types, treated by Chmutova \cite{Chm}, and type $H_3$, treated by Balagovic-Puranik \cite{BP}.  For parameters $c = 1/d$, where $d > 2$ is an integer dividing a fundamental degree of the Coxeter group $W$, Norton \cite{Norton} computed the decomposition matrices for $\oscr_c(W, \hfr)$, thereby classifying the finite-dimensional irreducible representations and determining character formulas and supports for all irreducible representations, when $W$ is a Coxeter group of type $E_6, E_7, H_4$, or $F_4$.  After this paper was finished, we learned that Norton, in a sequel to \cite{Norton} to be announced, has produced complete decomposition matrices in type $E_8$ when the denominator $d$ of $c$ is not $2,3,4$ or 6, decomposition matrices for several blocks of $\oscr_c(E_8, \hfr)$ when the denominator $d$ equals 4 or 6, as well as decomposition matrices in some unequal parameter cases in type $F_4$.  However, to our knowledge, the classification of the finite-dimensional representations of rational Cherednik algebras remains unknown in types $E_6, E_7, H_4$ and $F_4$ at half-integer parameters, in type $F_4$ in most unequal parameter cases, and in type $E_8$ when the denominator $d$ of $c$ is 2,3,4, or 6.

Recently, Griffeth-Gusenbauer-Juteau-Lanini \cite{GGJL} produced a necessary condition for an irreducible representation $L_c(\lambda)$ in $\oscr_c(W, \hfr)$ to be finite-dimensional that applies to all complex reflection groups $W$ and parameters $c$.  Our results show that this condition is also sufficient in many of the previously unknown cases mentioned above, providing complete classifications of the irreducible finite-dimensional representations of $H_c(W, \hfr)$ in these cases.  In combination with previous results of Balagovic-Puranik \cite{BP}, Norton \cite{Norton}, in this way we complete the classification of finite-dimensional irreducible representations of $H_c(W, \hfr)$ in the case that $W$ is an exceptional Coxeter group and $c = 1/d$, where $d$ is a positive integer possibly equal to 2, except in the case in which both $W = E_8$ and also $d = 3$ simultaneously.  In the case $W = E_8$ and $c = 1/3$, we show that there are 8 non-isomorphic finite-dimensional irreducible representations of $H_{1/3}(E_8, \hfr)$, and results of \cite{GGJL} rule out all but 9 of the 112 irreducible representations of the Coxeter group $E_8$ as the potential lowest weights of these representations.  In particular, determining which one of these 9 irreducible representations is infinite-dimensional will complete the classification in type $E$ entirely.  This analysis in types $E, H$, and $F$ is carried out Sections \ref{typeE}, \ref{typeH}, and \ref{typeF4}, respectively.

In Sections \ref{typeE} and \ref{typeH}, we describe the generalized Hecke algebras arising from exceptional Coxeter groups $W$ of types $E_6, E_7, E_8, H_3$ and $H_4$ at all parameters $c$ of the form $c = 1/d$, where $d > 1$ is an integer dividing a fundamental degree of $W$.  In Section \ref{typeF4}, we count the number of irreducible representations in $\oscr_{c_1, c_2}(F_4, \hfr)$ of given support in $\hfr$ for all parameter values $c_1, c_2$, including the unequal parameter case.  Along with previous results of Chmutova \cite{Chm} for dihedral Coxeter groups, this completes the counting of irreducible representations in $\oscr_c(W, \hfr)$ of given support for all exceptional Coxeter groups $W$ and all, possibly unequal, parameters $c$.

These results for parameters $c = 1/d$ can be extended to parameters $c = r/d$ with $r > 0$ a positive integer relatively prime to $d$; the case $r < 0$ is then obtained by tensoring with the sign character.  The reduction from the $c = r/d$ case to the $c = 1/d$ case can be achieved by results of Rouquier \cite{Ro}, Opdam \cite{Opdam}, and Gordon-Griffeth \cite{GG} when $d \geq 3$.  Specifically, a result of Rouquier, as it appears in \cite[Theorem 2.7]{GG} after appropriate modifications, implies that for any integer $d \geq 3$ and integer $r > 1$ relatively prime to $d$ there is a bijection $\varphi$ on the set $\irr(W)$ of the irreducible complex representations of the Coxeter group $W$ such that for all $\lambda \in \irr(W)$ the irreducible representations $L_{1/d}(\lambda)$ and $L_{r/d}(\phi(\lambda))$ have the same support in $\hfr$.  In \cite{Opdam} these bijections are calculated, and in \cite[Section 2.16]{GG} a method for computing the bijections $\phi$ is given which applies in greater generality.  This allows the reduction of the classification of irreducible representations of given support in $\oscr_{r/d}(W, \hfr)$ to the classification of irreducible representations of the same given support in $\oscr_{1/d}(W, \hfr)$.  In particular, as the finite-dimensional representations are those with support $\{0\} \subset \hfr$, this provides the same reduction for classifying finite-dimensional irreducible representations.  In type $H_3$, this analysis has been carried out in this way by Balagovic-Puranik \cite[Section 5.4]{BP}, and this approach generalizes to the other Coxeter types.  When $d = 2$, our results show that the necessary condition for finite-dimensionality appearing in \cite{GGJL} is in fact also sufficient for the exceptional types $E, H$, and $F$, completing the classification of finite-dimensional irreducible representations for all half-integer parameters in these cases without the use of such bijections on the labels of the irreducibles.

$\mathbf{Acknowledgements.}$  We would like to thank Pavel Etingof, Rapha\"el Rouquier, and Jos\'{e} Simental for many useful conversations.  We also thank Emily Norton for providing comments on a preliminary version of this paper.  The work of the first author was partially supported by the NSF under grants DMS-1161584 and DMS-1501558.

\section{Background and Definitions} \label{background-section}

In this section we will recall rational Cherednik algebras and related constructions and results.

\subsection{Rational Cherednik Algebras}  Let $\hfr$ be a finite-dimensional complex vector space.  A \emph{complex reflection} is an invertible linear operator $s \in GL(\hfr)$ of finite order such that $\text{rank}(1 - s) = 1$.  Let $W \subset GL(\hfr)$ be a \emph{complex reflection group}, i.e. a finite subgroup of $GL(\hfr)$ generated by the subset $S \subset W$ of complex reflections lying in $W$.  The complex reflection groups were classified by Shephard and Todd in 1954 \cite{ST}; each such group decomposes as a product of irreducible complex reflection groups, and every irreducible complex reflection group either appears in the infinite family of complex reflection groups $G(m, p, n)$ indexed by integers $m, p, n \geq 1$ with $p \mid m$ or is one of the 34 exceptional irreducible complex reflection groups.  Important special cases of complex reflection groups include the finite real reflection groups, i.e. the finite Coxeter groups, which may be regarded as complex reflection groups via complexification of the reflection representation.  For example, the symmetric groups $S_n$ are the complex reflection groups $G(1, 1, n)$.

Let $c : S \rightarrow \cplx$ be a class function on the reflections $S \subset W$.  Also, for each reflection $s \in S$ let $\alpha_s \in \hfr^*$ be an eigenvector for $s$ in $\hfr^*$ with nontrivial eigenvalue $\lambda_s \neq 1$ and let $\alpha_s^\vee \in \hfr$ be an eigenvector for $s$ in $\hfr$ with eigenvalue $\lambda_s^{-1}$.  The vectors $\alpha_s$ and $\alpha_s^\vee$ are determined up to $\cplx^\times$ scaling, and we partially normalize them so that $\langle\alpha_s, \alpha_s^\vee \rangle = 2$, where $\langle \cdot, \cdot \rangle$ denotes the natural pairing of $\hfr^*$ with $\hfr$.  The \emph{rational Cherednik algebra} $H_c(W, \hfr)$ attached to $(W, \hfr)$ with parameter $c$ is given by generators and relations as follows:  $$H_c(W, \hfr) := \frac{\cplx W \ltimes T(\hfr^* \oplus \hfr)}{[x, x'] = [y, y'] = 0, \ \ [y, x] = \langle y, x\rangle - \sum_{s \in S} c_s \langle \alpha_s, y\rangle \langle x, \alpha_s^\vee \rangle s \ \ \forall x \in \hfr^*, y \in \hfr}.$$

Observe that for $c = 0$ we have $H_c(W, \hfr) = \cplx W \ltimes D(\hfr)$, where $D(\hfr)$ is the algebra of polynomial differential operators on $\hfr$.

\subsection{PBW Theorem, Category $\oscr_c$, and Support Varieties}  From the above presentation it is clear that $H_c(W, \hfr)$ admits a filtration with $\deg W = \deg \hfr = 0$ and $\deg \hfr^* = 1$, analogous to the usual filtration on $D(\hfr)$ by the order of differential operators.  Etingof and Ginzburg \cite[Theorem 1.3]{EG} showed that the associated graded algebra of $H_c(W, \hfr)$ with respect to this filtration is naturally identified with $\cplx W \ltimes S(\hfr^* \oplus \hfr)$, and in particular the natural multiplication map $S\hfr^* \otimes \cplx W \otimes S\hfr \rightarrow H_c(W, \hfr)$ is an isomorphism of $\cplx$-vector spaces.  This isomorphism should be viewed as an analogue for rational Cherednik algebras of the triangular decomposition $U(\gfr)  = U(\nfr_-) \otimes U(\hfr) \otimes U(\nfr_+)$ of a complex semisimple Lie algebra $\gfr$ with respect to a choice of Borel subalgebra $\bfr = \hfr \oplus \nfr_+$.  In particular, parallel to the setting of semisimple Lie algebras and following \cite{GGOR}, from this triangular decomposition one defines the category $\oscr_c = \oscr_c(W, \hfr)$ of representations of $H_c(W, \hfr)$ to be the full subcategory of $H_c(W, \hfr)$-modules that are finitely generated over $S\hfr^*$ and on which elements $y \in \hfr$ act locally nilpotently.

The category $\oscr_c(W, \hfr)$ is similar in structure to the classical BGG categories $\oscr$ attached to semisimple Lie algebras $\gfr$.  To any representation $\lambda$ of $\cplx W$ one defines the \emph{standard} or \emph{Verma} module $\Delta_c(\lambda) := H_c(W, \hfr) \otimes_{\cplx W \ltimes S\hfr} \lambda$ where the action of $\cplx W \ltimes S\hfr$ on $\lambda$ is obtained by extending the action of $\cplx W$ so that $\hfr$ acts by $0$.  When $\lambda$ is irreducible, the Verma module $\Delta_c(\lambda)$ has a unique irreducible quotient $L_c(\lambda)$, and the correspondence $\lambda \mapsto L_c(\lambda)$ establishes a bijection $\irr(W) \rightarrow \irr(\oscr_c(W, \hfr))$ between the irreducible representations of $W$ and the irreducible representations of $H_c(W, \hfr)$ in $\oscr_c(W, \hfr$) \cite{GGOR}.

By restriction, any module $M$ in $\oscr_c(W, \hfr)$ can be viewed as a finitely generated $S\hfr^*$-module, i.e. as a coherent sheaf on $\hfr$.  In particular, one defines the support of $M$ to be its support as a coherent sheaf on $\hfr$.  The determination of the support varieties $\supp (L_c(\lambda))$ of the irreducible representations $L_c(\lambda)$ is a fundamental question about $\oscr_c(W, \hfr)$.  Using parabolic induction and restriction functors, Bezrukavnikov and Etingof \cite{BE} showed that the subvarieties of $\hfr$ appearing as supports of irreducible representations in $\oscr_c(W, \hfr)$ are of the form $X(W') := W\hfr^{W'}$ where $W'$ is a parabolic subgroup of $W$, i.e. the stabilizer of a point $b \in \hfr$.  Note that $X(W') = X(W'')$ exactly when $W'$ and $W''$ are conjugate in $W$, and in particular the possible supports of simple modules in $\oscr_c(W, \hfr)$ are indexed by conjugacy classes of parabolic subgroups of $W$.  It is important to note that not necessarily all such varietes $X(W')$ appear as the support varieties of representations in $\oscr_c(W, \hfr)$, and in fact for generic values of the parameter $c$ all irreducible representations in $\oscr_c(W, \hfr)$ have full support, in which case only the variety $X(1) = \hfr$ appears.

\subsection{The Localization Lemma, Hecke Algebras, and the KZ Functor} \label{KZ} Let $\hfr_{reg} := \hfr \backslash \bigcup_{s \in S} \ker(\alpha_s)$, the \emph{regular locus} for the action of $W$ on $\hfr$, be the complement in $\hfr$ of the arrangement of reflection hyperplanes for the action of $W$ on $\hfr$.  Note that $\hfr_{reg}$ is precisely the principal affine open subset of $\hfr$ defined by the nonvanishing of the discriminant element $\delta := \prod_{s \in S} \alpha_s$.  As $\delta^{|W|}$ is $W$-invariant and has locally nilpotent adjoint action on $H_c(W, \hfr)$, it follows that the noncommutative localization $H_c(W, \hfr)[\delta^{-1}]$ coincides with the localization of $H_c(W, \hfr)$ as a $\cplx[\hfr]$-module, and this localized algebra is isomorphic to the algebra $\cplx W\ltimes D(\hfr_{reg})$ in a natural way (see \cite[Theorem 5.6]{GGOR}).  Thus, a module $M$ in $\oscr_c(W, \hfr)$ determines a module $M[\delta^{-1}]$ over $\cplx W \ltimes D(\hfr_{reg})$ by localization, which may be regarded as a $W$-equivariant $D$-module on $\hfr_{reg}$.  The $W$-equivariant $D(\hfr_{reg})$-modules occurring in this way are $\oscr(\hfr_{reg})$-coherent, by definition of the category $\oscr_c(W, \hfr)$, and have regular singularities \cite{GGOR}.  By the Riemann-Hilbert correspondence \cite{De}, descending to $\hfr_{reg}/W$ and taking the monodromy representation at a base point $b \in \hfr_{reg}$ defines an equivalence of categories $$\cplx W \ltimes D(\hfr_{reg})\mhyphen\text{mod}_{\oscr(\hfr_{reg})\mhyphen\text{coh, r.s.}} \cong \pi_1(\hfr_{reg}/W, b)\mhyphen\text{mod}_{f.d.}$$ where the subscript $\oscr(\hfr_{reg})\mhyphen\text{coh, r.s.}$ indicates that the $D(\hfr_{reg})$-modules are $\oscr(\hfr_{reg})$-coherent with regular singularities and where the subscript $f.d.$ indicates that $\pi_1(\hfr_{reg}/W, b)$-modules are finite-dimensional.  This procedure of localization to $\hfr_{reg}$ followed by descent to $\hfr_{reg}$ and the Riemann-Hilbert correspondence thus defines an exact functor $$\oscr_c(W, \hfr) \rightarrow \pi_1(\hfr_{reg}/W, b)\mhyphen\text{mod}_{f.d.}.$$

The fundamental group $\pi_1(\hfr_{reg}/W, b)$ is the \emph{generalized braid group} attached to $W$ and is denoted $B_W$.  In \cite{GGOR}, it was shown that the above functor in fact factors through the category $\mathsf{H}_q(W)\mhyphen\text{mod}_{f.d.}$ of finite-dimensional representations over a certain quotient $\mathsf{H}_q(W)$, the \emph{Hecke algebra}, of the group algebra $\cplx B_W$.   The resulting exact functor $$KZ : \oscr_c(W, \hfr) \rightarrow \mathsf{H}_q(W)\mhyphen\text{mod}_{f.d.}$$ is the $KZ$ functor.  The quotient $\mathsf{H}_q(W)$ is defined as follows \cite{BMR}.  Let $\mathcal{H} = \{\ker(s) : s \in S\}$ be the set of reflection hyperplanes for the action of $W$ on $\hfr$.  For each reflection hyperplane $H \in \mathcal{H}$, let $T_H$ be a representative of the conjugacy class in $B_W$ determined by a small loop, oriented counterclockwise, in $\hfr_{reg}/W$ about the hyperplane $H$.  The group $B_W$ is generated by the union of these conjugacy classes (\cite[Theorem 2.17]{BMR}).  For each $H \in \mathcal{H}$, let $l_H$ be the order of the cyclic subgroup of $W$ stabilizing $H$ pointwise, and let $q_{1, H}, ... q_{l_H - 1, H} \in \cplx^\times$ be nonzero complex numbers that are $W$-invariant, i.e. $q_{j, H} = q_{j, H'}$ whenever $H = wH'$ for some $w \in W$.  Let $q$ denote the collection of these $q_{j, H}$, and define the Hecke algebra $\mathsf{H}_q(W)$ as the quotient of $\cplx B_W$ by the relations $$(T_H - 1)\prod_{j = 1}^{l_H - 1}(T_H - e^{2 \pi i j/l_H}q_{j, H}) = 0.$$
The choice of the parameter $q$ such that the functor $KZ$ is defined as above is given explicitly in \cite{GGOR} as a function of the parameter $c$ for the algebra $H_c(W, \hfr)$.  In the special case that $W$ is a real reflection group, which is the only case relevant to this paper, the dependence of $q$ on $c$ is especially simple.  In that case we have $l_H = 2$ for all $H \in \mathcal{H}$, and $q_{1, H} = e^{-2 \pi i c_s}$ where $s \in S$ is the reflection such that $H = \ker(s)$ (note that the sign convention for $c$ in this paper differs from that appearing in \cite{GGOR}).  In particular, in the Coxeter case the Hecke algebra $\mathsf{H}_q(W)$ appearing in the $KZ$ functor is precisely the Iwahori-Hecke algebra attached to the Coxeter group $W$ with generators $\{T_s : s \in S\}$ satisfying the relations in the braid group $B_W$ and the quadratic relations $$(T_s - 1)(T_s + e^{-2 \pi i c_s}) = 0.$$
 
\subsection{Bezrukavnikov-Etingof Parabolic Induction and Restriction Functors}  Parabolic induction and restriction functors are central to the study of representations of Iwahori-Hecke algebras.  The definition of these functors relies on the natural embedding $\mathsf{H}_q(W') \subset \mathsf{H}_q(W)$ of the Hecke algebra $\mathsf{H}_q(W')$ attached to a parabolic subgroup $W'$ of a Coxeter group $W$ in the Hecke algebra $\mathsf{H}_q(W)$ attached to $W$.  Parabolic restriction is then simply the naive restriction, and parabolic induction is the tensor product $\mathsf{H}_q(W) \otimes_{\mathsf{H}_q(W')} \bullet.$  As for Hecke algebras, parabolic induction and restriction functors are central to the study of representations of rational Cherednik algebras $H_c(W, \hfr)$.  Let $\hfr_{W'}$ denote the unique $W'$-stable complement to $\hfr^{W'}$ in $\hfr$.  Then $W'$ acts on $\hfr_{W'}$, and the action is generated by reflections.  Let $H_c(W', \hfr_{W'})$ denote the associated rational Cherednik algebra, where by abuse of notation $c$ here denotes the restriction of $c$ to the reflections $S' = S \cap W'$ in $W$ lying in $W'$.  Then, unlike for Hecke algebras, the algebra $H_c(W', \hfr_{W'})$ does not embed as a subalgebra of $H_c(W, \hfr)$ in a natural way, and the restriction and induction functors must be defined differently.

Bezrukavnikov and Etingof \cite{BE} circumvented this difficulty by using the geometric interpretation of rational Cherednik algebras.  The price to pay for this approach is that rather than defining a single parabolic restriction functor $\res_{W'}^W : \oscr_c(W, \hfr) \rightarrow \oscr_c(W', \hfr_{W'})$ and a single parabolic induction functor $\ind_{W'}^W : \oscr_c(W', \hfr_{W'}) \rightarrow \oscr_c(W, \hfr)$, one instead defines a restriction functor $\res_b$ and induction functor $\ind_b$ defined for every point $b$ in the locally closed subvariety $\hfr^{W'}_{reg}$ of $\hfr$ consisting of those points $b \in \hfr$ with stabilizer $\stab_W(b) = W'$.  Bezrukavnikov and Etingof explain the dependence of $\res_b$ on the choice of $b \in \hfr^{W'}_{reg}$ in the following elegant manner.  They construct an exact functor $$\underline{\res}_{W'}^W : \oscr_c(W, \hfr) \rightarrow (\oscr_c(W', \hfr_{W'}) \boxtimes \loc(\hfr^{W'}_{reg}))^{N_W(W')},$$ where $\loc(\hfr^{W'}_{reg})$ denotes the category of local systems on $\hfr^{W'}_{reg}$ and the $N_W(W')$ in the superscript indicates equivariance for the normalizer $N_W(W')$ of $W'$ in $W$, and they show that the fiber $(\underline{\res}_{W'}^W)_b$ of the resulting local system at $b \in \hfr^{W'}_{reg}$ gives precisely the parabolic restriction functor $\res_b$.  In particular, for any points $b, b' \in \hfr^{W'}_{reg}$, parallel transport along any path $\gamma$ in $\hfr^{W'}_{reg}$ connecting $b$ and $b'$ defines an isomorphism between the functors $\res_b$ and $\res_{b'}$.  As explained in \cite{BE}, in the case $W' = 1$, the functor $\underline{\res}_{W'}^W$ can be identified with the $KZ$ functor recalled above in Section \ref{KZ}, and therefore the functors $\underline{\res}_{W'}^W$ can be regarded as relative versions of the $KZ$ functor.  We refer to the functors $\underline{\res}_{W'}^W$ as the \emph{partial KZ functors}, and these functors are central to the approach we take here.

We recall the construction of the functors $\res_b, \ind_b,$ and $\underline{\res}_{W'}^W$ below, following \cite[Section 3]{BE}.

\subsubsection{Construction of $\res_b$ and $\ind_b$}  Let $b \in \hfr^{W'}_{reg}$, i.e. a point in $\hfr$ with stabilizer $W'$ in $W$.  The completion $\widehat{H}_c(W, \hfr)_b$ of the rational Cherednik algebra $H_c(W, \hfr)$ at the orbit $Wb \subset \hfr$ is defined to be the associative algebra $$\widehat{H}_c(W, \hfr)_b := \cplx[\hfr]^{\wedge_{Wb}} \otimes_{\cplx[\hfr]} H_c(W, \hfr),$$ where $\cplx[\hfr]^{\wedge_{Wb}}$ denotes the completion of $\cplx[\hfr]$ at the orbit $Wb$.  Restricting the parameter $c : S \rightarrow \cplx$ to the set of reflections $S' \subset W'$, one may form the rational Cherednik algebra $H_c(W', \hfr)$ and its completion $\widehat{H}_c(W', \hfr)_0$ at $0 \in \hfr$.  As explained in \cite[Section 3.3]{BE}, one may think of the algebra $\widehat{H}_c(W, \hfr)_b$ in a more geometric manner as the subalgebra of $\End_\cplx(\cplx[\hfr]^{\wedge_{Wb}})$ generated by $\cplx[\hfr]^{\wedge_{Wb}}$, the group $W$, and the Dunkl operators $D_y$ associated to points $y \in \hfr$.  This geometric interpretation leads naturally to an isomorphism (\cite[Theorem 3.2]{BE}) $$\theta : \widehat{H}_c(W, \hfr)_b \rightarrow Z(W, W', \widehat{H}_c(W', \hfr)_0)$$ where the algebra $Z(W, W', \widehat{H}_c(W', \hfr)_0)$, the \emph{centralizer algebra}, is the endomorphism algebra of the right $\widehat{H}_c(W, \hfr)_0$-module $Fun_{W'}(W, \widehat{H}_c(W', \hfr)_0)$ of functions $f : W \rightarrow \widehat{H}_c(W', \hfr)_0$ satisfying $f(w'w) = w'f(w)$ for all $w \in W, w' \in W'$.  We may take the completion $\widehat{H}_c(W', \hfr)_0$ of $H_c(W', \hfr)$ at $0$ rather than at $b$ in the centralizer because the assignments $w' \mapsto w'$ for $w' \in W'$, $y \mapsto y$ for $y \in \hfr$, and $x \mapsto x - b$ extends to an isomorphism $\widehat{H}_c(W', \hfr)_0 \cong \widehat{H}_c(W', \hfr)_{b}$.  The isomorphism $\theta$ determines, by transfer of structure, an equivalence of categories $$\theta_* : \widehat{H}_c(W, \hfr)_b\mhyphen\text{mod} \rightarrow Z(W, W', \widehat{H}_c(W', \hfr)_0)\mhyphen\text{mod}.$$

The centralizer algebra $Z(W, W', \widehat{H}_c(W', \hfr)_0)$ is non-canonically isomorphic to the matrix algebra of size $|W/W'|$ over $\widehat{H}_c(W', \hfr)_0$, and in particular the isomorphism $\theta$ shows that $\widehat{H}_c(W, \hfr)_b$ is Morita equivalent to $\widehat{H}_c(W', \hfr)_0$.  A particularly natural choice of Morita equivalence is given by the idempotent $e_{W'} \in Z(W, W', \widehat{H}_c(W', \hfr)_0)$ defined by $e_{W'}(f)(w) = f(w)$ for $w \in W'$ and $e_{W'}(f)(w) = 0$ for $w \notin W'$.  Note that $e_{W'}$ is the image under $\theta_*$ of the idempotent $1_b \in \cplx[\hfr]^{\wedge_{Wb}} \subset \widehat{H}_c(W, \hfr)_b$ that is the indicator function of the point $b$ in its $W$-orbit.  As the two-sided ideal in the centralizer algebra generated by $e_{W'}$ is the entire centralizer algebra and as the associated spherical subalgebra $e_{W'}Z(W, W', \widehat{H}_c(W', \hfr)_0)e_{W'}$ is naturally identified with $\widehat{H}_c(W', \hfr)_0$, the functors $$I : \widehat{H}_c(W', \hfr)_0\mhyphen\text{mod} \rightarrow Z(W, W', \widehat{H}_c(W', \hfr)_0)\mhyphen\text{mod}$$ $$M \mapsto I(M) := Z(W, W', \widehat{H}_c(W', \hfr)_0)e_{W'} \otimes_{e_{W'}Z(W, W', \widehat{H}_c(W', \hfr)_0)e_{W'}} M$$ and $$I^{-1} : Z(W, W', \widehat{H}_c(W', \hfr)_0)\mhyphen\text{mod} \rightarrow \widehat{H}_c(W', \hfr)_0\mhyphen\text{mod}$$ $$N \mapsto I^{-1}(N) := e_{W'}N$$ are mutually quasi-inverse equivalences.

Let $\widehat{\oscr}_c(W, \hfr)_b$ denote the full subcategory of $\widehat{H}_c(W, \hfr)_b$-modules which are finitely generated over $\cplx[\hfr]^{\wedge_{Wb}}$.  Let $$\widehat{\ \ }_b : \oscr_c(W, \hfr) \rightarrow \widehat{\oscr}_c(W, \hfr)_b$$ be the functor of completion at the orbit $Wb$ and let $$E^b : \widehat{\oscr_c}(W, \hfr)_b \rightarrow \oscr_c(W, \hfr)$$ be the functor that sends a module $M$ to its subspace $E^b(M)$ of $\hfr$-locally nilpotent vectors.  By \cite[Proposition 3.6, Proposition 3.8]{BE}, these functors are exact and $E^b$ is the right adjoint of $\widehat{\ \ }_b$.  Similarly, we have the category $\widehat{\oscr}_c(W', \hfr)_0$, the completion functor $\widehat{\ \ }_0 : \oscr_c(W', \hfr) \rightarrow \widehat{\oscr}_c(W', \hfr)_0$ and the functor $E^0 : \widehat{\oscr}_c(W', \hfr)_0 \rightarrow \oscr_c(W', \hfr)$ taking $\hfr$-locally nilpotent vectors, which are in fact category equivalences \cite[Theorem 2.3]{BE}.

Let $\hfr_{W'}$ denote the unique $W'$-stable complement to $\hfr^{W'}$ in $\hfr$.  Clearly, an element $s \in W'$ acts on $\hfr$ as a complex reflection if and only if it acts on $\hfr_{W'}$ as a complex reflection, and $W'$ acts on $\hfr_{W'}$ faithfully.  By restriction of the parameter $c : S \rightarrow \cplx$ to the set of reflections $S'$ in $W'$, we may form the rational Cherednik algebra $H_c(W', \hfr_{W'})$ and its associated category $\oscr_c(W', \hfr_{W'})$.  Let $$\zeta : \oscr_c(W', \hfr) \rightarrow \oscr_c(W', \hfr_{W'})$$ be the functor defined by $$\zeta(M) = \{m \in M : ym = 0 \ \forall y \in \hfr^{W'}\}.$$  As discussed in \cite[Section 2.3]{BE}, $\zeta$ is an equivalence of categories - this is essentially an instance of Kashiwara's lemma for $D(\hfr^{W'})$-modules, in view of the natural tensor product decomposition $H_c(W', \hfr) \cong H_c(W', \hfr_{W'}) \otimes D(\hfr^{W'})$.

\begin{definition} The parabolic restriction functor $\res_b : \oscr_c(W, \hfr) \rightarrow \oscr_c(W', \hfr_{W'})$ is the composition $$\res_b := \zeta \circ E^0 \circ I^{-1} \circ \theta_* \circ \widehat{\ \ }_b$$ and the parabolic induction functor $\ind_b : \oscr_c(W', \hfr_{W'}) \rightarrow \oscr_c(W, \hfr)$ is the composition $$E^b \circ \theta_*^{-1} \circ I \circ \widehat{\ \ }_0 \circ \zeta^{-1}.$$\end{definition}

\begin{proposition}  The functors $\ind_b$ and $\res_b$ are exact and biadjoint and do not depend on the choice of $b \in \hfr^{W'}_{reg}$ up to isomorphism.  \end{proposition}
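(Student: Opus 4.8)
The plan is to verify each of the three assertions --- exactness, biadjointness, and independence of $b$ --- by reducing each to a known property of one of the constituent functors in the composition $\res_b = \zeta \circ E^0 \circ I^{-1} \circ \theta_* \circ \widehat{\ \ }_b$ (and the analogous composition for $\ind_b$). First I would treat exactness. The functor $\widehat{\ \ }_b$ is exact by \cite[Proposition 3.6, Proposition 3.8]{BE}, and $E^b, E^0$ are exact by the same proposition; $\theta_*$ and $I^{-1}$ are equivalences of abelian categories (the first by transfer of structure along the isomorphism $\theta$, the second as a Morita equivalence via the idempotent $e_{W'}$), hence exact; and $\zeta$ is an equivalence by Kashiwara's lemma as recalled in \cite[Section 2.3]{BE}. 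A composition of exact functors is exact, so $\res_b$ is exact, and dually $\ind_b$ is a composition of the exact functors $\widehat{\ \ }_0$, $\zeta^{-1}$, $\theta_*^{-1}$, $I$, and $E^b$, hence exact.

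Next I would handle biadjointness. Among the constituents, $E^b$ is the right adjoint of $\widehat{\ \ }_b$ by \cite[Proposition 3.8]{BE}, and $E^0$ is a two-sided adjoint (indeed quasi-inverse) of $\widehat{\ \ }_0$ since both are equivalences; similarly $\zeta$ and $\zeta^{-1}$, and $I$ and $I^{-1}$, and $\theta_*$ and $\theta_*^{-1}$, are mutually quasi-inverse and in particular two-sided adjoint pairs. Composing adjunctions in the correct order shows $\ind_b$ is right adjoint to $\res_b$: reading off the composition, the right adjoint of $\res_b = \zeta \circ E^0 \circ I^{-1} \circ \theta_* \circ \widehat{\ \ }_b$ is $(\widehat{\ \ }_b)^R \circ (\theta_*)^R \circ (I^{-1})^R \circ (E^0)^R \circ \zeta^R = E^b \circ \theta_*^{-1} \circ I \circ \widehat{\ \ }_0 \circ \zeta^{-1} = \ind_b$. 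For the reverse adjunction (that $\ind_b$ is also \emph{left} adjoint to $\res_b$, equivalently $\res_b$ is right adjoint to $\ind_b$), the only nonformal input needed is that $\widehat{\ \ }_b$ is \emph{also} a right adjoint of $E^b$, or equivalently that $\widehat{\ \ }_b$ and $E^b$ form a biadjoint pair on the relevant subcategories; this too is part of the content of \cite[Proposition 3.6, Proposition 3.8]{BE} together with the fact that on $\oscr_c$ the completion functor is exact with both adjoints, so the same bookkeeping yields that $\res_b$ is right adjoint to $\ind_b$.

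Finally, independence of $b$. Fix $b, b' \in \hfr^{W'}_{reg}$. The functor $\underline{\res}{}_{W'}^W : \oscr_c(W, \hfr) \rightarrow (\oscr_c(W', \hfr_{W'}) \boxtimes \loc(\hfr^{W'}_{reg}))^{N_W(W')}$ of \cite[Section 3]{BE} has the property that its fiber at $b$ recovers $\res_b$; since the target is a category of local systems on the connected variety $\hfr^{W'}_{reg}$, parallel transport along any path $\gamma$ from $b$ to $b'$ in $\hfr^{W'}_{reg}$ furnishes a canonical isomorphism of fiber functors, hence an isomorphism $\res_b \cong \res_{b'}$. The statement for $\ind_b$ then follows either by the same local-system construction on the induction side or, more economically, from uniqueness of adjoints: $\ind_b$ and $\ind_{b'}$ are both right (and left) adjoint to the isomorphic functors $\res_b \cong \res_{b'}$, hence are themselves isomorphic. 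The one point requiring slight care --- and the place I would expect to spend the most effort --- is checking that the isomorphism supplied by parallel transport is indeed an isomorphism of \emph{functors} $\oscr_c(W,\hfr) \to \oscr_c(W',\hfr_{W'})$ and is compatible with the identification of $(\underline{\res}{}_{W'}^W)_b$ with $\res_b$; this is where one must unwind the construction of $\underline{\res}{}_{W'}^W$ from \cite{BE} rather than argue purely formally, but no genuinely new idea beyond what is in \cite{BE} is needed.
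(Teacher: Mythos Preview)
Your treatment of exactness and of independence of $b$ is essentially the same as the paper's: both follow from the structure of the composition and from the local-system construction $\underline{\res}_{W'}^W$ of \cite{BE}.

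There is, however, a genuine gap in your biadjointness argument. The right adjunction $\res_b \dashv \ind_b$ is indeed formal, as you say, and this is exactly \cite[Theorem 3.10]{BE}. But the \emph{left} adjunction is not. You claim that it reduces to showing that $\widehat{\ \ }_b$ is also a right adjoint of $E^b$, and that this ``is part of the content of \cite[Proposition 3.6, Proposition 3.8]{BE}.'' It is not: those propositions establish only that $E^b$ is right adjoint to $\widehat{\ \ }_b$, and the pair $(\widehat{\ \ }_b, E^b)$ is \emph{not} a pair of quasi-inverse equivalences (unlike $(\widehat{\ \ }_0, E^0)$), since completion at a proper closed subvariety genuinely loses information. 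So one cannot read off the reverse adjunction formally.

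In fact the left adjunction $\ind_b \dashv \res_b$ was a substantial open problem for some time after \cite{BE}. The paper's proof simply cites the literature: it was first proved by Shan \cite[Section 2.4]{Shan} under certain hypotheses, and then in full generality by Losev \cite{Losevbiadjoint}, each using nontrivial arguments well beyond the formal bookkeeping you propose. You should replace your argument for the left adjunction with a citation to these sources.
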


\begin{proof} Exactness is \cite[Proposition 3.9(i)]{BE}, and that $\ind_b$ is right adjoint to $\res_b$ is \cite[Theorem 3.10]{BE}.  That $\ind_b$ is also left adjoint to $\res_b$ was established by Shan \cite[Section 2.4]{Shan} under some assumptions and later in full generality by Losev \cite{Losevbiadjoint}.  The independence up to isomorphism on the choice of $b$ was established in \cite[Section 3.7]{BE} using the partial $KZ$ functor $\underline{\res}_{W'}^W$ to be recalled in the following section.\end{proof}

\subsection{Partial KZ Functors} \label{partial-KZ} In this section we give in some detail a construction of the \emph{partial KZ} functor $\underline{\res}_{W'}^W$ introduced in \cite[Section 3.7]{BE}.  Let $\widehat{H}_c(W, \hfr)_{W\hfr^{W'}_{reg}}$ denote the completion of $H_c(W, \hfr)$ at the $W$-stable locally closed subvariety $W\hfr^{W'}_{reg} \subset \hfr$.  As for the case of completion at the $W$-orbit of a point, the completion $\widehat{H}_c(W, \hfr)_{W\hfr^{W'}_{reg}}$ can be realized as the subalgebra of $\End_\cplx(\cplx[\hfr]^{\wedge_{W\hfr^{W'}_{reg}}})$ generated by the functions $\cplx[\hfr]^{\wedge_{W\hfr^{W'}_{reg}}}$ on the formal neighborhood of $W\hfr^{W'}_{reg}$ in $\hfr$, the group $W$, and the Dunkl operators $D_y$ for $y \in \hfr$ associated to the action of $W$ on $\hfr$.  (Recall that the algebra of functions $\cplx[\hfr]^{\wedge_{W\hfr^{W'}_{reg}}}$ is obtained by first localizing to a principal open subset in which $W\hfr^{W'}_{reg}$ is a closed subvariety, followed by completion at the ideal defining $W\hfr^{W'}_{reg}$ in this principal open subset.)  Note that the variety $W\hfr^{W'}_{reg}$ is the disjoint union of the $W$-translates of $\hfr^{W'}_{reg}$, and the set-wise stabilizer of $\hfr^{W'}_{reg}$ in $W$ is precisely $N_W(W')$.  Let $1_{\hfr^{W'}_{reg}} \in \cplx[\hfr]^{\wedge_{W\hfr^{W'}_{reg}}}$ denote the indicator function of $\hfr^{W'}_{reg}$ as a function on the formal neighborhood in $\hfr$ of its $W$-orbit.  Similar to the isomorphism $$1_b\widehat{H}_c(W, \hfr)_b1_b \cong \widehat{H}_c(W', \hfr)_b$$ from \cite{BE} discussed in the previous section, we have a natural isomorphism of algebras $$1_{\hfr^{W'}_{reg}}\widehat{H}_c(W, \hfr)_{W\hfr^{W'}_{reg}}1_{\hfr^{W'}_{reg}} \cong \widehat{H}_c(N_W(W'), \hfr)_{\hfr^{W'}_{reg}},$$ where the algebra $\widehat{H}_c(N_W(W'), \hfr)_{\hfr^{W'}_{reg}}$ denotes the completion of the algebra $H_c(N_W(W'), \hfr) = \cplx N_W(W') \ltimes_{W'} H_c(W', \hfr)$ at the $N_W(W')$-stable locally closed subvariety $\hfr^{W'}_{reg} \subset \hfr$ (for comparison with the case of \'{e}tale pullbacks rather than completions, see \cite[Theorem 3.2]{GGJL}).  The formal neighborhood of $\hfr^{W'}_{reg}$ inside $\hfr$ is canonically identified with its formal neighborhood inside the principal open subset $\hfr^{W'}_{reg} \times \hfr_{W'} \subset \hfr$, which is precisely $(\hfr^{W'}_{reg} \times \hfr_{W'})^{\wedge_{\hfr^{W'}_{reg}}} = \hfr^{W'}_{reg} \widehat{\times} \hfr_{W'}^{\wedge_0}$ and has ring of formal functions $\cplx[\hfr^{W'}] \widehat{\otimes} \cplx[[\hfr_{W'}]].$  From this it follows immediately that there is a natural isomorphism $$\widehat{H}_c(N_W(W'), \hfr)_{\hfr^{W'}_{reg}} \cong N_W(W') \ltimes_{W'} (\widehat{H}_c(W', \hfr_{W'})_0 \widehat{\otimes} D(\hfr^{W'}_{reg})).$$  Let $$\psi : 1_{\hfr^{W'}_{reg}}\widehat{H}_c(W, \hfr)_{W\hfr^{W'}_{reg}}1_{\hfr^{W'}_{reg}} \rightarrow N_W(W') \ltimes_{W'} (\widehat{H}_c(W', \hfr_{W'})_0 \widehat{\otimes} D(\hfr^{W'}_{reg}))$$ be the isomorphism obtained as the composition of the two natural isomorphisms discussed above, and let $\psi_*$ denote the induced equivalence of module categories.

Let $\eu_{W'} \in H_c(W', \hfr_{W'})$ denote the \emph{Euler element} $$\eu_{W'} := \sum_{i = 1}^n x_iy_i + \frac{n}{2} - \sum_{s \in S'} \frac{2c_s}{1 - \lambda_s}s$$ where $x_1, ..., x_n \in \hfr_{W'}^*$ is any basis of $\hfr_{W'}^*$ and $y_1, ..., y_n$ is the dual basis of $\hfr_{W'}$.  Recall that $\eu_{W'}$ satisfies the commutation relations $[\eu_{W'}, x] = x$ for all $x \in \hfr_{W'}^*$, $[\eu_{W'}, y] = -y$ for all $y \in \hfr_{W'}$, and $[\eu_{W'}, w] = 0$ for all $w \in W$.  Note that in our setting, $\eu_{W'}$ is also fixed by the action of $N_W(W')$ on $H_c(W', \hfr_{W'})$ because the parameter $c : S' \rightarrow \cplx$ is obtained by restriction of the $W$-invariant parameter $c : S \rightarrow \cplx$.

Let $\oscr(N_W(W') \ltimes_{W'} (H_c(W', \hfr_{W'}) \otimes D(\hfr^{W'}_{reg})))$ denote the category of modules over the algebra $$N_W(W') \ltimes_{W'} (H_c(W', \hfr_{W'}) \otimes D(\hfr^{W'}_{reg}))$$ that are finitely generated over $\cplx[\hfr^{W'}_{reg}] \otimes \cplx[\hfr_{W'}]$ and on which $\hfr_{W'}$ acts locally nilpotently, and similarly let $\widehat{\oscr}(N_W(W') \ltimes_{W'} (\widehat{H}_c(W', \hfr_{W'})_0 \widehat{\otimes} D(\hfr^{W'}_{reg})))$ denote the category of $N_W(W') \ltimes_{W'} (\widehat{H}_c(W', \hfr_{W'})_0 \widehat{\otimes} D(\hfr^{W'}_{reg}))$-modules that are finitely generated over $\cplx[\hfr^{W'}_{reg}] \widehat{\otimes} \cplx[[\hfr_{W'}]].$  By the proof of \cite[Proposition 2.4]{BE}, $\eu_{W'}$ acts locally finitely on any $H_c(W', \hfr_{W'})$-module that has locally nilpotent action of $\hfr_{W'}$, and in particular $\eu_{W'}$ acts locally finitely on any $M \in \oscr(N_W(W') \ltimes_{W'} (H_c(W', \hfr_{W'}) \otimes D(\hfr^{W'}_{reg})))$.  It follows in particular that such $M$ are naturally graded as $$M = \bigoplus_{a \in \cplx} M_a$$ where $M_a$ is the generalized $a$-eigenspace of $\eu_{W'}$ in $M$.  As the actions of $\eu_{W'}$ and $N_W(W') \ltimes D(\hfr^{W'}_{reg})$ on $M$ commute, it follows that the above decomposition of $M$ is a decomposition as $N_W(W') \times D(\hfr^{W'}_{reg})$-modules.  As $\hfr_{W'}M_a \subset M_{a + 1}$ for all $a \in \cplx$ and as $M$ is finitely generated over $\cplx[\hfr_{W'}] \otimes \cplx[\hfr^{W'}_{reg}]$, it follows that each $M_a$ is finitely generated over $\cplx[\hfr^{W'}_{reg}]$ and that the set of $a \in \cplx$ such that $M_a \neq 0$ is a subset of a finite union of sets of the form $z + \ints^{\geq 0}$.  In particular, the generalized eigenspaces $M_a$ are $N_W(W')$-equivariant $\oscr(\hfr^{W'}_{reg})$-coherent $D(\hfr^{W'}_{reg})$-modules.  An argument entirely analogous to the proof of Theorem 2.3 in \cite{BE} then shows that the functor $$E : \widehat{\oscr}(N_W(W') \ltimes_{W'} (\widehat{H}_c(W', \hfr_{W'})_0 \widehat{\otimes} D(\hfr^{W'}_{reg}))) \rightarrow \oscr(N_W(W') \ltimes_{W'} (H_c(W', \hfr_{W'}) \otimes D(\hfr^{W'}_{reg})))$$ sending a module $M$ to its subspace of $\eu_{W'}$-locally finite vectors is quasi-inverse to the functor $$\widehat{\ \ }_{\hfr^{W'}_{reg}} : \oscr(N_W(W') \ltimes_{W'} (H_c(W', \hfr_{W'}) \otimes D(\hfr^{W'}_{reg}))) \rightarrow \widehat{\oscr}(N_W(W') \ltimes_{W'} (\widehat{H}_c(W', \hfr_{W'})_0 \widehat{\otimes} D(\hfr^{W'}_{reg})))$$ of completion at $\hfr^{W'}_{reg}$.  

Let $\widehat{\oscr}(\widehat{H}_c(W, \hfr)_{W\hfr^{W'}_{reg}})$ denote the category of $\widehat{H}_c(W, \hfr)_{W\hfr^{W'}_{reg}}$-modules that are finitely generated over the ring of functions $\cplx[\hfr]^{\wedge_{W\hfr^{W'}_{reg}}}$ on the formal neighborhood of $W\hfr^{W'}_{reg}$ in $\hfr$.  Completion at $W\hfr^{W'}$ defines an exact functor $$\widehat{\ \ }_{W\hfr^{W'}_{reg}} : \oscr_c(W, \hfr) \rightarrow \widehat{\oscr}(\widehat{H}_c(W, \hfr)_{W\hfr^{W'}_{reg}}).$$  Note also that the discussion above and the observation that $1_{\hfr^{W'}_{reg}}$ generates the unit ideal in $\widehat{H}_c(W, \hfr)_{W\hfr^{W'}_{reg}}$ shows that the composition $$\psi_* \circ 1_{\hfr^{W'}_{reg}} : \widehat{\oscr}(\widehat{H}_c(W, \hfr)_{W\hfr^{W'}_{reg}}) \rightarrow \widehat{\oscr}(N_W(W') \ltimes_{W'} (\widehat{H}_c(W', \hfr_{W'})_0 \widehat{\otimes} D(\hfr^{W'}_{reg})))$$ is an equivalence of categories.  Next, consider the composite functor $$E \circ \psi_* \circ 1_{\hfr^{W'}_{reg}} \circ \widehat{\ \ }_{W\hfr^{W'}_{reg}} : \oscr_c(W, \hfr) \rightarrow \oscr(N_W(W') \ltimes_{W'} (H_c(W', \hfr_{W'}) \otimes D(\hfr^{W'}_{reg})))$$ which as we've seen is a completion functor followed by equivalences of categories.

\begin{lemma} The image of the composite functor $E \circ \psi_* \circ 1_{\hfr^{W'}_{reg}} \circ \widehat{\ \ }_{W\hfr^{W'}_{reg}}$ lies in the full subcategory $\oscr(N_W(W') \ltimes_{W'} (H_c(W', \hfr_{W'}) \otimes D(\hfr^{W'}_{reg})))_{r.s.}$ of $\oscr(N_W(W') \ltimes_{W'} (H_c(W', \hfr_{W'}) \otimes D(\hfr^{W'}_{reg})))$ consisting of those modules $M$ whose generalized $\eu_{W'}$ eigenspaces $M_a$ have regular singularities as $\oscr(\hfr^{W'}_{reg})$-coherent $D(\hfr^{W'}_{reg})$-modules.  \end{lemma}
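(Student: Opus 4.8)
The plan is to deduce the statement from the regular-singularities property of modules in $\oscr_c$ on the regular locus (the case $W' = 1$, recorded in Section \ref{KZ} and due to \cite{GGOR}), suitably relativized over $\hfr^{W'}_{reg}$. Since the decomposition $M = \bigoplus_a M_a$ into generalized $\eu_{W'}$-eigenspaces is preserved by the commuting action of $N_W(W') \ltimes D(\hfr^{W'}_{reg})$, and since we have already observed that each $M_a$ is an $\oscr(\hfr^{W'}_{reg})$-coherent $D(\hfr^{W'}_{reg})$-module, i.e. a vector bundle with flat connection on $\hfr^{W'}_{reg}$, it suffices to show that each such connection has regular singularities. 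By the curve-testing criterion this can be checked after restriction to generic curves in $\hfr^{W'}$, so it is enough to verify regularity (i) along the components of the arrangement $\bigcup_{s \in S \setminus S'}(\ker\alpha_s \cap \hfr^{W'})$, whose complement in $\hfr^{W'}$ is exactly $\hfr^{W'}_{reg}$, and (ii) at infinity.

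For (i) the point is that, through the isomorphism $\psi$ and the realization of the completed rational Cherednik algebra by Dunkl operators, the flat connection on $M_a$ in the $\hfr^{W'}_{reg}$-directions is generated by the action of the operators $D_y$ with $y \in \hfr^{W'}$. For such $y$ and any $s \in S'$ one has $\langle\alpha_s, y\rangle = 0$, because $s$ fixes $\hfr^{W'}$ pointwise and hence $\hfr^{W'} \subseteq \ker\langle\alpha_s,\cdot\rangle$; thus the only singular terms of $D_y$ come from reflections $s \in S \setminus S'$, and each contributes a \emph{simple} pole along $\ker\alpha_s$. After passing to a normal-crossing model obtained by blowing up the arrangement, the connection on each $M_a$ is therefore logarithmic along the boundary, and a connection with logarithmic poles along a normal-crossing divisor is automatically regular. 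For (ii) one argues exactly as in the $W'=1$ case in \cite{GGOR}: finite generation of $M$ over $S\hfr^*$ and local finiteness of the Euler element propagate, through the construction, to a growth bound on $M_a$ at infinity in $\hfr^{W'}$ (reflected in the $\eu_{W'}$-grading, whose eigenvalues lie in a finite union of sets $z + \ints^{\geq 0}$), giving a regular singularity there. Restricting to a generic line then reduces both checks to the classical fact that a one-variable connection with only simple poles, including at $\infty$, is regular.

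The main obstacle is making step (i) rigorous, since the construction produces the $D(\hfr^{W'}_{reg})$-module structure only on the formal neighborhood of $\hfr^{W'}_{reg}$ in $\hfr$ --- where, by definition of $\hfr^{W'}_{reg}$, there are \emph{no} poles --- so one must first upgrade this to an algebraic statement over all of $\hfr^{W'}$ in order even to speak of the behavior along the arrangement. I would do this by checking that the formation of $E \circ \psi_* \circ 1_{\hfr^{W'}_{reg}} \circ \widehat{\ \ }_{W\hfr^{W'}_{reg}}$ commutes with localizing $M$ away from the hyperplanes $\ker\alpha_s$ with $s \in S\setminus S'$; on that $N_W(W')$-stable locus the corner algebra $1_{\hfr^{W'}_{reg}}\widehat{H}_c(W,\hfr)_{W\hfr^{W'}_{reg}}1_{\hfr^{W'}_{reg}} \cong \widehat{H}_c(N_W(W'),\hfr)_{\hfr^{W'}_{reg}}$ is a completion of a sheaf of twisted differential operators, and the $\hfr^{W'}$-directions are governed by honest Dunkl operators whose poles can be read off directly. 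An alternative, avoiding this globalization, is to induct on the corank of $W'$: near a generic point of a component $\ker\alpha_s \cap \hfr^{W'}$ the pointwise stabilizer jumps to a parabolic $W''$ with $W' \subsetneq W''$ of corank one over $W'$, the restriction of $M_a$ to a transverse slice there is controlled by $\underline{\res}^{W}_{W''}$ applied to $M$, and regularity along that component reduces to the corank-one case, where it again follows from the simple-pole form of a single Dunkl operator; the base case (corank zero) is the regularity-at-infinity argument of (ii).
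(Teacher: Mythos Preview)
Your strategy is recognizably the GGOR argument for $W'=1$ relativized over $\hfr^{W'}_{reg}$, and in principle it can be made to work, but the obstacle you flag in your third paragraph is the whole difficulty, and neither of your proposed fixes is complete as stated. In (a), ``localizing $M$ away from the hyperplanes $\ker\alpha_s$ with $s\in S\setminus S'$'' is exactly restricting to the locus where the construction already lives, so this does not by itself give you access to the poles; what you would actually need is an extension of the $D(\hfr^{W'}_{reg})$-module $M_a$ to a coherent sheaf near a generic point of each boundary component on which the Dunkl operators act with at worst a simple pole, and producing that extension is nontrivial. In (b), invoking $\underline{\res}^W_{W''}$ for $W''\supset W'$ of corank one is circular at this stage: the partial KZ functor is only defined once the present lemma (regularity of the eigenspaces) is in hand, since $RH$ is applied to modules with regular singularities.

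The paper's argument is quite different and avoids globalizing the connection altogether. It uses that regular-singularity $D$-modules form a Serre subcategory, and then filters $M$ by the $D(\hfr^{W'}_{reg})$-submodules $(\hfr^*_{W'})^kM$: since $\hfr^*_{W'}$ acts by $D(\hfr^{W'}_{reg})$-endomorphisms of $\eu_{W'}$-degree $+1$, each $M_a$ embeds in $M/(\hfr^*_{W'})^kM$ for $k\gg0$, so it suffices to show the latter has regular singularities. The bottom piece $M/\hfr^*_{W'}M$ is identified with the pullback of the original $N\in\oscr_c(W,\hfr)$ to $\hfr^{W'}_{reg}$ as a coherent sheaf, and Wilcox \cite[Proposition~1.3]{Wilcox} proves this pullback has regular singularities; the higher quotients then follow by the short exact sequence $0\to(\hfr^*_{W'})^{k-1}M/(\hfr^*_{W'})^kM\to M/(\hfr^*_{W'})^kM\to M/(\hfr^*_{W'})^{k-1}M\to 0$ and induction. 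This bypasses any direct analysis of poles of the connection on $M_a$ and reduces everything to an already-established result.
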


\begin{proof} Let $N$ be a module in $\oscr_c(W, \hfr)$ and let $M$ be its image under the composite functor $E \circ \psi_* \circ 1_{\hfr^{W'}_{reg}} \circ \widehat{\ \ }_{W\hfr^{W'}_{reg}}$.  As the full subcategory of $\oscr(\hfr^{W'}_{reg})$-coherent $D(\hfr^{W'}_{reg})$-modules consisting of those $D$-modules with regular singularities is a Serre subcategory, to show that the $D(\hfr^{W'}_{reg})$-module $M_a$ has regular singularities it suffices to show that every irreducible $D(\hfr^{W'}_{reg})$-module appearing as a composition factor in $M$ has regular singularities.  Note that $\hfr_{W'}^*$ acts on $M$ by $D(\hfr^{W'}_{reg})$-module homomorphisms of degree 1 with respect to the $\eu_{W'}$-grading.  It follows that $(\hfr_{W'}^*)^kM$ is a $D(\hfr^{W'}_{reg})$-submodule of $M$ for all integers $k \geq 0$, and, as $M$ is finitely generated over $\cplx[\hfr_{W'}] \otimes \cplx[\hfr^{W'}_{reg}]$, that any generalized $\eu_{W'}$-eigenspace $M_a$ of $M$ embeds in some quotient $M/(\hfr_{W'}^*)^kM$ for sufficiently large $k$.  Therefore, it suffices to show that the module $M/(\hfr_{W'}^*)^kM$ has regular singularities for all integers $k > 0$.  But $M/\hfr^*_{W'}M$ is precisely the $D(\hfr^{W'}_{reg})$-module obtained by pulling $N$ back to $\hfr^{W'}_{reg}$ as a coherent sheaf as in \cite[Proposition 1.2]{Wilcox}, which has regular singularities by \cite[Proposition 1.3]{Wilcox}.  That $M/(\hfr^*_{W'})^kM$ has regular singularities then follows from the exact sequence $$0 \rightarrow \frac{(\hfr^*_{W'})^{k - 1}M}{(\hfr^*_{W'})^{k}M} \rightarrow \frac{M}{(\hfr^*_{W'})^{k}M} \rightarrow \frac{M}{(\hfr^*_{W'})^{k - 1}M} \rightarrow 0$$ and induction on $k$.\end{proof}

By the Riemann-Hilbert correspondence \cite{De}, passing to local systems of flat sections on $\hfr^{W'}_{reg}$ in each generalized eigenspace then defines an equivalence of categories $$RH : \oscr(N_W(W') \ltimes_{W'} (H_c(W', \hfr_{W'}) \otimes D(\hfr^{W'}_{reg})))_{r.s.} \rightarrow (\oscr_c(W', \hfr_{W'}) \boxtimes \loc(\hfr^{W'}_{reg}))^{N_W(W')}$$ where $\oscr_c(W', \hfr_{W'}) \boxtimes \loc(\hfr^{W'}_{reg})$ is the category of local systems on $\hfr^{W'}_{reg}$ of $H_c(W', \hfr_{W'})$-modules in $\oscr_c(W', \hfr_{W'})$ and where $(\oscr_c(W', \hfr_{W'}) \boxtimes \loc(\hfr^{W'}_{reg}))^{N_W(W')}$ is the associated category of $N_W(W')$-equivariant objects.

\begin{definition} Let $\underline{\res}_{W'}^W$ be the \emph{partial KZ functor} $$\underline{\res}_{W'}^W : \oscr_c(W, \hfr) \rightarrow (\oscr_c(W', \hfr_{W'}) \boxtimes \loc(\hfr^{W'}_{reg}))^{N_W(W')}$$ defined by the composition $$\underline{\res}_{W'}^W := RH \circ E \circ \psi_* \circ 1_{\hfr^{W'}_{reg}} \circ \widehat{\ \ }_{W\hfr^{W'}_{reg}}.$$\end{definition}

Let $b \in \hfr^{W'}_{reg}$ and let $\fiber_b : (\oscr_c(W', \hfr_{W'}) \boxtimes \loc(\hfr^{W'}_{reg}))^{N_W(W')} \rightarrow \oscr_c(W', \hfr_{W'})$ denote the exact functor of taking the fiber of the local system at $b$.  From \cite[Section 3.7]{BE}, the functor $\fiber_b \circ \underline{\res}_{W'}^W$ is naturally identified with the parabolic restriction functor $\res_b$.  In particular, the monodromy in $\hfr^{W'}_{reg}$ provides isomorphisms of functors $\res_b \cong \res_{b'}$ for any points $b, b' \in \hfr^{W'}_{reg}$.  Note, however, that such an isomorphism is not canonical and depends on a choice of path between $b$ and $b'$.  This monodromy action on $\res_b$ will be crucial in what follows.

When there is no loss of clarity, we will suppress the choice of $b \in \hfr^{W'}$ and write $\res_{W'}^W$ for the parabolic restriction functor $\res_b$, and similarly we will write $\ind_{W'}^W$ for the parabolic induction functor $\ind_b$.  The underlined $\underline{\res}_{W'}^W$ will always denote the associated partial $KZ$ functor with fibers $\res_{W'}^W$.

\subsection{Gordon-Martino Transitivity} \label{transitivity} Let $W'' \subset W' \subset W$ be a chain of parabolic subgroups of $W$.  In \cite[Corollary 2.5]{Shan}, Shan proved that the parabolic restriction functors are transitive in the sense that there is an isomorphism of functors $$\res_{W''}^W \cong \res_{W''}^{W'} \circ \res_{W'}^W.$$  Gordon and Martino \cite{GoMa} explained the sense in which this transitivity is compatible with the local systems appearing in the partial $KZ$ functors.  We recall their result below.

Consider the functor $$\underline{\res}_{W''}^{W'} \boxtimes \id \circ \downarrow^{N_W(W')}_{N_W(W', W'')} \circ \underline{\res}_{W'}^W : \oscr_c(W, \hfr)$$ $$\rightarrow \left(\oscr_c(W'', \hfr_{W''}) \boxtimes \loc((\hfr_{W'})^{W''}_{reg}) \boxtimes \loc(\hfr^{W'}_{reg})\right)^{N_W(W', W'')}.$$  Here $N_W(W', W'')$ is the intersection of the normalizers $N_W(W')$ and $N_W(W'')$ and $\downarrow$ denotes the restriction of equivariant structure to a subgroup.  Similarly to the notation $\hfr^{W'}_{reg}$, the space $(\hfr_{W'})^{W''}_{reg}$ is the locally closed locus of points in $\hfr_{W'}$ with stabilizer $W''$ in $W'$.  The goal is to relate this functor to the partial $KZ$ functor $\underline{\res}_{W''}^W$.  However, the latter functor produces local systems on the space $\hfr^{W''}_{reg}$, not on $(\hfr_{W'})^{W''}_{reg} \times \hfr^{W'}_{reg}$ as the functor considered above does.  In general, viewed as subvarieties of $\hfr$, these spaces do not coincide, and there is no obvious map between them.  However, $(\hfr_{W'})^{W''}_{reg} \times \hfr^{W'}_{reg}$, viewed as a complex manifold, does contain a $N_W(W', W'')$-stable deformation retract that includes naturally in $\hfr^{W''}_{reg}$, giving rise to a pullback functor $$\iota_{W', W''}^* : \loc(\hfr^{W''}_{reg})^{N_W(W', W'')} \rightarrow \loc((\hfr_{W'})^{W''}_{reg} \times \hfr^{W'}_{reg})^{N_W(W', W'')}.$$  This functor can be constructed as follows.

Choose a $W$-invariant norm $||\cdot||$ on $\hfr$.  Let $\epsilon > 0$ be a positive number.  Define the subspaces $$\hfr^{W'}_{\epsilon-reg} := \{x \in \hfr^{W'} : ||wx - x|| > \epsilon \text{ for all } w \in W\backslash W'\} \subset \hfr^{W'}_{reg}$$ and $$(\hfr_{W'})^{W'', \epsilon}_{reg} := \{x \in (\hfr_{W'})^{W''}_{reg} : ||x|| < \epsilon/2\} \subset (\hfr_{W'})^{W''}_{reg}.$$  Note that these spaces are $N_W(W', W'')$-stable, that the subspace $(\hfr_{W'})^{W'', \epsilon}_{reg} \times \hfr^{W'}_{\epsilon-reg}$ is a deformation retract of $(\hfr_{W'})^{W''}_{reg} \times \hfr^{W'}_{reg}$ via a $N_W(W', W'')$-equivariant deformation retraction, and there is a natural inclusion $(\hfr_{W'})^{W'', \epsilon}_{reg} \times \hfr^{W'}_{\epsilon-reg} \subset \hfr^{W''}_{reg}$.  Pullback along the inclusion $(\hfr_{W'})^{W'', \epsilon}_{reg} \times \hfr^{W'}_{\epsilon-reg} \subset \hfr^{W''}_{reg}$ defines a functor $$\loc(\hfr^{W''}_{reg})^{N_W(W', W'')} \rightarrow \loc((\hfr_{W'})^{W'', \epsilon}_{reg} \times \hfr^{W'}_{\epsilon-reg})^{N_W(W', W'')}$$ and the deformation retraction defines an equivalence of categories $$\loc((\hfr_{W'})^{W'', \epsilon}_{reg} \times \hfr^{W'}_{\epsilon-reg})^{N_W(W', W'')} \cong \loc((\hfr_{W'})^{W''}_{reg} \times \hfr^{W'}_{reg})^{N_W(W', W'')}.$$  We define $\iota_{W', W''}^*$ to be the composition of these functors.  Note that $\iota_{W', W''}^*$ does not depend on the choice of norm $||\cdot||$ or $\epsilon > 0$.  We comment that the definition of $\iota_{W', W''}^*$ given in \cite{GoMa} was stated in terms of fundamental groups, but the version we give here is equivalent.

We can now state the transitivity result of Gordon-Martino:

\begin{theorem} \label{trans-theorem} \emph{(Gordan-Martino, \cite[Theorem 3.11]{GoMa})} There is a natural isomorphism $$\iota_{W', W''}^* \circ \downarrow^{N_W(W'')}_{N_W(W', W'')} \circ \underline{\res}_{W''}^W \cong (\underline{\res}_{W''}^{W'} \boxtimes \id) \circ \downarrow^{N_W(W')}_{N_W(W', W'')} \circ \underline{\res}_{W'}^W$$ of functors $$\oscr_c(W, \hfr) \rightarrow \left(\oscr_c(W'', \hfr_{W''}) \boxtimes \loc((\hfr_{W'})^{W''}_{reg}) \boxtimes \loc(\hfr^{W'}_{reg})\right)^{N_W(W', W'')}.$$\end{theorem}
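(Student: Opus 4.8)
The plan is to reduce the statement to a compatibility between two ways of completing $H_c(W,\hfr)$ and then transport that compatibility through the ``formal'' part of the construction of the partial $KZ$ functors. Recall that $\underline{\res}_{W'}^{W} = RH \circ E \circ \psi_* \circ 1_{\hfr^{W'}_{reg}} \circ \widehat{\ \ }_{W\hfr^{W'}_{reg}}$, and similarly for $W''$; the functors $RH$, $E$ and the idempotent truncations are defined in the same manner regardless of the ambient group, so the content of the theorem lives at the level of the completed algebras $\widehat{H}_c(W,\hfr)_{W\hfr^{W'}_{reg}}$, $\widehat{H}_c(W,\hfr)_{W\hfr^{W''}_{reg}}$ equipped with their distinguished idempotents $1_{\hfr^{W'}_{reg}}$, $1_{\hfr^{W''}_{reg}}$. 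The heart of the argument is to show that completing $H_c(W,\hfr)$ once at $W\hfr^{W''}_{reg}$ agrees, after the relevant idempotent truncation, with the two-step procedure of first completing at $W\hfr^{W'}_{reg}$, recovering $H_c(W',\hfr_{W'})$ via $E$, and then completing the result at $(\hfr_{W'})^{W''}_{reg}$ --- all compatibly with the Bezrukavnikov-Etingof isomorphisms $\psi$. An equivalent and occasionally more transparent reformulation: a $N_W(W',W'')$-equivariant local system is determined by its fiber at a base point together with the monodromy action of $\pi_1$, so one could instead check agreement of fibers --- using Shan's transitivity $\res_{W''}^{W}\cong\res_{W''}^{W'}\circ\res_{W'}^{W}$ \cite{Shan} and the identification $\fiber_b\circ\underline{\res}=\res_b$ --- and then check that the two monodromy representations pull back to one another along the $\pi_1$-homomorphism induced by the inclusion of the deformation retract $(\hfr_{W'})^{W'', \epsilon}_{reg}\times\hfr^{W'}_{\epsilon-reg}\hookrightarrow\hfr^{W''}_{reg}$.

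First I would pin down the geometry behind $\iota_{W',W''}^*$. For $\epsilon>0$ small, the map $(\bar x, x)\mapsto\bar x+x$ sends $(\hfr_{W'})^{W'', \epsilon}_{reg}\times\hfr^{W'}_{\epsilon-reg}$ into $\hfr^{W''}_{reg}$: if $x$ is $\epsilon$-regular in $\hfr^{W'}$ then every $w\in W\setminus W'$ moves $x$ by more than $\epsilon$, so for $||\bar x||<\epsilon/2$ the only reflections of $W$ fixing $\bar x+x$ are those in $W'$ fixing $\bar x$, namely exactly the elements of $W''$. The same estimate shows that on a formal (or \'etale) neighborhood of the image, the reflection arrangement of $W$ on $\hfr$ restricts to the pullback of the reflection arrangement of $W'$ on $\hfr_{W'}$, which on the small slice around $0$ restricts in turn to that of $W''$ on $\hfr_{W''}$; concretely, $\cplx[\hfr]^{\wedge_{W\hfr^{W''}_{reg}}}$, localized near the retract, is identified with $\cplx[\hfr^{W'}_{\epsilon-reg}]\,\widehat{\otimes}\,\cplx[(\hfr_{W'})^{W'', \epsilon}_{reg}]\,\widehat{\otimes}\,\cplx[[\hfr_{W''}]]$ compatibly with the identification produced by the two-step completion. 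I would also verify here that the subgroup of $N_W(W'')$ stabilizing the subvariety $\hfr^{W'}_{reg}\subset\hfr$ is exactly $N_W(W',W'')$, which is what matches the equivariance-restriction bookkeeping on the two sides.

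Next I would prove the algebraic core: the transitivity of the Bezrukavnikov-Etingof isomorphism. On the $\hfr^{W''}_{reg}$ side one applies $\psi$ for the pair $(W,W'')$. On the other side one first applies $\psi$ for $(W,W')$, arriving at $N_W(W')\ltimes_{W'}(\widehat{H}_c(W',\hfr_{W'})_0\,\widehat{\otimes}\,D(\hfr^{W'}_{reg}))$; one then uses $E$ to pass from $\widehat{H}_c(W',\hfr_{W'})_0$ back to $H_c(W',\hfr_{W'})$ and applies $\psi$ for $(W',W'')$ to that tensor factor, the $D(\hfr^{W'}_{reg})$-factor remaining inert and the normalizers combining to $N_W(W',W'')$ by the previous paragraph. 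The two outcomes agree because the ``centralizer'' construction underlying $\theta$ and $\psi$ is transitive --- a manifestation of the associativity $\cplx W\otimes_{\cplx W'}(\cplx W'\otimes_{\cplx W''}(-))\cong\cplx W\otimes_{\cplx W''}(-)$ of induction --- which provides a transitive chain of the Morita equivalences used to define $\res_b$, matching the distinguished idempotents $e_{W'}$, $e_{W''}$, and hence also the idempotents $1_{\hfr^{W'}_{reg}}$, $1_{(\hfr_{W'})^{W''}_{reg}}$, $1_{\hfr^{W''}_{reg}}$. Combining this chain of identifications with the geometric one of the preceding paragraph yields the sought isomorphism of completed-algebra-with-idempotent data.

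The step I expect to be the main obstacle is reconciling the ``complete, un-complete, re-complete'' mismatch between the two sides: the first stage of the two-step procedure completes $H_c(W',\hfr_{W'})$ at the single point $0\in\hfr_{W'}$, whereas the second stage must complete it at the positive-dimensional locus $(\hfr_{W'})^{W''}_{reg}$. The bridge is the functor $E$ of $\eu_{W'}$-locally finite vectors together with the regular-singularities Lemma proved above: inside a small ball the locus $(\hfr_{W'})^{W'', \epsilon}_{reg}$ accumulates only at $0$, so completing $H_c(W',\hfr_{W'})$ at $0$, applying $E$ to recover the genuine $H_c(W',\hfr_{W'})$-module, and then completing the result at the small $W''$-slice reproduces the direct completion at that slice --- and this has to be checked compatibly with the $\eu_{W''}$-grading, so that $RH$ may legitimately be applied eigenspace-by-eigenspace on both sides. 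Granting this, one applies $RH$ in each generalized $\eu$-eigenspace; since $RH$ commutes with pullback along the open inclusion of the retract, the pullback $\iota_{W',W''}^*$ appears exactly, restricting the equivariant structure from $N_W(W'')$ (respectively $N_W(W')$) down to $N_W(W',W'')$ matches the two sides, and the proof is complete. As a sanity check, the case $W''=1$ recovers the compatibility of $\underline{\res}_{W'}^{W}$ with the $KZ$ functors of $W$ and of $W'$, while $W'=W$ and $W'=W''$ are immediate.
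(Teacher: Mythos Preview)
The paper does not prove this theorem; it is stated with attribution to Gordon--Martino \cite[Theorem 3.11]{GoMa} and used as a black box. There is therefore no proof in the paper to compare your proposal against.

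That said, your sketch is a plausible outline of how such a result is established, and is in the spirit of the original argument: one shows that the two-step completion (first along $W\hfr^{W'}_{reg}$, then along the $W''$-locus inside the $W'$-slice) matches the one-step completion along $W\hfr^{W''}_{reg}$ after restricting to the deformation retract, and that the Bezrukavnikov--Etingof isomorphisms $\psi$ are transitive in the appropriate sense. The point you flag as the main obstacle --- reconciling completion at $0\in\hfr_{W'}$ with completion along the positive-dimensional locus $(\hfr_{W'})^{W''}_{reg}$ via the functor $E$ of locally finite vectors --- is indeed where the technical work lies, and your description of how $E$ bridges this is correct in outline. The equivariance bookkeeping (restricting from $N_W(W'')$ and $N_W(W')$ down to $N_W(W',W'')$) and the compatibility of $RH$ with pullback are routine once the algebraic core is in place. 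If you want the details, consult \cite{GoMa} directly.
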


\subsection{Mackey Formula for Rational Cherednik Algebras for Coxeter Groups} \label{Mackey-Coxeter} Shan and Vasserot established in \cite[Lemma 2.5]{SV} that the natural analogue of the usual Mackey formula for the composition of induction and restriction functors for representations of finite groups holds for the Bezrukavnikov-Etingof parabolic induction and restriction functors at the level of Grothendieck groups.  It will be convenient for us to know that, at least in the case of rational Cherednik algebras attached to finite Coxeter groups, the Mackey formula holds at the level of the parabolic induction and restriction functors themselves.  This is established below, by lifting the Mackey formula for the associated Hecke algebras via the $KZ$ functor.

Let $(W, S)$ be a finite Coxeter system with simple reflections $S$, real reflection representation $\hfr_\real$, and complexified reflection representation $\hfr$.  Let $c : S \rightarrow \cplx$ be a class function on the simple reflections, and let $q : S \rightarrow \cplx^\times$ be the associated class function $q(s) = e^{-2 \pi i c(s)}$.  Recall that the \emph{Hecke algebra} $\mathsf{H}_q(W)$ attached to $W$ and $q$ is the associative $\cplx$-algebra generated by symbols $\{T_s : s \in S\}$ subject to \emph{braid relations} $$T_{s_1}T_{s_2}T_{s_1} \cdots = T_{s_2}T_{s_1}T_{s_2} \cdots$$ for $s_1 \neq s_2 \in S$, where there are $m_{ij}$ terms on each side where $m_{ij}$ is the order of the product $s_1s_2$, and the \emph{quadratic relations} $$(T_s - 1)(T_s + q_s) = 0$$ where we write $q_s$ for $q(s)$.  Recall that the \emph{length function} $l : W \rightarrow \ints^{\geq 0}$ assigns to $w \in W$ the minimal length $l(w)$ of an expression $s_{i_1}\cdots s_{i_{l(w)}} = w$ of $w$ as a product of simple reflections, and that such a factorization of $w$ is called a \emph{reduced expression}.  The product $T_{s_{i_1}}\cdots T_{s_{i_{l(w)}}}$ in $\mathsf{H}_q(W)$ does not depend on the choice of reduced expression $s_{i_1}\cdots s_{i_{l(w)}}$ for $W$, and the resulting elements $T_w$ form a $\cplx$-basis for $\mathsf{H}_q(W)$.  For details and proofs on such basic structural results on Hecke algebras mentioned in this section, see \cite{GP} (note that their convention is to use quadratic relations $(T + 1)(T - q) = 0$ rather than $(T - 1)(T + q) = 0$).

For a subset $J \subset S$ of the simple reflections in $W$, let $W_J \subset W$ be the parabolic subgroup generated by $J \subset S$.  Then $(W_J, J)$ is a Coxeter subsystem of $(W, S)$, and the (complexified) reflection representation of $(W_J, J)$ is identified with the action of $W_J$ on the unique $W_J$-stable complement $\hfr_{W_J}$ of $\hfr^{W_J}$ in $\hfr$.  The parameter $q : S \rightarrow \cplx^\times$ restricts to give a $W_J$-invariant function $J \rightarrow \cplx^\times$, which by abuse of notation we also denote by $q$.  We therefore may form the Hecke algebra $\mathsf{H}_q(W_J)$.  Reduced expressions of $w \in W_J$ as an element of $W_J$ are precisely the reduced expressions of $w$ as an element of $W$.  In particular the length function $l_{W_J}$ for $(W_J, J)$ coincides with the restriction to $W_J$ of the length function for $(W, J)$, and in this way the assignment $T_w \mapsto T_w$ for $w \in W$ extends uniquely to a unital embedding of the algebra $\mathsf{H}_q(W_J)$ as a subalgebra of $\mathsf{H}_q(W)$.  Via this embedding we can define the parabolic restriction functor $$\hres_{W_J}^W : \mathsf{H}_q(W)\mhyphen\text{mod} \rightarrow \mathsf{H}_q(W_J)\mhyphen\text{mod}$$ and the parabolic induction functor $$\hind_{W_J}^W := \mathsf{H}_q(W) \otimes_{\mathsf{H}_q(W_J)} \bullet : \mathsf{H}_q(W_J)\mhyphen\text{mod} \rightarrow \mathsf{H}_q(W)\mhyphen\text{mod}.$$

Let $J, K \subset S$ be two subsets of simple reflections, with associated parabolic subgroups $W_J$ and $W_K$.  Each $W_K$-$W_J$ double-coset in $W$ contains a unique element of minimal length.  Let $X_{KJ} \subset W$ denote this subset of distinguished double-coset representatives.  For $d \in X_{KJ}$, conjugation $w \mapsto dwd^{-1}$ defines a length-preserving isomorphism $W_{J \cap K^d} \rightarrow W_{^dJ \cap K}$, where $K^d := d^{-1}Kd$ and $^dJ := dJd^{-1}$.  In particular, the assignment $T_w \mapsto T_{dwd^{-1}}$ extends uniquely to an algebra isomorphism $\mathsf{H}_q(W_{J \cap K^d}) \rightarrow \mathsf{H}_q(W_{^dJ \cap K})$.  Note that this isomorphism is realized inside $\mathsf{H}_q(W)$ by conjugation by $T_d$.  Let $$\htw_d : \mathsf{H}_q(W_{J \cap K^d})\mhyphen\text{mod} \rightarrow \mathsf{H}_q(W_{^dJ \cap K})\mhyphen\text{mod}$$ denote the equivalence of categories obtained by transfer of structure via this isomorphism.

We can now state the Mackey formula for Hecke algebras.  Note that this formula holds for any numerical parameter $q : S \rightarrow \cplx^\times$.

\begin{proposition} \emph{(Mackey Formula for Hecke Algebras, \cite[Proposition 9.1.8]{GP})} Let $J, K \subset S$ and let $X_{KJ} \subset W$ be the set of distinguished (minimal length) $W_K$-$W_J$ double-coset representatives in $W$.  There is an isomorphism $$\hres^W_{W_K} \circ \hind_{W_J}^W \cong \bigoplus_{d \in X_{KJ}} \hind_{W_{{ }^dJ \cap K}}^W \circ \htw_d \circ \hres_{J \cap K^d}^{W_J}$$ of functors $\mathsf{H}_q(W_J)\mhyphen\text{mod} \rightarrow \mathsf{H}_q(W_K)\mhyphen\text{mod}.$  \end{proposition}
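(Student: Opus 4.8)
The plan is to deduce the statement from a decomposition of $\mathsf{H}_q(W)$ as an $(\mathsf{H}_q(W_K),\mathsf{H}_q(W_J))$-bimodule, exactly as one proves the Mackey formula for group algebras; this is \cite[Proposition 9.1.8]{GP}, so I only sketch the structure of the argument. First I would observe that $\hres^W_{W_K}\circ\hind^W_{W_J}$ is the functor $M\mapsto\mathsf{H}_q(W)\otimes_{\mathsf{H}_q(W_J)}M$ equipped with its left $\mathsf{H}_q(W_K)$-action, i.e. the functor of tensoring with the bimodule $\mathsf{H}_q(W)$; and that for each $d\in X_{KJ}$ the composite $\hind^{W_K}_{W_{{}^dJ\cap K}}\circ\htw_d\circ\hres^{W_J}_{W_{J\cap K^d}}$ is, unravelling the definition of $\htw_d$ as transfer of structure along $T_w\mapsto T_{dwd^{-1}}$, the functor of tensoring with the bimodule $\mathsf{H}_q(W_K)\otimes_{\mathsf{H}_q(W_{{}^dJ\cap K})}{}^{T_d}\!\mathsf{H}_q(W_J)$, where ${}^{T_d}\!\mathsf{H}_q(W_J)$ denotes $\mathsf{H}_q(W_J)$ with $\mathsf{H}_q(W_{{}^dJ\cap K})$ acting on the left through conjugation by $T_d^{-1}$. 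Using that $T_d$ is invertible in $\mathsf{H}_q(W)$ and that $T_dT_wT_d^{-1}=T_{dwd^{-1}}$ for $w\in W_{J\cap K^d}$ — which follows from the length identities recorded in Section \ref{Mackey-Coxeter}, namely that $d$ has minimal length in $W_KdW_J$ (so $l(ud)=l(u)+l(d)$ and $l(dv)=l(d)+l(v)$ for $u\in W_K$, $v\in W_J$) and that conjugation by $d$ preserves lengths on $W_{J\cap K^d}$ — the assignment $a\otimes x\mapsto aT_d\otimes x\mapsto aT_dx$ exhibits this bimodule as a homomorphic image of $\mathsf{H}_q(W_K)T_d\otimes_{\mathsf{H}_q(W_{J\cap K^d})}\mathsf{H}_q(W_J)$ landing in $\mathsf{H}_q(W)$. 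Thus it suffices to show that multiplication assembles these into a bimodule isomorphism
$$\bigoplus_{d\in X_{KJ}}\mathsf{H}_q(W_K)\,T_d\otimes_{\mathsf{H}_q(W_{J\cap K^d})}\mathsf{H}_q(W_J)\ \cong\ \mathsf{H}_q(W).$$

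To prove this isomorphism I would invoke the combinatorics of distinguished double-coset representatives \cite[Chapter 2]{GP}. Fix $d\in X_{KJ}$; then $W_K\cap dW_Jd^{-1}=W_{{}^dJ\cap K}$, and every $w\in W_KdW_J$ has a unique factorization $w=u\,d\,v$ with $u\in W_K$ and $v$ a distinguished representative of a left coset of $W_{J\cap K^d}$ in $W_J$, satisfying $l(w)=l(u)+l(d)+l(v)$. The subword property of the Bruhat order then yields $T_w=T_uT_dT_v$, so $\mathsf{H}_q(W_K)T_d\mathsf{H}_q(W_J)=\bigoplus_{w\in W_KdW_J}\cplx T_w$. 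Since $\mathsf{H}_q(W_K)T_d$ is free of rank $[W_K:W_{{}^dJ\cap K}]$ as a right $\mathsf{H}_q(W_{J\cap K^d})$-module (via $T_d$ and a set of minimal-length coset representatives, again using $T_dT_wT_d^{-1}=T_{dwd^{-1}}$), the $d$-th summand above has $\cplx$-dimension $[W_K:W_{{}^dJ\cap K}]\cdot|W_J|=|W_KdW_J|$. Hence the multiplication map from it onto $\bigoplus_{w\in W_KdW_J}\cplx T_w$ is a surjection between spaces of equal finite dimension, so an isomorphism; summing over the partition $W=\bigsqcup_{d\in X_{KJ}}W_KdW_J$ gives the displayed bimodule isomorphism. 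Transporting this isomorphism through the identifications of the first paragraph — a routine check of compatibility between the twist $\htw_d$ and the associativity of $\otimes$ — produces the stated isomorphism of functors.

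I expect the only genuinely nonformal ingredient, and the one place where the Coxeter structure is essential, to be the unique-factorization lemma $w=u\,d\,v$ with additive lengths and the resulting identity $T_w=T_uT_dT_v$; everything downstream (the freeness and dimension count, and the passage between bimodule functors and the composites $\hind\circ\htw\circ\hres$) is bookkeeping. In practice one therefore cites \cite[Proposition 9.1.8]{GP} for the algebraic content and records the reformulation above, which is what makes the Mackey formula available at the level of the functors themselves rather than only on Grothendieck groups.
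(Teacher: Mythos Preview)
Your sketch is correct and is essentially the argument in \cite[Proposition~9.1.8]{GP}: decompose $\mathsf{H}_q(W)$ as an $(\mathsf{H}_q(W_K),\mathsf{H}_q(W_J))$-bimodule via the additive-length factorization $w=udv$ for $w\in W_KdW_J$, and then read off the functorial statement. The paper itself does not give a proof of this proposition at all; it simply cites Geck--Pfeiffer and then uses the result as input to the subsequent theorem (lifting the Mackey formula to rational Cherednik algebras via the $KZ$ functor). So there is nothing to compare beyond noting that you have reproduced the standard cited argument.
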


By restriction of the parameter $c$ to subsets $J \subset S$, one can form the associated rational Cherednik algebra $H_c(W_J, \hfr_{W_J})$.  Let $J, K \subset S$, and let $d \in X_{KJ}$.  Note that the action of $d$ on $\hfr$ induces an isomorphism $\hfr_{W_{J \cap K^d}} \rightarrow \hfr_{W_{^dJ \cap K}}$, $y \mapsto dy$, and hence also an isomorphism $\hfr^*_{W_{J \cap K^d}} \rightarrow \hfr^*_{W_{^dJ \cap K}}$, $f \mapsto d(f) = f(d^{-1}\bullet).$  As in the comments preceding \cite[Lemma 2.5]{SV}, these isomorphisms along with the isomorphism $W_{J \cap K^d} \rightarrow W_{^dJ \cap K}$ discussed above induce an isomorphism $H_c(W_{J \cap K^d}, \hfr_{W_{J \cap K^d}}) \cong H_c(W_{^dJ \cap K}, \hfr_{W_{^dJ \cap K}})$ respecting triangular decompositions.  Transfer of structure by this isomorphism therefore induces an equivalence of categories $$\tw_d : \oscr_c(W_{J \cap K^d}, \hfr_{W_{J \cap K^d}}) \cong \oscr_c(W_{^dJ \cap K}, \hfr_{W_{^dJ \cap K}}).$$

\begin{theorem} \emph{(Mackey Formula for Rational Cherednik Algebras for Coxeter Groups)} Let $J, K \subset S$ and let $X_{KJ} \subset W$ be the set of distinguished $W_K$-$W_J$ double-coset representatives in $W$.  There is an isomorphism $$\res^W_{W_K} \circ \ind_{W_J}^W \cong \bigoplus_{d \in X_{KJ}} \ind_{W_{{ }^dJ \cap K}}^W \circ \tw_d \circ \res_{J \cap K^d}^{W_J}$$ of functors $\oscr_c(W_J, \hfr_{W_J}) \rightarrow \oscr_c(W_K, \hfr_{W_K}).$\end{theorem}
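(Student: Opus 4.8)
The plan is to lift the Mackey formula for Hecke algebras through the $KZ$ functor, as announced. Write $F_1 := \res^W_{W_K}\circ\ind_{W_J}^W$ and $F_2 := \bigoplus_{d\in X_{KJ}}\ind_{W_{{ }^dJ\cap K}}^W\circ\tw_d\circ\res_{J\cap K^d}^{W_J}$ for the two sides of the claimed identity; both are exact functors $\oscr_c(W_J,\hfr_{W_J})\to\oscr_c(W_K,\hfr_{W_K})$, and the goal is a natural isomorphism $F_1\cong F_2$. The first ingredient is the compatibility of $KZ$ with the Bezrukavnikov--Etingof parabolic functors: by Shan \cite{Shan}, for any parabolic $W'\subset W$ there are natural isomorphisms $KZ\circ\res_{W'}^W\cong\hres_{W'}^W\circ KZ$ and $KZ\circ\ind_{W'}^W\cong\hind_{W'}^W\circ KZ$, where on each side the relevant Hecke algebras are those of $W'$ and of $W$. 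We also need the analogous statement for the twist functors: $\tw_d$ is transfer of structure along the isomorphism of triples $(W_{J\cap K^d},\hfr_{W_{J\cap K^d}},c)\xrightarrow{\sim}(W_{{ }^dJ\cap K},\hfr_{W_{{ }^dJ\cap K}},c)$ induced by the action of $d$, and since this isomorphism carries the reflection arrangement and the (length-one, hence simple) reflections of $W_{J\cap K^d}$ to those of $W_{{ }^dJ\cap K}$, the induced isomorphism of braid groups descends on the Hecke quotients to $T_w\mapsto T_{dwd^{-1}}$, which is precisely $\htw_d$. As the $KZ$ functor is built functorially from the defining data (Dunkl operators, localization to the regular locus, monodromy at a basepoint, the Hecke relations), this gives $KZ\circ\tw_d\cong\htw_d\circ KZ$.

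Combining these compatibilities with the Mackey formula for Hecke algebras stated above now yields a natural isomorphism $KZ_{W_K}\circ F_1\cong KZ_{W_K}\circ F_2$. Indeed, $KZ_{W_K}\circ\res^W_{W_K}\circ\ind_{W_J}^W\cong\hres^W_{W_K}\circ\hind_{W_J}^W\circ KZ_{W_J}$ by two applications of the above, which by the Hecke Mackey formula is isomorphic to $\bigl(\bigoplus_{d}\hind_{W_{{ }^dJ\cap K}}^W\circ\htw_d\circ\hres_{J\cap K^d}^{W_J}\bigr)\circ KZ_{W_J}$; on the other side, three applications of the compatibilities (to $\ind_{W_{{ }^dJ\cap K}}^W$, to $\tw_d$, and to $\res_{J\cap K^d}^{W_J}$), together with additivity of $KZ$, give $KZ_{W_K}\circ F_2\cong\bigoplus_d\hind_{W_{{ }^dJ\cap K}}^W\circ\htw_d\circ\hres_{J\cap K^d}^{W_J}\circ KZ_{W_J}$, the same functor.

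It remains to upgrade the isomorphism $KZ_{W_K}\circ F_1\cong KZ_{W_K}\circ F_2$ to an isomorphism $F_1\cong F_2$ of functors on $\oscr$, for which one uses that $KZ$ is fully faithful on projective objects \cite{GGOR}. Since $\res$ and $\ind$ are exact and biadjoint, each is left adjoint to an exact functor and hence preserves projectives; as $\tw_d$ is an equivalence and a finite direct sum of projectives is projective, both $F_1$ and $F_2$ carry projectives to projectives. Consequently, for each projective $P\in\oscr_c(W_J,\hfr_{W_J})$, the objects $F_1P,F_2P$ are projective and the isomorphism $KZ_{W_K}(F_1P)\cong KZ_{W_K}(F_2P)$ is the image under $KZ_{W_K}$ of a unique isomorphism $\phi_P\colon F_1P\xrightarrow{\sim}F_2P$; naturality of $(\phi_P)$ over morphisms of projectives follows from naturality of the previous isomorphism together with faithfulness of $KZ_{W_K}$ on projectives. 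Finally, since $\oscr_c(W_J,\hfr_{W_J})$ has enough projectives and $F_1,F_2$ are right exact, a projective presentation $P_1\to P_0\to M\to 0$ of an arbitrary object $M$ realizes $F_iM$ as the cokernel of $F_iP_1\to F_iP_0$, and $\phi$ on $P_0,P_1$ then induces an isomorphism $F_1M\cong F_2M$ which, by the usual diagram chases, is independent of the chosen presentation and natural in $M$. This gives the desired isomorphism of functors, refining the Grothendieck-group identity of \cite[Lemma 2.5]{SV}.

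The step I expect to require the most care is the bookkeeping of the first two paragraphs: one must pin down the several $KZ$-compatibility isomorphisms consistently enough that, after composing them with the Hecke Mackey isomorphism, one genuinely obtains $KZ_{W_K}\circ F_2$ and not merely something abstractly isomorphic to it summand by summand — in particular, verifying $KZ\circ\tw_d\cong\htw_d\circ KZ$ requires tracking how $d$ permutes the relevant reflection hyperplanes and reduced words, though this is a naturality statement rather than a deep point. The reason the passage through projective objects cannot be avoided in the last step is that $KZ$ is not faithful on all of $\oscr_c$ — it annihilates the modules supported on the reflection hyperplanes — so an isomorphism obtained after applying $KZ$ does not by itself yield an isomorphism of functors; it is only on projectives, where $KZ$ is fully faithful, that the argument goes through.
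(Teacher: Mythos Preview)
Your proof is correct and follows essentially the same approach as the paper: reduce to an isomorphism after composing with $KZ$ via the Hecke-algebra Mackey formula and Shan's compatibilities, then lift through $KZ$. The only difference is that where the paper simply cites \cite[Lemma~2.4]{Shan} for the lifting step, your third paragraph essentially reproves that lemma by hand using full faithfulness of $KZ$ on projectives and projective presentations.
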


\begin{proof} For a subset $A \subset S$, let $KZ(W_A, \hfr_{W_A})$ denote the $KZ$ functor $\oscr_c(W_A, \hfr_{W_A}) \rightarrow \mathsf{H}_q(W_A)\mhyphen\text{mod}.$  It follows from \cite[Lemma 2.4]{Shan} that to show the existence of the desired isomorphism of functors, we need only check that the functors $KZ(W_K, \hfr^{W_K}) \circ \res^W_{W_K} \circ \ind_{W_J}^W$ and $KZ(W_K, \hfr_{W_K}) \circ \bigoplus_{d \in X_{KJ}} \ind_{W_{{ }^dJ \cap K}}^W \circ \tw_d \circ \res_{J \cap K^d}^{W_J}$ are isomorphic.  Shan also proved (\cite[Theorem 2.1]{Shan}) that $KZ$ commutes with parabolic restriction functors in the sense that there is an isomorphism of functors $$KZ(W_A, \hfr_{W_A}) \circ \res_{W_A}^{W_B} \cong \hres_{W_A}^{W_B} \circ KZ(W_B, \hfr_{W_B})$$ for any $A \subset B \subset S$.  It is clear that $KZ$ commutes with $\tw_d$ in the sense that there is an isomorphism of functors $KZ(W_{^dJ \cap K}, \hfr_{W^dJ \cap K}) \circ \tw_d \cong \htw_d \circ KZ(W_{J \cap K^d}, \hfr_{W_{J \cap K^d}}).$  Passing the $KZ$ functor to the right using these isomorphisms, the desired statement then follows from the Mackey formula for Hecke algebras.\end{proof}

\section{Endomorphism Algebras Via Monodromy} \label{generalities-section} Throughout this section, let $W$ be a complex reflection group with reflection representation $\hfr$, let $S \subset W$ be the set of reflections in $W$, and let $c : S \rightarrow \cplx$ be a $W$-invariant function.  Let $H_c(W, \hfr)$ be the associated rational Cherednik algebra.  In light of the isomorphism $H_c(W, \hfr) \cong H_c(W, \hfr_{W}) \otimes D(\hfr^{W})$, we will always assume that the fixed space $\hfr^W$ is the trivial subspace.  In order to understand and count irreducible representations in $\oscr_c(W, \hfr)$ with a given support in $\hfr$, it will be convenient to consider certain subcategories and subquotient categories of $\oscr_c(W, \hfr)$.

\subsection{Harish-Chandra Series for Rational Cherednik Algebras}

In the following definition, let $W' \subset W$ be a parabolic subgroup and let $L$ be a finite-dimensional irreducible representation in $\oscr_c(W', \hfr_{W'})$.  

\begin{definition} Let $W' \subset W$ be a parabolic subgroup and let $L$ be a finite-dimensional irreducible representation in $\oscr_c(W', \hfr_{W'})$.  Let $\oscr_{c, W'}(W, \hfr)$ denote the full subcategory of $\oscr_c(W, \hfr)$ consisting of modules supported on $W\hfr^{W'}$, and let $\oscr_{c, W', L}(W, \hfr)$ denote the full subcategory of $\oscr_{c, W'}(W, \hfr)$ consisting of modules $M$ such that $\res^W_{W'} M \in \oscr_c(W', \hfr_{W'})$ is semisimple with all irreducible constituents in orbit of $L$ for the action of $N_W(W')$ on $\irr(H_c(W', \hfr_{W'}))$.  Let $\overline{\oscr}_{c, W'}(W, \hfr)$ and $\overline{\oscr}_{c, W', L}(W, \hfr)$ denote the respective quotient categories by the kernel of the exact functor $\res^W_{W'}$.\end{definition}

When the ambient reflection representation $(W, \hfr)$ is clear, we will write $\oscr_{c, W'}$ for $\oscr_{c, W'}(W, \hfr)$, $\oscr_{c, W', L}$ for $\oscr_{c, W', L}(W, \hfr)$, and similarly for their respective quotients $\bar{\oscr}_{c, W'}$ and $\bar{\oscr}_{c, W', L}$.  

\begin{proposition} \label{simples} The following hold:

(1) We have $$\ext^1_{\oscr_c(W', \hfr_{W'})}(L', L'') = 0$$ for all $L', L''$ in the $N_W(W')$-orbit of $L$.

(2)  The categories $\oscr_{c, W'}$ and $\oscr_{c, W', L}$ are Serre subcategories of $\oscr_c(W, \hfr)$.

(3)  Every simple object $S \in \oscr_c(W, \hfr)$ lies in such a category $\oscr_{c, W', L}$ for some parabolic subgroup $W' \subset W$ and finite dimensional irreducible $L$, and the pair $(W', L)$ is uniquely determined by $S$ up to the natural $W$-action on such pairs.

(4)  If $(W', L)$ labels the simple module $S$ in this way, then $\supp(S) = W\hfr^{W'}$.\end{proposition}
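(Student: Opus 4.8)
The plan is to establish the four parts in sequence, using the parabolic restriction functors and the biadjunction as the main tools. For part (1), the vanishing of $\ext^1$ in $\oscr_c(W', \hfr_{W'})$ between constituents of an $N_W(W')$-orbit of a finite-dimensional simple $L$ should follow from the grading by generalized eigenvalues of the Euler element $\eu_{W'}$: a finite-dimensional simple is concentrated in a single $\eu_{W'}$-eigenvalue, and the $N_W(W')$-action (which fixes $\eu_{W'}$) preserves that eigenvalue, so all members of the orbit sit in the same eigenspace. An extension between two such modules would then live in that single eigenspace as well; but any self-extension or extension between distinct finite-dimensional simples in category $\oscr$ that is concentrated in one $\eu$-eigenvalue is forced to split, since the $\hfr_{W'}^*$-action would have to be nilpotent and the module would then decompose. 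I would phrase this carefully: a nonsplit extension $0 \to L'' \to M \to L' \to 0$ has $M$ finite-dimensional with $\hfr_{W'}$ acting nilpotently; being in a single $\eu_{W'}$-eigenspace forces $\hfr_{W'}^* M = 0$, so $M$ is a $W'$-module extension, which is semisimple since $L', L''$ are semisimple $W'$-modules.

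For part (2), that $\oscr_{c, W'}$ and $\oscr_{c, W', L}$ are Serre subcategories: $\oscr_{c, W'}$ consists of modules supported on the closed $W$-stable set $W\hfr^{W'}$, and support of a coherent sheaf behaves well under subobjects, quotients, and extensions, so this is immediate. For $\oscr_{c, W', L}$, exactness of $\res_{W'}^W$ means that for a short exact sequence $0 \to M' \to M \to M'' \to 0$ the sequence of restrictions is exact; if the middle restriction is semisimple with constituents in the $N_W(W')$-orbit of $L$, then so are the sub and quotient (constituents are inherited), and conversely if both ends have this property then $\res_{W'}^W M$ is an extension of two such semisimple modules, hence semisimple with constituents in the orbit by part (1). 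So $\oscr_{c, W', L}$ is closed under sub, quotient, and extension.

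For part (3), the key input is the Bezrukavnikov--Etingof description of supports: every simple $S$ has $\supp(S) = W\hfr^{W'}$ for some parabolic $W'$, determined up to conjugacy. Given such $S$, I would take $\res_{W'}^W S$; by the general theory of parabolic restriction (the geometric construction via $\underline{\res}_{W'}^W$ and the fact that fibers recover $\res_b$), since $S$ has support exactly $W\hfr^{W'}$, the restriction $\res_{W'}^W S$ is nonzero and finite-dimensional over $\cplx$ (this uses that the support meets $\hfr^{W'}_{reg}$ in a nonempty set and that restriction to a point of $\hfr^{W'}_{reg}$ picks out the finite-dimensional ``transverse'' part). I would argue $\res_{W'}^W S$ is a semisimple $H_c(W', \hfr_{W'})$-module all of whose constituents lie in a single $N_W(W')$-orbit: semisimplicity can be extracted from the equivariant-local-system picture (the fiber of an $N_W(W')$-equivariant local system of $\oscr_c(W', \hfr_{W'})$-modules, together with the transitivity of the monodromy action, forces the constituents to form a single orbit and forces no self-extensions by part (1)), and the constituents must be finite-dimensional since $\res_{W''}^{W'}$ of any of them vanishes for proper $W'' \subset W'$ — otherwise, by transitivity of restriction (Shan), $\res_{W''}^W S \neq 0$, contradicting $\supp(S) = W\hfr^{W'}$ (a proper parabolic $W'' \subsetneq W'$ would give a strictly larger support stratum contained in the support, impossible since $\supp S$ is the single stratum $X(W')$). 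Then $S \in \oscr_{c, W', L}$ for $L$ any constituent of $\res_{W'}^W S$, and uniqueness of $(W', L)$ up to $W$-action follows because $W'$ is pinned down up to conjugacy by $\supp(S)$ and $L$ is pinned down up to $N_W(W')$-orbit by $\res_{W'}^W S$, while biadjunction ($\hom(S, \ind_{W'}^W L) = \hom(\res_{W'}^W S, L) \neq 0$) shows $S$ really does appear in the relevant induced module, confirming the orbit of $(W', L)$ is an invariant of $S$.

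Part (4) is then essentially a restatement: if $(W', L)$ labels $S$ via membership in $\oscr_{c, W', L}$, then by definition $S \in \oscr_{c, W'}$, so $\supp(S) \subseteq W\hfr^{W'}$; and since $\res_{W'}^W S \neq 0$ (it has $L$ as a constituent), the support must meet $\hfr^{W'}_{reg}$, hence $\supp(S) \supseteq X(W') = W\hfr^{W'}$, giving equality — here I use that $\supp(S)$ is a union of strata $X(W'')$ and that $\res_{W'}^W$ detects exactly the stratum $X(W')$. The main obstacle I anticipate is part (3): specifically, proving cleanly that $\res_{W'}^W S$ is semisimple with constituents in a single $N_W(W')$-orbit. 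The semisimplicity and the single-orbit condition are where the $N_W(W')$-equivariant local system structure and the monodromy-transitivity from Section~\ref{transitivity} really have to be used, rather than formal nonsense; one must show the monodromy acts transitively on the set of isotypic components of the fiber and that the equivariant structure plus part (1) rules out any non-semisimplicity. I would want to isolate this as a separate lemma about fibers of $\underline{\res}_{W'}^W$ applied to simple objects before assembling the proof of the proposition.
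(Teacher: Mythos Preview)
Your argument for part (1) contains a genuine error: you claim that ``a finite-dimensional simple is concentrated in a single $\eu_{W'}$-eigenvalue.'' This is false in general. A finite-dimensional irreducible $L_c(\lambda)$ in $\oscr_c(W', \hfr_{W'})$ is graded by generalized $\eu_{W'}$-eigenspaces, with the lowest weight space $\lambda$ sitting in the bottom degree, but higher graded pieces are typically nonzero (e.g.\ for $H_c(B_n, \cplx^n)$ there are many finite-dimensional irreducibles of dimension greater than $1$ spread over several $\eu$-degrees). Consequently your deduction that $\hfr_{W'}^* M = 0$ for an extension $M$ collapses. The correct argument, as in the paper, is that $N_W(W')$ fixes $\eu_{W'}$, so all simples $L', L''$ in the orbit of $L$ have lowest weight spaces in the \emph{same} $\eu_{W'}$-degree; hence for any $M$ with $[M] = [L'] + [L'']$ the lowest weight space of $M$ is $\lambda' \oplus \lambda''$, giving a map $\Delta_c(\lambda') \oplus \Delta_c(\lambda'') \to M$ that is an isomorphism on lowest weights and therefore factors through an isomorphism $L' \oplus L'' \to M$.

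For part (3), your route via monodromy transitivity on the fibers of $\underline{\res}_{W'}^W$ is workable in principle but, as you yourself flag, the semisimplicity-plus-single-orbit step is delicate. The paper bypasses this entirely with a much shorter argument: pick any simple constituent $L$ of $\res_{W'}^W S$ (nonzero and finite-dimensional by \cite[Proposition 3.2]{BE}), use biadjunction to get a surjection $\ind_{W'}^W L \twoheadrightarrow S$, and then show $\ind_{W'}^W L \in \oscr_{c, W', L}$ directly from the Mackey formula at the level of Grothendieck groups (since $L$ is killed by all proper parabolic restrictions, only the $N_W(W')$-twists of $L$ survive in $\res_{W'}^W \ind_{W'}^W L$). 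Since $\oscr_{c, W', L}$ is Serre by part (2), the quotient $S$ lies in it too. This avoids any appeal to the local system structure or monodromy and makes part (3) almost formal once (1) and (2) are in place.
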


\begin{proof} For the $\ext^1$ statement, notice that the restriction of the parameter $c$ to the set of reflection $S' \subset W'$ is not only $W'$-equivariant but also $N_W(W')$-equivariant.  As the Euler grading element $\eu_{W'}$ is fixed by the $N_W(W')$-action, it follows that the lowest $\eu_{W'}$-weight spaces of $L$ and $L'$ have the same weight, and therefore there can be no nontrivial extensions between $L$ and $L'$ by usual highest weight theory (see, e.g. \cite[Section 2]{GGOR}).  More precisely, let $L$ have lowest weight $\lambda \in \irr(W')$ and $L'$ have lowest weight $\lambda' = n.\lambda$ for some $n \in N_W(W')$, and consider a module $M \in \oscr_c(W', \hfr_{W'})$ such that $[M] = [L_c(\lambda)] + [L_c(\lambda')]$ in $K_0(\oscr_c(W', \hfr_{W'}))$.  It suffices to show that $M \cong L_c(\lambda) \oplus L_c(\lambda')$.  As the lowest weight spaces $\lambda$ of $L_c(\lambda)$ and $\lambda'$ of $L_c(\lambda')$ occur in the same graded degree with respect to the grading by generalized eigenspaces of $\eu_{W'}$, the lowest weight space of $M$ is isomorphic to $\lambda \oplus \lambda'$ as a representation of $W'$.  It follows that there is an $H_c(W', \hfr_{W'})$-module homomorphism $\Delta_c(\lambda) \oplus \Delta_c(\lambda') \rightarrow M$ that is an isomorphism in the lowest weight space, and as $[M] = [L_c(\lambda)] + [L_c(\lambda')]$ this map must annihilate the unique maximal proper submodules of each $\Delta_c(\lambda)$ and $\Delta_c(\lambda')$, as their simple constituents have strictly higher lowest weight spaces, and hence factor through an isomorphism $L_c(\lambda) \oplus L_c(\lambda') \rightarrow M$, as needed.  That $\oscr_{c, W'}$ and $\oscr_{c, W', L}$ are Serre subcategories then follows from the exactness of $\res^W_{W'}$.

Given a simple module $S \in \oscr_c(W, \hfr)$, its support is of the form $W\hfr^{W'}$ for a parabolic subgroup $W' \subset W$ uniquely determined up to conjugation, and $\res^W_{W'}S$ is finite dimensional and nonzero for a parabolic subgroup $W' \subset W$ if and only if $\supp(S) = W\hfr^{W'}$, see \cite[Proposition 3.2]{BE}.  So, there exists a finite dimensional simple module $L \in \oscr_c(W', \hfr_{W'})$ and a nonzero homomorphism $L \rightarrow \res^W_{W'}S$.  By adjunction, there is therefore a nonzero homomorphism $\ind^W_{W'}L \rightarrow S$, which is a surjection because $S$ is simple.  So it suffices to check $\ind^W_{W'}L \in \oscr_{c, W', L}$, i.e. that the irreducible constituents of $\res^W_{W'}\ind^W_{W'}L$ lie in the $N_W(W')$-orbit of $L$.  This follows from the Mackey formula for rational Cherednik algebras at the level of Grothendieck groups (\cite[Lemma 2.5]{SV}) and the fact that $L$ is annihilated by any nontrivial parabolic restriction functor.\end{proof}

\begin{remark} Proposition \ref{simples} can also be obtained from \cite[Theorem 3.4.6]{Losevcompletions}.\end{remark}

The labeling of simple modules in $\oscr_c(W, \hfr)$ by pairs $(W', L)$ as in Proposition \ref{simples} is an analogue in the setting of rational Cherednik algebras of the partitioning of irreducible representations of finite groups of Lie type into Harish-Chandra series.

\begin{proposition} \label{proj-gen} The image of $\ind^W_{W'}L$ in the quotient category $\overline{\oscr}_{c, W', L}$ is a projective generator.  Furthermore, the natural map $$\End_{\oscr_{c, W', L}}(\ind^W_{W'}L) \rightarrow \End_{\overline{\oscr}_{c, W', L}}(\ind^W_{W'}L)$$ is an isomorphism, and in particular there is an equivalence of categories $$\overline{\oscr}_{c, W', L} \cong \End_{\oscr_c(W', \hfr_{W'})}(\ind^W_{W'}L)^{opp}\mhyphen\emph{mod}_{f.d.}.$$\end{proposition}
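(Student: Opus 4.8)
The plan is to exhibit $\overline{\oscr}_{c, W', L}$ as the category of finite-dimensional modules over the opposite of the endomorphism algebra of the projective generator $\bar P$, the image of $P := \ind^W_{W'} L$ in the quotient, and then to invoke the standard recognition theorem for module categories over finite-dimensional algebras: once we know that $\overline{\oscr}_{c, W', L}$ is abelian with all objects of finite length, that $\bar P$ is projective, and that $\bar P$ is a generator, the functor $\hom_{\overline{\oscr}_{c, W', L}}(\bar P, -)$ is automatically an equivalence onto $\End(\bar P)^{opp}\mhyphen\text{mod}_{f.d.}$ (here $\End(\bar P)$ is finite-dimensional since Hom-spaces in $\oscr_c(W, \hfr)$ are, the category being equivalent to modules over a finite-dimensional algebra by \cite{GGOR}). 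So the substance is: (i) $\bar P$ is projective; (ii) the natural map $\End_{\oscr_{c, W', L}}(P) \to \End_{\overline{\oscr}_{c, W', L}}(\bar P)$ is an isomorphism; (iii) $\bar P$ generates. Write $\res := \res^W_{W'}$, $\ind := \ind^W_{W'}$, let $\mathcal T := \ker(\res) \cap \oscr_{c, W', L}$ be the Serre subcategory by which we quotient, and let $\pi$ be the quotient functor.

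The crux is a single computation: for every $M \in \oscr_{c, W', L}$ the natural map $\hom_{\oscr_{c, W', L}}(P, M) \to \hom_{\overline{\oscr}_{c, W', L}}(\bar P, \bar M)$ is an isomorphism, and under biadjunction both sides are identified with $\hom_{\oscr_c(W', \hfr_{W'})}(L, \res M)$. First, the Grothendieck-group Mackey formula \cite[Lemma 2.5]{SV}, together with the fact that a finite-dimensional irreducible $L$ is cuspidal (annihilated by every proper parabolic restriction), shows that $[\res \ind L] = \sum_{n \in N_W(W')/W'} [\tw_n L]$; since there are no extensions among the members of the $N_W(W')$-orbit of $L$ (Proposition \ref{simples}(1)) and $\ind L$ is supported on $W\hfr^{W'}$, this gives $\ind L \in \oscr_{c, W', L}$ with $\res \ind L$ semisimple. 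Next, any simple quotient $S$ of $P$ has $\hom(L, \res S) = \hom(\ind L, S) \neq 0$, hence $\res S \neq 0$, so $S \notin \mathcal T$; as $\mathcal T$ is closed under quotients, $P$ has no nonzero quotient in $\mathcal T$, so the poset of subobjects $P' \subseteq P$ with $P/P' \in \mathcal T$ reduces to $\{P\}$. It then remains to check that $\hom_{\oscr_{c, W', L}}(P, M) \to \hom_{\oscr_{c, W', L}}(P, M/N)$ is an isomorphism for $N \in \mathcal T$; this follows from the long exact sequence of $0 \to N \to M \to M/N \to 0$ once $\hom(\ind L, N) = \ext^1_{\oscr_c(W, \hfr)}(\ind L, N) = 0$, and both vanish because applying the exact functor $\ind$ to a projective resolution of $L$ yields a projective resolution of $\ind L$ (as $\ind$, being left adjoint to the exact functor $\res$, preserves projectives), whence $\ext^i(\ind L, N) \cong \ext^i_{\oscr_c(W', \hfr_{W'})}(L, \res N) = \ext^i(L, 0) = 0$. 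Taking $M = P$ gives (ii). For (i), the computation shows $\hom_{\overline{\oscr}_{c, W', L}}(\bar P, -)$ is isomorphic to $\hom_{\oscr_c(W', \hfr_{W'})}(L, -)$ composed with the exact functor $\overline{\res} : \overline{\oscr}_{c, W', L} \to \oscr_c(W', \hfr_{W'})$ induced by $\res$; since $\overline{\res}$ lands in the semisimple subcategory spanned by the $N_W(W')$-orbit of $L$, both functors in the composite are exact, so $\bar P$ is projective.

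For (iii), objects of $\overline{\oscr}_{c, W', L}$ have finite length, and its simple objects are the images $\bar S$ of simple $S \in \oscr_{c, W', L}$ with $S \notin \mathcal T$, i.e. with $\supp S = W\hfr^{W'}$ by \cite[Proposition 3.2]{BE}; for such $S$ one has $\res S \neq 0$ and $\res S$ semisimple with constituents in the $N_W(W')$-orbit of $L$. I claim $L$ itself occurs. Indeed $\res S$ is the fiber at a basepoint of an $N_W(W')$-equivariant local system on the connected space $\hfr^{W'}_{reg}$ produced by the partial $KZ$ functor $\underline{\res}^W_{W'}$, so it is isomorphic to each twist $\tw_n(\res S)$, $n \in N_W(W')$ (using also that $\res_b$ is independent of $b$ up to isomorphism); hence the multiset of composition factors of $\res S$ is $N_W(W')$-stable, and being nonempty and contained in the orbit of $L$ it equals that orbit. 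Thus $\hom_{\overline{\oscr}_{c, W', L}}(\bar P, \bar S) = \hom(L, \res S) \neq 0$, and a nonzero morphism onto a simple object is an epimorphism; a routine induction on length, lifting surjections using the projectivity of $\bar P$, then shows every object of $\overline{\oscr}_{c, W', L}$ is a quotient of a finite direct sum of copies of $\bar P$. This establishes that $\bar P$ is a projective generator; combining (i)--(iii) with the recognition theorem above, and noting $\End_{\oscr_{c, W', L}}(P) = \End_{\oscr_c(W, \hfr)}(\ind^W_{W'} L)$ since $\oscr_{c, W', L}$ is a full subcategory, yields the stated equivalence $\overline{\oscr}_{c, W', L} \cong \End_{\oscr_c(W, \hfr)}(\ind^W_{W'} L)^{opp}\mhyphen\text{mod}_{f.d.}$.

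The step I expect to require the most care is the Hom computation of the second paragraph, and within it the $\ext^1$-vanishing via the projective resolution of $L$ and the verification that passing to the Serre quotient does not shrink $\hom(P, -)$; everything downstream — projectivity, the endomorphism-ring identification, and generation — follows formally from it, together with the Mackey formula \cite[Lemma 2.5]{SV}, the support criterion \cite[Proposition 3.2]{BE}, and the $N_W(W')$-equivariance of the partial $KZ$ functor, after which the conclusion is the standard recognition theorem for module categories over finite-dimensional algebras.
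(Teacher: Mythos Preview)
Your proof is correct and follows the same overall architecture as the paper's: establish projectivity, generation, and the endomorphism comparison, then invoke the recognition theorem for module categories. The execution differs in two places worth noting. First, you package (i) and (ii) into a single Hom computation $\hom(P,M)\to\hom(\bar P,\bar M)$ valid for all $M$, proved via the derived adjunction $\ext^i(\ind L,N)\cong\ext^i(L,\res N)$; the paper instead treats (ii) by observing directly that $\ind L$ has no nonzero subobject or quotient in $\mathcal T$ (biadjunction applied twice), and treats (i) by arguing that $\ind$ and $\res$ descend to a biadjoint pair between $\bar\oscr_{c,W',L}$ and the semisimple Serre subcategory generated by the orbit of $L$, so projectivity of $L$ there transfers. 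Your route is slightly more uniform; the paper's is slightly more elementary (no projective resolutions needed). Second, for generation you invoke the $N_W(W')$-equivariance of the partial KZ functor to see that $L$ itself occurs in $\res S$; the paper instead points back to the argument in Proposition~\ref{simples}, which implicitly uses the simpler fact that $\ind L\cong\ind(\tw_n L)$ for $n\in N_W(W')$ (so surjecting from $\ind L'$ for some $L'$ in the orbit is the same as surjecting from $\ind L$). Your argument is more self-contained here, though it draws on machinery the paper develops only afterward; both are fine.
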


\begin{proof} The statement comparing the endomorphism algebra of $\ind^W_{W'}L$ in $\oscr_{c, W', L}$ and in $\overline{\oscr}_{c, W', L}$ follows from the observation that $\ind^W_{W'}L$ has no nonzero submodules or quotients annihilated by $\res^W_{W'}$ and from the definition of morphisms in the quotient category.  Otherwise, there would exist a simple module $S \in \oscr_{c, W', L}$ such that $\res^W_{W'}S = 0$ but such that either $\hom_{\oscr_{c, W', L}}(S, \ind^W_{W'}L)$ or $\hom_{\oscr_{c, W', L}}(\ind^W_{W'}L, S)$ is nonzero.  As $\ind^W_{W'}$ and $\res^W_{W'}$ are biadjoint, this is equivalent to one of $\hom_{\oscr_c(W', \hfr_{W'})}(\res^W_{W'}S, L)$ or $\hom_{\oscr_c(W', \hfr_{W'})}(L, \res^W_{W'}S)$ being nonzero, which contradicts $\res^W_{W'}S = 0$.

The argument in the last paragraph of the proof of Proposition \ref{simples} shows that $\ind^W_{W'}L$ admits a surjection to any simple module in $\bar{\oscr}_{c, W', L}$, so the claim that it is a projective generator will follow as soon as we see that it is projective in $\overline{\oscr}_{c, W', L}$.  For this, note that $\res_{W'}^W$ and $\ind_{W'}^W$ induce a biadjoint pair of functors between $\bar{\oscr}_{c, W', L}$ and the Serre subcategory of $\oscr_c(W', \hfr_{W'})$ generated by those simple objects in the $N_W(W')$-orbit of $L$; indeed, that these functors are biadjoint follows from the biadjunction of $\res_{W'}^W$ and $\ind_{W'}^W$ as functors between $\oscr_c(W, \hfr)$ and $\oscr_c(W', \hfr_{W'})$ and the facts that every object in $\bar{\oscr}_{c, W', L}$ is of the form $\pi(M)$, where $\pi : \oscr_{c, W', L} \rightarrow \bar{\oscr}_{c, W', L}$ is the quotient functor and where $M \in \oscr_{c, W', L}$ is such that all simple objects in its head and socle have support equal to $W\hfr^{W'}$, that $\ind_{W'}^W M'$ is such an object for every $M'$ in the specified Serre subcategory of $\oscr_c(W', \hfr_{W'})$, and that $\pi$ is fully faithful on such objects.  This Serre subcategory is semisimple by the $\ext^1$ statement from Proposition \ref{simples}, and in particular $L$ is projective in that category.  Biadjoint functors preserve projectives, so $\ind_{W'}^W L$ is projective in $\bar{\oscr}_{c, W', L}$ as needed.

The category $\oscr_c(W, \hfr)$ is equivalent to the category of finite dimensional modules of a finite dimensional $\cplx$-algebra \cite[Theorem 5.16]{GGOR}.  The same is true for its Serre subquotient $\overline{\oscr}_{c, W', L},$ which therefore is equivalent to the category of finite dimensional modules over the opposite endomorphism algebra of any projective generator.  The last statement follows.\end{proof}

Through conjugation and the $W$-action on $\hfr$, any $w \in W$ determines an isomorphism $H_c(W', \hfr_{W'}) \rightarrow H_c(^wW', \hfr_{^wW'})$ and therefore also a bijection $$\irr(H_c(W', \hfr_{W'})) \rightarrow \irr(H_c(^wW', \hfr_{^wW'})), \ \ \ M \mapsto { }^wM$$ on the sets of isomorphism classes of irreducible representations by transfer of structure.  The group $W$ therefore acts on the set of pairs $(W', L)$,  where $W' \subset W$ is a parabolic subgroup and $L$ is a finite-dimensional irreducible representation of $H_c(W', \hfr_{W'})$, by $w.(W', L) = (^wW', { }^wL)$.  Note that $W' \subset W$ stabilizes the pair $(W', L)$.  This leads to the following definition:

\begin{definition} \label{inertia-definition} The \emph{inertia group} $I_W(W', L)$ of the pair $(W', L)$ is the subgroup $$I_W(W', L) := \{w \in W : w.(W', L) = (W', L)\}/W' \subset N_W(W')/W'.$$\end{definition}

The following statement follows from Propositions \ref{simples} and \ref{proj-gen}:

\begin{corollary} \label{counting} The number of isomorphism classes of simple modules in $\oscr_c(W, \hfr)$ labeled by the pair $(W', L)$ in the sense of Proposition \ref{simples} is the number $\#\irr(\End_{\oscr_c(W, \hfr)}(\ind_{W'}^W L)^{opp})$ of irreducible representations of the $\End_{\oscr_c(W, \hfr)}(\ind_{W'}^W L)^{opp}$.  The number of isomorphism classes of simple modules in $\oscr_c(W, \hfr)$ with support variety $W\hfr^{W'}$ is given by $$\sum_{\substack{L \in \irr(\oscr_c(W', \hfr_{W'})),\\ \dim_\cplx L < \infty}} \frac{\#\irr(\End_{\oscr_c(W, \hfr)}(\ind_{W'}^W L)^{opp})}{[N_W(W')/W' : I_W(W', L)]}.$$\end{corollary}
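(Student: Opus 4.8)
The plan is to deduce Corollary \ref{counting} directly from Propositions \ref{simples} and \ref{proj-gen}, together with the $W$-action on labeling pairs. The first statement is essentially immediate: by Proposition \ref{simples}(3), the simple modules labeled by $(W', L)$ are exactly the simple objects of the quotient category $\bar{\oscr}_{c, W', L}$; by Proposition \ref{proj-gen} this category is equivalent to $\End_{\oscr_c(W', \hfr_{W'})}(\ind_{W'}^W L)^{opp}\mhyphen\text{mod}_{f.d.}$, and the number of irreducible modules over a finite-dimensional algebra equals the number of irreducibles over its opposite. (One should note that $\End_{\oscr_c(W', \hfr_{W'})}(\ind_{W'}^W L) = \End_{\oscr_c(W, \hfr)}(\ind_{W'}^W L)$, since $\ind_{W'}^W L$ is a module in $\oscr_c(W, \hfr)$; the subscript on $\End$ is just recording which category we compute morphisms in.)

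For the second statement, I would count the simples with support $W\hfr^{W'}$ by summing over the possible labels $(W'', L')$ with $W''$ conjugate to $W'$ — this is forced since by Proposition \ref{simples}(4) a simple module with support $W\hfr^{W'}$ must be labeled by a pair whose first component is $W$-conjugate to $W'$, and each simple has a unique label up to the $W$-action. Fixing the representative $W'$, every such label is $W$-conjugate to a label of the form $(W', L)$ for some finite-dimensional irreducible $L \in \oscr_c(W', \hfr_{W'})$. The key combinatorial point is then to pass from a sum over $W$-orbits of pairs $(W', L)$ (with first component exactly $W'$) to a sum over all finite-dimensional irreducibles $L \in \irr(\oscr_c(W', \hfr_{W'}))$, dividing by the size of each orbit. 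By the orbit–stabilizer theorem, the $N_W(W')/W'$-orbit of $L$ on the set of finite-dimensional irreducibles of $\oscr_c(W', \hfr_{W'})$ has size $[N_W(W')/W' : I_W(W', L)]$, using that $I_W(W', L)$ is by Definition \ref{inertia-definition} exactly the stabilizer of $L$ in $N_W(W')/W'$. Since the count $\#\irr(\End_{\oscr_c(W, \hfr)}(\ind_{W'}^W L)^{opp})$ depends only on the $N_W(W')$-orbit of $L$ — one sees this either from the definition of $\oscr_{c, W', L}$, which is manifestly orbit-dependent, or by transporting structure along the isomorphism $H_c(W', \hfr_{W'}) \to H_c(W', \hfr_{W'})$ induced by a normalizing element — each orbit of size $[N_W(W')/W' : I_W(W', L)]$ contributes that common value once, and dividing the value by the orbit size and summing over all $L$ recovers the orbit-count, giving the stated formula.

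The main obstacle, such as it is, is purely bookkeeping: one must be careful that the pair $(W', L)$ attached to a simple module is only well-defined up to the $W$-action (not merely up to $N_W(W')$), so that distinct $N_W(W')$-orbits of finite-dimensional $L$'s genuinely give distinct families of simples — this is exactly the uniqueness clause in Proposition \ref{simples}(3), which guarantees there is no overcounting between orbits. A small additional check is that $W'$ itself acts trivially on such pairs (so the inertia group is correctly defined as a subgroup of $N_W(W')/W'$ rather than of $N_W(W')$), which is noted in the paragraph preceding Definition \ref{inertia-definition}. With these points in hand, the proof is a one-line application of the orbit–stabilizer theorem layered on top of the two preceding propositions, and I would present it as such.

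\begin{proof} The first claim is immediate from Proposition \ref{proj-gen}: the simple modules in $\oscr_c(W, \hfr)$ labeled by $(W', L)$ are precisely the simple objects of $\overline{\oscr}_{c, W', L} \cong \End_{\oscr_c(W', \hfr_{W'})}(\ind_{W'}^W L)^{opp}\mhyphen\text{mod}_{f.d.}$ by Proposition \ref{simples}(3), and $\ind_{W'}^W L$ is a module in $\oscr_c(W, \hfr)$ so that $\End_{\oscr_c(W', \hfr_{W'})}(\ind_{W'}^W L) = \End_{\oscr_c(W, \hfr)}(\ind_{W'}^W L)$; the number of simple modules over a finite-dimensional algebra coincides with that over its opposite.

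For the second claim, by Proposition \ref{simples}(3) and (4) every simple module $S$ with $\supp(S) = W\hfr^{W'}$ is labeled by a pair $(W'', L)$, unique up to the $W$-action, with $W''$ conjugate to $W'$; replacing the label by a $W$-translate we may assume $W'' = W'$, and then $L$ is a finite-dimensional irreducible in $\oscr_c(W', \hfr_{W'})$ well-defined up to the residual action of $N_W(W')/W'$ (recall $W'$ itself fixes all such pairs). Hence the simple modules with support $W\hfr^{W'}$ are partitioned according to $N_W(W')/W'$-orbits $\mathcal{O}$ of finite-dimensional irreducibles $L \in \irr(\oscr_c(W', \hfr_{W'}))$, and the number of simples attached to the orbit $\mathcal{O}$ is $\#\irr(\End_{\oscr_c(W, \hfr)}(\ind_{W'}^W L)^{opp})$ for any $L \in \mathcal{O}$, this value being independent of the representative since the subcategory $\oscr_{c, W', L}$, and hence $\overline{\oscr}_{c, W', L}$, depends only on the $N_W(W')$-orbit of $L$. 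By Definition \ref{inertia-definition} the stabilizer of $L$ in $N_W(W')/W'$ is $I_W(W', L)$, so by the orbit--stabilizer theorem $\#\mathcal{O} = [N_W(W')/W' : I_W(W', L)]$. Therefore
$$\#\{\text{simples with support } W\hfr^{W'}\} = \sum_{\mathcal{O}} \#\irr(\End_{\oscr_c(W, \hfr)}(\ind_{W'}^W L_{\mathcal{O}})^{opp}) = \sum_{\substack{L \in \irr(\oscr_c(W', \hfr_{W'})),\\ \dim_\cplx L < \infty}} \frac{\#\irr(\End_{\oscr_c(W, \hfr)}(\ind_{W'}^W L)^{opp})}{[N_W(W')/W' : I_W(W', L)]},$$
where in the last step we replaced the sum over orbits by a sum over all finite-dimensional irreducibles, each orbit $\mathcal{O}$ being counted $\#\mathcal{O} = [N_W(W')/W' : I_W(W', L)]$ times while its contribution is divided by the same number.\end{proof}
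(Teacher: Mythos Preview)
Your proof is correct and follows exactly the approach the paper intends: the paper's own proof consists of the single sentence ``Immediate from Propositions \ref{simples} and \ref{proj-gen} and the definition of the inertia group,'' and your argument is precisely a careful unpacking of that sentence via the orbit--stabilizer theorem.
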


\begin{proof} Immediate from Propositions \ref{simples} and \ref{proj-gen} and the definition of the inertia group. \end{proof}

Note that Corollary \ref{counting} is only non-vacuous in the case $W' \neq W$, i.e. for counting isomorphism classes of simple modules with support strictly containing $W\hfr^W = \{0\}$, i.e. for counting isomorphism classes of infinite-dimensional simple modules in $\oscr_c(W, \hfr)$.  But the total number of irreducible representations in $\oscr_c(W, \hfr)$ is $\#\irr(W)$, and in this way one also obtains counts of the finite-dimensional simple modules by subtracting the number of infinite-dimensional simple modules.

The inertia group $I_{W', L}$ will be important to our study of $\End_{\oscr_c(W, \hfr)}(\ind_{W'}^W L)^{opp}.$  The following is a basic result about the endomorphism algebra:

\begin{proposition} \label{dim-prop} We have $$\dim_{\cplx} \End_{H_c}(\ind^W_{W'}L) = \#I_W(W', L)$$\end{proposition}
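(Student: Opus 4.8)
The plan is to compute $\dim_\cplx \End_{H_c}(\ind^W_{W'}L)$ by adjunction: since $\ind^W_{W'}$ and $\res^W_{W'}$ are biadjoint (Proposition in Section 2.5), we have
\[
\End_{H_c(W,\hfr)}(\ind^W_{W'}L) \cong \hom_{\oscr_c(W',\hfr_{W'})}(\res^W_{W'}\ind^W_{W'}L,\; L).
\]
So the task reduces to understanding $\res^W_{W'}\ind^W_{W'}L$ as an object of $\oscr_c(W',\hfr_{W'})$, at least well enough to compute the dimension of $\hom(-,L)$. The natural tool is the Mackey formula for rational Cherednik algebras for Coxeter groups proved in Section 2.4 — but note this proposition is stated for a general complex reflection group $W$, so one should instead use the Mackey formula at the level of Grothendieck groups of Shan–Vasserot \cite[Lemma 2.5]{SV}, which suffices since we only need a dimension count and $L$, $L'$ are simple with no self-extensions in the relevant range by Proposition \ref{simples}(1).

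Concretely, I would take $W' = W''$ in the Mackey formula, so the double cosets $W'\backslash W/W'$ are indexed by distinguished representatives $d \in X_{W'W'}$. The summand indexed by $d$ contributes $\ind^{W'}_{W' \cap {}^dW'} \circ \tw_d \circ \res^{W'}_{W' \cap W'^d} L$ to $[\res^W_{W'}\ind^W_{W'}L]$ in $K_0$. Now $L$ is cuspidal (finite-dimensional), so $\res^{W'}_{W'\cap W'^d}L = 0$ unless $W'\cap W'^d = W'$, i.e. unless $W'^d = W'$, i.e. unless $d$ normalizes $W'$. For those $d$, the summand is just $\tw_d L$, the $d$-twist of $L$, which is the irreducible labeled by $d \in N_W(W')/W'$ acting on $L$. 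Hence in $K_0(\oscr_c(W',\hfr_{W'}))$,
\[
[\res^W_{W'}\ind^W_{W'}L] \;=\; \sum_{\substack{d \in X_{W'W'} \\ d \in N_W(W')}} [\,{}^dL\,].
\]
By Proposition \ref{simples}(1), there are no extensions among the ${}^dL$, so $\res^W_{W'}\ind^W_{W'}L$ is semisimple, and $\hom(\res^W_{W'}\ind^W_{W'}L, L)$ has dimension equal to the multiplicity of $L$ itself among the ${}^dL$, i.e. the number of distinguished double-coset representatives $d$ that normalize $W'$ and satisfy ${}^dL \cong L$. That set maps onto $I_W(W',L) \subseteq N_W(W')/W'$, and since distinguished representatives of $W'\backslash W/W'$ that normalize $W'$ biject with $N_W(W')/W'$ (each coset $wW'$ with $w \in N_W(W')$ is its own double coset and contains a unique minimal-length element), this count is exactly $\#I_W(W',L)$.

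The main obstacle, and the one point that needs genuine care rather than bookkeeping, is the passage from the Grothendieck-group Mackey identity to an actual computation of $\hom(-,L)$ — i.e. justifying that $\res^W_{W'}\ind^W_{W'}L$ is genuinely semisimple (not merely semisimple in $K_0$) so that $\dim\hom(\res^W_{W'}\ind^W_{W'}L, L)$ can be read off from composition multiplicities. This is where Proposition \ref{simples}(1) is essential: all composition factors ${}^dL$ lie in a single $N_W(W')$-orbit, pairwise $\ext^1$ vanishes, so any module with those composition factors splits. A secondary check is the bijection between minimal-length double-coset representatives normalizing $W'$ and elements of $N_W(W')/W'$, and the identification of the $d$-twist $\tw_d L$ with the action of $dW' \in N_W(W')/W'$ on the label of $L$; both are routine from the definitions of $\tw_d$ and of the $W$-action on pairs $(W',L)$ given just before Definition \ref{inertia-definition}, but they should be stated explicitly.
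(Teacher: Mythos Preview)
Your proposal is correct and follows essentially the same approach as the paper: adjunction reduces to computing $\hom$ between $L$ and $\res^W_{W'}\ind^W_{W'}L$, the Grothendieck-group Mackey formula of Shan--Vasserot together with cuspidality of $L$ gives $[\res^W_{W'}\ind^W_{W'}L] = \sum_{n \in N_W(W')/W'}[{}^nL]$, the $\ext^1$-vanishing of Proposition \ref{simples}(1) upgrades this to an actual direct sum, and then Schur's lemma counts $\#I_W(W',L)$. The only cosmetic differences are that the paper uses the adjunction in the form $\hom(L,\res\ind L)$ rather than $\hom(\res\ind L, L)$, and that your discussion of ``minimal-length double-coset representatives'' is Coxeter-specific language that should be replaced by the general-group observation that double cosets $W'dW'$ with $d \in N_W(W')$ coincide with left cosets $dW'$ and hence biject with $N_W(W')/W'$.
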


\begin{proof} By adjunction, we have $$\End_{\oscr_c(W, \hfr)}(\ind^W_{W'}L) \cong \hom_{\oscr_c(W', \hfr_{W'})}(L, \res^W_{W'}\ind^W_{W'}L).$$  By the Mackey formula for rational Cherednik algebras at the level of Grothendieck groups \cite[Lemma 2.5]{SV} and the fact that $L$ is finite-dimensional and therefore annihilated by any nontrivial parabolic restriction functor, it follows that the class of $\res_{W'}^W \ind_{W'}^W L$ in $K_0(\oscr_c(W', \hfr_{W'}))$ equals $\sum_{n \in N_W(W')/W'} [{ }^nL]$.  The $\ext^1$ vanishing statement from Proposition \ref{simples} therefore implies that $\res_{W'}^W\ind_{W'}^W L$ is isomorphic to the direct sum $\oplus_{n \in N_W(W')/W'} { }^nL$.  We therefore have $$\hom_{\oscr_c(W', \hfr_{W'})}(L, \res^W_{W'}\ind^W_{W'}L) = \hom_{\oscr_c(W', \hfr_{W'})}(L, \bigoplus_{n \in N_W(W')/W'} { }^nL).$$ As $\hom_{\oscr_c(W', \hfr_{W'})}(L, { }^nL)$ is isomorphic to $\cplx$ if $n \in I_{W', L}$ and is 0 otherwise, the claim follows.\end{proof}

With this dimension result in mind, our goal is to give a presentation of $\End_{H_c}(\ind^W_{W'}L)^{opp}$ as a twisted extension of a Hecke algebra, with a natural basis indexed by $I_W(W', L)$, analogous to the presentation by Howlett and Lehrer \cite{HoLe} of endomorphism algebras of induced cuspidal representations in the representation theory of finite groups of Lie type.  A first step towards this goal is to realize $\End_{H_c}(\ind^W_{W'}L)^{opp}$ as a quotient of a twist of the group algebra of the fundamental group of $\hfr^{W'}_{reg}/I_W(W', L)$ by a group 2-cocycle.  This is achieved in the following section.

\subsection{The Functor $KZ_L$}

\

\noindent $\mathbf{Notation}$: Throughout this section we will again fix a parabolic subgroup $W' \subset W$ and a finite-dimensional irreducible representation $L$ of $H_c(W', \hfr_{W'})$.  The only parabolic induction and restrictions functors appearing will be between rational Cherednik algebras associated to $W'$ and $W$, so we will omit the superscript $W$ and subscript $W'$ in the notation for the functors $\res_{W'}^W$, $\ind_{W'}^W$, and $\underline{\res}_{W'}^W$.  We will suppress the basepoint $b$ in the notation for the fundamental group of $\hfr^{W'}_{reg}$ and its quotients, as this basepoint plays no role in this section.

\begin{lemma} \label{full} The restriction $$\underline{\res} : \oscr_{c, W'} \rightarrow (\oscr_c(W', \hfr_{W'}) \boxtimes \loc(\hfr^{W'}_{reg}))^{N_W(W')}$$ of the partial $KZ$ functor to the subcategory $\oscr_{c, W'}$ of $\oscr_c(W, \hfr)$ is exact, full, and has image closed under subquotients.  The same is true for its restriction to $\oscr_{c, W', L}$.\end{lemma}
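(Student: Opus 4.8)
The plan is to establish the three properties—exactness, fullness, and closure of the image under subquotients—by exploiting the factorization of $\underline{\res}$ through the completion functor $\widehat{\ \ }_{W\hfr^{W'}_{reg}}$ followed by a chain of equivalences, together with the localization description of the relevant $D$-modules. Exactness is the easiest part: the completion functor $\widehat{\ \ }_{W\hfr^{W'}_{reg}}$ is exact by the same argument as \cite[Proposition 3.6]{BE} (completion along a locally closed subvariety after localizing to a principal open is flat), the functors $1_{\hfr^{W'}_{reg}}$, $\psi_*$, $E$, and $RH$ are all equivalences of categories, and restricting to the Serre subcategory $\oscr_{c,W'}$ (or $\oscr_{c,W',L}$) preserves exactness. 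So the first claim is essentially inherited from the construction in Section \ref{partial-KZ}.

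For fullness and closure under subquotients, the key observation is that the functor $\underline{\res}$, when restricted to $\oscr_{c,W'}$, is genuinely close to an equivalence onto its image. First I would reduce to understanding the composite $E \circ \psi_* \circ 1_{\hfr^{W'}_{reg}} \circ \widehat{\ \ }_{W\hfr^{W'}_{reg}}$ before applying $RH$, since $RH$ is an equivalence and hence automatically full with image closed under subquotients. For a module $M$ supported on $W\hfr^{W'}$, the completion $\widehat{M}_{W\hfr^{W'}_{reg}}$ together with the idempotent truncation to $\hfr^{W'}_{reg}$ recovers $M$ on a neighborhood of the generic locus of its support; I would use the fact, implicit in \cite[Proposition 3.2]{BE} and exploited in the proof of Proposition \ref{simples}, that for $M \in \oscr_{c,W'}$ the completion at $W\hfr^{W'}_{reg}$ loses no information about composition factors whose support is exactly $W\hfr^{W'}$, while composition factors with smaller support are killed. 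Thus on $\oscr_{c,W'}$ the functor $\underline{\res}$ is "exact and conservative up to the torsion part," and the standard categorical fact is that an exact functor out of a length category which is the composition of a Serre quotient (killing the sub supported on the smaller strata) with a fully faithful functor has image closed under subquotients and is full. Concretely, I would identify the kernel of $\underline{\res}$ on $\oscr_{c,W'}$ with the Serre subcategory of modules supported on $W\hfr^{W'}\setminus W\hfr^{W'}_{reg}$ (i.e. on strictly smaller strata), and show that the induced functor on the quotient is fully faithful by tracing through the equivalences $1_{\hfr^{W'}_{reg}}$, $\psi_*$, $E$, $RH$ and the localization-plus-Riemann–Hilbert description—this is the same mechanism by which the ordinary $KZ$ functor induces the equivalence $\oscr_c(W,\hfr)/\oscr_c(W,\hfr)^{tor} \cong \mathsf{H}_q(W)\mhyphen\text{mod}_{f.d.}$.

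Once fullness and closure under subquotients are known for the restriction to $\oscr_{c,W'}$, the corresponding statements for $\oscr_{c,W',L}$ follow almost formally: $\oscr_{c,W',L}$ is a Serre subcategory of $\oscr_{c,W'}$ cut out by the condition that $\res^W_{W'}M$ be semisimple with constituents in the $N_W(W')$-orbit of $L$, and this condition can be read off from $\underline{\res}(M)$ since $\fiber_b \circ \underline{\res} \cong \res^W_{W'}$ by the identification recalled after the definition of the partial $KZ$ functor. Because $\underline{\res}$ restricted to $\oscr_{c,W'}$ is full with image closed under subquotients, any subquotient of $\underline{\res}(M)$ for $M \in \oscr_{c,W',L}$ is again of the form $\underline{\res}(M')$ for some $M'\in\oscr_{c,W'}$, and checking the fiber at $b$ shows $M'$ actually lies in $\oscr_{c,W',L}$; fullness is inherited directly from the fullness on $\oscr_{c,W'}$ since $\mathrm{Hom}$ spaces only shrink upon passing to a full subcategory on the source side while the target $\mathrm{Hom}$ spaces are the same.

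The main obstacle I anticipate is pinning down precisely that the composite functor, after modding out the subcategory supported on smaller strata, is \emph{fully faithful} rather than merely faithful—i.e. that every morphism of $N_W(W')$-equivariant local systems (of $H_c(W',\hfr_{W'})$-modules) in the image lifts to a morphism of Cherednik modules. This requires knowing that the relevant $D$-modules on $\hfr^{W'}_{reg}$ have regular singularities (which is the content of the Lemma just proved in the excerpt) so that Riemann–Hilbert applies, and that the truncation by $1_{\hfr^{W'}_{reg}}$ together with the $E$ (taking $\eu_{W'}$-locally finite vectors) genuinely realizes a Serre quotient of $\oscr_{c,W'}$—the subtlety being that $\oscr_{c,W'}$ consists of modules over $H_c(W,\hfr)$ whereas after completion and truncation we work with modules over $N_W(W') \ltimes_{W'}(H_c(W',\hfr_{W'})\otimes D(\hfr^{W'}_{reg}))$, and the comparison of $\mathrm{Hom}$ spaces across this equivalence-of-completed-categories must be checked to descend correctly. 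I would handle this by the same completion-comparison arguments used in \cite[Section 3]{BE} and in the analysis of the ordinary $KZ$ functor, invoking exactness of completion and the fact that a morphism between modules in $\oscr_{c,W'}$ supported generically on $W\hfr^{W'}$ is determined by its completion along $W\hfr^{W'}_{reg}$ up to kernels and cokernels supported on smaller strata.
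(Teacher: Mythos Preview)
Your proposal is correct and follows essentially the same approach as the paper: reduce to the completion functor $\widehat{\ \ }_{W\hfr^{W'}_{reg}}$ (the rest being equivalences), identify its kernel on $\oscr_{c,W'}$ as a Serre subcategory, and show the induced functor on the quotient is a full embedding closed under subquotients. The paper dispatches your stated obstacle cleanly by observing that $W\hfr^{W'}_{reg}$ is a \emph{principal} open subset of $W\hfr^{W'}$ (cut out by an explicit $W$-invariant element $\delta_{W'}$), so that for modules set-theoretically supported on $W\hfr^{W'}$ the completion functor is nothing but localization by $\delta_{W'}$; since localization by a single element satisfies $\theta_*\theta^* \cong \id$, this immediately gives the desired full embedding of the Serre quotient without any delicate $\hom$-space comparison.
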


\begin{proof} Recall that by definition $\underline{\res}$ is the composition $RH \circ E \circ \psi_* \circ 1_{\hfr^{W'}_{reg}} \circ \widehat{\ \ }_{W\hfr^{W'}_{reg}}.$  All the functors involved in this composition after $\widehat{\ \ }_{W\hfr^{W'}_{reg}}$ are equivalences of categories, and in particular it suffices to check the claims of the lemma for the functor $\widehat{\ \ }_{W\hfr^{W'}_{reg}}$.  Observe that $W\hfr^{W'}_{reg}$ is a principal open subset of $W\hfr^{W'}$, defined by the nonvanishing of the polynomial $$\delta_{W'} := \sum_{w \in W} w\left(\prod_{s \notin S'} \alpha_s.\right).$$  Indeed, for any $w, w' \in W$ with $w\hfr^{W'} \neq w'\hfr^{W'}$ the nonvanishing of the polynomial $w(\prod_{s \notin S'} \alpha_s)$ defines $w\hfr^{W'}_{reg}$ in $w\hfr^{W'}$ and $w'(\prod_{s \notin S'} \alpha_s)$ vanishes identically on $w\hfr^{W'}$ ($w\hfr^{W'}$ is the common vanishing locus of the $w\alpha_s$ for $s \in S'$, and as $w\hfr^{W'} \neq w'\hfr^{W'}$ it follows that $S'$ - and hence its complement in $S$ - is not stable under conjugation by $w^{-1}w'$, so $w^{-1}w'(\prod_{s \notin S'}\alpha_s)$ vanishes on $\hfr^{W'}$ and hence $w'(\prod_{s \notin S'} \alpha_s)$ vanishes on $w\hfr^{W'}$).   Modules $M$ in $\oscr_{c, W'}$ are set-theoretically supported on $W\hfr^{W'}$, and in particular the completion functor $\widehat{\ \ }_{W\hfr^{W'}_{reg}}$ is simply localization by $\delta_{W'}$.  

Define the categories $$H_c(W, \hfr)\mhyphen\text{mod}_{\text{Supp} \subset W\hfr^{W'}}, \ \ \ \ H_c(W, \hfr)^{\wedge W\hfr^{W'}_{reg}}\mhyphen\text{mod}_{\text{Supp} \subset W\hfr^{W'}}$$ to be the full subcategories of modules over the respective algebras which are annihilated by sufficiently high powers of the ideal of $W\hfr^{W'}$.  The induction functor $$\theta_* : H_c(W, \hfr)\mhyphen\text{mod}_{\text{Supp} \subset W\hfr^{W'}} \rightarrow H_c(W, \hfr)^{\wedge W\hfr^{W'}_{reg}}\mhyphen\text{mod}_{\text{Supp} \subset W\hfr^{W'}}$$ given by extension of scalars along the map of algebras $\theta : H_c(W, \hfr) \rightarrow H_c(W, \hfr)^{\wedge W\hfr^{W'}_{reg}}$ amounts to localization by the function $\delta_{W'}$ defining $W\hfr^{W'}_{reg}$ in $W\hfr^{W'}$.  We also have the associated pullback (restriction) functor $\theta^*$, and as $\theta_*$ is localization by a single element we have $\theta_* \circ \theta^*$ is isomorphic to the identity.  It follows that $\theta_*$ induces an equivalence of categories $$\frac{H_c(W, \hfr)\mhyphen\text{mod}_{\text{Supp} \subset W\hfr^{W'}}}{\ker \theta_*} \rightarrow H_c(W, \hfr)^{\wedge W\hfr^{W'}_{reg}}\mhyphen\text{mod}_{\text{Supp} \subset W\hfr^{W'}}.$$  As $\oscr_{c, W'}$ is a Serre subcategory of $H_c(W, \hfr)\mhyphen\text{mod}_{\text{Supp} \subset W\hfr^{W'}}$ and $\ker \res = (\ker \theta_*) \cap \oscr_{c, W'},$ the canonical functor $$\overline{\oscr}_{c, W'} \rightarrow \frac{H_c(W, \hfr)\mhyphen\text{mod}_{\text{Supp} \subset W\hfr^{W'}}}{\ker \theta_*}$$ is an inclusion of a full subcategory closed under subquotients, and the result follows.  For identical reasons, the result holds for the restriction of $\underline{\res}$ to the Serre subcategories $\oscr_{c, W', L}$.\end{proof}

It will be convenient for us to take a semidirect product splitting of the normalizer $N_W(W')$ as given by the following lemma:

\begin{lemma} \label{split} \emph{(\cite{Mu})} There is a subgroup $N_{W'} \subset N_W(W')$ such that there is a semidirect product decomposition $N_W(W') = W' \rtimes N_{W'}$.\end{lemma}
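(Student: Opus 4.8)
The plan is to follow \cite{Mu}, whose argument generalizes Howlett's splitting theorem for finite Coxeter groups. Write $N := N_W(W')$ and $\bar N := N/W'$. First I would record the features of the short exact sequence $1 \to W' \to N \to \bar N \to 1$ that we want to split. Since $W'$ is by definition a point stabilizer $\stab_W(b)$ with $b \in \hfr^{W'}$, any element of $W$ fixing $\hfr^{W'}$ pointwise fixes $b$ and hence lies in $W'$; thus $\stab_W(\hfr^{W'}) = W'$, and consequently $\bar N$ embeds faithfully into $\gl(\hfr^{W'})$. Since $\hfr_{W'}$ is the unique $W'$-stable complement to $\hfr^{W'}$ and $N$ preserves $\hfr^{W'}$, the group $N$ also preserves $\hfr_{W'}$ and permutes the reflecting hyperplanes of $W'$ lying in it.

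Next I would construct the complement $N_{W'}$, beginning with the case that $W$ is a finite Coxeter group, which already displays the main idea. Fix a $W$-invariant real form, so that $N$ acts on the real form $(\hfr_{W'})_\real$, on which $W'$ acts essentially as a finite Coxeter group, preserving its reflection arrangement. Choose a chamber $C'$ of this arrangement and set $N_{W'} := \{w \in N : wC' = C'\}$. Because $N$ permutes the chambers of $W'$ and $W'$ acts simply transitively on them, one checks: if $v \in W'$ fixes $C'$ then $v = 1$, so $N_{W'} \cap W' = \{1\}$; and for any $w \in N$ there is a unique $u \in W'$ with $u(wC') = C'$, whence $uw \in N_{W'}$ and $w = u^{-1}(uw) \in W' N_{W'}$. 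Since $W'$ is normal in $N$, this exhibits $N = W' \rtimes N_{W'}$.

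The \emph{main obstacle} is the general complex reflection group case, in which chambers are unavailable and the geometric section above has no direct analogue; here I would invoke \cite{Mu}. The strategy there is to replace the chamber combinatorics by finer data attached to the $W'$-action on $\hfr_{W'}$ — for instance a suitably chosen point of the $W'$-regular locus together with its $W'$-orbit — and to establish the existence of a complement, combining uniform arguments for the infinite series $G(m,p,n)$ with case-by-case verification for the exceptional complex reflection groups. For the explicit computations in the later sections of this paper, where $W$ is always a Coxeter group, the classical construction of the second paragraph already suffices.
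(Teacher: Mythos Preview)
Your proposal is correct and well aligned with the paper. Note that the paper itself offers no proof of this lemma beyond the citation to \cite{Mu}; the result is simply quoted. Your sketch of the Coxeter case via the chamber stabilizer is exactly Howlett's argument \cite[Corollary 3]{How}, which the paper invokes explicitly later in Section~\ref{coxeter-section} when it actually needs the splitting, and your deferral to \cite{Mu} for the general complex reflection group case matches the paper's treatment. There is nothing to add.
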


Fix a complement $N_{W'}$ to $W'$ in $N_W(W')$ as in Lemma \ref{split}.  Let $I_{W', L} \subset N_{W'}$ denote the stabilizer of $L$ under the action of $N_{W'}$ on $\irr(H_c(W', \hfr_{W'}))$.  The natural map $I_{W', L} \rightarrow I_W(W', L)$ is an isomorphism.

\noindent $\mathbf{Notation}$: Throughout this section, in which $W'$ and $L$ are fixed, we will simplify the notation by denoting $I_{W', L}$ by $I$.

Given a set $\mathcal{L}$ of simple objects in $\oscr_c(W', \hfr_{W'})$, let $\langle \mathcal{L} \rangle_{\oscr_c(W', \hfr_{W'})}$ denote the Serre subcategory of $\oscr_c(W', \hfr_{W'})$ generated by $\mathcal{L}$.  Note that the functor $\underline{\res} : \oscr_{c, W', L} \rightarrow (\oscr_c(W', \hfr_{W'}) \boxtimes \loc(\hfr^{W'}_{reg}))^{N_W(W')}$ factors through the Serre subcategory $$\left(\langle N_W(W').L\rangle_{\oscr_c(W', \hfr_{W'})} \boxtimes \loc(\hfr^{W'}_{reg})\right)^{N_W(W')} \subset \left(\oscr_c(W', \hfr_{W'}) \boxtimes \loc(\hfr^{W'}_{reg})\right)^{N_W(W')}.$$  Let $H_{c, L}(W', \hfr_{W'})$ be the quotient $$H_{c, L}(W', \hfr_{W'}) := H_c(W', \hfr_{W'})/\cap_{n \in N_W(W')} \ann_{H_c(W', \hfr_{W'})}({ }^nL).$$  Note that $H_{c, L}(W', \hfr_{W'})$ is a semisimple finite-dimensional $\cplx$-algebra with $N_W(W')$-action and that the category $\langle N_W(W').L \rangle_{\oscr_c(W', \hfr_{W'})}$ is naturally equivalent to $H_{c, L}(W', \hfr_{W'})\mhyphen\text{mod}_{f.d.}$.  In particular, choosing a point $b \in \hfr^{W'}_{reg}$, taking the fiber of the local system at $b$ defines an equivalence of categories $$\fiber_b : \left(\langle N_W(W').L\rangle_{\oscr_c(W', \hfr_{W'})} \boxtimes \loc(\hfr^{W'}_{reg})\right)^{N_W(W')} \rightarrow H_{c, L}(W', \hfr_{W'}) \rtimes \pi_1(\hfr^{W'}_{reg}/N_{W'})\mhyphen\text{mod}_{f.d}$$ where the semidirect product is defined by the natural action of $\pi_1(\hfr^{W'}_{reg}/N_{W'})$ on $H_{c, L}(W', \hfr_{W'})$ through the natural projection $\pi_1(\hfr^{W'}_{reg}/N_{W'}) \rightarrow N_{W'}.$  Let $e_L$ denote the central idempotent associated to the irreducible representation $L$ of $H_{c, L}(W', \hfr_{W'})$.  Clearly $e_L$ generates the unit ideal in $H_{c, L}(W', \hfr_{W'}) \rtimes \pi_1(\hfr^{W'}_{reg}/N_{W'})$ and the associated spherical subalgebra is given by $$e_L(H_{c, L}(W', \hfr_{W'}) \rtimes \pi_1(\hfr^{W'}_{reg}/N_{W'}))e_L \cong \End_\cplx(L) \rtimes \pi_1(\hfr^{W'}_{reg}/I).$$  In particular, multiplication by $e_L$ defines an equivalence of categories $$e_L : H_{c, L}(W', \hfr_{W'}) \rtimes \pi_1(\hfr^{W'}_{reg}/N_{W'})\mhyphen\text{mod}_{f.d} \rightarrow \End_\cplx(L) \rtimes \pi_1(\hfr^{W'}_{reg}/I)\mhyphen\text{mod}_{f.d}.$$

The automorphism group of $\End_\cplx(L)$ is the projective general linear group $PGL_\cplx(L)$, and in particular the action of $I$ on $\End(L)$ defines a projective representation $\pi_L$ of $I$ on $L$.  Let $\mu \in Z^2(I, \cplx^\times)$ be a group 2-cocycle of $I$ with coefficients in $\cplx^\times$ representing the cohomology class associated to $\pi_L$.  Let $\tilde{\mu} \in Z^2(\pi_1(\hfr^{W'}_{reg}/I), \cplx^\times)$ be the pullback of $\mu$ along the natural projection $\pi_1(\hfr^{W'}_{reg}/I) \rightarrow I.$  Let $-\tilde{\mu}$ denote the opposite group cocycle defined by $(-\tilde{\mu})(g) = \tilde{\mu}(g)^{-1}$, and for any group 2-cocycle $\nu \in Z^2(\pi_1(\hfr^{W'}_{reg}/I), \cplx^\times)$ let $\cplx_\nu[\pi_1(\hfr^{W'}_{reg}/I)]$ denote the $\nu$-twisted group algebra of $\pi_1(\hfr^{W'}_{reg}/I)$, i.e. the associative unital $\cplx$-algebra with $\cplx$-basis $\{e_g : g \in \pi_1(\hfr^{W'}_{reg}/I)\}$ and multiplication $e_ge_{g'} = \nu(g, g')e_{gg'}.$  Essentially by definition of $\mu$ we see that $L$ is naturally a $\cplx_{-\tilde{\mu}}[\pi_1(\hfr^{W'}_{reg}/I)]$-module, and in particular we have a functor $$\hom_{\End_{\cplx}(L)}(L, \bullet) : \End_\cplx(L) \rtimes \pi_1(\hfr^{W'}_{reg}/I)\mhyphen\text{mod}_{f.d.} \rightarrow \cplx_{-\tilde{\mu}}[\pi_1(\hfr^{W'}_{reg}/I)]\mhyphen\text{mod}_{f.d.}.$$  This functor is an equivalence of categories, with quasi-inverse $N \mapsto L \otimes_\cplx N$.

\begin{remark} \label{lowest-weight-remark} Let $\widetilde{\pi}_L : I \rightarrow GL_\cplx(L)$ be any lift of the projective representation $\pi_L$.  For any $n \in I$, the operator $\widetilde{\pi}_L(n)$ gives an isomorphism of $H_c(W', \hfr_{W'})$-modules of $L$ with its twist ${ }^nL$ and in particular preserves and is determined by its action on the lowest $\eu_{W'}$-weight space $L^0$ of $L$.  It follows that $\pi_L$ induces a projective representation $\pi_{L^0}$ of $I$ on $L^0$ with respect to which $L^0$ is (projectively) equivariant as a representation of $W'$.  Furthermore, the projective representation $\pi_L$ lifts to a linear representation of $I$, and in particular the cocycle $\mu$ is trivial, if and only if the same holds for the projective representation $\pi_{L^0}.$  Indeed, given a representation $\widetilde{\pi}_{L^0}$ of $I$ on $L^0$ with respect to which $L^0$ is an equivariant representation of $W'$, this representation extends uniquely to a representation of $I$ on all of $L$ with respect to which $L$ is $I$-equivariant as an $H_c(W', \hfr_{W'})$-module.  It follows that triviality of the cocycle $\mu$ can be checked at the level of the representation of $W'$ in $L^0$.  For example, the cocycle $\mu$ is trivial in the large class of examples in which $W'$ splits as a direct product $W' = W_1' \times W_2'$ with respect to which $L^0$ is isomorphic to a tensor product $L^0_1 \otimes L^0_2$ and such that both $I$ centralizes $W_1'$ and also $L^0_2$ is the trivial representation.\end{remark}

From the above discussion, we have:

\begin{theorem} \label{KZL} The $\hom$ functor $\hom_{\oscr_c(W, \hfr)}(\ind(L), \bullet)$ on $\oscr_{c, W', L}$ factors as the composition of a functor $$\widetilde{KZ_L} : \oscr_{c, W', L} \rightarrow \cplx_{-\tilde{\mu}}[\pi_1(\hfr^{W'}_{reg}/I)]\mhyphen\text{mod}_{f.d.}$$ followed by the forgetful functor to the category of finite-dimensional vector spaces.  $\widetilde{KZ_L}$ is exact, full, has image closed under subquotients, and induces a fully faithful embedding $$\bar{\oscr}_{c, W', L} \hookrightarrow \cplx_{-\tilde{\mu}}[\pi_1(\hfr^{W'}_{reg}/I)]\mhyphen\text{mod}_{f.d.}$$ with image closed under subquotients.  In particular, there is a surjection of $\cplx$-algebras $$\phi_L :\cplx_{-\tilde{\mu}}[\pi_1(\hfr^{W'}_{reg}/I)] \rightarrow \End_{\oscr_c(W, \hfr)}(\ind(L))^{opp}.$$  \end{theorem}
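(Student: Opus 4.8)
\emph{The plan.} Since the relevant functors and equivalences have all been assembled just before the statement, the proof is mostly bookkeeping. First I would set
$$\widetilde{KZ_L} := \hom_{\End_\cplx(L)}(L,\bullet)\circ e_L\circ\fiber_b\circ\underline{\res},$$
restricted to $\oscr_{c, W', L}$, where $\fiber_b$ denotes the (monodromy-remembering) fiber equivalence onto $H_{c, L}(W', \hfr_{W'})\rtimes\pi_1(\hfr^{W'}_{reg}/N_{W'})\mhyphen\text{mod}_{f.d.}$ and $e_L$, $\hom_{\End_\cplx(L)}(L,\bullet)$ are the two subsequent equivalences recorded above. By Lemma \ref{full}, $\underline{\res}$ restricted to $\oscr_{c, W', L}$ is exact, full, and has image closed under subquotients, so $\widetilde{KZ_L}$, being that functor followed by three equivalences of categories, has the same properties. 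Because those equivalences kill nothing, $\ker\widetilde{KZ_L}=\ker(\underline{\res}|_{\oscr_{c, W', L}})=\ker(\res|_{\oscr_{c, W', L}})$; hence $\widetilde{KZ_L}$ factors through $\bar{\oscr}_{c, W', L}$ with a faithful induced functor, and fullness together with closure of the image under subquotients descend to the quotient, giving the asserted fully faithful embedding $\bar{\oscr}_{c, W', L}\hookrightarrow\cplx_{-\tilde{\mu}}[\pi_1(\hfr^{W'}_{reg}/I)]\mhyphen\text{mod}_{f.d.}$ with image closed under subquotients.

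Next I would identify the underlying vector spaces. Forgetting the $\pi_1(\hfr^{W'}_{reg}/N_{W'})$-equivariance turns $\fiber_b\circ\underline{\res}$ into the parabolic restriction functor $\res$ (Section \ref{partial-KZ}), so for $M\in\oscr_{c, W', L}$ the underlying $H_{c, L}(W', \hfr_{W'})$-module of $\fiber_b(\underline{\res}(M))$ is $\res(M)$; multiplying by $e_L$ extracts its $L$-isotypic summand, which as a module over $e_L H_{c, L}(W', \hfr_{W'}) e_L\cong\End_\cplx(L)$ is naturally isomorphic to $L\otimes_\cplx\hom_{\oscr_c(W', \hfr_{W'})}(L,\res(M))$, and $\hom_{\End_\cplx(L)}(L,\bullet)$ then returns the multiplicity space $\hom_{\oscr_c(W', \hfr_{W'})}(L,\res(M))$. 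By the biadjunction of $\ind$ and $\res$ this is naturally isomorphic to $\hom_{\oscr_c(W, \hfr)}(\ind(L),M)$. All identifications are natural in $M$, so $\widetilde{KZ_L}$ followed by the forgetful functor is $\hom_{\oscr_c(W, \hfr)}(\ind(L),\bullet)$, as claimed.

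For the surjection $\phi_L$, write $A:=\End_{\oscr_c(W, \hfr)}(\ind(L))$, which by Proposition \ref{proj-gen} equals $\End_{\bar{\oscr}_{c, W', L}}(\ind(L))$ and satisfies $\bar{\oscr}_{c, W', L}\cong A^{opp}\mhyphen\text{mod}_{f.d.}$ via $\hom_{\oscr_c(W, \hfr)}(\ind(L),\bullet)$, under which $\ind(L)$ goes to the regular module. Put $V:=\widetilde{KZ_L}(\ind(L))$ and $C:=\cplx_{-\tilde{\mu}}[\pi_1(\hfr^{W'}_{reg}/I)]/\ann_{\cplx_{-\tilde{\mu}}[\pi_1(\hfr^{W'}_{reg}/I)]}V$, so $V$ is a faithful $C$-module. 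Since $\ind(L)$ is a projective generator of $\bar{\oscr}_{c, W', L}$, every object in the image of $\widetilde{KZ_L}$ is a quotient of a power of $V$, hence a $C$-module; and since that image is closed under subquotients and finite direct sums and contains $V$, it contains the regular module $C$ (which embeds into a power of the faithful module $V$) and therefore equals all of $C\mhyphen\text{mod}_{f.d.}$. Combining this with the equivalence of Proposition \ref{proj-gen} yields a $\cplx$-linear equivalence $A^{opp}\mhyphen\text{mod}_{f.d.}\cong C\mhyphen\text{mod}_{f.d.}$ that commutes with the forgetful functors to vector spaces (by the previous paragraph); since a finite-dimensional algebra is recovered as the endomorphism algebra of the forgetful functor on its category of finite-dimensional modules, this forces $C\cong A^{opp}$. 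The desired map is then the composite $\phi_L:\cplx_{-\tilde{\mu}}[\pi_1(\hfr^{W'}_{reg}/I)]\twoheadrightarrow C\cong A^{opp}=\End_{\oscr_c(W, \hfr)}(\ind(L))^{opp}$.

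The main obstacle is this last paragraph: producing a homomorphism $\phi_L$ is immediate from the module structure on $\widetilde{KZ_L}(\ind(L))$, but proving it is \emph{onto}—equivalently, that the essential image of $\bar{\oscr}_{c, W', L}$ inside $\cplx_{-\tilde{\mu}}[\pi_1(\hfr^{W'}_{reg}/I)]\mhyphen\text{mod}_{f.d.}$ is precisely the module category of the quotient of $\cplx_{-\tilde{\mu}}[\pi_1(\hfr^{W'}_{reg}/I)]$ by the annihilator of $\widetilde{KZ_L}(\ind(L))$—is where the properties ``full'' and ``image closed under subquotients'' together with the faithfulness built into that quotient are genuinely used. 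Everything else is transport of structure through the equivalences of the preceding subsection, Lemma \ref{full}, Proposition \ref{proj-gen}, and the $\ind\dashv\res$ adjunction.
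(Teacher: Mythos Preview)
Your proposal is correct and follows essentially the same route as the paper. You define $\widetilde{KZ_L}$ as the same composite $\hom_{\End_\cplx(L)}(L,\bullet)\circ e_L\circ\fiber_b\circ\underline{\res}$, invoke Lemma \ref{full} plus the fact that the later functors are equivalences to get exactness, fullness, and closure of the image under subquotients, and use adjunction to identify the underlying vector-space functor with $\hom_{\oscr_c(W,\hfr)}(\ind(L),\bullet)$, exactly as the paper does.

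The only genuine difference is in the argument for surjectivity of $\phi_L$. The paper argues more directly: once one knows the induced embedding $\End_{\oscr_c(W,\hfr)}(\ind(L))^{opp}\text{-mod}_{f.d.}\hookrightarrow \cplx_{-\tilde{\mu}}[\pi_1(\hfr^{W'}_{reg}/I)]\text{-mod}_{f.d.}$ is the identity on underlying vector spaces, it must be restriction along some homomorphism $\phi_L$; then $\im\phi_L$ is a $\cplx_{-\tilde{\mu}}[\pi_1(\hfr^{W'}_{reg}/I)]$-submodule of $\End_{\oscr_c(W,\hfr)}(\ind(L))^{opp}$, and because the image of the functor is closed under subquotients, $\im\phi_L$ is also an $\End_{\oscr_c(W,\hfr)}(\ind(L))^{opp}$-submodule containing $1$, hence all of the target. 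Your route instead passes through the quotient $C$ by the annihilator of $V=\widetilde{KZ_L}(\ind(L))$, shows the essential image is exactly $C\text{-mod}_{f.d.}$ (using that $C$, acting faithfully on the finite-dimensional $V$, is itself finite-dimensional and embeds into a power of $V$), and then reconstructs $C\cong A^{opp}$ from the forgetful functor. Both arguments work; the paper's is shorter, while yours makes more explicit the identification of the essential image with a module category and would generalize cleanly to situations where one wants to name the quotient algebra first.
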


\begin{proof} By adjunction, we have $\hom_{\oscr_c(W, \hfr)}(\ind(L), \bullet) \cong \hom_{\oscr_c(W', \hfr_{W'})}(L, \res \bullet)$, and this latter functor can be expressed as the composition $\hom_{\End_\cplx(L)}(L, \bullet) \circ e_L \circ \fiber_b \circ \underline{\res}$ of functors discussed above, from which the existence of the lift $\widetilde{KZ_L}$ follows.  That $\widetilde{KZ_L}$ is exact, full, and has image closed under subquotients follows from Lemma \ref{full} and the fact that the functors appearing after $\underline{\res}$ in the composition above are equivalences of categories.   That the induced functor on $\bar{\oscr}_{c, W', L}$ is faithful follows from the fact that it is represented by the projective generator $\ind(L).$  By Proposition \ref{proj-gen}, this induced functor is identified with a fully faithful embedding $$\End_{\oscr_c(W, \hfr)}(\ind(L))^{opp}\mhyphen\text{mod}_{f.d.} \rightarrow \cplx_{-\tilde{\mu}}[\pi_1(\hfr^{W'}_{reg}/I)]\mhyphen\text{mod}_{f.d.}$$ with image closed under subquotients and which is identity at the level of $\cplx$-vector spaces.  Such a functor is simply restriction along some algebra homomorphism $\phi_L : \cplx_{-\tilde{\mu}}[\pi_1(\hfr^{W'}_{reg}/I)] \rightarrow \End_{\oscr_c(W, \hfr)}(\ind(L))^{opp}.$  The image $\im \phi_L$ is a $\cplx_{-\tilde{\mu}}[\pi_1(\hfr^{W'}_{reg}/I)]$-submodule of $\End_{\oscr_c(W, \hfr)}(\ind(L))^{opp}$, and because the functor in question has image closed under subquotients it follows that $\im \phi_L$ is also a $\End_{\oscr_c(W, \hfr)}(\ind(L))^{opp}$-submodule.  As $\im \phi_L$ contains 1, it follows that $\phi_L$ is surjective, as needed.\end{proof}

\begin{definition} Let the \emph{generalized Hecke algebra} $\mathcal{H}(c, W', L, W)$ attached to $c, W', L$ and the group $W$ be the quotient algebra $$\mathcal{H}(c, W', L, W) := \cplx_{-\tilde{\mu}}[\pi_1(\hfr^{W'}_{reg}/I)]/\ker \phi_L.$$  Let $$KZ_L : \oscr_{c, W', L} \rightarrow \mathcal{H}(c, W', L, W)\mhyphen\text{mod}_{f.d.}$$ be the natural factorization of the functor $\widetilde{KZ_L}$ from Theorem \ref{KZL} through $\mathcal{H}(c, W', L, W)\mhyphen\text{mod}_{f.d.}$.\end{definition}

\begin{corollary} \label{quotient-iso} $KZ_L$ induces an equivalence of categories $$\bar{\oscr}_{c, W', L} \cong \mathcal{H}(c, W', L, W)\mhyphen\text{mod}_{f.d.}.$$\end{corollary}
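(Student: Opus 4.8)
The plan is to read this off from Theorem~\ref{KZL}, Proposition~\ref{proj-gen}, and the definition of $\mathcal{H}(c, W', L, W)$; the substantive content of the corollary is already present in those results, and what remains is to match up the functors. First I would record that $KZ_L$ factors through the quotient $\bar{\oscr}_{c, W', L}$, which by definition is the quotient of $\oscr_{c, W', L}$ by $\ker(\res^W_{W'})$. By Theorem~\ref{KZL} the composite of $\widetilde{KZ_L}$ with the forgetful functor to vector spaces is $\hom_{\oscr_c(W, \hfr)}(\ind(L), \bullet) \cong \hom_{\oscr_c(W', \hfr_{W'})}(L, \res(\bullet))$, which obviously annihilates every module in the kernel of $\res = \res^W_{W'}$; since the forgetful functors from $\cplx_{-\tilde\mu}[\pi_1(\hfr^{W'}_{reg}/I)]\mhyphen\text{mod}_{f.d.}$ and from $\mathcal{H}(c, W', L, W)\mhyphen\text{mod}_{f.d.}$ to vector spaces are conservative, both $\widetilde{KZ_L}$ and $KZ_L$ kill $\ker(\res^W_{W'})$. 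Hence $KZ_L$ induces a functor $\overline{KZ_L} : \bar{\oscr}_{c, W', L} \to \mathcal{H}(c, W', L, W)\mhyphen\text{mod}_{f.d.}$, and it is this functor that we must show is an equivalence.

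Next I would unwind the definitions. Since $\phi_L$ is surjective (Theorem~\ref{KZL}), the definition $\mathcal{H}(c, W', L, W) = \cplx_{-\tilde\mu}[\pi_1(\hfr^{W'}_{reg}/I)]/\ker\phi_L$ furnishes a $\cplx$-algebra isomorphism $\mathcal{H}(c, W', L, W) \cong \End_{\oscr_c(W, \hfr)}(\ind(L))^{opp}$; under it, restriction of scalars along the quotient map becomes a fully faithful functor $U : \mathcal{H}(c, W', L, W)\mhyphen\text{mod}_{f.d.} \to \cplx_{-\tilde\mu}[\pi_1(\hfr^{W'}_{reg}/I)]\mhyphen\text{mod}_{f.d.}$ whose essential image is exactly the full subcategory of modules annihilated by $\ker\phi_L$. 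By the construction of $KZ_L$, the composite $U \circ \overline{KZ_L}$ is precisely the embedding produced in Theorem~\ref{KZL}; and after identifying $\bar{\oscr}_{c, W', L} \cong \End_{\oscr_c(W, \hfr)}(\ind(L))^{opp}\mhyphen\text{mod}_{f.d.}$ via Proposition~\ref{proj-gen}, that embedding is nothing but restriction of scalars along $\phi_L$, hence is fully faithful with essential image exactly the modules killed by $\ker\phi_L$, i.e.\ exactly the essential image of $U$.

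Finally I would conclude formally. A functor whose composite with the fully faithful $U$ is fully faithful is itself fully faithful, so $\overline{KZ_L}$ is fully faithful; and since the essential image of $U \circ \overline{KZ_L}$ coincides with all of the essential image of $U$, while $U$ (being fully faithful) reflects isomorphisms, $\overline{KZ_L}$ is essentially surjective. Therefore $\overline{KZ_L}$ is an equivalence $\bar{\oscr}_{c, W', L} \cong \mathcal{H}(c, W', L, W)\mhyphen\text{mod}_{f.d.}$, as asserted. I do not anticipate any genuine obstacle; the only point requiring care is to ensure that the resulting equivalence is the one \emph{induced by $KZ_L$} itself rather than merely an abstract Morita-type equivalence, and this is exactly what the identification of $U \circ \overline{KZ_L}$ with the embedding of Theorem~\ref{KZL} secures.
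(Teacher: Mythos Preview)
Your proposal is correct and is essentially a detailed unpacking of what the paper leaves implicit: the corollary is stated without proof, as it follows immediately from Theorem~\ref{KZL}, Proposition~\ref{proj-gen}, and the definition of $\mathcal{H}(c, W', L, W)$ as $\cplx_{-\tilde\mu}[\pi_1(\hfr^{W'}_{reg}/I)]/\ker\phi_L$. Your care in verifying that the equivalence is the one induced by $KZ_L$ itself, rather than an abstract Morita equivalence, is appropriate and correctly handled.
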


\begin{remark} In the case $W' = 1$ and $L = \cplx$, the generalized Hecke algebra $\mathcal{H}(c, W', L, W)$ is precisely the Hecke algebra $\mathsf{H}_q(W)$ attached to the complex reflection group $W$, the functor $KZ_\cplx$ is precisely the $KZ$ functor of \cite{GGOR}, and the statements of Theorem \ref{KZL} are well-known in that case.  The equivalence in Corollary \ref{quotient-iso} in that case is the well-known equivalence $\oscr_c/\oscr_c^{tor} \cong \mathsf{H}_q(W)\mhyphen\text{mod}_{f.d.}.$\end{remark}

The following remark reduces the study of the algebras $\mathcal{H}(c, W', L, W)$ to the case in which $W$ is an irreducible complex reflection group.

\begin{remark} \label{reducible-reduction-prop} Suppose the complex reflection group $W$ and its reflection representation decompose as a product $(W, \hfr) = (W_1 \times W_2, \hfr_{W_1} \oplus \hfr_{W_2})$.  Let $S_i \subset W_i$ be the set of reflections in $W_i$ for $i = 1,2$, so that $S = S_1 \sqcup S_2$, and let $c_i$ be the restriction of the parameter $c$ to $S_i$.  The rational Cherednik algebra $H_c(W, \hfr)$ decomposes naturally as the tensor product $H_{c_1}(W_1, \hfr_{W_1}) \otimes H_{c_2}(W_2, \hfr_{W_2}).$  Let $W' \subset W$ be a parabolic subgroup, and for $i = 1,2$ let $W_i' \subset W_i$ be the parabolic subgroups such that $W' = W_1' \times W_2'$.  Let $L_i$ be a finite-dimensional irreducible representation of $H_{c_i}(W_i', \hfr_{W_i'})$ for $i = 1,2$, so that $L = L_1 \otimes L_2$ is a finite-dimensional irreducible representation of $H_c(W_1' \times W_2', \hfr_{W_1' \times W_2'}) = H_{c_1}(W_1', \hfr_{W_1'}) \otimes H_{c_2}(W_2', \hfr_{W_2'}).$  Every finite-dimensional irreducible representation of $H_c(W_1' \times W_2', \hfr_{W_1'} \times \hfr_{W_2'})$ appears in this way, and all of the constructions appearing in Theorem \ref{KZL} split naturally as well.  In particular, there is a natural isomorphism of algebras $$\mathcal{H}(c, W', L, W) \cong \mathcal{H}(c_1, W_1', L_1, W_1) \otimes \mathcal{H}(c_2, W_2', L_2, W_2)$$ compatible with the surjections $\phi_{L_1}, \phi_{L_2}$ and $\phi_{L_1 \otimes L_2}$.\end{remark}

\subsection{Eigenvalues of Monodromy and Relations From Corank 1}  We want to describe the generalized Hecke algebras $\mathcal{H}(c, W', L, W)$ as explicitly as possible so that we can in turn understand the subquotient categories $\bar{\oscr}_{c, W', L}.$  To achieve this, we study the kernel $\ker \phi_L$.

Let $W'' \subset W$ be another parabolic subgroup, with $W' \subset W'' \subset W$.  Let $I_{W', L, W''} := I_{W', L} \cap W''$ be the (lift to $N_{W''}(W)$ via the splitting $N_W(W') = W' \rtimes N_{W'}$ of the) inertia group of $L$ in $N_{W''}(W')$.  The 2-cocycle $\mu \in Z^2(I_{W', L}, \cplx^\times)$ restricts to give a 2-cocycle of $I_{W', L, W''}$, which we will also denote by $\mu$, and by pullback determines a 2-cocycle of $\pi_1((\hfr_{W''})^{W'}_{reg}/I_{W', L, W''})$, which we will also denote by $\tilde{\mu}$.    We may choose $b \in \hfr^{W'}_{reg}$ such that the projection $b_{W''}$ of $b$ to $(\hfr_{W''})^{W'}$ with respect to the vector space decomposition $\hfr^{W'} = (\hfr_{W''})^{W'} \oplus \hfr^{W''}$ lies in $(\hfr_{W''})^{W'}_{reg}.$  By Theorem \ref{KZL}, there is a surjection of $\cplx$-algebras $$\phi_{L, W''} : \cplx_{-\tilde{\mu}}[\pi_1((\hfr_{W''})^{W'}_{reg}/I_{W', L, W''}, b_{W''})] \rightarrow \End_{\oscr_c(W'', \hfr_{W''})}(\ind_{W'}^{W''}L)^{opp}.$$

\noindent $\mathbf{Notation}$: Throughout this section, the parabolic subgroup $W'$ and finite-dimensional irreducible representation $L$ of $H_c(W', \hfr_{W'})$ will remain fixed, while the parabolic subgroup $W''$ will vary.  We will denote $I_{W', L}$ by $I$, as in the previous section, and we will denote $I_{W', L, W''}$ by $I_{W''}$.  We will always take the basepoints for the fundamental groups of $\hfr^{W'}_{reg}$ and $(\hfr_{W''})^{W'}_{reg}$ and their quotients to be $b$ and $b_{W''}$ as above, and we will suppress these from the notation for readability.

By the construction of the functor $\iota^*_{W'', W'}$ of Gordon and Martino recalled in Section \ref{transitivity} and the fact that $I_{W''}$ acts trivially on $\hfr^{W''}_{reg}$, there is a natural map of fundamental groups $$\iota_{W'', W', L} : \pi_1((\hfr_{W''})^{W'}_{reg}/I_{W''}) \rightarrow \pi_1(\hfr^{W'}_{reg}/I),$$ extending to a natural map of $\cplx$-algebras $$\iota_{W'', W', L} : \cplx_{-\tilde{\mu}}[\pi_1((\hfr_{W''})^{W'}_{reg}/I_{W''})] \rightarrow \cplx_{-\tilde{\mu}}[\pi_1(\hfr^{W'}_{reg}/I)].$$  By functoriality, there is also an algebra homomorphism $$\End_{\oscr_c(W'', \hfr_{W''})}(\ind_{W'}^{W''}L)^{opp} \rightarrow \End_{\oscr_c(W, \hfr)}(\ind_{W'}^{W}L)^{opp}$$ induced by the functor $\ind_{W''}^W$ and the isomorphism $\ind_{W''}^W \circ \ind_{W'}^{W''} \cong \ind_{W'}^W.$  The following lemma is then immediate from the constructions and from Gordon and Martino's transitivity result that was recalled in Theorem \ref{trans-theorem}:

\begin{lemma} \label{compatibility} The following diagram commutes: $$\begin{diagram}  \cplx_{-\tilde{\mu}}[\pi_1((\hfr_{W''})^{W'}_{reg}/I_{W''})] & \rTo^{\iota_{W'', W', L}} & \cplx_{-\tilde{\mu}}[\pi_1(\hfr^{W'}_{reg}/I)] \\ \dTo^{\phi_{L, W''}} && \dTo^{\phi_L} \\ \End_{\oscr_c(W'', \hfr_{W''})}(\ind_{W'}^{W''}L)^{opp} & \rTo^{\ind_{W''}^W} & \End_{\oscr_c(W, \hfr)}(\ind_{W'}^{W}L)^{opp}\end{diagram}$$\end{lemma}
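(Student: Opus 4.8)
The plan is to deduce the commutativity of the square from the Gordon--Martino transitivity result, Theorem \ref{trans-theorem}, after tracing how each of the four maps is built from the partial KZ functors and their monodromy. All four maps are homomorphisms of $\cplx$-algebras: the vertical ones by Theorem \ref{KZL}; the top one because $\iota_{W'', W', L}$ is induced by a homomorphism of fundamental groups compatible with the pulled-back cocycles, so that as an algebra map it sends $e_g \mapsto e_{\iota_{W'', W', L}(g)}$; and the bottom one because $\ind_{W''}^W$ is a covariant functor together with the isomorphism $\ind_{W''}^W \circ \ind_{W'}^{W''} \cong \ind_{W'}^W$. Since $\cplx_{-\tilde{\mu}}[\pi_1((\hfr_{W''})^{W'}_{reg}/I_{W''})]$ is spanned by the basis elements $e_g$, $g \in \pi_1((\hfr_{W''})^{W'}_{reg}/I_{W''})$, it suffices to verify $\phi_L(e_{\iota_{W'', W', L}(g)}) = \ind_{W''}^W(\phi_{L, W''}(e_g))$ in $\End_{\oscr_c(W, \hfr)}(\ind_{W'}^W L)^{opp}$ for each such $g$, which I represent by a loop $\gamma$ based at $b_{W''}$.

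First I would unwind the definitions from the proof of Theorem \ref{KZL}: $\widetilde{KZ_L}$ is $\hom_{\oscr_c(W, \hfr)}(\ind_{W'}^W L, \bullet)$ as a functor to vector spaces, with the $\cplx_{-\tilde{\mu}}[\pi_1(\hfr^{W'}_{reg}/I)]$-module structure obtained from the monodromy of $\underline{\res}_{W'}^W(\bullet)$ through the equivalences $\hom_{\End_\cplx(L)}(L, \bullet) \circ e_L \circ \fiber_b$, so that $\phi_L(e_g)$ is the image of $\id_{\ind_{W'}^W L}$ under the action of $e_g$ on $\widetilde{KZ_L}(\ind_{W'}^W L) = \End_{\oscr_c(W, \hfr)}(\ind_{W'}^W L)$. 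The same description holds for the $W''$-analogue $\widetilde{KZ_{L, W''}} = \hom_{\oscr_c(W'', \hfr_{W''})}(\ind_{W'}^{W''} L, \bullet)$, whose module structure comes from the monodromy of $\underline{\res}_{W'}^{W''}(\bullet)$, and then $\phi_{L, W''}(e_g) = e_g \cdot \id_{\ind_{W'}^{W''} L}$. Finally I would recall from Section \ref{transitivity} that, since $I_{W''}$ acts trivially on $\hfr^{W''}_{reg}$, the class $\iota_{W'', W', L}(g)$ is represented by $\gamma$ --- viewed inside $(\hfr_{W''})^{W', \epsilon}_{reg}$ for $\epsilon$ large enough that $b_{W''}$ lies there --- times the constant path at $b^{W''}$, the $\hfr^{W''}$-component of $b$, inside $(\hfr_{W''})^{W', \epsilon}_{reg} \times \hfr^{W''}_{\epsilon-reg} \subset \hfr^{W'}_{reg}$.

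Next I would apply Theorem \ref{trans-theorem}, with its outer, middle, and inner parabolic subgroups taken to be $W$, $W''$, and $W'$, to the module $M = \ind_{W'}^W L = \ind_{W''}^W \ind_{W'}^{W''} L$. This supplies a natural identification of $\iota^*_{W'', W'}\,\underline{\res}_{W'}^W(\ind_{W'}^W L)$ with $(\underline{\res}_{W'}^{W''} \boxtimes \id)\,\underline{\res}_{W''}^W(\ind_{W'}^W L)$ as equivariant local systems on $(\hfr_{W''})^{W'}_{reg} \times \hfr^{W''}_{reg}$; under it, parallel transport of the former around $\gamma \times \{b^{W''}\}$ becomes parallel transport, around $\gamma$, of the local system gotten by applying $\underline{\res}_{W'}^{W''}$ to the $b^{W''}$-fiber $\res_{W''}^W \ind_{W''}^W \ind_{W'}^{W''} L$ of $\underline{\res}_{W''}^W(\ind_{W'}^W L)$. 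Using the unit $\eta \colon \ind_{W'}^{W''} L \to \res_{W''}^W \ind_{W''}^W \ind_{W'}^{W''} L$ of the adjunction $\ind_{W''}^W \dashv \res_{W''}^W$ and Shan's transitivity $\res_{W'}^{W''} \circ \res_{W''}^W \cong \res_{W'}^W$, the morphism $\widetilde{KZ_{L, W''}}(\eta)$ is a map of $\cplx_{-\tilde{\mu}}[\pi_1((\hfr_{W''})^{W'}_{reg}/I_{W''})]$-modules from $\widetilde{KZ_{L, W''}}(\ind_{W'}^{W''} L)$ to $\widetilde{KZ_{L, W''}}(\res_{W''}^W \ind_{W'}^W L)$; the identification above makes the target equal to $\widetilde{KZ_L}(\ind_{W'}^W L)$ with its module structure restricted along $\iota_{W'', W', L}$, and the triangle identities show that $\widetilde{KZ_{L, W''}}(\eta)$ is precisely the map induced by $\ind_{W''}^W$ on endomorphism algebras and sends $\id_{\ind_{W'}^{W''} L}$ to $\id_{\ind_{W'}^W L}$. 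Evaluating the $e_g$-action at these distinguished endomorphisms on both sides then yields $\ind_{W''}^W(\phi_{L, W''}(e_g)) = \ind_{W''}^W(e_g \cdot \id_{\ind_{W'}^{W''} L}) = e_g \cdot \id_{\ind_{W'}^W L} = \phi_L(e_{\iota_{W'', W', L}(g)})$, which is the desired identity.

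The only genuinely non-formal point --- and the step I expect to cost the most care --- is confirming that the identification furnished by Theorem \ref{trans-theorem} intertwines everything on the nose: the $\pi_1$-monodromy actions, the passage from $N_W(W')$-equivariance to $N_W(W'', W')$-equivariance and on to the quotients by $I$ and $I_{W''}$, and the spherical reductions through $e_L$ together with the cocycle $\tilde{\mu}$, whose $W''$-version is by construction the restriction of the one for $W$. Each of these compatibilities is built into the constructions of the functors $\underline{\res}$ and of $\iota^*_{W'', W'}$ recalled in Sections \ref{partial-KZ} and \ref{transitivity}, so beyond Theorem \ref{trans-theorem} the argument is careful bookkeeping of identifications rather than a new idea.
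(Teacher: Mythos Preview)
Your proposal is correct and follows exactly the approach the paper indicates: the paper simply states that the lemma ``is immediate from the constructions and from Gordon and Martino's transitivity result that was recalled in Theorem \ref{trans-theorem},'' and your argument is a careful unpacking of precisely that. The detailed bookkeeping you provide---tracing how $\phi_L$ and $\phi_{L,W''}$ arise from monodromy via $\widetilde{KZ_L}$, how $\iota_{W'',W',L}$ is built, and how Theorem \ref{trans-theorem} intertwines the two monodromy actions---is more than the paper itself offers, but it is the same idea.
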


\begin{definition} Given a chain of parabolic subgroups $W' \subset W'' \subset W$ as above, we say that $W'$ is of \emph{corank} $r$ in $W''$ if $\dim \hfr_{W''} = \dim \hfr_{W'} + r.$\end{definition}

The parabolic subgroups $W'' \subset W$ containing $W'$ in corank 1 are closely related to the reflections appearing in the action of $N_W(W')$ on $\hfr^{W'}$:

\begin{lemma} \label{corank-1-refs} Let $W'' \subset W$ be a parabolic subgroup containing $W'$ in corank 1.  The quotient $N_{W''}(W')/W'$ is cyclic and acts on $\hfr^{W'}$ by complex reflections through the hyperplane $\hfr^{W''} \subset \hfr^{W'}$.  Furthermore, every element $n \in N_W(W')$ that acts on $\hfr^{W'}$ as a complex reflection lies in some parabolic subgroup $W'' \subset W$ containing $W'$ in corank 1.\end{lemma}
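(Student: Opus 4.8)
The plan is to prove the two assertions of the lemma separately, using in both cases only the standard fact (Steinberg's theorem) that a parabolic subgroup of $W$ is the pointwise stabilizer of its fixed space; in particular $W' = \stab_W(\hfr^{W'})$, and the assignment $W'' \mapsto \hfr^{W''}$ is injective on parabolic subgroups.

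For the first assertion, I would set $N := N_{W''}(W')$, so that $W' \trianglelefteq N$ and $N/W' = N_{W''}(W')/W'$. Since $N$ normalizes $W'$ it preserves $\hfr^{W'}$, and since $N \subseteq W''$ it fixes $\hfr^{W''}$ pointwise; the corank $1$ hypothesis says precisely that $\hfr^{W''}$ is a hyperplane in $\hfr^{W'}$. I would then consider the homomorphism $\rho : N \to GL(\hfr^{W'}/\hfr^{W''}) \cong \cplx^\times$ induced by the action on the one-dimensional quotient. Any $w \in \ker\rho$ fixes the hyperplane $\hfr^{W''}$ pointwise and acts trivially on $\hfr^{W'}/\hfr^{W''}$, hence acts unipotently on $\hfr^{W'}$; as $w$ has finite order it acts trivially on $\hfr^{W'}$ and therefore lies in $\stab_W(\hfr^{W'}) = W'$. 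Conversely $W' \subseteq \ker\rho$ trivially, so $\ker\rho = W'$ and $\rho$ descends to an injection $N/W' \hookrightarrow \cplx^\times$; thus $N/W'$ is cyclic. A nontrivial element of $N/W'$ acts nontrivially on $\hfr^{W'}/\hfr^{W''}$, hence nontrivially on $\hfr^{W'}$, while fixing the hyperplane $\hfr^{W''}$ pointwise, and is diagonalizable on $\hfr^{W'}$ (finite order); so it acts as a complex reflection through $\hfr^{W''}$, as claimed.

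For the second assertion, given $n \in N_W(W')$ acting on $\hfr^{W'}$ as a complex reflection, I would put $H := \hfr^{W'} \cap \ker(1 - n)$, a hyperplane of $\hfr^{W'}$ that is fixed pointwise by $W'$ (as $H \subseteq \hfr^{W'}$) and by $n$ (by definition of $H$); note also that $n \notin W'$, since $n$ acts nontrivially on $\hfr^{W'}$ and $W'$ does not. Choose a generic point $b'' \in H$ and set $W'' := \stab_W(b'')$; then $W' \subseteq W''$ and $n \in W''$. By Steinberg's theorem $W''$ is generated by the reflections of $W$ fixing $b''$, so $\hfr^{W''}$ is the intersection of the reflection hyperplanes through $b''$, and for generic $b'' \in H$ these are exactly the reflection hyperplanes containing $H$ (a reflection hyperplane not containing $H$ meets $H$ in a proper subspace, and there are only finitely many hyperplanes). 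Since the reflecting hyperplane of any reflection in $W'$ contains $\hfr^{W'} \supseteq H$, we obtain $H \subseteq \hfr^{W''} \subseteq \hfr^{W'}$; as $H$ has codimension $1$ in $\hfr^{W'}$, either $\hfr^{W''} = H$ or $\hfr^{W''} = \hfr^{W'}$. The latter is impossible: it would force $W'' = \stab_W(\hfr^{W''}) = \stab_W(\hfr^{W'}) = W'$, contradicting $n \in W'' \setminus W'$. Hence $\hfr^{W''} = H$, so $\dim \hfr_{W''} = \dim \hfr - \dim H = \dim \hfr_{W'} + 1$, i.e. $W''$ contains $W'$ in corank $1$, as required.

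Both arguments are short and largely formal; the one step that requires genuine care --- and which I regard as the main point --- is the genericity claim in the second part, namely that for $b''$ in a suitable nonempty Zariski-open subset of $H$ the fixed space $\hfr^{\stab_W(b'')}$ equals the intersection of the reflection hyperplanes containing $H$ (and not something larger). Once this is in hand, the containment $H \subseteq \hfr^{W''} \subseteq \hfr^{W'}$ together with the exclusion of the upper endpoint pins down the codimension, and the rest is bookkeeping with fixed spaces and finite-order linear maps.
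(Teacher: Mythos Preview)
Your proof is correct and follows essentially the same line as the paper's.  For the first assertion both arguments embed $N_{W''}(W')/W'$ into $GL_1$; the paper does this via the one-dimensional complementary summand $(\hfr_{W''})^{W'}$ in the decomposition $\hfr^{W'} = (\hfr_{W''})^{W'} \oplus \hfr^{W''}$ rather than via the quotient $\hfr^{W'}/\hfr^{W''}$, but this is purely cosmetic.  For the second assertion the paper takes $W''$ to be the pointwise stabilizer in $W$ of the entire hyperplane $H$ (which is automatically parabolic), rather than the stabilizer of a generic point of $H$; these are of course the same subgroup, and the paper's formulation sidesteps the genericity argument you singled out as the main point.  After that the sandwich $H \subseteq \hfr^{W''} \subsetneq \hfr^{W'}$ and the codimension count are identical to yours.
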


\begin{proof} The action of $N_{W''}(W')/W'$ on $\hfr^{W'}$ is faithful because $W'$ is precisely the pointwise stabilizer of $\hfr^{W'}$ in $W$, and the action respects the decomposition $\hfr^{W'} = (\hfr_{W''})^{W'} \oplus \hfr^{W''}$.  As $N_{W''}(W')/W'$ acts trivially on $\hfr^{W''}$ and $\dim (\hfr_{W''})^{W'} = 1$, the first claim follows.

For the second claim, suppose $n \in N_W(W')$ acts on $\hfr^{W'}$ as a complex reflection through the hyperplane $H \subset \hfr^{W'}$.  Let $W'' \subset W$ be the point-wise stabilizer of $H$ in $W$, a parabolic subgroup.  As $n$ does not fix $\hfr^{W'}$ but $W' \subset W''$, we have $H \subset \hfr^{W''} \subsetneq \hfr^{W'}$.  As $H$ is a hyperplane, it follows that $H = \hfr^{W''}$, so $W''$ is a parabolic subgroup containing $W'$ in corank 1 and $n \in N_{W''}(W)$ acts on $\hfr^{W'}$ as a complex reflection through the hyperplane $\hfr^{W''} \subset \hfr^{W'}$, as needed.\end{proof}

It follows from Lemma \ref{corank-1-refs} that the inertia group $I_{W''}$ considered above is cyclic and acts on $(\hfr_{W''})^{W'}_{reg}$ through a faithful character.  As $W'$ is corank 1 in $W''$, we have $(\hfr_{W''})^{W'}_{reg} \cong \cplx^\times$ as a $\cplx$-manifold, and in particular there is a canonical group isomorphism $$\pi_1((\hfr_{W''})^{W'}_{reg}/I_{W''}) \cong \ints.$$

\begin{definition} Let $T_{W', L, W''}$ denote the canonical generator of $\pi_1((\hfr_{W''})^{W'}_{reg}/I_{W''})$ arising from the isomorphism above.  We will also let $T_{W', L, W''}$ denote its image in $\pi_1(\hfr^{W'}_{reg}/I)$ under the homomorphism $\iota_{W'', W', L}$, and we refer to $T_{W', L, W''}$ as the \emph{generator of monodromy} about the hyperplane $\hfr^{W''} \subset \hfr^{W'}$.  We call the parabolic subgroup $W''$ and its associated hyperplane $\hfr^{W''} \subset \hfr^{W'}$ $L$-\emph{trivial} (resp., $L$-\emph{essential}) when the inertia group $I_{W''}$ is trivial (resp., nontrivial).  Let $\hfr^{W'}_{L-reg}$ denote the complement of the arrangement of $L$-essential hyperplanes in $\hfr^{W'}$, and let $I^{ref}$ denote the subgroup of $I$ generated by the subgroups $I_{W''}$ for all $L$-essential $W''$.\end{definition}

We comment that $I^{ref}$ is a normal subgroup of $I$ and that the action of $I^{ref}$ on $\hfr^{W'}$ is generated by complex reflections through the $L$-essential hyperplanes.  In fact, from Lemma \ref{corank-1-refs} we see that $I^{ref}$ is the maximal reflection subgroup of $I$ with respect to its representation in $\hfr^{W'}$.  We have $\hfr^{W'}_{reg} \subset \hfr^{W'}_{L-reg}$, and $\hfr^{W'}_{L-reg}$ is stable under the action of $I$ on $\hfr^{W'}$.

The second cohomology group $H^2(C, \cplx^\times)$ vanishes for all cyclic groups $C$, and in particular we may assume that the 2-cocycle $\mu$ on $I$ is trivial on all subgroups $I_{W''} \subset I$ associated to parabolic subgroups $W'' \subset W$ containing $W'$ in corank 1.  Then, the twisted group algebra $\cplx_{-\tilde{\mu}}[\pi_1((\hfr_{W''})^{W'}_{reg}/I_{W''})]$ is naturally identified with the Laurent polynomial ring $\cplx[T_{W', L, W''}^{\pm 1}].$  The kernel of the map $\phi_{L, W''}$ appearing in the commutative diagram in Lemma \ref{compatibility} is therefore generated by a monic polynomial $P_{W', L, W''} \in \cplx[T_{W', L, W''}]$ nonvanishing at 0.  It follows from Proposition \ref{dim-prop} that $P_{W', L, W''}$ is a polynomial of degree $\#I_{W''}$.

\begin{definition} \label{rel-def} Let $P_{W', L, W''} \in \cplx[T]$ be the monic polynomial of degree $\#I_{W''}$ generating the kernel $\ker \phi_{W', L, W''}$.\end{definition}

Note that $P_{W', L, W''}$ only depends on $W''$ up to $I$-conjugacy and that $P_{W', L, W''}(0) \neq 0$.

\noindent $\mathbf{Notation}$: As $W'$ and $L$ remain fixed, we will denote $P_{W', L, W''}$ by $P_{W''}$ and $T_{W', L, W''}$ by $T_{W''}$ when the meaning is clear.

We can now state the main result of this section.

\begin{theorem} \label{general-presentation-theorem} The map $\phi_L$ of Theorem \ref{KZL} factors through a surjection $$\bar{\phi}_L : \frac{\cplx_{-\tilde{\mu}}[\pi_1(\hfr^{W'}_{reg}/I)]}{\langle P_{W''}(T_{W''}) : \text{ $W'$ is corank 1 in $W''$}\rangle} \rightarrow \mathcal{H}(c, W', L, W)$$ of $\cplx$-algebras.  If the inequality $$\dim \frac{\cplx_{-\tilde{\mu}}[\pi_1(\hfr^{W'}_{L-reg}/I^{ref})]}{\langle P_{W''}(T_{W''}) : W'' \text{ is $L$-essential}\rangle} \leq \#I^{ref}$$ holds then it is an equality and $\bar{\phi}_L$ is an isomorphism.

In particular, if $W$ is a Coxeter group or if the group 2-cocycle $\mu \in Z^2(I, \cplx^\times)$ has cohomologically trivial restriction to $I^{ref}$, $\bar{\phi}_L$ is an isomorphism.\end{theorem}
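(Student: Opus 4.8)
The plan is to prove all three assertions of Theorem~\ref{general-presentation-theorem} in turn: the factorization of $\phi_L$ through $\bar\phi_L$, then a dimension count reducing the isomorphism statement to the inequality for $\bar B := \cplx_{-\tilde\mu}[\pi_1(\hfr^{W'}_{L-reg}/I^{ref})]\big/\langle P_{W''}(T_{W''}) : W''\ L\text{-essential}\rangle$, and finally the verification of $\dim_\cplx\bar B\le\#I^{ref}$ in the two special cases. For the factorization I would evaluate $\phi_L$ on each generator $P_{W''}(T_{W''})$: by construction $T_{W''}$ is the image under $\iota_{W'',W',L}$ of the canonical generator of $\pi_1((\hfr_{W''})^{W'}_{reg}/I_{W''})$, and the commuting square of Lemma~\ref{compatibility} gives $\phi_L\circ\iota_{W'',W',L} = (\ind_{W''}^W)\circ\phi_{L,W''}$; since $P_{W''}$ generates $\ker\phi_{L,W''}$ (Definition~\ref{rel-def}) we get $\phi_L(P_{W''}(T_{W''}))=0$, so $\phi_L$ annihilates the two-sided ideal generated by the $P_{W''}(T_{W''})$ and descends to the claimed surjection $\bar\phi_L$. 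Writing $\bar A$ for the source of $\bar\phi_L$, and using $\dim_\cplx\mathcal{H}(c,W',L,W) = \dim_\cplx\End_{\oscr_c(W,\hfr)}(\ind(L))^{opp} = \#I$ from Proposition~\ref{dim-prop}, it then suffices to prove $\dim_\cplx\bar A\le\#I$.

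The heart of the argument is a reduction to the corank-one data for the reflection subgroup $I^{ref}$. The key observation is that every meridian $T_{W''}$ (for $W''$ of corank $1$ over $W'$) already lies in $G^{ref}:=\pi_1(\hfr^{W'}_{reg}/I^{ref})\le G:=\pi_1(\hfr^{W'}_{reg}/I)$: under $G\to I$ it maps to a generator of $I_{W''}\subseteq I^{ref}$, which is trivial precisely when $W''$ is $L$-trivial. Hence the defining two-sided ideal of $\bar A$ is extended from the subalgebra $A^{ref}:=\cplx_{-\tilde\mu}[G^{ref}]$, and since $G^{ref}$ is normal of index $[I:I^{ref}]$ in $G$ (being the preimage of $I^{ref}\trianglelefteq I$), the twisted group algebra $\cplx_{-\tilde\mu}[G]$ is free of rank $[I:I^{ref}]$ over $A^{ref}$; a direct-sum computation then gives $\dim_\cplx\bar A = [I:I^{ref}]\cdot\dim_\cplx(A^{ref}/J)$, where $J\subseteq A^{ref}$ is the $G$-stabilization of the two-sided ideal $J_0:=\langle P_{W''}(T_{W''})\rangle_{A^{ref}}$, so $\dim_\cplx(A^{ref}/J)\le\dim_\cplx(A^{ref}/J_0)$. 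Next I would identify $A^{ref}/J_0$ with $\bar B$ by "filling in" the $L$-trivial hyperplanes. Each $L$-trivial $W''$ contributes the degree-one relation $P_{W''}(T)=T-\lambda_{W''}$ with $\lambda_{W''}\in\cplx^\times$ depending only on the $I$-orbit; using suitable $I^{ref}$-semi-invariant products of the linear forms cutting out the $L$-trivial hyperplanes, together with divisibility of $\cplx^\times$, one constructs a character $\chi\colon G^{ref}\to\cplx^\times$ with $\chi(T_{W''})=\lambda_{W''}$ on $L$-trivial $W''$ and $\chi(T_{W''})=1$ on $L$-essential $W''$. Twisting $A^{ref}$ by $\chi$ is an algebra automorphism carrying the $L$-trivial relations to $T_{W''}-1$ and fixing the $L$-essential ones; imposing $T_{W''}=1$ for $L$-trivial $W''$ collapses $G^{ref}$ onto $\pi_1(\hfr^{W'}_{L-reg}/I^{ref})$ (the cocycle $\tilde\mu$, pulled back from $I^{ref}$, descends because the $L$-trivial meridians die in $I^{ref}$), giving $A^{ref}/J_0\cong\bar B$. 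Combining, $\dim_\cplx\bar A\le[I:I^{ref}]\dim_\cplx\bar B$, so if $\dim_\cplx\bar B\le\#I^{ref}$ then $\dim_\cplx\bar A\le[I:I^{ref}]\#I^{ref}=\#I$, forcing $\bar\phi_L$ to be an isomorphism and every intermediate inequality to be an equality.

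It then remains to bound $\dim_\cplx\bar B$. If $\mu|_{I^{ref}}$ is cohomologically trivial, $\tilde\mu$ may be taken trivial on $\pi_1(\hfr^{W'}_{L-reg}/I^{ref})$; since $I^{ref}$ is the reflection subgroup of $I$ acting on $\hfr^{W'}$ and acts freely on the complement $\hfr^{W'}_{L-reg}$ of its reflecting hyperplanes, $\pi_1(\hfr^{W'}_{L-reg}/I^{ref})$ is the braid group of the complex reflection group $I^{ref}$, the $T_{W''}$ ($W''$ $L$-essential) are its braid reflections, and $\deg P_{W''}=\#I_{W''}$ equals the order of the corresponding cyclic stabilizer; thus $\bar B$ is a cyclotomic Hecke algebra of $I^{ref}$, of dimension $\#I^{ref}$ by the BMR freeness theorem (Etingof, \cite{Et-BMR}). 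If instead $W$ is a Coxeter group, then $I^{ref}$ is a finite real reflection group, each $\#I_{W''}=2$, and $\bar B$ is the quotient of the $\tilde\mu$-twisted group algebra of the associated Artin group by quadratic relations with invertible constant terms; the standard spanning argument — reduce to positive braid words using the quadratic relations, then invoke Matsumoto's theorem, observing that a braid move changes a monomial $e_\beta$ only by the nonzero scalar coming from the cocycle — shows $\bar B$ is spanned by $\{e_{\widetilde w}:w\in I^{ref}\}$, so $\dim_\cplx\bar B\le\#I^{ref}$. In either case $\bar\phi_L$ is an isomorphism.

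The main obstacle is the middle step, the transfer from $\hfr^{W'}_{reg}$ to $\hfr^{W'}_{L-reg}$: unlike in the classical $KZ$ situation, the $L$-trivial relations impose $T_{W''}=\lambda_{W''}$ with $\lambda_{W''}$ a genuine scalar (in general not a root of unity), so one cannot simply kill the corresponding meridians, and the detour through the twisting character $\chi$ — together with the checks that it fixes the $L$-essential relations and that $\tilde\mu$ descends — is essential. By comparison, the ideal-theoretic bookkeeping (the defining ideal of $\bar A$ being extended from $A^{ref}$) and the freeness of the twisted group algebra over the subalgebra are routine once the normality of $G^{ref}$ in $G$ is noted, and the Coxeter-case spanning bound, while elementary, must be done with the cocycle present.
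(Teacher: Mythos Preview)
Your proposal is correct and follows essentially the same route as the paper's proof: the factorization via Lemma~\ref{compatibility}, the identification $A^{ref}/J_0\cong\bar B$ by twisting away the $L$-trivial relations with a character and then collapsing the $L$-trivial meridians (the paper isolates the character construction as Lemma~\ref{twisting-character}, defining $\chi$ on all of $\pi_1(\hfr^{W'}_{reg}/I)$ rather than just on $G^{ref}$, but this is immaterial), the bound $\dim\bar A\le[I:I^{ref}]\dim(A^{ref}/J_0)$ via the coset decomposition of $\cplx_{-\tilde\mu}[G]$ over $A^{ref}$, and the two special cases handled by the BMR freeness theorem and the Matsumoto-style spanning argument respectively. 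Your bookkeeping with the $G$-stabilized ideal $J\supseteq J_0$ makes the equality $\dim\bar A=[I:I^{ref}]\dim(A^{ref}/J)$ explicit, whereas the paper simply bounds $\dim\bar A$ by $[I:I^{ref}]$ times the dimension of the image of $A^{ref}/J_0$ in $\bar A$; these are the same inequality. The ``divisibility of $\cplx^\times$'' remark is unnecessary, since the abelianization of $G^{ref}$ is free abelian on the $I^{ref}$-orbits of hyperplanes, so the character is determined freely by its values on the meridians.
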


\begin{remark} Note that only the $L$-essential hyperplanes feature in dimension bound above.  The 2-cocycle $\tilde{\mu} \in Z^2(\pi_1(\hfr^{W'}_{L-reg}/I^{ref}), \cplx^\times)$ is obtained by pulling back the 2-cocycle $\mu \in Z^2(I, \cplx^\times)$ along the natural projection.  Note that by their definition, the generators of monodromy $T_{W''}$ are naturally elements of $\pi_1(\hfr^{W'}_{reg}/I^{ref})$.\end{remark}

The following lemma will be useful in the proof of Theorem \ref{general-presentation-theorem}

\begin{lemma} \label{twisting-character} There is a linear character $\chi : \pi_1(\hfr^{W'}_{reg}/I) \rightarrow \cplx^\times$ satisfying $$\chi(T_{W''}) = \begin{cases} -P_{W''}(0) & \text{ if $W''$ is $L$-trivial} \\ 1 & \text{ otherwise.}\end{cases}$$ \end{lemma}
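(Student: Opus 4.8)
I want to construct the character $\chi$ on the abelianization-type quotient of $\pi_1(\hfr^{W'}_{reg}/I)$ that records only the winding numbers around the reflection hyperplanes $\hfr^{W''}$, weighted by the constants $-P_{W''}(0)$ on $L$-trivial hyperplanes and by $1$ on $L$-essential ones. Concretely, recall that the fundamental group $\pi_1(\hfr^{W'}_{reg}/I)$ surjects, via the usual abelianized monodromy, onto $\bigl(\bigoplus_{[W'']} \ints\bigr)^{I} \cong \bigoplus_{[W'']/I}\ints$, where the sum is over $I$-orbits of corank-$1$ parabolics $W'' \supset W'$ (equivalently over $I$-orbits of the hyperplanes $\hfr^{W''}\subset \hfr^{W'}$), and the image of the monodromy generator $T_{W''}$ is the standard basis vector of the summand indexed by the orbit of $W''$. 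This is exactly the statement that the linear characters of a generalized braid group factor through the lattice generated by the ``around a hyperplane'' loops, and it follows from the presentation of $\pi_1(\hfr^{W'}_{reg}/I)$ as a complex-reflection-group braid group (applied to the group $I$ acting on $\hfr^{W'}$, possibly with the $L$-essential and $L$-trivial hyperplanes distinguished); this is where I would cite the relevant structural fact about braid groups of hyperplane arrangements (e.g. Brou\'e--Malle--Rouquier \cite{BMR}, as already used in Section~\ref{KZ}).

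**The key steps.** First I would observe that a linear character $\chi$ on $\pi_1(\hfr^{W'}_{reg}/I)$ is the same as an $I$-invariant assignment of a nonzero scalar to each hyperplane $\hfr^{W''}$, i.e. a function on $I$-orbits of corank-$1$ parabolics, since the abelianized monodromy lattice is freely generated by the classes of the $T_{W''}$ over such orbits and there are no further relations obstructing an arbitrary choice of eigenvalues (again by the braid-group presentation). Second, I would check that the prescription in the statement is indeed $I$-invariant: $P_{W''}$ depends on $W''$ only up to $I$-conjugacy, as noted right after Definition~\ref{rel-def}, and whether $W''$ is $L$-trivial or $L$-essential is likewise an $I$-conjugacy-invariant condition (the inertia subgroup $I_{W''}$ is conjugated to $I_{{}^nW''}$ under $n\in I$). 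Hence the function $W''\mapsto -P_{W''}(0)$ on $L$-trivial hyperplanes and $W''\mapsto 1$ on $L$-essential ones descends to $I$-orbits, and therefore defines a character $\chi$ with the required values on the generators $T_{W''}$. Third, I should make sure the scalars are legitimate, i.e. nonzero: $P_{W''}(0)\neq 0$ by the remark following Definition~\ref{rel-def} (equivalently, $\ker\phi_{L,W''}$ is generated by a polynomial nonvanishing at $0$), so $-P_{W''}(0)\in\cplx^\times$ and $\chi$ is a well-defined homomorphism to $\cplx^\times$.

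**The main obstacle.** The only real content is the first point: that a character of $\pi_1(\hfr^{W'}_{reg}/I)$ is free to take any prescribed nonzero eigenvalue $\chi(T_{W''})$ on each monodromy generator, subject only to $I$-equivariance. Equivalently, the abelianization $\pi_1(\hfr^{W'}_{reg}/I)^{ab}$, modulo its torsion coming from $I$ acting on the monodromy lattice, is the free abelian group on $I$-orbits of hyperplanes. For the \emph{unquotiented} group $\pi_1(\hfr^{W'}_{reg})$ this is classical (the abelianization of the pure/coloured complex braid group associated with the arrangement of hyperplanes in $\hfr^{W'}$ is free on the hyperplanes); for the quotient by $I$ one uses the short exact sequence $1\to\pi_1(\hfr^{W'}_{reg})\to\pi_1(\hfr^{W'}_{reg}/I)\to I\to 1$ together with the fact that $I$ acts on $\hfr^{W'}$ as a (not necessarily well-generated) subgroup of $GL(\hfr^{W'})$, so that the braid-group presentation of \cite{BMR} applies and the abelianization is again controlled by hyperplane classes. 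I would phrase the proof to extract exactly this: produce the character directly on the level of monodromy around hyperplanes, verify $I$-invariance of the prescribed values using the conjugacy-invariance of $P_{W''}$ and of $L$-triviality, and conclude. This is essentially a bookkeeping argument once the braid-group fact is invoked, so the write-up should be short.
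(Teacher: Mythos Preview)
Your outline identifies the right ingredients---$I$-invariance of $P_{W''}(0)$, nonvanishing of $P_{W''}(0)$, and the fact that characters of $\pi_1(\hfr^{W'}_{reg}/I)$ should be governed by $I$-orbits of hyperplanes---but the justification you give for the last point does not hold up. The results of \cite{BMR} that you invoke concern the braid group $\pi_1(V_{reg}/W)$ of a \emph{complex reflection group} $W$ acting on $V$ with its reflection arrangement. Here $I$ is in general \emph{not} a reflection group in its action on $\hfr^{W'}$ (only $I^{ref}$ is), and even for $I^{ref}$ the arrangement defining $\hfr^{W'}_{reg}$ is strictly larger than the reflection arrangement: the $L$-trivial hyperplanes $\hfr^{W''}$ are precisely those \emph{not} fixed by any reflection in $I$. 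So there is no off-the-shelf presentation of $\pi_1(\hfr^{W'}_{reg}/I)$ to cite, and the claim that its characters are freely prescribed on $I$-orbits of hyperplanes needs an argument. Equivalently, in the short exact sequence $1 \to \pi_1(\hfr^{W'}_{reg}) \to \pi_1(\hfr^{W'}_{reg}/I) \to I \to 1$ you must explain why your $I$-invariant character on the pure group extends; there is a potential obstruction in $H^2(I,\cplx^\times)$ that you have not addressed.

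The paper's proof bypasses all of this with a direct geometric construction. For a single $I$-orbit of hyperplanes, choose a linear form $\alpha$ cutting out $\hfr^{W''}$ and form the $I$-invariant polynomial $\delta = \prod_{n \in I} n\alpha$. Since $\delta$ is $I$-invariant it descends to a regular function $\hfr^{W'}_{reg}/I \to \cplx^\times$, and the induced map $\pi_1(\delta) : \pi_1(\hfr^{W'}_{reg}/I) \to \pi_1(\cplx^\times)=\ints$ sends $T_H$ to a positive integer when $H$ lies in the orbit of $\hfr^{W''}$ and to $0$ otherwise (as in \cite[Proposition~2.16]{BMR}). Composing with $\ints \to \cplx^\times$ and taking the product over all $L$-trivial orbits yields $\chi$ directly, with value $1$ on $L$-essential generators automatically since those $T_{W''}$ map to $0$ under each $\pi_1(\delta)$. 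This is both more elementary than your approach and, in effect, supplies the missing proof of the structural fact you wanted to invoke.
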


\begin{proof} As $P_{W''}(0)$ is nonzero and only depends on $W''$ up to $I$-conjugacy, it suffices to show that for any parabolic subgroup $W''$ containing $W'$ in corank 1 there exists a group homomorphism $\theta : \pi_1(\hfr^{W'}_{reg}/I) \rightarrow \ints$ such that a generator of monodromy $T_H$ about one of the hyperplanes $H$ defining $\hfr^{W'}_{reg}/I$ is sent to 1 under $\theta$ if and only if $H$ is $I$-conjugate to $\hfr^{W''}$ and 0 otherwise.  One may then compose with the group homomorphism $\ints \rightarrow \cplx^\times$, $1 \mapsto -P_{W''}(0)$, and take the product of such maps over all $I$-conjugacy classes of $L$-trivial parabolic subgroups $W'' \supset W'$.  To construct such a homomorphism, choose a linear functional $\alpha \in (\hfr^{W'})^*$ defining $\hfr^{W''}$ in $\hfr^{W'}$ and let $\delta = \prod_{n \in I} n\alpha$.   Then, $\delta$ defines a continuous function $\delta : \hfr^{W'}_{reg}/I \rightarrow \cplx^\times$ with associated map $\pi_1(\delta) : \pi_1(\hfr^{W'}_{reg}/I) \rightarrow \pi_1(\cplx^\times) = \ints.$  That $\pi_1(\delta)$ has the desired effect on the generators of monodromy then follows easily as in the proof of \cite[Proposition 2.16]{BMR}.\end{proof}

\begin{proof}[Proof of Theorem \ref{general-presentation-theorem}] That $\phi_L$ factors as $\bar{\phi}_L$ through the quotient above follows immediately from Lemma \ref{compatibility}.

Assume the dimension inequality in the theorem statement holds.  As the action of $I^{ref}$ on $\hfr^{W'}$ is generated by reflections, the quotient $\hfr^{W'}/I^{ref}$ is a smooth $\cplx$-variety by the Chevalley-Shephard-Todd theorem \cite{Chev, ST}.  In particular, by Proposition A.1 of \cite{BMR} and induction it follows that the kernel of the map $\pi_1(\hfr^{W'}_{reg}/I^{ref}) \rightarrow \pi_1(\hfr^{W'}_{L-reg}/I^{ref})$ is generated (as a normal subgroup) by the generators of monodromy $T_{W''}$ for the $L$-trivial $W''$.  In particular, we have an isomorphism $$\frac{\pi_1(\hfr^{W'}_{reg}/I^{ref})}{\langle T_{W''} : W'' \text{ is $L$-trivial}\rangle} \cong \pi_1(\hfr^{W'}_{L-reg}/I^{ref}).$$  Clearly, this isomorphism respects the cocycle $\tilde{\mu}$.  Let $\chi : \pi_1(\hfr^{W'}_{reg}/I^{ref}) \rightarrow \cplx^\times$ be the composition of the natural map $\pi_1(\hfr^{W'}_{reg}/I^{ref}) \rightarrow \pi_1(\hfr^{W'}_{reg}/I)$ with the character from Lemma \ref{twisting-character}.  The assignments $g \mapsto \chi(g)g$ for $g \in \pi_1(\hfr^{W'}_{reg}/I)$ extend to an algebra automorphism $\tau_\chi$ of $\cplx_{-\tilde{\mu}}[\pi_1(\hfr^{W'}_{reg}/I)]$ satisfying $$\tau_{\chi}(P_{W''}(T_{W''})) = \begin{cases} -P_{W''}(0)(T_{W''} - 1)& \text{ if $W''$ is $L$-trivial} \\ P_{W''}(T_{W''}) & \text{ otherwise.}\end{cases}.$$  To see this formula in the case that $W''$ is $L$-trivial, recall that in that case $P_{W''}(T_{W''})$ is a monic linear polynomial, hence of the form $P_{W''}(T_{W''}) = T_{W''} + P_{W''}(0)$; applying $\tau_\chi$ gives $$\tau_\chi(P_{W''}(T_{W''})) = \tau_\chi(T_{W''} + P_{W''}(0))$$ $$= -P_{W''}(0)T_{W''} + P_{W''}(0) = -P_{W''}(0)(T_{W''} - 1),$$ as above.  The formula in the case that $W''$ is $L$-essential is clear.  Composing these two isomorphisms yields an isomorphism $$\frac{\cplx_{-\tilde{\mu}}[\pi_1(\hfr^{W'}_{L-reg}/I^{ref})]}{\langle P_{W''}(T_{W''}) : W'' \text{ is $L$-essential}\rangle} \cong \frac{\cplx_{-\tilde{\mu}}[\pi_1(\hfr^{W'}_{reg}/I^{ref})]}{\langle P_{W''}(T_{W''}) : \text{ $W'$ is corank 1 in $W''$}\rangle}.$$  By assumption, this algebra has dimension at most $\#I^{ref}$, and so the same holds for the dimension $d \leq \#I^{ref}$ of its image in $$\frac{\cplx_{-\tilde{\mu}}[\pi_1(\hfr^{W'}_{reg}/I)]}{\langle P_{W''}(T_{W''}) : \text{ $W'$ is corank 1 in $W''$}\rangle}$$ under the natural map induced by the map $\pi_1(\hfr^{W'}_{reg}/I^{ref}) \rightarrow \pi_1(\hfr^{W'}_{reg}/I)$ of fundamental groups.  As $\pi_1(\hfr^{W'}_{reg}/I)/\pi_1(\hfr^{W'}_{reg}/I^{ref}) \cong I/I^{ref},$ it follows that $$\dim \frac{\cplx_{-\tilde{\mu}}[\pi_1(\hfr^{W'}_{reg}/I)]}{\langle P_{W''}(T_{W''}) : \text{ $W'$ is corank 1 in $W''$}\rangle} \leq d\cdot(\#I/I^{ref}) \leq \#I.$$  But, as $\bar{\phi}_L$ is a surjection and $\dim \mathcal{H}(c, W', L, W) = \#I$ by Proposition \ref{dim-prop}, it follows also that $$\#I \leq \dim \frac{\cplx_{-\tilde{\mu}}[\pi_1(\hfr^{W'}_{reg}/I)]}{\langle P_{W''}(T_{W''}) : \text{ $W'$ is corank 1 in $W''$}\rangle}.$$  It follows that the dimension inequalities above are equalities and that $\bar{\phi_L}$ is an isomorphism.

For the final statement of the theorem, first suppose the restriction of $\mu$ to $I^{ref}$ is cohomologically trivial.  Then the quotient $$\frac{\cplx[\pi_1(\hfr^{W'}_{L-reg}/I^{ref})]}{\langle P_{W''}(T_{W''}) : W'' \text{ is $L$-essential}\rangle}$$ is precisely a specialization to $\cplx$ of the Hecke algebra attached to the complex reflection group $I^{ref}$ in the sense of Brou\'{e}-Malle-Rouquier \cite{BMR}.  In that paper it was conjectured that the generic Hecke algebra is a free module of rank $\#I^{ref}$ over its ring of parameters, and this conjecture was subsequently proved in characteristic zero \cite{Et-BMR}. In particular, this algebra has dimension $\#I^{ref}$, as needed.

Now suppose $W$ is a Coxeter group.  In that case, $I^{ref}$ is a Coxeter group as well, and its action on $\hfr^{W'}$ is the complexification of a real reflection representation.  We need to show that the algebra $$\frac{\cplx_{-\tilde{\mu}}[\pi_1(\hfr^{W'}_{L-reg}/I^{ref})]}{\langle P_{W''}(T_{W''}) : W'' \text{ is $L$-essential}\rangle}$$ has dimension at most, and hence equal to, $\#I^{ref}$.  In this case, the fundamental group $\pi_1(\hfr^{W'}_{L-reg}/I^{ref})$ is the Artin braid group $B_{I^{ref}}$ attached to the Coxeter group $I^{ref}$.  If $S \subset I^{ref}$ is a set of simple reflections for this Coxeter group, the group $B_{I^{ref}}$ is generated by the generators of monodromy $\{e_s : s \in S\}$ about the hyperplanes associated to the simple reflections, with braid relations as recalled in Section \ref{Mackey-Coxeter}.  Given a finite list $\bf{s} = (s_1, ..., s_n)$ of simple reflections, let $e_{\bf{s}}$ denote the product $e_{s_1}\cdots e_{s_n}$.  By Matsumoto's Theorem (see \cite[Theorem 1.2.2]{GP}), for any $w \in I^{ref}$ the element $e_{\bf{s}}$ does not depend on the choice of reduced expression $w = s_1\dots s_n$, and we denote as usual the element $e_{\bf{s}}$ by $e_w$ in this case.  For a list $\bf{s}$ let $T_{\bf{s}}$ denote the element $e_{\bf{s}}$ viewed as an element of the quotient algebra above, and similarly for $T_w$.  To show that the algebra has dimension at most $\#I^{ref}$, it suffices to check that the $\cplx$-span of the elements $\{T_w : w \in I^{ref}\}$ is closed under multiplication by $T_s$ for any $s \in S$.  If $ss_1\cdots s_n$ is a reduced expression with $w = s_1 \cdots s_n$ a reduced expression for $w$, then $T_sT_w = \mu(s, w)^{-1}T_{sw}$.  Otherwise, we may choose a reduced expression $w = s_1\cdots s_n$ for $w$ with $s_1 = s$.  The hyperplane associated to $s$ is $L$-essential and $\#I_{W''} = 2$ for all $L$-essential $W''$ in this case, so $T_s$ satisfies a quadratic relation $T_s^2 = a + bT_s$.  We then have $T_sT_w = \mu(s, sw)T_s^2T_{sw} = \mu(s, sw)(a + be_s)T_{sw} = \mu(s, sw)(aT_{sw} + \mu(s, sw)^{-1}bT_w) = \mu(s, sw)aT_{sw} + bT_w$, which lies in the desired space, as needed. \end{proof}

\section{The Coxeter Group Case} \label{coxeter-section} In this section, suppose $W$ is a finite Coxeter group with set of simple reflections $S \subset W$ and real reflection representation $\hfr_\real$ with inner product $\langle \cdot, \cdot \rangle$.  Let $\hfr := \cplx \otimes_\real \hfr_\real$ be the complexified reflection representation, let $c : S \rightarrow \cplx$ be a $W$-invariant function (by which we mean $c(s) = c(s')$ whenever $s$ and $s'$ are conjugate in $W$), and let $H_c(W, \hfr)$ be the associated rational Cherednik algebra (note that $c$ extends uniquely to a $W$-invariant function on the set of reflections in $W$).  We will use certain simplifications that arise in the Coxeter case, such as natural splittings of the normalizers of parabolic subgroups and the $T_w$-basis of the Hecke algebra $\mathsf{H}_q(W)$, to significantly improve upon the description of the generalized Hecke algebras $\mathcal{H}(c, W', L, W'')$ studied in the previous section.  In particular, we will give a natural presentation for these algebras and will explain how to compute the quadratic relations $P_{W', L, W''}(T_{W', L, W''}) = 0$ for the $L$-essential parabolic subgroups $W'' \supset W'$ introduced in Definition \ref{rel-def}.

\noindent $\mathbf{Notation}$:  As in previous sections, we will take $W'$ and $L$ to be fixed and will suppress them from the notation, e.g. write $I_{W''}$ rather than $I_{W', L}$, when the meaning is clear.

\subsection{A Presentation of the Endomorphism Algebra}  Let $J \subset S$ be a subset of the simple reflections and let $W' := \langle J \rangle$ be the parabolic subgroup of $W$ generated by $J$.  Let $H_c(W', \hfr_{W'})$ be the associated rational Cherednik algebra as considered in previous sections, and let $L$ be a finite-dimensional irreducible representation of $H_c(W', \hfr_{W'})$.  Let $\hfr_{W', \real} \subset \hfr_\real$ denote the unique $W'$-stable complement to $\hfr_\real^{W'}$ in $\hfr_{\real}$.  We have $\hfr^{W'} = (\hfr^{W'}_\real)_\cplx$ and $\hfr_{W'} = (\hfr_{W', \real})_\cplx$, where the subscript $\cplx$ denotes the complexification viewed as a subspace of $\hfr$.  The action of $W'$ on $\hfr_{W', \real}$ is naturally identified with the real reflection representation of the Coxeter system $(W', J)$.  Let $\hfr^{W'}_{\real, reg} \subset \hfr^{W'}_{\real}$ denote the subset of points with stabilizer in $W$ equal to $W'$.  The complexification $(\hfr^{W'}_{\real, reg})_\cplx$ is a proper subspace of $\hfr^{W'}_{reg}$.

Let $\Phi \subset \hfr_\real$ denote the root system attached to $(W, S)$, let $\Phi^+ \subset \Phi$ denote the set of positive roots, and for $s \in S$ let $\alpha_s \in \Phi$ denote the positive simple root defining the reflection $s$.  For a subset $J \subset S$ of the simple roots, let $\Phi_J \subset \Phi$ denote the associated root subsystem with positive roots $\Phi_J^+ = \Phi_J \cap \Phi^+$.  Let $\mathcal{C}_J := \{x \in \hfr_{W', \real} : \langle \alpha_s, x \rangle > 0 \text{ for all } s \in J\} \subset \hfr_{W', \real}$ be the associated open fundamental Weyl chamber.  In \cite[Corollary 3]{How}, Howlett explains that the normalizer $N_W(W')$ splits as a semidirect product $N_W(W') = W' \rtimes N_{W'}$, where $N_{W'}$ is the set-wise stabilizer of $\mathcal{C}_J$ in $N_W(W')$.  We then take the inertia subgroup $I \subset N_{W'}$ to be the stabilizer in $N_{W'}$ of the representation $L$, as defined after Lemma \ref{split}.  Recall that for each parabolic subgroup $W'' \subset W$ containing $W'$ in corank 1 we have the associated subgroup $I_{W''} := I \cap W''$, and recall that $I^{ref} \unlhd I$ is the normal subgroup generated by the $I_{W''}$ for all such $W''$.  The subgroups $I_{W''}$ are all either trivial (for $L$-trivial $W''$) or of order 2 (for $L$-essential $W''$) acting on $\hfr^{W'}_\real$ through orthogonal reflection through the hyperplane $\hfr^{W''}_\real \subset \hfr^{W'}_\real$.  Therefore $I^{ref}$ is a real reflection group with faithful reflection representation $\hfr^{W'}_{\real}$ (potentially with nontrivial fixed points).  Recall that $I^{ref}$ is the maximal reflection subgroup of $I$.

The complement $\hfr^{W'}_{\real, L-reg}$ of the real hyperplanes $\hfr^{W''}_\real \subset \hfr^{W'}_\real$ is the locus of points in $\hfr^{W'}_\real$ with trivial stabilizer in $I^{ref}$.  By the standard theory of real reflection groups, $I^{ref}$ acts simply transitively on the set of connected components of $\hfr^{W'}_{\real, L-reg}$.  Choose a connected component $\mathcal{C} \subset \hfr^{W'}_{\real, L-reg}$, and let $I^{comp} \subset I$ be the set-wise stabilizer of $\mathcal{C}$ in $I$.  Clearly, we then have the semidirect product decomposition $$I = I^{ref} \rtimes I^{comp}.$$  Choosing the connected component $\mathcal{C}$ amounts to choosing a fundamental Weyl chamber for $I^{ref}$, and in particular the pair $(I^{ref}, S_{W', L})$ is a Coxeter system, where $S_{W', L}$ is the set of reflections through the walls of $\mathcal{C}$.  As $I^{comp}$ acts on $\mathcal{C}$, it follows that $I^{comp}$ permutes the walls of $C$, and in particular the action of $I^{comp}$ on $I^{ref}$ is through diagram automorphisms of the Dynkin diagram of $(I^{ref}, S_{W', L})$.

Let $q : S_{W', L} \rightarrow \cplx^\times$, $s \mapsto q_s$, be the unique $I$-invariant function such that for each $s \in S_{W', L}$ the quadratic polynomial $P_{W', L, \langle W', s\rangle}$ factors as $(T - 1)(T + q_s)$ after a rescaling of the indeterminate $T$.  Let $l : I^{ref} \rightarrow \ints^{\geq 0}$ denote the length function of the Coxeter system $(I^{ref}, S_{W', L})$.  We then have the following presentation for the generalized Hecke algebra $\mathcal{H}(c, W', L, W'')$, analogous to the presentation given in \cite[Theorem 4.14]{HoLe} in the context of finite groups of Lie type:

\begin{theorem} \label{coxeter-presentation} Let $\mu \in Z^2(I, \cplx^\times)$ be the 2-cocycle appearing in Theorem \ref{KZL}.  There is a basis $\{T_x : x \in I\}$ for $\mathcal{H}(c, W', L, W'')$ with multiplication law completely described by the following relations for all $x \in I$, $d \in I^{comp}$, $w \in I^{ref}$, and $s \in S_{W', L}$:

(1) \ \ $T_dT_x = \mu(d, x)^{-1}T_{dx}$

(2) \ \ $T_xT_d = \mu(x, d)^{-1}T_{xd}$

(3) \ \ $T_sT_w = \begin{cases} \mu(s, w)^{-1}T_{sw} & \text{ if $l(sw) > l(w)$} \\ q_s\mu(s, w)^{-1}T_{sw} + (q_s - 1)T_w & \text{ if $l(sw) < l(w)$}\end{cases}$

(4) \ \ $T_wT_s = \begin{cases} \mu(w, s)^{-1}T_{ws} & \text{ if $l(ws) > l(w)$} \\ q_s\mu(w, s)^{-1}T_{ws} + (q_s - 1)T_w & \text{ if $l(ws) < l(w).$}\end{cases}$\end{theorem}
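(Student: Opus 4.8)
The plan is to read the presentation off the isomorphism of Theorem~\ref{general-presentation-theorem}, after rewriting its source in a form adapted to the Coxeter combinatorics of the pair $(I^{ref}, S_{W', L})$. By Theorem~\ref{general-presentation-theorem}, since $W$ is a Coxeter group the map $\bar{\phi}_L$ is an isomorphism, so $\mathcal{H}(c, W', L, W) \cong \cplx_{-\tilde{\mu}}[\pi_1(\hfr^{W'}_{reg}/I)]\big/\langle P_{W''}(T_{W''}) : W'\text{ corank }1\text{ in }W''\rangle$, an algebra of dimension exactly $\#I$ by Proposition~\ref{dim-prop}. First I would apply the algebra automorphism $\tau_\chi$ from the proof of Theorem~\ref{general-presentation-theorem}: it fixes the relations attached to $L$-essential $W''$ and converts those attached to $L$-trivial $W''$ into $T_{W''} = 1$. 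Since $\tilde{\mu}$ is pulled back from $\mu \in Z^2(I, \cplx^\times)$ and the $L$-trivial monodromy generators map to the identity of $I$ (so $\tilde{\mu}$ is trivial on the normal subgroup they generate), imposing these relations identifies $\mathcal{H}(c, W', L, W)$ with $\cplx_{-\tilde{\mu}}[G]\big/\langle P_{W''}(T_{W''}) : W''\text{ is }L\text{-essential}\rangle$, where $G$ is the quotient of $\pi_1(\hfr^{W'}_{reg}/I)$ by the normal closure of $\{T_{W''} : W''\text{ is }L\text{-trivial}\}$.

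The second step is to identify $G$ with $B_{I^{ref}} \rtimes I^{comp}$. Since $I$ acts freely on $\hfr^{W'}_{reg}$, the covering $\hfr^{W'}_{reg}/I^{ref} \to \hfr^{W'}_{reg}/I$ gives a short exact sequence $1 \to \pi_1(\hfr^{W'}_{reg}/I^{ref}) \to \pi_1(\hfr^{W'}_{reg}/I) \to I^{comp} \to 1$; the $L$-trivial monodromy generators lie in $\pi_1(\hfr^{W'}_{reg}/I^{ref})$, and quotienting that subgroup by their normal closure produces $\pi_1(\hfr^{W'}_{L-reg}/I^{ref}) = B_{I^{ref}}$ exactly as in the proof of Theorem~\ref{general-presentation-theorem}, so $1 \to B_{I^{ref}} \to G \to I^{comp} \to 1$. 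To split this I would use the geometry recorded before the theorem statement: the chosen component $\mathcal{C}$ of $\hfr^{W'}_{\real, L-reg}$ is a convex open cone stable under $I^{comp}$, so fixing a basepoint $b \in \mathcal{C}$ and joining $b$ to each translate $db$ ($d \in I^{comp}$) by the straight-line path in $\mathcal{C}$ produces lifts $\widetilde{d} \in G$ which, by convexity of $\mathcal{C}$, compose as in $I^{comp}$ and act on $B_{I^{ref}}$ by conjugation through the diagram automorphisms of $(I^{ref}, S_{W', L})$ induced by the action of $I^{comp}$ on the walls of $\mathcal{C}$. This gives $G \cong B_{I^{ref}} \rtimes I^{comp}$.

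With this identification every element of $G$ is uniquely $e_w\widetilde{d}$ with $w \in I^{ref}$, $d \in I^{comp}$, where $e_w \in B_{I^{ref}}$ is the Matsumoto lift (well defined as in the proof of Theorem~\ref{general-presentation-theorem}), so $G$ is in bijection with $I^{ref} \times I^{comp} = I$. I would then set $T_s$ ($s \in S_{W', L}$) to be the generator of monodromy about the $L$-essential hyperplane $\hfr^{\langle W', s\rangle}$, rescaled and sign-normalized so that the relation $P_{\langle W', s\rangle}(T_{\langle W', s\rangle}) = 0$ becomes $T_s^2 = (q_s - 1)T_s + q_s\mu(s,s)^{-1}T_1$; set $T_d$ ($d \in I^{comp}$) to be the image of $\widetilde{d}$; after fixing a reduced word for each $w \in I^{ref}$ set $T_w$ to be the corresponding product of the $T_s$ (well defined up to a $\mu$-scalar by Matsumoto's theorem and the braid relations in $B_{I^{ref}}$, exactly as in the proof of Theorem~\ref{general-presentation-theorem}); and set $T_{wd} := T_w T_d$. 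Relations (1) and (2) then follow from the semidirect product structure of $G$ and the defining relation $e_g e_h = \tilde{\mu}(g,h)^{-1}e_{gh}$ of $\cplx_{-\tilde{\mu}}[G]$, using that $\tilde{\mu}$ is pulled back from $\mu$ along the natural map $G \to I$; relations (3) and (4) follow from the $\mu$-twisted braid relations together with the quadratic relation above for $T_s$, which is quadratic because $\#I_{W''} = 2$ for every $L$-essential $W''$ by Lemma~\ref{corank-1-refs} and Proposition~\ref{dim-prop}. Finally, relations (1)--(4) exhibit $\cplx\text{-}\spn\{T_x : x \in I\}$ as a unital subalgebra, hence all of $\mathcal{H}(c, W', L, W)$, so $\dim \mathcal{H}(c, W', L, W) \le \#I$; as equality holds by Proposition~\ref{dim-prop}, the $T_x$ form a basis and (1)--(4) determine every product.

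The step I expect to be the main obstacle is the consistent bookkeeping of the $2$-cocycle through all of the above: one must (as in the proof of Theorem~\ref{general-presentation-theorem}) choose $\mu$ trivial on each cyclic $I_{W''}$, check that the splitting $G \cong B_{I^{ref}} \rtimes I^{comp}$ is compatible with $\tilde{\mu}$, and verify that the scalar normalizations of the $T_x$ can be arranged so that the cocycle values occurring in (1)--(4) are precisely those of $\mu$ rather than of a coboundary twist of it; this also requires knowing that $P_{W''}$, for $L$-essential $W''$, acquires a root at $1$ after the normalization — so that $q_s$ is well defined — which is where the corank-$1$ analysis via the ordinary $KZ$ functor enters. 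A secondary, more geometric subtlety is that $I$ does not act freely on $\hfr^{W'}_{L-reg}$ (elements of $I^{comp}$ have fixed points there), so $G$ is not literally the fundamental group of a quotient space; it is precisely the convexity and $I^{comp}$-stability of $\mathcal{C}$ that nonetheless yield the splitting and the compatibility of the $\widetilde{d}$ with the braid relations.
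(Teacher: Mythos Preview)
Your approach is essentially the same as the paper's: both apply $\tau_\chi$ to trivialize the $L$-trivial relations, split the resulting extension of $B_{I^{ref}}$ by $I^{comp}$ using paths in the chamber $\mathcal{C}$, build the $I^{ref}$-indexed basis via Matsumoto lifts, and close with the dimension count from Proposition~\ref{dim-prop}. One small technical slip: your straight-line paths from $b$ to $db$ in $\mathcal{C}$ need not lie in $\hfr^{W'}_{reg}$, since $\mathcal{C}$ is only the complement of the $L$-essential hyperplanes and a segment may well cross an $L$-trivial one; the paper fixes this by taking the basepoint and paths in $\mathcal{C} \cap \hfr^{W'}_{reg}$ and then using contractibility of $\mathcal{C}$ to see that any two such paths differ in $\pi_1(\hfr^{W'}_{reg})$ by an element of the normal subgroup generated by the $L$-trivial monodromy, hence give the same element of $G$. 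Also, the existence of the normalization $(T_s - 1)(T_s + q_s) = 0$ is purely algebraic (rescale the generator) and does not require the corank-$1$ $KZ$ analysis; that analysis is only needed later to compute $q_s$.
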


\begin{remark} When the cocycle $\mu$ is trivial, Theorem \ref{coxeter-presentation} gives an isomorphism $$\mathcal{H}(c, W', L, W'') \cong I^{comp} \ltimes \mathsf{H}_q(I^{ref})$$ where $\mathsf{H}_q(I^{ref})$ is the Iwahori-Hecke algebra attached to the Coxeter system $(I^{ref}, S_{W', L})$ with parameter $q : s \mapsto q_s$ and where $I^{comp}$ acts on $\mathsf{H}_q(I^{ref})$ by automorphisms induced by diagram automorphisms of the Dynkin diagram of $(I^{ref}, S_{W', L})$.  We will see in a later section, by inspecting all possible cases, that the cocycle $\mu$ is indeed trivial in every case.\end{remark}

\begin{proof}  Take the base point $b \in \hfr^{W'}_{reg}$ for the fundamental group to lie in the fundamental chamber $\mathcal{C} \subset \hfr^{W'}_{\real, L-reg}$ and outside of the hyperplanes $\hfr^{W''} \subset \hfr^{W'}$ for the $L$-trivial parabolic subgroups $W'' \supset W'$.  As $I$ acts freely on $\hfr^{W'}_{reg}$, so too $I^{comp}$ acts freely on the orbit $I^{comp}.b \subset \mathcal{C} \cap \hfr^{W'}_{reg}$.  For each $d \in I^{comp}$, choose a path $\gamma_d : [0, 1] \rightarrow \mathcal{C} \cap \hfr^{W'}_{reg}$ in $\mathcal{C} \cap \hfr^{W'}_{reg}$ from $b$ to $d.b$.  Let $\bar{\gamma}_d$ denote the reverse path, so that the concatenation $\bar{\gamma}_d * \gamma_d$ is a loop in $\mathcal{C} \cap \hfr^{W'}_{reg}$ with base point $b$.  As $\mathcal{C}$ is contractible, the image of the path homotopy class of $\bar{\gamma}_d * \gamma_d$ under the homomorphism $$\pi_1(\hfr^{W'}_{reg}) \rightarrow \pi_1(\hfr^{W'}_{L-reg})$$ induced by the inclusion $\hfr^{W'}_{reg} \subset \hfr^{W'}_{L-reg}$ is trivial.  As discussed in the proof of Theorem \ref{general-presentation-theorem}, the kernel of this map is generated (as a normal subgroup) by the generators of monodromy $T_{W''}$ about the hyperplanes $\hfr^{W''}$ for the $L$-trivial parabolic subgroups $W'' \supset W'$.  It follows immediately that the image of the homotopy class $[\gamma_d]$ in the quotient $$\frac{\pi_1(\hfr^{W'}_{reg}/I)}{\langle T_{W''} : W'' \text{ is $L$-trivial}\rangle}$$ is independent of the choice of the path $\gamma_d$ and that the assignments $d \mapsto [\gamma_d]$ for $d \in I^{comp}$ extends uniquely to an algebra homomorphism $$\cplx_{-\mu}[I^{comp}] \rightarrow \frac{\cplx_{-\tilde{\mu}}[\pi_1(\hfr^{W'}_{reg}/I)]}{\langle T_{W''} : W'' \text{ is $L$-trivial}\rangle}.$$  Composing with the automorphism $\tau_\chi$ of $\cplx_{-\tilde{\mu}}[\pi_1(\hfr^{W'}_{reg}/I)]$ discussed in the proof of Theorem \ref{general-presentation-theorem}, this yields an algebra homomorphism $$\cplx_{-\mu}[I^{comp}] \rightarrow \frac{\cplx_{-\tilde{\mu}}[\pi_1(\hfr^{W'}_{reg}/I)]}{\langle P_{W''}(T_{W''}) : \text{$W'$ is corank 1 in $W''$}\rangle}.$$  We also have the embedding $$\frac{\cplx[\pi_1(\hfr^{W'}_{L-reg}/I^{ref})]}{\langle P_{W''}(T_{W''}) : W'' \text{ is $L$-essential}\rangle} \rightarrow \frac{\cplx_{-\tilde{\mu}}[\pi_1(\hfr^{W'}_{reg}/I)]}{\langle P_{W''}(T_{W''}) : \text{$W'$ is corank 1 in $W''$}\rangle}$$ from Theorem \ref{general-presentation-theorem}.  It is clear that the map of $\cplx$-vector spaces $$\cplx_{-\mu}[I^{comp}] \otimes_\cplx \frac{\cplx_{-\tilde{\mu}}[\pi_1(\hfr^{W'}_{L-reg}/I^{ref})]}{\langle P_{W''}(T_{W''}) : W'' \text{ is $L$-essential}\rangle} \rightarrow \frac{\cplx_{-\tilde{\mu}}[\pi_1(\hfr^{W'}_{reg}/I)]}{\langle P_{W''}(T_{W''}) : \text{$W'$ is corank 1 in $W''$}\rangle}$$ induced by multiplication is surjective.  It follows by Theorem \ref{general-presentation-theorem} that this map is in fact an isomorphism of $\cplx$-vector spaces.

For a simple reflection $s \in S_{W', L} \subset I^{ref}$, let $e_s$ denote the generator of monodromy $T_{W', L, \langle W', s \rangle}$.  For any $w \in I^{ref}$, let $e_w \in \pi_1(\hfr^{W'}_{L-reg}/I^{ref})$ denote the element obtained as the product $e_w := e_{s_1} \cdots e_{s_l}$ for any reduced expression $w = s_1 \cdots s_l$ of $w$ as a product of simple reflections in $S_{W', L}$, as in the last paragraph of the proof of Theorem \ref{general-presentation-theorem} (recall that the product here is taken in the fundamental group $\pi_1(\hfr^{W'}_{L-reg}/I^{ref})$, i.e. without regards to the cocycle $-\tilde{\mu}$).  For $w \in I^{ref}$, let $T_w$ denote $e_w$ viewed as an element of $\cplx_{-\tilde{\mu}}[\pi_1(\hfr^{W'}_{L-reg}/I^{ref})].$  By Theorem \ref{general-presentation-theorem} and the last paragraph in its proof, the set $\{T_w : w \in I^{ref}\}$ forms a basis for the quotient $$\frac{\cplx_{-\tilde{\mu}}[\pi_1(\hfr^{W'}_{L-reg}/I^{ref})]}{\langle P_{W''}(T_{W''}) : W'' \text{ is $L$-essential}\rangle}.$$  By rescaling the elements $T_w$ appropriately using twists by characters as $\tau_\chi$ was used in the proof of Theorem \ref{general-presentation-theorem}, we may assume that the quadratic relations the $T_s$ satisfy are of the form $$(T_s - 1)(T_s + q_s) = 0$$ for some uniquely determined $q_s \in \cplx^\times$.  The assignments $s \mapsto q_s$ determine an $I$-invariant function $q : S_{W', L} \rightarrow \cplx^\times$ with $q(s) = q_s$.  The computations at the end of the proof of Theorem \ref{general-presentation-theorem} show that for $s \in S_{W', L}$ and $w \in I^{ref}$ we have the multiplication law $$T_sT_w = \begin{cases} \mu(s, w)^{-1}T_{sw} & \text{ if $l(sw) > l(w)$} \\ q_s\mu(s, w)^{-1}T_{sw} + (q_s - 1)T_w & \text{ if $l(sw) < l(w)$}\end{cases}$$ where $l : I^{ref} \rightarrow \ints^{\geq 0}$ is the length function determined by the choice of simple reflections $S_{W', L}$ (note that $\mu(s, sw) = \mu(s, w)^{-1}$ because $s^2 = 1$).  An entirely analogous calculation shows that $$T_wT_s = \begin{cases} \mu(w, s)^{-1}T_{ws} & \text{ if $l(ws) > l(w)$} \\ q_s\mu(w, s)^{-1}T_{ws} + (q_s - 1)T_w & \text{ if $l(ws) < l(w).$}\end{cases}$$

For $d \in I^{comp}$, let $$e_d \in \frac{\pi_1(\hfr^{W'}_{reg}/I)}{\langle T_{W''} : \text{ $W''$ is $L$-trivial}\rangle}$$ be the image of $[\gamma_d]$ as constructed above.  We may also regard the $e_w$ for $w \in I^{ref}$ as elements of this quotient.  Any element $x \in I$ can be written uniquely as a product $x = dw$ for some $d \in I^{comp}$ and $w \in I^{ref}$, and we may therefore define $e_x := e_de_w$.  Note that $e_de_we_{d^{-1}} = e_{dwd^{-1}}$ so $e_{dwd^{-1}}e_d = e_{x}$, so writing $x = w'd'$ with $w' \in I^{ref}$ and $d' \in I^{comp}$ and taking the element $e_{w'}e_{d'}$ defines the same element $e_x$.  Note that $e_de_d' = e_{dd'}$ for any $d, d' \in I^{comp}$, and hence $e_de_x = e_{dx}$ and $e_xe_d = e_{xd}$ for any $d \in I^{comp}$ and $w \in I$.  For any $x \in I$, let $T_x$ denote the element $e_x$ viewed as an element of the quotient algebra $$\frac{\cplx_{-\tilde{\mu}}[\pi_1(\hfr^{W'}_{reg}/I)]}{\langle P_{W''}(T_{W''}) : \text{$W'$ is corank 1 in $W''$}\rangle}.$$  For $x \in I^{ref}$, the elements $T_x$ are the images of the elements $T_x$ considered above.  The multiplication rules $T_dT_x = \mu(d, x)^{-1}T_{dx}$ and $T_xT_d = \mu(x, d)^{-1}T_{xd}$ for $d \in I^{comp}$ and $x \in I$ follow immediately from the computations with $e_x$ and $e_d$ above and the definition of the twisted group algebra.  That $\{T_x : x \in I\}$ forms a basis for this quotient, and hence also for $\mathcal{H}(c, W', L, W'')$ by Theorem \ref{general-presentation-theorem} follows immediately from the considerations above, and the theorem follows. \end{proof}

\subsection{Computing Parameters Via KZ} \label{computing-params-section} To compute the quadratic relations that the generators $T_s \in \mathcal{H}(c, W', L, W)$ satisfy, we will reduce the problem to certain explicit computations in the Hecke algebra $\mathsf{H}_q(W)$ of the ambient group $W$ and its representations.  This is possible thanks to the following result of Shan:

\begin{lemma} \label{Shan-lemma} \cite[Lemma 2.4]{Shan} Let $KZ : \oscr_c(W, \hfr) \rightarrow \mathsf{H}_q(W)\mhyphen\text{mod}_{f.d.}$ be the $KZ$ functor, and let $K, L : \oscr_c(W, \hfr) \rightarrow \oscr_c(W, \hfr)$ be two right exact functors that map projective objects to projective objects.  Then the natural map of vector spaces $$\hom(K, L) \rightarrow \hom(KZ \circ K, KZ \circ L)$$ $$f \mapsto 1_{KZ}f$$ is an isomorphism.\end{lemma}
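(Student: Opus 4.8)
The plan is to identify both sides of the displayed map with spaces of bimodule homomorphisms and then reduce everything to the theorem of Ginzburg--Guay--Opdam--Rouquier that $KZ$ is fully faithful on the full subcategory of projective objects. First I would fix a projective generator $P$ of $\oscr_c(W, \hfr)$ and set $A := \End_{\oscr_c(W, \hfr)}(P)^{opp}$, so that $\hom_{\oscr_c(W, \hfr)}(P, -)$ is an equivalence $\oscr_c(W, \hfr) \cong A\mhyphen\text{mod}_{f.d.}$ carrying $P$ to the regular module (which I also denote $A$) and the projective object $P_{KZ}$ to a projective left $A$-module $Ae$ for an idempotent $e = e^2 \in A$. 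Under this equivalence $\mathsf{H}_q(W) = \End(P_{KZ})^{opp} = eAe$ and $KZ$ becomes the functor $e(-) : A\mhyphen\text{mod}_{f.d.} \to eAe\mhyphen\text{mod}_{f.d.}$.

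Since $K$ and $L$ are right exact and preserve projectives, I would next invoke the Eilenberg--Watts description of right exact functors between module categories of finite-dimensional algebras over $\cplx$: there are natural isomorphisms $K \cong K(A) \otimes_A (-)$ and $L \cong L(A) \otimes_A (-)$, where $K(A)$ and $L(A)$ are the values of $K$ and $L$ on the regular module, regarded as $(A, A)$-bimodules, and where these bimodules are projective as left $A$-modules (here one uses both that $K, L$ are right exact and that they preserve projectives). Consequently $\hom(K, L) \cong \hom_{(A, A)}(K(A), L(A))$, and, composing with $e(-)$ on the left, $KZ \circ K \cong eK(A) \otimes_A (-)$ and $KZ \circ L \cong eL(A) \otimes_A (-)$ with $\hom(KZ \circ K, KZ \circ L) \cong \hom_{(eAe, A)}(eK(A), eL(A))$. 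Tracing through these identifications --- by evaluating natural transformations at the regular module and using $K(M) = K(A) \otimes_A M$, $f_M = f_A \otimes \text{id}_M$ --- the map $f \mapsto 1_{KZ}f$ of the lemma becomes the restriction map $\phi \mapsto \phi|_{eK(A)}$.

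The heart of the matter is then to show that $\phi \mapsto \phi|_{eK(A)}$ is a bijection $\hom_{(A, A)}(K(A), L(A)) \to \hom_{(eAe, A)}(eK(A), eL(A))$. Forgetting the right $A$-module structures, this is a bijection $\hom_A(K(A), L(A)) \to \hom_{eAe}(eK(A), eL(A))$ exactly because $K(A)$ and $L(A)$ are projective left $A$-modules and $KZ = e(-)$ is fully faithful on projectives \cite{GGOR}. To upgrade this to bimodule maps I would note that the right action of $a \in A$ on $K(A)$ is by the operator $K(\rho_a)$, where $\rho_a \in \End_A({}_AA)$ denotes right multiplication by $a$, and that the right action of $a$ on $eK(A) = KZ(K(A))$ is the restriction $K(\rho_a)|_{eK(A)}$ --- this is precisely how the right $A$-module structure on the Eilenberg--Watts bimodule of $KZ \circ K$ arises; the same holds with $L$ in place of $K$. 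Hence right $A$-linearity of a left $A$-linear map $\phi$ is the system of equalities $\phi \circ K(\rho_a) = L(\rho_a) \circ \phi$ in $\hom_A(K(A), L(A))$ (one for each $a$), while right $A$-linearity of $\phi|_{eK(A)}$ is the image of this system under $e(-)$; since $e(-)$ is injective on $\hom_A(K(A), L(A))$ by full faithfulness on projectives, the two systems are equivalent. Combining this with the left-module bijection gives the desired bimodule bijection, and hence the lemma.

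I expect the main obstacle to be organizational rather than conceptual: one must keep careful track of the several left-, right-, and bimodule structures in play, and in particular verify cleanly that $f \mapsto 1_{KZ}f$ matches restriction of bimodule maps under the Eilenberg--Watts identifications. The only genuine external inputs are the GGOR full faithfulness of $KZ$ on projectives and the standard fact that a right exact functor between finite-dimensional module categories is given by tensoring with a bimodule, namely its value on the regular module, which is moreover projective as a left module when the functor preserves projectives; with those in hand the argument is essentially formal.
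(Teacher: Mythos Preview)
Your proposal is correct. The paper does not give its own proof of this lemma---it simply cites \cite[Lemma 2.4]{Shan}---and your argument is essentially the standard one: pass to modules over a finite-dimensional algebra via a projective generator, realize $KZ$ as multiplication by an idempotent $e$, identify right exact functors with bimodules via Eilenberg--Watts, and then use the GGOR result that $KZ$ is fully faithful on projectives to conclude that restriction of bimodule maps is a bijection.
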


Assume that $(W', S') \subset (W, S)$ is a Coxeter subsystem of corank 1.  The following lemma describing the canonical complement $N_{W'}$ to $W'$ in $N_W(W')$ will be useful in what follows:

\begin{lemma} \label{coxeter-normalizer-lemma} Either $W'$ is self-normalizing in $W$ or has index 2 in its normalizer $N_W(W')$.  In the latter case, the longest elements $w_0$ and $w_0'$ of $W$ and $W'$, respectively, commute, and the canonical complement $N_{W'}$ to $W'$ in $N_W(W')$ is $$N_{W'} = \{1, w_0w_0'\}.$$\end{lemma}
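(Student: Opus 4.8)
The plan is to leverage the corank-$1$ hypothesis: since $\hfr^W = 0$, corank $1$ forces $\dim_\real \hfr^{W'}_\real = 1$. First I recall that $W'$, being the pointwise stabilizer of $\hfr^{W'}$, is exactly the kernel of the action of $N_W(W')$ on $\hfr^{W'}$; hence the canonical complement $N_{W'}$ — the setwise stabilizer in $N_W(W')$ of the fundamental chamber of $W'$, as in Howlett \cite{How} — injects into $GL(\hfr^{W'}_\real) \cong \real^\times$. A finite subgroup of $\real^\times$ is $\{1\}$ or $\{\pm 1\}$, which gives the dichotomy; in the second case $N_{W'} = \{1, n\}$ where $n$ is an involution acting on $\hfr^{W'}_\real$ by $-1$.

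Now suppose the index is $2$, and let $n$ be the nontrivial element of $N_{W'}$; the main task is to show $n = w_0 w_0'$. Write $S' = S \setminus \{s\}$ and let $e$ span $\hfr^{W'}_\real$ with $\langle \alpha_s, e \rangle > 0$; as $e \in \hfr^{W'}_\real$ we have $\langle\alpha_j, e\rangle = 0$ for all $j \in S'$, so a root $\beta = \sum_i c_i\alpha_i$ satisfies $\langle\beta, e\rangle = c_s\langle\alpha_s, e\rangle$, whence $\langle\beta, e\rangle$ is positive exactly on $\Phi^+ \setminus \Phi_{S'}^+$ and vanishes on $\Phi_{S'}$. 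Because $n$ stabilizes the fundamental chamber of $W'$ it permutes the simple roots $\{\alpha_j : j \in S'\}$, and hence stabilizes $\Phi_{S'}$ and $\Phi_{S'}^+$; because $n$ is an isometry with $ne = -e$, any $\beta \in \Phi^+ \setminus \Phi_{S'}^+$ is sent by $n$ to a root pairing negatively with $e$, i.e.\ with negative $\alpha_s$-coefficient, hence to a negative root. Consider $m := n w_0'$. Since $w_0'$ is the longest element of $W' = W_{S'}$ it sends $\Phi_{S'}^+$ to $\Phi_{S'}^-$ and permutes $\Phi^+ \setminus \Phi_{S'}^+$; combining with the previous sentence, $m$ sends $\Phi_{S'}^+$ into $\Phi_{S'}^- \subseteq \Phi^-$ and $\Phi^+ \setminus \Phi_{S'}^+$ into $\Phi^-$. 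Thus $m(\Phi^+) \subseteq \Phi^-$, which forces $m = w_0$, and therefore $n = w_0(w_0')^{-1} = w_0 w_0'$.

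The remaining assertions then follow formally: $n$ is an involution, so $(w_0 w_0')^2 = 1$, i.e.\ $w_0 w_0' w_0 = (w_0')^{-1} = w_0'$, and multiplying by $w_0$ (using $w_0^2 = 1$) gives $w_0 w_0' = w_0' w_0$; and $N_{W'} = \{1, n\} = \{1, w_0 w_0'\}$. I expect the only delicate point to be showing that $n$ inverts every root outside $\Phi_{S'}$ — this is precisely where corank $1$ is used, via the single vector $e$ that both spans $\hfr^{W'}_\real$ and reads off the $\alpha_s$-coefficient of a root. Once the identity $n w_0' = w_0$ is in hand, the rest uses only standard facts about longest elements and reduced expressions in finite Coxeter groups (see e.g.\ \cite{GP}).
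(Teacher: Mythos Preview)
Your proof is correct and follows essentially the same approach as the paper. Both arguments hinge on the observation that the nontrivial element $n \in N_{W'}$ acts by $-1$ on the one-dimensional space $\hfr^{W'}_\real$, preserves $\Phi_{S'}^+$, and sends $\Phi^+ \setminus \Phi_{S'}^+$ into $\Phi^-$; the paper then identifies the inversion set of $n$ directly as $\Phi^+ \setminus \Phi_{S'}^+$ and concludes $n = w_0w_0'$, whereas you equivalently show $n w_0'$ sends all of $\Phi^+$ to $\Phi^-$ and hence equals $w_0$. The only cosmetic difference is that the paper cites an earlier lemma for the index dichotomy while you rederive it via the embedding $N_{W'} \hookrightarrow GL(\hfr^{W'}_\real)$.
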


\begin{proof} The first statement follows from Lemma \ref{corank-1-refs}.  Let $w \in N_{W'}$ be the nontrivial element.  Let $\Phi \subset \hfr_{\real}$ denote the root system of $W$, let $\Phi_{W'} \subset \Phi$ denote the root system of $W'$, let $\alpha_1, ..., \alpha_r$ be an ordering of the simple roots in $\Phi$ so that $\alpha_1, ..., \alpha_{r - 1}$ are the simple roots in $\Phi_{W'}$, and let $\Phi^+ \subset \Phi$ and $\Phi^+_{W'} \subset \Phi_{W'}$ denote the respective subsets of positive roots.  By definition of $N_{W'}$, $w(\Phi_{W'}^+) = \Phi_{W'}^+$.  From the proof of Lemma \ref{corank-1-refs}, we see that $w$ acts by $-1$ in $\hfr_\real^{W'}$.  As the remaining simple root $\alpha_r$ is the unique simple root with nonzero component in $\hfr_\real^{W'}$ with respect to the decomposition $\hfr_\real = \hfr_{\real, W'} \oplus \hfr_\real^{W'}$ and as every positive root $\alpha \in \Phi^+$ is uniquely of the form $\alpha = \sum_i n_i\alpha_i$ for some nonnegative integers $n_i \geq 0$, it follows that $w(\Phi^+\backslash\Phi^+_{W'}) \subset \Phi^-$.  It follows that the inversion set of $w$ is precisely $\Phi^+\backslash\Phi^+_{W'}$ and hence that $w = w_0w_0'$.  As $w, w_0$, and $w_0'$ are involutions, it follows that $w_0$ and $w_0'$ commute.\end{proof}

As in previous sections, let $c : S \rightarrow \cplx$ be a $W$-invariant function and let $L$ be an irreducible finite-dimensional representation of $H_c(W', \hfr_{W'})$.  Let $w_0$ and $w_0'$ denote the longest elements in $W$ and $W'$, respectively.  We will assume that $W'$ is not self-normalizing in $W$, and by Lemma \ref{coxeter-normalizer-lemma} it follows that $w_0$ and $w_0'$ commute, that $W'$ is index 2 in $N_{W'}(W)$, and that $N_{W'} = \{1, w_0w_0'\}.$  We will assume that the involution $w_0w_0'$ fixes the isomorphism class of $L$ so that $I = N_{W'}$ and the monodromy operator $T_{W'}$ satisfies a nontrivial quadratic relation as in Theorem \ref{general-presentation-theorem}.

The following observation in this setting will be central to our approach as it will allow for the explicit computation, via computations in the Hecke algebra $\mathsf{H}_q(W)$, of the eigenvalues of monodromy in the local systems arising from the functors $KZ_L$.  This lemma should be regarded as a generalization to Coxeter groups of arbitrary type of the calculation appearing in \cite[Lemma 4.14]{GoMa}.

\begin{lemma} \label{monodromy-generator-lemma} Let $\mathcal{C}_{W'} \subset \hfr_{W', \real}$ be the open fundamental Weyl chamber associated to $(W', S')$, let $\mathcal{C}_W \subset \hfr_{\real}$ be the open fundamental Weyl chamber associated to $(W, S)$, and choose a basepoint $b = (b', b'') \in \mathcal{C}_{W'} \times \hfr^{W'}_{reg}$ lying in $\mathcal{C}_W$.  As $\mathcal{C}_{W'}$ is contractible and stable under $w_0w_0'$, the pair $(\gamma, T_{W'})$, where $\gamma$ is any path in $\mathcal{C}_{W'}$ from $b'$ to $w_0w_0'b'$ and $T_{W'}$ is the half-loop in $\hfr^{W'}_{reg}$ lifting the canonical generator of monodromy in $\pi_1(\hfr^{W'}_{reg}/N_{W'})$, determines an element in $\pi_1(((\hfr_{W'})_{reg} \times \hfr^{W'}_{reg})/N_{W'})$ that does not depend on the choice of $\gamma$.  The image of this element under the natural map $$\iota_{W', 1} : \pi_1(((\hfr_{W'})_{reg} \times \hfr^{W'}_{reg})/N_{W'}) \rightarrow \pi_1(\hfr_{reg}/W) = B_W$$ is $T_{w_0w_0'}$. \end{lemma}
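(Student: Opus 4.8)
The plan is to unwind the definition of the element determined by the pair $(\gamma, T_{W'})$ as an honest loop in $\hfr_{reg}/W$ and to compute its homotopy class by an explicit (but essentially combinatorial) path computation. First I would set up coordinates: write $b = (b', b'')$ with $b' \in \mathcal{C}_{W'} \subset \hfr_{W', \real}$ and $b'' \in \hfr^{W'}_{reg}$, with $b$ lying in the fundamental chamber $\mathcal{C}_W$ of $(W, S)$, as in the hypothesis. The element in $\pi_1(((\hfr_{W'})_{reg} \times \hfr^{W'}_{reg})/N_{W'})$ attached to the pair $(\gamma, T_{W'})$ is, by the standard description of fundamental groups of quotients by free actions, the homotopy class of the concatenation of a lift of $\gamma$ (a path in $\mathcal{C}_{W'} \times \{b''\}$ from $b$ to $(w_0 w_0' b', b'')$) with a lift of the half-loop $T_{W'}$ (a path in $\{b'\} \times \hfr^{W'}_{reg}$, or a small perturbation thereof, from $(b', b'')$ to $(b', w_0 w_0' b'')$ winding once "halfway" around the deleted hyperplanes, composed with the deck transformation by $w_0 w_0'$). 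The key point is that $\iota_{W', 1}$ is (up to the deformation retraction identifying $(\hfr_{W'})_{reg} \times \hfr^{W'}_{reg}$ with a neighborhood of $\hfr^{1}_{reg} = (\hfr_{W'})_{reg}$ inside $\hfr_{reg}$, as in Section \ref{transitivity}) simply the map on $\pi_1$ induced by this inclusion, so I just need to identify the resulting loop in $\hfr_{reg}/W$ with the loop representing $T_{w_0 w_0'}$.

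Next, the main computation. Recall that $B_W = \pi_1(\hfr_{reg}/W, b)$ and that $T_w$ for $w \in W$ is the image, under the quotient map $\hfr_{reg} \to \hfr_{reg}/W$, of a path in $\hfr_{reg}$ from $b$ to $wb$ that stays "positive" in a suitable sense — concretely, Brieskorn's and Deligne's description: for $w$ with reduced expression $w = s_{i_1} \cdots s_{i_k}$, the element $T_w$ is obtained by a path that crosses precisely the walls corresponding to the inversion set of $w$, each exactly once, in a compatible order. So I would argue: the concatenated path above, pushed into $\hfr_{reg}$, goes from $b$ to $w_0 w_0' b$, and it crosses exactly the hyperplanes in the inversion set of $w_0 w_0'$. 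The path $\gamma$ lifted into $\hfr_{W', \real} \times \{b''\}$ stays inside a contractible chamber region and hence crosses no hyperplane of the full arrangement whose restriction to $\hfr_{W'}$ is a hyperplane — i.e. it does not cross the "$W'$-walls" and in fact by positivity contributes nothing to the inversion count from $\Phi_{W'}$; meanwhile, from the proof of Lemma \ref{coxeter-normalizer-lemma}, the inversion set of $w_0 w_0'$ is exactly $\Phi^+ \setminus \Phi^+_{W'}$. The half-loop $T_{W'}$ part, lifted into $\{b'\} \widehat{\times} \hfr^{W'}_{reg}$ and crossed with the deck transformation by $w_0 w_0'$, is precisely the path that realizes the crossing of the hyperplanes "transverse to $\hfr_{W'}$", i.e. the roots in $\Phi^+ \setminus \Phi^+_{W'}$, each once, since $w_0 w_0'$ acts as $-1$ on $\hfr_\real^{W'}$ and the half-loop winds halfway around $\hfr^{W'}_{reg}/N_{W'}$. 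Combining: the concatenation crosses exactly the roots of $\Phi^+ \setminus \Phi^+_{W'}$, each once, in an order compatible with positivity, so by Brieskorn–Deligne normal form it represents $T_{w_0 w_0'} \in B_W$.

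To make the middle step rigorous I would phrase it using the explicit model of $\iota^*_{W', 1}$ from Section \ref{transitivity}: choose $\epsilon > 0$, work inside $(\hfr_{W'})^{1, \epsilon}_{reg} \times \hfr^{1}_{\epsilon\text{-reg}} = (\hfr_{W'})_{reg}^{\text{small}} \times \hfr^{W'}_{\epsilon\text{-reg}} \subset \hfr_{reg}$, and build the concrete representative path as a concatenation $\gamma \ast \tau$ where $\tau(t) = (b', \eta(t))$ with $\eta$ a suitable half-arc in $\hfr^{W'}_{reg}$ from $b''$ to $w_0 w_0' b''$, followed by applying $w_0 w_0'$. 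Then I would invoke the standard fact (as used in the proof of Theorem \ref{general-presentation-theorem} and in \cite[Proposition 2.16]{BMR}, \cite{BMR}) that such "positive" paths compute the $T_w$-basis, and more precisely that a path from $b$ to $wb$ whose lift crosses each hyperplane in the inversion set of $w$ exactly once and crosses no other hyperplane, with the crossings appearing in an order realizing some reduced word, descends to $T_w$. The one genuine subtlety — and the step I expect to be the main obstacle — is checking that $\gamma$ really contributes nothing to the element (beyond moving the basepoint within the contractible chamber) and that the combined crossing pattern matches a reduced word for $w_0 w_0'$ rather than some non-reduced or reordered word that would introduce extra factors of the $T_{s_i}$'s; this requires knowing that the decomposition $\hfr_\real = \hfr_{W', \real} \oplus \hfr^{W'}_\real$ is compatible with the positivity/separation used in the Brieskorn–Deligne description, which ultimately rests on the fact that $b \in \mathcal{C}_W$ and that $\mathcal{C}_W \cap (\hfr_{W',\real} \times \hfr^{W'}_{\real,reg})$ projects into $\mathcal{C}_{W'} \times (\text{a chamber of } I^{ref})$. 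Once that compatibility is pinned down, the identification with $T_{w_0 w_0'}$ follows, and the independence of the choice of $\gamma$ is immediate since $\mathcal{C}_{W'}$ is contractible and $w_0 w_0'$-stable.
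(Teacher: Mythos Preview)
Your proposal is correct and follows essentially the same approach as the paper. The paper resolves the subtlety you flag by writing the path explicitly as $p(t) = (\gamma(t), Re^{\pi i t}b'')$ for large $R$ and then computing $\alpha \circ p$ for each positive root: for $\alpha \in \Phi^+_{W'}$ this stays on $\real^+$ (since $\gamma(t) \in \mathcal{C}_{W'}$ and $\alpha(b'') = 0$), while for $\alpha \in \Phi^+ \setminus \Phi^+_{W'}$ it traces a positive half-loop from $\real^+$ to $\real^-$ through the upper half-plane, which directly yields $T_{w_0 w_0'}$ without any need to match a particular reduced word.
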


\begin{proof} That the element of $\pi_1(((\hfr_{W'})_{reg} \times \hfr^{W'}_{reg})/N_{W'})$ determined by the pair $(\gamma, T_{W'})$ does not depend on the choice of $\gamma$ follows immediately from the contractibility of the fundamental Weyl chamber $\mathcal{C}_{W'}$.  By definition of $\iota_{W', 1}$, for any sufficiently large real number $R > 0$ the image $\iota_{W', 1}(\gamma, T_{W'})$ in $\pi_1(\hfr_{reg}/W)$ is represented by the image of the path $$p : [0, 1] \rightarrow \hfr_{reg}$$ $$p(t) = (\gamma(t), Re^{\pi it}b'')$$ under the natural projection $\hfr_{reg} \rightarrow \hfr_{reg}/W$.  As $w_0w_0'b'' = -b''$, it follows that $p$ is a path from the point $p(0) = (b', Rb'') \in \mathcal{C}_W$ to the point $p(1) = w_0w_0'p(0) \in w_0w_0'\mathcal{C}_W.$  For each positive root $\alpha \in \Phi^+$, let $H_\alpha := \ker(\alpha) \subset \hfr$ be the associated reflection hyperplane.  That $p$ represents the element $T_{w_0w_0'}$ follows from the observation that $p$ traverses a positively-oriented (with respect to the complex structure) half-loop about each hyperplane $H_\alpha$ for roots $\alpha \in \Phi^+\backslash\Phi^+_{W'}$ while $p$ does not encircle any of the remaining hyperplanes $H_\alpha$ for roots $\alpha \in \Phi^+_{W'}$.  More precisely, for roots $\alpha \in \Phi^+\backslash\Phi^+_{W'}$ the composition $\alpha \circ p : [0, 1] \rightarrow \cplx^\times$ determines a path from the positive real axis $\real^+$ to the negative real axis $\real^-$ lying entirely in the upper half-space $\{z \in \cplx^\times : \text{Re}(z) \geq 0\}$.  For roots $\alpha \in \Phi^+_{W'}$, the composition $\alpha \circ p : [0, 1] \rightarrow \cplx^\times$ determines a path lying entirely on $\real^+$, as $\gamma(t) \in \mathcal{C}_{W'}$ and $\alpha(b'') = 0$.  The equality $\iota_{W', 1}(\gamma, T_{W'}) = T_{w_0w_0'}$ follows.\end{proof}

Let $\mon(T_{W})$ denote the isomorphism of functors $$\res_{W'}^W \rightarrow \tw_{w_0w_0'} \circ \res_{W'}^W$$ arising from monodromy along the generator of monodromy $T_{W}$ in the local system $\underline{\res}_{W'}^W$, where $\underline{\res}_{W'}^W$ is the partial $KZ$ functor recalled in Section \ref{partial-KZ}.  The following is an immediate corollary of Lemma \ref{monodromy-generator-lemma} and the transitivity result of Gordon-Martino recalled in Theorem \ref{trans-theorem}:

\begin{lemma} \label{monodromy-lift-lemma} Multiplication by $T_{w_0w_0'}$ defines an isomorphism $$T_{w_0w_0'} : \hres_{W'}^W \rightarrow \htw_{w_0w_0'} \circ \hres_{W'}^W$$ of functors $\mathsf{H}_q(W')\mhyphen\text{mod}_{f.d.} \rightarrow \mathsf{H}_q(W')\mhyphen\text{mod}_{f.d.}.$  Furthermore, $\mon(T_{W})$ is the lift to $\oscr_c(W, \hfr)$ of $T_{w_0w_0'}$ in the sense that, with respect to the identifications $KZ \circ \res_{W'}^W = \hres_{W'}^W \circ KZ$ and $KZ \circ \tw_{w_0w_0'} = \htw_{w_0w_0'} \circ KZ$, we have an equality $$1_{KZ}\mon(T_{W}) = T_{w_0w_0'}1_{KZ}$$ of isomorphisms of functors $$KZ \circ \res_{W'}^W \rightarrow KZ \circ \tw_{w_0w_0'} \circ \res_{W'}^W.$$\end{lemma}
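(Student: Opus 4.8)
The first assertion I can dispose of by a direct computation in $\mathsf{H}_q(W)$. Write $d:=w_0w_0'$. By Lemma \ref{coxeter-normalizer-lemma} the element $d$ lies in $N_{W'}$ and, while normalizing $W'$, preserves $\Phi^+_{W'}$, so conjugation by $d$ is a length-preserving diagram automorphism of $(W',S')$; let $\sigma$ denote the induced algebra automorphism $T_w\mapsto T_{dwd^{-1}}$ of $\mathsf{H}_q(W')$, so that $\htw_{w_0w_0'}$ is transfer of structure along $\sigma$. For an $\mathsf{H}_q(W)$-module $M$, left multiplication by $T_d$ will carry $\hres_{W'}^W M$ to $\htw_{w_0w_0'}\hres_{W'}^W M$ exactly because $T_d T_w = T_{dwd^{-1}}\,T_d$ in $\mathsf{H}_q(W)$ for every $w\in W'$; and both sides of this identity equal $T_{dw}$ once one knows the length facts $l(dw)=l(d)+l(w)$, $l(w'd)=l(w')+l(d)$ for $w'\in W'$, and $l(dwd^{-1})=l(w)$. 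These follow from the standard identities $l(w_0u)=l(w_0)-l(u)$ for $u\in W$ and $l(w_0'v)=l(w_0')-l(v)$ for $v\in W'$ together with the commutativity of $w_0,w_0'$ from Lemma \ref{coxeter-normalizer-lemma}; e.g.\ $l(dw)=l(w_0(w_0'w))=l(w_0)-l(w_0')+l(w)=l(d)+l(w)$ and $l(w'd)=l((w'w_0')w_0)=l(w_0)-l(w_0')+l(w')=l(d)+l(w')$, while $l(dwd^{-1})=l(w)$ because $\sigma$ is a diagram automorphism. The resulting map is visibly natural in $M$ and invertible since $T_d$ is a unit, which proves the first assertion.

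For the second assertion the plan is to apply the Gordon--Martino transitivity isomorphism (Theorem \ref{trans-theorem}) to the chain $1\subset W'\subset W$, in which case $\underline{\res}_1^W$ is the ordinary $KZ$ functor and $\underline{\res}_1^{W'}$ is the $KZ$ functor of $H_c(W',\hfr_{W'})$. Theorem \ref{trans-theorem} then identifies $\iota_{W',1}^*\circ{\downarrow}\circ\underline{\res}_1^W$ with $(\underline{\res}_1^{W'}\boxtimes\id)\circ{\downarrow}\circ\underline{\res}_{W'}^W$ as equivariant local systems on a deformation retract of $(\hfr_{W'})_{reg}\times\hfr^{W'}_{reg}$, and I would evaluate the (common) monodromy of these along the element of $\pi_1\big(((\hfr_{W'})_{reg}\times\hfr^{W'}_{reg})/N_{W'}\big)$ represented by the pair $(\gamma,T_W)$ produced in Lemma \ref{monodromy-generator-lemma}, where $\gamma$ is a path in the fundamental chamber $\mathcal{C}_{W'}$ from $b'$ to $w_0w_0'\,b'$ and $T_W$ is the half-loop lifting the canonical generator of monodromy in $\hfr^{W'}_{reg}$.

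On the $\iota_{W',1}^*\circ\underline{\res}_1^W$ side, monodromy is computed by pushing this loop forward along $\iota_{W',1}$; by Lemma \ref{monodromy-generator-lemma} its image is $T_{w_0w_0'}\in B_W$, and since $KZ$ factors through $\mathsf{H}_q(W)$ with the generator of monodromy $T_w$ acting by $T_w$, the monodromy operator on the fiber $KZ(M)$ is left multiplication by $T_{w_0w_0'}$ — which, by the first assertion, is precisely $T_{w_0w_0'}1_{KZ}$ viewed as a morphism $\hres_{W'}^W\circ KZ\to\htw_{w_0w_0'}\circ\hres_{W'}^W\circ KZ$. On the $(\underline{\res}_1^{W'}\boxtimes\id)\circ\underline{\res}_{W'}^W$ side, the monodromy along $(\gamma,T_W)$ splits as the contribution of $T_W$ in the $\hfr^{W'}_{reg}$-direction, which by definition is $\mon(T_W)$ (followed by $\underline{\res}_1^{W'}$), and that of $\gamma$ in the $(\hfr_{W'})_{reg}$-direction; since $\gamma$ lies in the contractible chamber $\mathcal{C}_{W'}$ and meets no reflection hyperplane of $W'$, parallel transport of the $W'$-$KZ$ local system along $\gamma$ is the canonical identification of fibers, so this contribution is exactly the equivariant-structure isomorphism realizing $\underline{\res}_1^{W'}\circ\tw_{w_0w_0'}\cong\htw_{w_0w_0'}\circ\underline{\res}_1^{W'}$. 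Combining with Shan's commutation isomorphism $KZ(W',\hfr_{W'})\circ\res_{W'}^W\cong\hres_{W'}^W\circ KZ$, the monodromy on this side becomes $1_{KZ}\mon(T_W)$, again as a morphism $\hres_{W'}^W\circ KZ\to\htw_{w_0w_0'}\circ\hres_{W'}^W\circ KZ$; Theorem \ref{trans-theorem} then forces $1_{KZ}\mon(T_W)=T_{w_0w_0'}1_{KZ}$. (Since $\res_{W'}^W$ is exact with exact adjoint $\ind_{W'}^W$, both $\res_{W'}^W$ and $\tw_{w_0w_0'}\circ\res_{W'}^W$ are right exact and preserve projectives, so by Lemma \ref{Shan-lemma} this equality moreover determines $\mon(T_W)$ uniquely, which is how the statement gets used later.)

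The hard part is the bookkeeping in the last paragraph: tracking the three local-system directions and the two twists $\tw_{w_0w_0'}$ and $\htw_{w_0w_0'}$ correctly through Theorem \ref{trans-theorem} and through $\iota_{W',1}^*$, and in particular checking that the $\gamma$-contribution really is the canonical equivariant identification and not extra monodromy — equivalently, that $\gamma$ encircles none of the reflection hyperplanes of $W'$, which is the reason it is chosen inside $\mathcal{C}_{W'}$ in Lemma \ref{monodromy-generator-lemma}. One also has to pin down orientations so that the generator of monodromy $T_{w_0w_0'}\in B_W$ acts by $T_{w_0w_0'}\in\mathsf{H}_q(W)$ under $KZ$, consistently with the positively-oriented half-loop conventions used in the proof of Lemma \ref{monodromy-generator-lemma}.
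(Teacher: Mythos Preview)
Your proof is correct and follows the same approach as the paper: the first assertion via the commutation relation $T_{w_0w_0'}T_s = T_{\sigma(s)}T_{w_0w_0'}$ for $s\in S'$ (which you extend to all $w\in W'$ by the length computations), and the second assertion via the Gordon--Martino transitivity theorem applied to the chain $1\subset W'\subset W$ together with Lemma~\ref{monodromy-generator-lemma}. The paper's proof is a two-sentence pointer to exactly these two ingredients; you have simply unpacked the bookkeeping that the paper leaves implicit.
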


\begin{remark} When the meaning is clear, we denote the $KZ$ functors defined for $\oscr_c(W, \hfr)$ and $\oscr_c(W', \hfr_{W'})$ each by $KZ$.\end{remark}

\begin{proof}[Proof of Lemma \ref{monodromy-lift-lemma}] The first statement follows from the observation that the functor $\hres_{W'}^W$ is represented by the $\mathsf{H}_q(W')\mhyphen \mathsf{H}_q(W)$-bimodule $\mathsf{H}_q(W)$ and the observation that $T_{w_0w_0'}T_s = T_{\sigma_{w_0w_0'}(s)}T_{w_0w_0'}$ for all $s \in S'$, where $\sigma_{w_0w_0'}$ is the diagram automorphism of $(W', S')$ induced by conjugation by $w_0w_0'$ in $W$.  The second statement is an immediate consequence of Lemma \ref{monodromy-generator-lemma} and Theorem \ref{trans-theorem}.\end{proof}

\begin{definition} \label{z-defs} Let $X_{W'} \subset W$ be the set of minimal length left $W'$-coset representatives, and let $\{z_d\}_{d \in X_{W'}}$ be the unique set of elements $z_d \in \mathsf{H}_q(W')$ such that $$T_{w_0w_0'}^2 = \sum_{d \in X_{W'}} z_dT_d.$$\end{definition}

We are interested in the elements $z_1, z_{w_0w_0'} \in \mathsf{H}_q(W')$ arising from the left $W'$-cosets that are also right $W'$-cosets.  We now establish important properties that these elements satisfy:

\begin{lemma} \label{z-props-lemma} The element $z_{w_0w_0'}$ commutes with $T_{w_0w_0'}$ and satisfies the relation $$xz_{w_0w_0'} = z_{w_0w_0'}\sigma(x)$$ for all $x \in \mathsf{H}_q(W')$, where $\sigma : \mathsf{H}_q(W') \rightarrow \mathsf{H}_q(W')$ is the automorphism of $\mathsf{H}_q(W')$ arising from the diagram automorphism of $(W', S')$ obtained by conjugation by $w_0w_0'$ in $W$.  In particular, multiplication by $z_{w_0w_0'}$ defines a morphism $$z_{w_0w_0'} : \id \rightarrow \htw_{w_0w_0'}$$ of functors $\mathsf{H}_q(W')\mhyphen\text{mod}_{f.d.} \rightarrow \mathsf{H}_q(W')\mhyphen\text{mod}_{f.d.}.$  Furthermore, $$z_1 = q^{l(w_0w_0')},$$ where, in the unequal parameter case, $q^{l(w_0w_0')}$ denotes the product $q^{l(w_0w_0')} := \prod_{s \in S/W} q_s^{l_s(w_0w_0')}$ over the conjugacy classes of reflections in $W$, where $l_s : W \rightarrow \ints^{\geq 0}$ is the length function of $W$ attached to the conjugacy class of $s$ and $q_s \in \cplx$ is the parameter associated to the conjugacy class of $s$.\end{lemma}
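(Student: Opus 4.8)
Write $w:=w_0w_0'$ and $z:=z_{w_0w_0'}$. The plan is to extract everything from the way right multiplication by $T_w$ interacts with the parabolic subalgebra $\mathsf{H}_q(W')\subseteq\mathsf{H}_q(W)$, so I first record the relevant straightening facts. By Lemma \ref{coxeter-normalizer-lemma}, $w$ is an involution normalizing $W'$ and lying in the canonical complement $N_{W'}$; from the proof of that lemma $w$ stabilizes $\Phi_{W'}^+$, so $w$ has minimal length in both cosets $W'w$ and $wW'$ (whence $w\in X_{W'}$ and $T_w$ is one of the basis elements $T_d$, $d\in X_{W'}$), the equalities $l(w'w)=l(ww')=l(w)+l(w')$ hold for all $w'\in W'$, and the restriction to $W'$ of conjugation by $w$ is a length-preserving diagram automorphism $\sigma$ of $(W',S')$ with $\sigma^2=\id$. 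The resulting identities $T_wT_s=T_{\sigma(s)}T_w$ for $s\in S'$ give $xT_w=T_w\sigma(x)$ and $T_wx=\sigma(x)T_w$ for all $x\in\mathsf{H}_q(W')$, and hence $xT_w^2=T_w\sigma(x)T_w=T_w^2\sigma^2(x)=T_w^2x$, i.e.\ $T_w^2$ centralizes $\mathsf{H}_q(W')$.

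For the intertwining relation $xz=z\sigma(x)$ I would compare the ``$T_w$-components'' of the two sides of $xT_w^2=T_w^2x$ in the decomposition $\mathsf{H}_q(W)=\bigoplus_{d\in X_{W'}}\mathsf{H}_q(W')T_d$, where the $T_w$-component means the $\mathsf{H}_q(W')$-coefficient of the basis element $T_w$. On the left, $xT_w^2=\sum_d(xz_d)T_d$ has $T_w$-component $xz$. On the right, $T_w^2x=\sum_d z_d(T_dx)$; since each span $\{T_v:v\in W'dW'\}$ is stable under left and right multiplication by $\mathsf{H}_q(W')$, the summand $z_d(T_dx)$ is supported on $\{T_v:v\in W'dW'\}$, and because $w$ normalizes $W'$ one has $w\in W'dW'$ only for $d=w$ (in which case $W'dW'=W'w$); for $d=w$, $z_w(T_wx)=z\sigma(x)T_w$, contributing $T_w$-component $z\sigma(x)$. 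Hence $xz=z\sigma(x)$, equivalently $zx=\sigma(x)z$; the latter form says exactly that $m\mapsto zm$ is, for each $M$, a homomorphism $M\to\htw_{w_0w_0'}(M)$ of $\mathsf{H}_q(W')$-modules, and naturality in $M$ is immediate — this is the asserted morphism of functors.

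To prove that $z$ commutes with $T_w$ it suffices, given $zT_w=T_w\sigma(z)$, to show $\sigma(z)=z$, and I would do this by a short trace computation. The $T_w$-component $zT_w$ of $T_w^2$ equals $\sum_{u\in W'}a_uT_{uw}$ with $z=\sum_u a_uT_u$, so $a_u$ is the coefficient of $T_{uw}$ in $T_w^2$; using the symmetrizing trace $\tau$ on $\mathsf{H}_q(W)$ (normalized by $\tau(T_v)=\delta_{v,1}$, with $\tau(T_aT_b)=\delta_{b,a^{-1}}q_a$ and the coefficient of $T_v$ in any $h$ equal to $q_v^{-1}\tau(hT_{v^{-1}})$, where $q_v:=\prod_{s\in S/W}q_s^{l_s(v)}$ is multiplicative along length-additive products and satisfies $q_{\sigma(v)}=q_v=q_{v^{-1}}$) and $T_wT_{u^{-1}}=T_{wu^{-1}}$ one gets $a_u=(q_uq_w)^{-1}\tau(T_w^3T_{u^{-1}})$. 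Now cyclicity of $\tau$ together with the straightening relation $T_{u^{-1}}T_w=T_wT_{\sigma(u)^{-1}}$ gives $\tau(T_w^3T_{u^{-1}})=\tau(T_{u^{-1}}T_w^3)=\tau(T_wT_{\sigma(u)^{-1}}T_w^2)=\tau(T_{\sigma(u)^{-1}}T_w^3)=\tau(T_w^3T_{\sigma(u)^{-1}})$, so $a_{\sigma(u)}=a_u$ for all $u\in W'$, i.e.\ $\sigma(z)=z$; then $zT_w=T_w\sigma(z)=T_wz$.

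Finally, the formula $z_1=q^{l(w_0w_0')}$ is the same kind of computation, but simpler. Since $z_1$ is the $d=1$ component of $T_w^2=\sum_{d\in X_{W'}}z_dT_d$, it is exactly the part of $T_w^2$ supported on $\{T_u:u\in W'\}$; for $u\in W'$ the coefficient of $T_u$ in $T_w^2$ equals $q_u^{-1}\tau(T_w^2T_{u^{-1}})=q_u^{-1}\tau(T_wT_{wu^{-1}})$ (using $T_wT_{u^{-1}}=T_{wu^{-1}}$), which by $\tau(T_aT_b)=\delta_{b,a^{-1}}q_a$ and $w^{-1}=w$ vanishes unless $u=1$ and equals $q_w$ when $u=1$. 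Hence $z_1=q_w\cdot 1=q^{l(w_0w_0')}$. No step here is deep; the points that require care are the bookkeeping with the double-coset-type stability in the second paragraph (where $w\in N_W(W')$ is essential, collapsing $W'wW'$ to the single coset $W'w$) and the combined use of straightening and cyclicity of $\tau$ to produce the symmetry $a_{\sigma(u)}=a_u$, all while keeping the unequal-parameter normalization $q_v=\prod_{s\in S/W}q_s^{l_s(v)}$ straight throughout.
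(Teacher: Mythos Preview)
Your proof is correct. For the intertwining relation $xz=z\sigma(x)$ it essentially coincides with the paper's: both arguments rest on the fact that $T_{w_0w_0'}^2$ centralizes $\mathsf{H}_q(W')$ and then read off the $W'w_0w_0'$-component of $xT_{w_0w_0'}^2=T_{w_0w_0'}^2x$ using double-coset stability.

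The arguments diverge for the commutation $[z,T_{w_0w_0'}]=0$ and for $z_1=q^{l(w_0w_0')}$. The paper exploits the factorization $T_{w_0w_0'}=T_{w_0}T_{w_0'}^{-1}$ together with the centrality of $T_{w_0}^2$ in $\mathsf{H}_q(W)$ and of $T_{w_0'}^2$ in $\mathsf{H}_q(W')$ (citing \cite{BrSa, De-center}): conjugation by each of $T_{w_0}$ and $T_{w_0'}$ preserves the relevant coset structure, so conjugation by $T_{w_0w_0'}$ fixes the top term $zT_{w_0w_0'}$, yielding $\sigma(z)=z$; and the $z_1$ computation is reduced, after left-multiplying by $T_{w_0'}$, to identifying the $\mathsf{H}_q(W')$-component of $T_{w_0}T_{w_0w_0'}$. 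You bypass this factorization entirely and instead use the symmetrizing trace $\tau$: cyclicity of $\tau$ combined with the straightening relation $T_{u^{-1}}T_w=T_wT_{\sigma(u)^{-1}}$ gives the coefficient symmetry $a_{\sigma(u)}=a_u$ directly, and the orthogonality $\tau(T_aT_b)=\delta_{b,a^{-1}}q_a$ picks out $z_1=q_w$ in one line. Your trace argument is more self-contained (it never invokes the special centrality of $T_{w_0}^2$) and handles the unequal-parameter normalization $q_v=\prod_s q_s^{l_s(v)}$ transparently via $q_{\sigma(v)}=q_v$; the paper's argument is a little shorter to state but leans on that auxiliary centrality result.
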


\begin{proof} As $w_0$ and $w_0'$ commute, we have $T_{w_0}T_{w_0'}^{-1} = T_{w_0w_0'} = T_{w_0'w_0} = T_{w_0'}^{-1}T_{w_0}.$  Every simple reflection in $S$ lies in both the right and left descent sets of $w_0$, and similarly every simple reflection in $S'$ lies in both the right and left descent sets of $w_0'$.  It follows that $T_{w_0}T_s = T_{\sigma_{w_0}(s)}T_{w_0}$ for every $s \in S$, where $\sigma_{w_0}$ is the diagram automorphism of $(W, S)$ obtained by conjugation by $w_0$, and similarly $T_{w_0'}T_s = T_{\sigma_{w_0'}(s)}T_{w_0'}$ for all $s \in S'$, where $\sigma_{w_0'}$ is the diagram automorphism of $(W', S')$ obtained by conjugation by $w_0'$.  It follows that $T_{w_0}^2$ is central in $\mathsf{H}_q(W)$ and $T_{w_0'}^2$ is central in $\mathsf{H}_q(W')$ (this is well known \cite{BrSa, De-center}) and that $T_{w_0w_0'}x = \sigma(x)T_{w_0w_0'}$ for all $x \in \mathsf{H}_q(W')$.    In particular, the element $T_{w_0w_0'}^2 = T_{w_0}^2T_{w_0'}^{-2} \in \mathsf{H}_q(W)$ considered above centralizes $\mathsf{H}_q(W')$.  As multiplication by elements of $\mathsf{H}_q(W')$ on the right or left respects the decomposition of $T_{w_0w_0'}^2$ by $W'$-double cosets and as $T_{w_0w_0'}$ normalizes $\mathsf{H}_q(W')$, it follows that $xz_{w_0w_0'}T_{w_0w_0'} = z_{w_0w_0'}T_{w_0w_0'}x$ for all $x \in \mathsf{H}_q(W')$.  But as $T_{w_0w_0'}x = \sigma(x)T_{w_0w_0'}$ and $T_{w_0w_0'}$ is invertible, it follows that $xz_{w_0w_0'} = z_{w_0w_0'}\sigma(x)$.  That multiplication by $z_{w_0w_0'}$ defines a morphism of functors $\id \rightarrow \htw_{w_0w_0'}$ follows immediately.

Similarly, note that conjugation by $T_{w_0'}$ clearly respects decomposition of elements by $W'$-double cosets and that conjugation by $T_{w_0}$ stabilizes $\mathsf{H}_q(W')$ and sends minimal length $W'$-double-coset representatives of to minimal length $W'$-double-coset representatives of the same length.  In particular, conjugation by $T_{w_0w_0'}$ fixes the top degree term $z_{w_0w_0'}T_{w_0w_0'}$ in the decomposition of $T_{w_0w_0'}^2$ by left $W'$-cosets.  As $T_{w_0w_0'}$ commutes with itself and $T_{w_0w_0'}z_{w_0w_0'} = \sigma(z_{w_0w_0'})T_{w_0w_0'}$ this implies that $z_{w_0w_0'}T_{w_0w_0'} = \sigma(z_{w_0w_0'})T_{w_0w_0'}$ and hence that $\sigma(z_{w_0w_0'}) = z_{w_0w_0'}$.  In particular, $z_{w_0w_0'}$ commutes with $T_{w_0w_0'}$.

Finally, to show that $z_1 = q^{l(w_0w_0')}$, by multiplying on the left by $T_{w_0'}$ it suffices to show that the component of $T_{w_0}T_{w_0w_0'}$ lying in $\mathsf{H}_q(W')$ according to the decomposition of $\mathsf{H}_q(W)$ by left $W'$-cosets is $q^{l(w_0w_0')}T_{w_0'}$.  This is clear from the interaction of the multiplication laws defining $\mathsf{H}_q(W)$ and the length function.\end{proof}

\begin{definition} \label{cz} Let $\cz_{w_0w_0'}$ be the unique morphism $$\cz_{w_0w_0'} : \id \rightarrow \tw_{w_0w_0'}$$ of functors $\oscr_c(W', \hfr_{W'}) \rightarrow \oscr_c(W', \hfr_{W'})$ lifting $z_{w_0w_0'}$ in the sense of Lemma \ref{Shan-lemma}, i.e. so that $$1_{KZ}\cz_{w_0w_0'} = z_{w_0w_0'}1_{KZ}$$ with respect to the identification $KZ \circ tw_{w_0w_0'} = \htw_{w_0w_0'} \circ KZ.$\end{definition}

\begin{definition} Let $\mu_{w_0w_0'}$ be the isomorphism of functors $$\mu_{w_0w_0'} : \id \oplus \tw_{w_0w_0'} \rightarrow \tw_{w_0w_0'} \circ (\id \oplus \tw_{w_0w_0'}) = \tw_{w_0w_0'} \oplus \id$$ defined by the matrix $$\mu_{w_0w_0'} := \begin{pmatrix} 0 & q^{l(w_0w_0')} \\ 1 & \cz_{w_0w_0'}\end{pmatrix}.$$\end{definition}

Recall that by the Mackey formula for rational Cherednik algebras attached to Coxeter groups (see Section \ref{Mackey-Coxeter}), and the fact that $L$ is finite-dimensional and hence annihilated by all restriction functors $\res_{W''}^{W'}$ for proper parabolic subgroups $W'' \subsetneq W'$, that we have $$(\res_{W'}^W \circ \ind_{W'}^W)L \cong (\id \oplus \tw_{w_0w_0'})L.$$

\begin{lemma} \label{matrix-lemma} With respect to the isomorphism $$(\res_{W'}^W \circ \ind_{W'}^W)L \cong (\id \oplus \tw_{w_0w_0'})L$$ arising from the Mackey formula, the isomorphisms $$(\mon(T_{W})1_{\ind_{W'}^W})(L), \mu(L) : (\id \oplus \tw_{w_0w_0'})L \rightarrow (\tw_{w_0w_0'} \oplus \id)L$$ are equal.\end{lemma}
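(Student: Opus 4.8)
The plan is to lift the claim to the Iwahori--Hecke algebra $\mathsf{H}_q(W)$ and read it off from the expansion $T_{w_0w_0'}^2=\sum_{d}z_dT_d$, transferring back to $\oscr_c$ by Shan's Lemma~\ref{Shan-lemma}. A point to keep in mind throughout is that $KZ(W',\hfr_{W'})$ annihilates finite-dimensional modules --- in particular $KZ(W',\hfr_{W'})(L)=0$ --- so one cannot compare the two morphisms by evaluating $KZ$ on the object $(\id\oplus\tw_{w_0w_0'})L$; the comparison must be made at the level of functors, where $KZ(W',\hfr_{W'})$ is still faithful on projective-preserving functors. Accordingly, I would first upgrade the statement to an equality of morphisms of functors: by the Mackey formula for rational Cherednik algebras attached to Coxeter groups (Section~\ref{Mackey-Coxeter}) together with the vanishing of proper parabolic restrictions on finite-dimensional modules, there is an isomorphism of functors $\eta:\res_{W'}^W\circ\ind_{W'}^W\xrightarrow{\ \sim\ }\id\oplus\tw_{w_0w_0'}\oplus R$ with $R$ a direct sum of functors $\ind_{W''}^{W'}\circ\tw_d\circ\res_{W''}^{W'}$, $W''\subsetneq W'$, so that $R(L)=0$; applying $\tw_{w_0w_0'}$ yields the corresponding isomorphism for $\tw_{w_0w_0'}\circ\res_{W'}^W\circ\ind_{W'}^W$, whose third summand again vanishes on $L$. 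Conjugating $\mon(T_W)1_{\ind_{W'}^W}$ by these isomorphisms and restricting to the first two summands gives a morphism of functors $\nu:\id\oplus\tw_{w_0w_0'}\to\tw_{w_0w_0'}\oplus\id$; since $R(L)=0$ the identification used to form $\nu$ is exactly the Mackey identification in the lemma statement, so it suffices to prove $\nu=\mu$.

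Since $\id$, $\tw_{w_0w_0'}$, $\res_{W'}^W$, $\ind_{W'}^W$ and their composites are all exact and preserve projectives, by Shan's Lemma~\ref{Shan-lemma} (applied to $\oscr_c(W',\hfr_{W'})$) it is enough to check $1_{KZ}\nu=1_{KZ}\mu$ with $KZ=KZ(W',\hfr_{W'})$. On one side, Definition~\ref{cz} gives $1_{KZ}\cz_{w_0w_0'}=z_{w_0w_0'}1_{KZ}$ and Lemma~\ref{z-props-lemma} gives $z_1=q^{l(w_0w_0')}$, so $1_{KZ}\mu$ is the matrix $\left(\begin{smallmatrix}0&z_1\\ 1&z_{w_0w_0'}\end{smallmatrix}\right)$ of left multiplications on $(\id\oplus\htw_{w_0w_0'})\circ KZ$. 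On the other side, Lemma~\ref{monodromy-lift-lemma} together with the identification $KZ\circ\ind_{W'}^W\cong\hind_{W'}^W\circ KZ$ (adjoint to Shan's commutation of $KZ$ with restriction) shows that $1_{KZ}(\mon(T_W)1_{\ind_{W'}^W})$ is left multiplication by $T_{w_0w_0'}$ on $\hres_{W'}^W\circ\hind_{W'}^W\circ KZ=(\mathsf{H}_q(W)\otimes_{\mathsf{H}_q(W')}-)\circ KZ$; and $1_{KZ}\eta$ is the Hecke-algebra Mackey isomorphism --- this is precisely how $\eta$ is built, by lifting the Hecke Mackey formula through $KZ$ --- which on the bimodule $\mathsf{H}_q(W)$ is the $W'$--$W'$ double coset decomposition $\mathsf{H}_q(W)=\bigoplus_{[e]}\mathsf{H}_q(W')T_e\mathsf{H}_q(W')$, the cosets of $1$ and $w_0w_0'$ (each a single left and a single right $W'$-coset, as $w_0w_0'$ normalizes $W'$) giving the $\id$- and $\htw_{w_0w_0'}$-summands.

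It then remains to compute the $(\id\oplus\htw_{w_0w_0'})$-to-$(\htw_{w_0w_0'}\oplus\id)$ component of left multiplication by $T_{w_0w_0'}$ relative to this decomposition. One has $T_{w_0w_0'}\cdot(1\otimes m)=T_{w_0w_0'}\otimes m$ in the $[w_0w_0']$-summand, and $T_{w_0w_0'}\cdot(T_{w_0w_0'}\otimes m)=T_{w_0w_0'}^2\otimes m=\sum_{d\in X_{W'}}z_d(T_d\otimes m)$; the only $d\in X_{W'}$ lying in the cosets $W'$ and $W'w_0w_0'$ are $1$ and $w_0w_0'$ --- because $w_0w_0'$ is the minimal-length element of $W'w_0w_0'$, its inversion set being $\Phi^+\setminus\Phi^+_{W'}$ as in the proof of Lemma~\ref{coxeter-normalizer-lemma} --- so the relevant component of the image is $z_1(1\otimes m)+z_{w_0w_0'}(T_{w_0w_0'}\otimes m)$. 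Using $z_{w_0w_0'}T_{w_0w_0'}=T_{w_0w_0'}z_{w_0w_0'}$, $\sigma(z_{w_0w_0'})=z_{w_0w_0'}$ and $z_1=q^{l(w_0w_0')}$ from Lemma~\ref{z-props-lemma}, and keeping track of the extra $\tw_{w_0w_0'}$ on the codomain (which interchanges the two summands), this component comes out to be exactly $\left(\begin{smallmatrix}0&q^{l(w_0w_0')}\\ 1&z_{w_0w_0'}\end{smallmatrix}\right)$, i.e.\ $1_{KZ}\mu$. Hence $1_{KZ}\nu=1_{KZ}\mu$, so $\nu=\mu$, and evaluating at $L$ proves the lemma. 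The step I expect to demand the most care is precisely this last matching-up: correctly pairing the two ``trivial'' double-coset summands of the domain with those of the codomain, where the extra twist $\tw_{w_0w_0'}$ swaps them, so that the $2\times 2$ matrix emerges as $\mu$ itself rather than a conjugate of it by the swap.
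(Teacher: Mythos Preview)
Your proposal is correct and follows the same route the paper does: the paper's own proof is the single sentence ``This is an immediate consequence of the compatibility of the Mackey decomposition for rational Cherednik algebras with the $KZ$ functor, Definition~\ref{z-defs}, and Lemmas~\ref{monodromy-lift-lemma} and~\ref{z-props-lemma},'' and what you have written is precisely a careful unpacking of that sentence. Your explicit remark that the comparison must be carried out at the level of functors (since $KZ(W',\hfr_{W'})$ kills $L$ itself) and then specialized to $L$ is exactly the mechanism implicit in the paper's appeal to Lemma~\ref{Shan-lemma}, and your extraction of the $2\times 2$ block from the double-coset decomposition of left multiplication by $T_{w_0w_0'}$ on $\mathsf{H}_q(W)$ is the intended content of ``compatibility of the Mackey decomposition with $KZ$'' together with Definition~\ref{z-defs}.
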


\begin{proof} This is an immediate consequence of the compatibility of the Mackey decomposition for rational Cherednik algebras with the $KZ$ functor, Definition \ref{z-defs}, and Lemmas \ref{monodromy-lift-lemma} and \ref{z-props-lemma}.\end{proof}

\noindent $\mathbf{Notation}$: For a central element $z \in \mathsf{H}_q(W')$, let $z|_L \in \cplx$ denote the scalar by which $z$ acts on irreducible representations of $\mathsf{H}_q(W')$ lying in the block of $\mathsf{H}_q(W')\mhyphen\text{mod}_{f.d.}$ corresponding to the block of $\oscr_c(W', \hfr_{W'})$ containing $L$ under the $KZ$ functor.

We can now give a formula for the quadratic relations satisfied by the elements $T_s$ appearing in Theorem \ref{coxeter-presentation}:

\begin{theorem} \label{quadratic-relations-theorem} Let $T$ denote the element $T_{W} \in \mathcal{H}(c, W', L, W),$ and let $n \in \aut_\cplx(L)$ denote the involution of $L$ by which $w_0w_0' \in N_{W'}$ acts making $L$ $N_{W'}$-equivariant as a $H_c(W', \hfr_{W'})$-module.  Then $T$ satisfies the quadratic relation $$T^2 = (\cz_{w_0w_0'}(L) \circ n)|_LT + q^{l(w_0w_0')}$$ where $(\cz_{w_0w_0'}(L) \circ n)|_L \in \cplx$ denotes the scalar by which the $H_c(W', \hfr_{W'})$-module homomorphism $n \circ \cz_{w_0w_0'}(L)$ acts on the irreducible representation $L$.

Furthremore, if the diagram automorphism $\sigma = \sigma_{w_0w_0'}$ of $(W', S')$ arising from conjugation by $w_0w_0'$ is trivial, $T$ satisfies the quadratic relation $$T^2 = (z_{w_0w_0'})|_LT + q^{l(w_0w_0')}.$$  If the diagram automorphism $\sigma_{w_0}$ is trivial but the diagram automorphism $\sigma_{w_0'}$ is nontrivial, $T$ satisfies the quadratic relation $$T^2 = (z_{w_0w_0'}T_{w_0'})|_L(T_{w_0'}^2)|_L^{-1/2}T + q^{l(w_0w_0')}.$$\end{theorem}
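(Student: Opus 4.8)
The plan is to use that, under the running assumptions of this subsection, the algebra $\End_{H_c}(\ind L)$ is two-dimensional, and then to extract the quadratic relation from the $2\times 2$ matrix of the monodromy operator on the Mackey decomposition of $\res(\ind L)$. Since $W'$ has corank $1$ in $W$ and $w_0w_0'$ fixes the isomorphism class of $L$, Lemma \ref{coxeter-normalizer-lemma} gives $I_W(W',L)\cong N_{W'}=\{1,w_0w_0'\}$, so $\dim_\cplx\End_{H_c}(\ind L)=\#I_W(W',L)=2$ by Proposition \ref{dim-prop}. As $\hfr^{W'}$ is a line, $\hfr^{W'}_{reg}\cong\cplx^\times$, so $\pi_1(\hfr^{W'}_{reg}/I)\cong\ints$ has vanishing $H^2$ and $\cplx_{-\tilde\mu}[\pi_1(\hfr^{W'}_{reg}/I)]\cong\cplx[T_W^{\pm1}]$; by Theorem \ref{KZL}, $\phi_L$ is then a surjection from $\cplx[T_W^{\pm1}]$ onto the two-dimensional algebra $\End_{H_c}(\ind L)^{opp}$, so $T=\phi_L(T_W)$ satisfies a unique monic quadratic $T^2=aT+b$ with $b\neq0$. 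Moreover $\res$ is faithful on $\End_{H_c}(\ind L)$ (a $\psi$ with $\res\psi=0$ would have $\im\psi$ a nonzero subobject killed by $\res$, which $\ind L$ does not possess, as in the proof of Proposition \ref{proj-gen}), and $\res(T)$ is not scalar (otherwise $\End_{H_c}(\ind L)$ would be one-dimensional); hence $a$ and $-b$ are the trace and determinant of $\res(T)$ acting on $\res(\ind L)\cong(\id\oplus\tw_{w_0w_0'})L$, which by the Mackey formula for rational Cherednik algebras has exactly two summands because $L$ is cuspidal and $N_W(W')/W'$ has order $2$.

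Next I would compute this matrix using Lemma \ref{matrix-lemma}: together with $z_1=q^{l(w_0w_0')}$ from Lemma \ref{z-props-lemma}, it identifies the monodromy $(\mon(T_W)\,1_{\ind})(L)$ with $\mu(L)$, where $\mu_{w_0w_0'}=\mattwo{0}{q^{l(w_0w_0')}}{1}{\cz_{w_0w_0'}}$; here the $q^{l(w_0w_0')}$-entry is a genuine scalar while the $\cz_{w_0w_0'}$-entry is the morphism $\id\to\tw_{w_0w_0'}$ of Definition \ref{cz}. Passing $\mu(L)$ through the remaining equivalences defining $\widetilde{KZ_L}$ — which identify the twisted summand $\tw_{w_0w_0'}L$ with $L$ using the equivariant involution $n$ — replaces the $\cz_{w_0w_0'}$-entry by the $H_c(W',\hfr_{W'})$-endomorphism $\cz_{w_0w_0'}(L)\circ n$ of $L$, which is a scalar by Schur's lemma since both $\cz_{w_0w_0'}(L)$ and $n$ intertwine the action of $H_c(W',\hfr_{W'})$ with its twist by the involution $w_0w_0'$. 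Thus $\res(T)$ has matrix $\mattwo{0}{q^{l(w_0w_0')}}{1}{(\cz_{w_0w_0'}(L)\circ n)|_L}$, whose characteristic polynomial (equal to its minimal polynomial, as it is non-scalar) is $X^2-(\cz_{w_0w_0'}(L)\circ n)|_LX-q^{l(w_0w_0')}$; this is precisely the first claimed relation.

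Both ``Furthermore'' statements then evaluate $(\cz_{w_0w_0'}(L)\circ n)|_L$ explicitly by transporting the ingredients to $\mathsf{H}_q(W')$ via $KZ$ and Lemma \ref{Shan-lemma}. If $\sigma=\sigma_{w_0w_0'}$ is trivial, the Hecke twist $\htw_{w_0w_0'}$ is the identity on $\mathsf{H}_q(W')\mhyphen\text{mod}$, the element $z_{w_0w_0'}$ is central in $\mathsf{H}_q(W')$ (Lemma \ref{z-props-lemma}), and the equivariant structure becomes trivial, so $n$ may be taken to be $\id_L$; then $\cz_{w_0w_0'}(L)\circ n=\cz_{w_0w_0'}(L)$, whose scalar value is $(z_{w_0w_0'})|_L$ by Definition \ref{cz} and the definition of $(\cdot)|_L$, giving $T^2=(z_{w_0w_0'})|_LT+q^{l(w_0w_0')}$. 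If instead $\sigma_{w_0}$ is trivial — so $w_0=-1$ is central in $W$ — but $\sigma_{w_0'}$ is nontrivial, then $\sigma_{w_0w_0'}=\sigma_{w_0'}$, and on $\mathsf{H}_q(W')$ this automorphism is implemented by conjugation by $T_{w_0'}$ (every simple reflection of $W'$ lies in both descent sets of $w_0'$); hence $n$ lifts, via $KZ$, a scalar multiple $\xi T_{w_0'}$ of $T_{w_0'}$, and the requirement $n^2=\id$ together with the centrality of $T_{w_0'}^2$ (Lemma \ref{z-props-lemma}) forces $\xi^2(T_{w_0'}^2)|_L=1$, i.e.\ $\xi=(T_{w_0'}^2)|_L^{-1/2}$. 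Consequently $\cz_{w_0w_0'}(L)\circ n$ lifts $z_{w_0w_0'}T_{w_0'}(T_{w_0'}^2)|_L^{-1/2}$, and we obtain $T^2=(z_{w_0w_0'}T_{w_0'})|_L(T_{w_0'}^2)|_L^{-1/2}T+q^{l(w_0w_0')}$.

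The main obstacle is the claim in the second step that passing $\mu(L)$ through the equivalences defining $\widetilde{KZ_L}$ produces exactly the matrix above: this requires carefully tracking the chosen splitting $N_W(W')=W'\rtimes N_{W'}$, the idempotent $e_L$, and the involution $n$ through the chain $\widetilde{KZ_L}=\hom_{\End_\cplx(L)}(L,-)\circ e_L\circ\fiber_b\circ\underline{\res}$, so as to verify that the $\cz_{w_0w_0'}$-entry acquires the factor $n$ on the correct side (harmless, since a short argument shows $n\cz_{w_0w_0'}(L)$ and $\cz_{w_0w_0'}(L)n$ coincide once $n^2=\id$) while the $q^{l(w_0w_0')}$-entry, being genuinely scalar, is unchanged — in particular this pins down the determinant as $-q^{l(w_0w_0')}$ with no sign ambiguity. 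The remaining manipulations — the Mackey decomposition of $\res(\ind L)$, the lift of $T_{w_0w_0'}$ from Lemma \ref{monodromy-lift-lemma}, and the properties of $z_{w_0w_0'}$ and $T_{w_0'}^2$ from Lemma \ref{z-props-lemma} — are routine.
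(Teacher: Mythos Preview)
Your proposal is correct and follows essentially the same route as the paper. Both arguments reduce to Lemma \ref{matrix-lemma} (the identification of the monodromy with $\mu_{w_0w_0'}$), establish faithfulness of the action on $\hom(L,\res\ind L)$ via Proposition \ref{proj-gen}, and then extract the quadratic relation from the resulting $2\times 2$ description; the ``Furthermore'' clauses are handled identically, constructing $n$ explicitly (as $\id_L$, resp.\ as $(T_{w_0'}^2)|_L^{-1/2}$ times the lift $\cT_{w_0'}$ of $T_{w_0'}$). The only cosmetic difference is that the paper writes the action of $T$ on $L^*\otimes_\cplx\res\ind L$ as $n^*\otimes\mu_{w_0w_0'}(L)$ and squares this directly (using $n^2=\id$) to obtain $T^2=(n^*\otimes\cz_{w_0w_0'}(L)I)\,T+q^{l(w_0w_0')}I$, whereas you first use $n$ to identify $\tw_{w_0w_0'}L$ with $L$ and then read off the characteristic polynomial of the resulting scalar $2\times 2$ matrix --- the paper's formulation makes the bookkeeping you flag as ``the main obstacle'' slightly more transparent, but the content is the same.
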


\begin{remark} The projective representation of $N_{W'} = I \cong \ints/2\ints$ on $L$ lifts to an linear representation of $N_{W'}$ in two ways, differing by a tensor product with the nontrivial character of $N_{W'}$, so there is a choice of sign for the action of the nontrivial element $w_0w_0' \in N_{W'}$ on $L$.  The quadratic relations appearing in Theorem \ref{quadratic-relations-theorem} under assumptions on the diagram automorphisms hold for an appropriate choice of sign for the operator $n$ - choosing the other sign simply negates the linear term in the quadratic relation.  The quadratic relations of the form $(T_s - 1)(T_s + q_s) = 0$ appearing in Theorem \ref{coxeter-presentation} are obtained by rescaling the generators $T_{W}$ by twisting by characters of $\pi_1(\hfr^{W'}_{reg}/I)$ as in Lemma \ref{twisting-character}, and the relations in this normalized form are only determined up to inverting $q_s$ but do not depend on the choice of sign for the action of $w_0w_0'$ on $L$.\end{remark}

\begin{remark} The case in which the diagram automorphism $\sigma_{w_0}$ is nontrivial but the diagram automorphism $\sigma_{w_0'}$ is trivial only appears for groups of type $D$.  We will show later in Section \ref{type-D} how to reduce the problem of computing the quadratic relations in type $D$ to the type $B$ case in which this complication does not arise.\end{remark}

\begin{proof} By Proposition \ref{proj-gen}, $\ind_{W'}^W L$ is a projective generator of $\bar{\oscr}_{c, W', L}$, and hence it follows from Theorem \ref{KZL} that the action of $\mathcal{H}(c, W', L, W)$ on $\hom_{\oscr_c(W', \hfr_{W'})}(L, \res_{W'}^W \ind_{W'}^W L)$ is faithful.  It therefore suffices to check that the quadratic relation holds in this representation.

It follows from Lemma \ref{matrix-lemma} that the action of $T$ in the representation $$\hom_{\oscr_c(W', \hfr_{W'})}(L, \res_{W'}^W \ind_{W'}^W L) \subset L^* \otimes_\cplx \res_{W'}^W \ind_{W'}^W L$$ is by the operator $$n^* \otimes \mu_{w_0w_0'}(L) = n^* \otimes \begin{pmatrix} 0 & q^{l(w_0w_0')} \\ 1 & \cz_{w_0w_0'}(L)\end{pmatrix}.$$  As $n^2 = 1$, a simple calculation yields $$T^2 = \left(n^* \otimes \begin{pmatrix} \cz_{w_0w_0'}(L) & 0 \\ 0 & \cz_{w_0w_0'}(L)\end{pmatrix}\right) \circ T + \begin{pmatrix} q^{l(w_0w_0')} & 0 \\ 0 & q^{l(w_0w_0')}\end{pmatrix}.$$  As the operator appearing in front of $T$ on the righthand side acts on $\hom_{\oscr_c(W', \hfr_{W'})}(L, L \oplus \tw_{w_0w_0'}L)$ by the scalar $(\cz_{w_0w_0'}(L) \circ n)|_L$, the first claim follows.

Now, suppose the diagram automorphism $\sigma_{w_0w_0'}$ of $(W', S')$ is trivial, so that $w_0w_0'$ centralizes $W'$ and acts on $\hfr_{W'}$ trivially.  In particular, $w_0w_0'$ acts trivially on $H_c(W', \hfr_{W'})$, and hence the trivial action of $N_{W'}$ on $L$ makes $L$ $N_{W'}$-equivariant, so we may take $n = \id_L$.  The quadratic relation for $T$ in this case follows immediately.

Finally, suppose the diagram automorphism $\sigma_{w_0}$ is trivial but the diagram automorphism $\sigma_{w_0'}$ is not.  It follows that the diagram automorphisms $\sigma_{w_0'}$ and $\sigma = \sigma_{w_0w_0'}$ are equal and that $T_{w_0'}x = \sigma(x)T_{w_0'}$ for all $x \in \mathsf{H}_q(W')$.  In particular, multiplication by $T_{w_0'}$ defines a morphism $$T_{w_0'} : \id \rightarrow \htw_{w_0w_0'}$$ of functors $\mathsf{H}_q(W')\mhyphen\text{mod}_{f.d.} \rightarrow \mathsf{H}_q(W')\mhyphen\text{mod}_{f.d.}.$  Let $\cT_{w_0'}$ be the morphism $$\cT_{w_0'} : \id \rightarrow \tw_{w_0w_0'}$$ of functors $\oscr_c(W', \hfr_{W'}) \rightarrow \oscr_c(W', \hfr_{W'})$ obtained by lifting $T_{w_0'}$ by Lemma \ref{Shan-lemma}, similarly to the definition of $\cz_{w_0w_0'}$ (Definition \ref{cz}).  We may then take the operator $n \in \aut_\cplx(L)$ by which $w_0w_0'$ acts to be the involutive operator $n = (T_{w_0'}^2)|_L^{-1/2}\cT_{w_0'}(L)$.  We then have $(\cz_{w_0w_0'}(L) \circ n)|_L = (\cz_{w_0w_0'}(L) \circ (T_{w_0'}^2)|_L^{-1/2}(\cT_{w_0'}(L)))|_L = (z_{w_0w_0'}T_{w_0'})|_L(T_{w_0'}^2)|_L^{-1/2}$, and the final claim follows.\end{proof}

We will see that, in the setting of Coxeter groups, the projective representation of $I$ on $L$ always lifts to a linear representation, and in particular the cocycle $\mu \in Z^2(I, \cplx^\times)$ is always trivial.  Furthermore, the inertia group $I$ is always as large as possible, i.e. it equals $N_{W'}$.  These facts make the presentations of the algebras $\mathcal{H}(c, W', L, W)$ particularly simple:

\begin{theorem} \label{simplified-coxeter-theorem} Let $W$ be a finite Coxeter group with simple reflections $S$, let $c : S \rightarrow \cplx$ be a class function, let $W'$ be a standard parabolic subgroup generated by the simple reflections $S'$, and let $L$ be an irreducible finite-dimensional representation of the rational Cherednik algebra $H_c(W', \hfr_{W'})$.  Let $N_{W'}$ denote the canonical complement to $W'$ in its normalizer $N_W(W')$, let $S_{W'} \subset N_{W'}$ denote the set of reflections in $N_{W'}$ with respect to its representation in the fixed space $\hfr^{W'}$, and let $N_{W'}^{ref}$ denote the reflection subgroup of $N_{W'}$ generated by $S_{W'}$.  Let $N_{W'}^{comp}$ be a complement for $N_{W'}^{ref}$ in $N_{W'}$, given as the stabilizer of a choice of fundamental Weyl chamber for the action of $N_{W'}^{ref}$ on $\hfr^{W'}$.  Then there is a class function $q_{W', L} : S_{W'} \rightarrow \cplx^\times$ and an isomorphism of algebras $$\mathcal{H}(c, W', L, W) \cong N_{W'}^{comp} \ltimes \mathsf{H}_{q_{W', L}}(N_{W'}^{ref})$$ where the semidirect product is defined by the action of $N_{W'}^{comp}$ on $\mathsf{H}_{q_{W', L}}(N_{W'}^{ref})$ by diagram automorphisms arising from the conjugation action of $N_{W'}^{comp}$ on $N_{W'}^{ref}$.\end{theorem}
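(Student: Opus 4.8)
The plan is to deduce this from Theorem~\ref{coxeter-presentation} together with two structural facts special to the Coxeter setting: that the inertia group $I = I_{W', L}$ equals the \emph{entire} canonical complement $N_{W'}$, and that the $2$-cocycle $\mu \in Z^2(I, \cplx^\times)$ of Theorem~\ref{KZL} is cohomologically trivial. Granting these, I would first record the bookkeeping that makes the two presentations coincide. If $I = N_{W'}$, then for each corank-$1$ parabolic $W'' \supsetneq W'$ the group $I_{W''} = I \cap W'' = N_{W'} \cap W''$ is the image of $N_{W''}(W')/W'$, which by Lemmas~\ref{corank-1-refs} and~\ref{coxeter-normalizer-lemma} is cyclic of order at most $2$ acting on $\hfr^{W'}$ through the reflection fixing $\hfr^{W''}$; by the second assertion of Lemma~\ref{corank-1-refs} every reflection of $N_{W'}$ on $\hfr^{W'}$ arises this way, so the reflection subgroup $I^{ref}$ generated by the $I_{W''}$ for $L$-essential $W''$ is exactly $N_{W'}^{ref}$, its complement $I^{comp}$ may be taken to be $N_{W'}^{comp}$, and the simple system $S_{W', L}$ of Theorem~\ref{coxeter-presentation} is a simple system for $N_{W'}^{ref}$ contained in $S_{W'}$. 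If moreover $\mu$ is cohomologically trivial, the remark following Theorem~\ref{coxeter-presentation} applies and gives $\mathcal{H}(c, W', L, W) \cong I^{comp} \ltimes \mathsf{H}_q(I^{ref}) = N_{W'}^{comp} \ltimes \mathsf{H}_{q_{W', L}}(N_{W'}^{ref})$ with the stated semidirect-product structure, where $q_{W', L}$ is the class function on $S_{W'}$ restricting to $q$ on $S_{W', L}$.

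Before verifying the two facts I would reduce to the case of an irreducible Coxeter group using Remark~\ref{reducible-reduction-prop}: a product decomposition $(W, \hfr) = (W_1, \hfr_{W_1}) \times (W_2, \hfr_{W_2})$ forces $W' = W_1' \times W_2'$, $L = L_1 \boxtimes L_2$, and $N_{W'} = N_{W_1'} \times N_{W_2'}$ compatibly on reflection parts, complements, simple systems and cocycles, with $\mathcal{H}(c, W', L, W) \cong \mathcal{H}(c_1, W_1', L_1, W_1) \otimes \mathcal{H}(c_2, W_2', L_2, W_2)$, so it suffices to prove both facts when $W$ is irreducible.

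For $I = N_{W'}$ one must show that a finite-dimensional irreducible $L$ is $N_{W'}$-stable, equivalently that its lowest weight $L^0 \in \irr(W')$ is fixed by the diagram automorphisms of $(W', S')$ induced by $N_{W'}$. In classical types this follows from the cited descriptions of supports of irreducibles in type $A$ (Wilcox~\cite{Wilcox}) and types $B, D$ (Shan--Vasserot~\cite{SV}); e.g.\ in type $A$ a finite-dimensional $L$ on $W' = S_{a_1} \times \cdots \times S_{a_k}$ forces all blocks to have the common size $a$ equal to the denominator of $c$, the finite-dimensional irreducible of each $H_c(S_a, \cdot)$ is unique, and $N_{W'}$ merely permutes the $k$ equal blocks, so $L = L_0^{\boxtimes k}$ is $N_{W'}$-stable. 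In the exceptional types the equality is to be checked directly, using the classification of finite Coxeter groups and the explicit lists of finite-dimensional irreducibles of the parabolic rational Cherednik algebras that occur, as carried out in Sections~\ref{typeE}, \ref{typeH}, and~\ref{typeF4}. For triviality of $\mu$, by Remark~\ref{lowest-weight-remark} it suffices to show that the $I$-stable irreducible $L^0$ extends to a representation of $W' \rtimes I$, i.e.\ that the Clifford obstruction in $H^2(I, \cplx^\times)$ vanishes; in each case $I$ is either cyclic (so $H^2(I, \cplx^\times) = 0$) or small with $L^0$ explicit enough that the extension is visible, often via the product-splitting criterion at the end of Remark~\ref{lowest-weight-remark}.

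The hard part is exactly this pair of facts, $I = N_{W'}$ and the triviality of $\mu$: they are global statements about which finite-dimensional irreducibles occur for the relevant parabolic rational Cherednik algebras and how they transform under the outer automorphisms coming from $N_W(W')$, and I do not expect a uniform argument avoiding the classification of finite Coxeter groups together with the known constructions of finite-dimensional representations. Once these are in hand—and they account for $\dim \mathcal{H}(c, W', L, W) = \#I = \#N_{W'}$ via Proposition~\ref{dim-prop}—the statement follows formally from Theorem~\ref{coxeter-presentation} and the dictionary recorded in the first paragraph.
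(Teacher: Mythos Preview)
Your proposal is correct and follows essentially the same route as the paper: reduce to irreducible $W$ via Remark~\ref{reducible-reduction-prop}, then verify type by type that $I = N_{W'}$ and that the cocycle $\mu$ is cohomologically trivial (using Remark~\ref{lowest-weight-remark} and the known classifications of finite-dimensional irreducibles for the parabolic Cherednik algebras), after which Theorem~\ref{coxeter-presentation} and its subsequent remark yield the stated semidirect-product presentation. The paper carries out exactly this case-by-case verification in Sections~\ref{type-A}--\ref{typeI}, and you have correctly identified that no uniform argument avoiding the classification is expected.
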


Theorem \ref{simplified-coxeter-theorem} is proved by case-by-case analysis of the inertia subgroups $I$ and their 2-cocycles $\mu$, to which the rest of this paper is dedicated.  The class function $q_{W', L}$ can be explicitly computed using the corank-1 methods developed in Section \ref{computing-params-section}.  We compute this class function in many cases, leading to complete lists of the irreducible finite-dimensional representations of the algebras $H_c(W, \hfr)$ in many new cases in exceptional types.

\begin{remark} In all cases that we have computed explicitly, the class function $q_{W', L}$ depends only on the parabolic subgroup $W'$, and not on the finite-dimensional irreducible representation $L$.  It would be interesting to have a conceptual explanation for this fact.\end{remark}

\noindent $\mathbf{Notation}$  For a Coxeter group $W$, corank 1 parabolic subgroup $W' \subset W$, and finite-dimensional irreducible representation $L$ of the rational Cherednik algebra $H_c(W', \hfr_{W'})$, let $q(c, W', L, W) \in \cplx^\times$ denote a scalar such that the element $T_{W', L, W} \in \mathcal{H}(c, W', L, W)$, after an appropriate rescaling, satisfies the quadratic relation $$(T - 1)(T + q(c, W', L, W)) = 0.$$  Note that $q(c, W', L, W)$ is determined only up to taking an inverse.  The calculations in the proof of Theorem \ref{quadratic-relations-theorem} show that the constant term of the monic quadratic relation satisfied by the canonical (up to sign) element $T_{W', L, W}$ is $q^{l(w_0w_0')}$, and the linear term in the monic quadratic relation satisfied by $T_{W', L, W}$ can therefore be recovered, again up to sign, from $q^{l(w_0w_0')}$ and $q(c, W', L, W)$.  Note also that the ambiguity of $q(c, W', L, W)$ up to inverse has no impact on the isomorphism class of any Iwahori-Hecke algebra for which $q(c, W', L, W)$ is a parameter, and an explicit isomorphism can be obtained by scaling the corresponding generators by $-q(c, W', L, W)^{-1}$.

\subsection{Type A} \label{type-A} Let $n \geq 1$ be a positive integer, $S_n$ be the symmetric group on $n$ letters with irreducible reflection representation $\hfr = \{(z_1, ..., z_n) \in \cplx^n : \sum_i z_i = 0\}$, $c \in \cplx$ be a complex number, and let $H_c(S_n, \hfr)$ be the associated rational Cherednik algebra.  As a first illustration of the results obtained in the previous sections, let us now recover the following result of Wilcox describing the subquotients of the filtration of category $\oscr_c(S_n, \hfr)$ by the dimension of supports:

\begin{theorem} \label{Wilcox} \emph{(Wilcox, \cite[Theorem 1.8]{Wilcox})} Suppose $c = r/e > 0$ is a positive rational number with $r$ and $e$ relatively prime positive integers.  The subquotient category of $\oscr_c(S_n, \hfr)$ obtained as the quotient of the full subcategory of modules in $\oscr_c(S_n, \hfr)$ supported on the subvariety $S_n\hfr^{S_e^k}$, where $k \geq 0$ is a nonnegative integer, modulo the Serre subcategory of modules with strictly smaller support, is equivalent to the category of finite-dimensional modules over the algebra $\cplx[S_k] \otimes \mathsf{H}_q(S_{n - ke})$, where $\mathsf{H}_q(S_{n - ke})$ is the Hecke algebra of $S_{n - ke}$ with parameter $q = e^{-2 \pi i c}.$\end{theorem}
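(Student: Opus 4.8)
The plan is to realise the displayed subquotient as one of the Serre quotient categories $\bar{\oscr}_{c, W', L}$ of Section~\ref{generalities-section}, to identify the unique Harish-Chandra datum $(W', L)$ responsible for the support $S_n\hfr^{S_e^k}$, and then to compute the generalized Hecke algebra $\mathcal{H}(c, W', L, S_n)$ by means of the presentation in Theorem~\ref{simplified-coxeter-theorem} together with the corank-$1$ reductions of Section~\ref{computing-params-section}. Throughout we may assume $e \geq 2$; the statement is vacuous when $e = 1$, since then $S_e^k$ is trivial for every $k$.

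Let $W' := S_e^k \subset S_n$ be the Young subgroup stabilising a partition of $\{1, \dots, n\}$ into $k$ blocks of size $e$ and $n - ke$ singletons, so that $W\hfr^{W'} = S_n\hfr^{S_e^k}$ and $H_c(W', \hfr_{W'}) \cong H_c(S_e, \hfr_{S_e})^{\otimes k}$. By the classification of finite-dimensional irreducible representations of rational Cherednik algebras in type $A$ (Berest--Etingof--Ginzburg), and since $c = r/e > 0$ is in lowest terms, the algebra $H_c(S_m, \hfr_{S_m})$ admits a finite-dimensional irreducible representation if and only if $m = e$, in which case it is unique and equal to $L_c(\triv)$. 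Therefore, in the labelling of Proposition~\ref{simples}, the only datum attached (up to $S_n$-conjugacy) to a simple module of support exactly $S_n\hfr^{S_e^k}$ is $(W', L)$ with $L := L_c(\triv)^{\boxtimes k}$. As this is the unique Harish-Chandra series with this support, the $\ext^1$-vanishing in Proposition~\ref{simples} forces $\oscr_{c, W', L}$ to coincide with the full subcategory of modules supported on $S_n\hfr^{S_e^k}$, so $\bar{\oscr}_{c, W', L}$ is exactly Wilcox's subquotient, and by Proposition~\ref{proj-gen} it is equivalent to $\mathcal{H}(c, W', L, S_n)\mhyphen\text{mod}_{f.d.}$.

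To compute $\mathcal{H}(c, W', L, S_n)$, note $N_{S_n}(S_e^k) = (S_e \wr S_k) \times S_{n-ke}$, with canonical complement $N_{W'} = S_k \times S_{n-ke}$, the $S_k$ permuting the $e$-blocks and $S_{n-ke}$ the singletons. On $\hfr^{W'} = \{(a_1, \dots, a_k, b_1, \dots, b_{n-ke}) : e\sum_i a_i + \sum_j b_j = 0\}$ both factors act by permuting coordinates, hence as real reflection groups, so the inertia group satisfies $I := I_{W', L} = N_{W'}$ (the datum $L$ is plainly $N_{W'}$-stable), $I = I^{ref}$, and $I^{comp} = 1$. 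Since the lowest $\eu_{W'}$-weight space of $L$ is the one-dimensional trivial representation of $W'$, Remark~\ref{lowest-weight-remark} gives triviality of the cocycle $\mu$, and Theorem~\ref{simplified-coxeter-theorem} yields $\mathcal{H}(c, W', L, S_n) \cong \mathsf{H}_{q_2}(S_k) \otimes \mathsf{H}_{q_1}(S_{n-ke})$, with $q_1, q_2$ the parameters on the two $I$-orbits of corank-$1$ parabolic subgroups $W'' \supseteq W'$: merging two singletons gives $W'' \cong S_e^k \times S_2$, accounting for the reflections in the $S_{n-ke}$-factor and the parameter $q_1$; merging two $e$-blocks gives $W'' \cong S_e^{k-2} \times S_{2e}$, accounting for the reflections in the $S_k$-factor and the parameter $q_2$. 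The remaining corank-$1$ parabolics over $W'$, of type $S_e^{k-1} \times S_{e+1}$, are $L$-trivial since $N_{S_{e+1}}(S_e) = S_e$ for $e \geq 2$, so they contribute no relation.

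Finally one evaluates the two parameters. For $q_1$: by Gordon--Martino transitivity (Theorem~\ref{trans-theorem}) and the compatibility of Lemma~\ref{compatibility}, $q_1$ is computed inside $W'' \cong S_e^k \times S_2$, where the product decomposition of Remark~\ref{reducible-reduction-prop} gives $\mathcal{H}(c, W', L, W'') \cong \mathcal{H}(c, S_e^k, L, S_e^k) \otimes \mathcal{H}(c, 1, \cplx, S_2) \cong \cplx \otimes \mathsf{H}_q(S_2)$, so $q_1 = q = e^{-2\pi i c}$. For $q_2$: the same reduction brings us to $\mathcal{H}(c, S_e \times S_e, L_c(\triv)^{\boxtimes 2}, S_{2e})$, whose monodromy generator $T$ we analyse via Theorem~\ref{quadratic-relations-theorem} with $W = S_{2e}$ and $W' = S_e \times S_e$. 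Here $l(w_0 w_0') = l(w_0) - l(w_0') = e(2e-1) - e(e-1) = e^2$, so the relation reads $T^2 = (\cz_{w_0 w_0'}(L) \circ n)|_L \, T + q^{l(w_0 w_0')}$ with $q^{l(w_0 w_0')} = q^{e^2} = 1$ since $q^e = 1$; hence $q_2 = 1$ as soon as the linear coefficient $(\cz_{w_0 w_0'}(L) \circ n)|_L$ vanishes, i.e. as soon as $T^2 = 1$, i.e. $\mathcal{H}(c, S_e \times S_e, L_c(\triv)^{\boxtimes 2}, S_{2e}) \cong \cplx[\ints/2] = \cplx[S_2]$ (it is two-dimensional, by Proposition~\ref{dim-prop}), equivalently $\ind_{S_e \times S_e}^{S_{2e}}(L_c(\triv)^{\boxtimes 2})$ is a sum of two non-isomorphic indecomposables. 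Granting this, $\mathcal{H}(c, W', L, S_n) \cong \cplx[S_k] \otimes \mathsf{H}_q(S_{n-ke})$, which with the earlier steps proves the theorem. \emph{This last vanishing is the main difficulty.} One route is a direct computation of the element $z_{w_0 w_0'} \in \mathsf{H}_q(S_e \times S_e)$ of Definition~\ref{z-defs} together with the involution $n$ on $L_c(\triv)^{\boxtimes 2}$ (which is the flip of the two tensor factors): because the lowest weight space of $L_c(\triv)$ is one-dimensional, $\cz_{w_0 w_0'}(L)$ acts on $L_c(\triv)^{\boxtimes 2}$ by a scalar governed by a central character of $\mathsf{H}_q(S_e \times S_e)$, and at $c = r/e$ this scalar is forced to be $0$. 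A more robust alternative uses Corollary~\ref{counting}: for any $m$ the total number of simple modules of $H_c(S_m, \hfr)$ of all supports equals $\#\irr(S_m)$, and applying this with $m = 2e$, together with the already-established instances of the theorem for the proper parabolics $S_e^{k-1} \times S_{e+1}$ and $S_e^k \times S_2$ and the type-$A$ classification of finite-dimensional representations, pins $q_2$ to the unique value consistent with the count, namely $1$ (the choice $q_2 = q$ already fails for $S_4$ at $c = 1/2$).
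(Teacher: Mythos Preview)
Your overall architecture matches the paper's proof exactly: identify the unique cuspidal datum $(S_e^k, L_c(\triv)^{\boxtimes k})$ via \cite{BEG}, invoke Propositions~\ref{simples} and~\ref{proj-gen}, compute $N_{W'} = S_k \times S_{n-ke}$ acting as a reflection group with trivial cocycle, and reduce via Theorem~\ref{coxeter-presentation} and Remark~\ref{reducible-reduction-prop} to the two corank-$1$ problems $1 \subset S_2$ and $S_e^2 \subset S_{2e}$.  The first gives $q_1 = q$ immediately, and you correctly isolate the vanishing of the linear coefficient in the quadratic relation for $S_e^2 \subset S_{2e}$ as the only nontrivial point.

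The gap is that neither of your two routes for this vanishing is actually carried out.  Your first route is precisely what the paper does, but the paper does not merely assert that the scalar is ``forced to be $0$'': it computes the element $z_{w_0w_0'}$ explicitly as
\[
z_{w_0w_0'} = (1-q)^e\, q^{\binom{e}{2}} \sum_{w \in S_e} q^{-l(w)}\, T_w \otimes T_{w^{-1}} \in \mathsf{H}_q(S_e)^{\otimes 2},
\]
and then evaluates its action on $KZ(\Delta_c(\triv))$ (where each $T_w$ acts by $1$) as $(1-q)^e q^{\binom{e}{2}} P_{S_e}(q^{-1})$, which vanishes because the Poincar\'e polynomial $P_{S_e}(t) = \prod_{i=1}^e \frac{1-t^i}{1-t}$ has a zero at any primitive $e$-th root of unity.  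This is the substance you are missing.

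Your second route, the counting argument, does not work as written.  Applying Corollary~\ref{counting} at $m = 2e$ only determines $\#\irr(\mathsf{H}_{q_2}(S_2))$, and this equals $2$ for every $q_2 \neq -1$; it does not single out $q_2 = 1$.  For a fixed $n$ with $k = \lfloor n/e \rfloor \geq 3$ you would need $\mathsf{H}_{q_2}(S_k)$ to be semisimple, which requires ruling out all primitive $m$-th roots of unity for $2 \leq m \leq k$.  One could attempt an induction over all $S_{je}$, but this demands the combinatorial identity $p(ke) = \sum_{j=0}^{k} p(j)\cdot p_e((k-j)e)$ (with $p_e$ counting $e$-regular partitions), and even then yields only semisimplicity of $\mathsf{H}_{q_2}(S_k)$ for each $k$, not the value of $q_2$.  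The paper's direct computation avoids this entirely.
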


Let $c = r/e$ be as in Theorem \ref{Wilcox}.  By \cite[Theorem 1.2]{BEG}, the only parabolic subgroups $W'$ of $S_n$ such that $H_c(W', \hfr_{W'})$ has nonzero finite-dimensional representations are the conjugates of parabolic subgroups of the form $S_e^k$ for some nonnegative integer $k \leq n/e$, and the unique irreducible finite-dimensional representation of $H_c(S_e^k, \hfr_{S_e^k})$, up to isomorphism, is $L := L(\triv)$, where $\triv$ denotes the trivial representation of $S_e^k$.  It follows that the subquotient category appearing in Theorem \ref{Wilcox} is the subquotient $\bar{\oscr}_{c, S_e^k, L}$.  By Theorem \ref{KZL}, to prove Theorem \ref{Wilcox} it suffices to give an isomorphism of algebras $$\mathcal{H}(c, S_e^k, L(\triv), S_n) \cong \cplx[S_k] \otimes \mathsf{H}_q(S_{n - ek}).$$  This follows from Theorems \ref{coxeter-presentation} and \ref{quadratic-relations-theorem}, as follows.

The fixed space $\hfr^{S_e^k}$ is $$\hfr^{S_e^k} = \{(z_1, ..., z_n) \in \hfr : \text{ for } 0 \leq l < k,\  z_{le + i} = z_{le + j} \text{ for } 1 \leq i < i \leq e\}.$$  Take coordinates $x_1, ..., x_k, y_1, ..., y_{n - ek}$ for $\hfr^{S_e^k}$, where $x_l = z_{(l - 1)e + i}$ for $1 \leq l \leq k$ and $1 \leq i \leq e$ and $y_j = z_{ek + j}$ for $1 \leq j \leq n - ek$.  These coordinates satisfy the relation $\sum_i x_i + e\sum_jy_j = 0$.  The complement $N_{S_e^k}$ to $S_e^k$ in its normalizer is isomorphic to $S_k \times S_{n - ek}$, with the action of $S_k$ on $\hfr^{S_e^k}$ given by permuting the $x_i$ coordinates and the action of $S_{n - ek}$ given by permuting the $y_j$ coordinates.  The action on $H_c(S_e^k, \hfr_{S_e^k}) \cong H_c(S_e, \hfr_{S_e})^{\otimes k}$ is by permuting the tensor factors, and in particular the inertia group $I_{S_e^k, L}$ is maximal, i.e. equals $N_{S_e^k}$.  Clearly the action of $N_{S_e^k}$ on $\hfr^{S_e^k}$ is generated by reflections, so we have $I_{S_e^k, L}^{ref} = N_{S_e^k}$ and $I_{S_e^k, L}^{comp} = 1$.  The trivial action of $N_{S_e^k}$ on the trivial representation $\triv$ of $S_e^k$ makes $\triv$ equivariant.  In particular, by Remark \ref{lowest-weight-remark}, the 2-cocycle $\mu \in Z^2(I_{S_e^k}, \cplx^\times)$ is trivial.

There are three distinct $N_{S_e^k}$-orbits of hyperplanes defining $\hfr^{S_e^k}_{reg} \subset \hfr^{S_e^k}$, given by $(1) \ x_i = x_j$ for $1 \leq i < j \leq k$, (2) $x_i = y_j$ for $1 \leq i \leq k$ and $1 \leq j \leq n - ek$, and (3) $y_i = y_j$ for $1 \leq i < j \leq n - ek$.  The stabilizers in $S_n$ of the $x_i = y_j$ hyperplanes are those parabolic subgroups containing $S_e^k$ and conjugate to $S_e^{k - 1} \times S_{e + 1}$, and $S_e^k$ is self-normalizing in these groups.  The stabilizers of the hyperplanes $x_i = x_j$ are those parabolic subgroups of $S_n$ containing $S_e^k$ and conjugate to $S_{2e} \times S_e^{k - 2}$, and the stabilizers of the hyperplanes $y_i = y_j$ are of those parabolic subgroups of $S_n$ containing $S_e^k$ and conjugate to $S_e^k \times S_2$.  Note that $S_e^k$ is not self-normalizing in either of these types of parabolic subgroups, and in particular the space $\hfr^{S_e^k}_{L-reg}$ is the complement of the hyperplanes $x_i = x_j$ and $y_i = y_j$.  It follows already from Theorem \ref{coxeter-presentation} that there is an isomorphism of algebras $\mathcal{H}(c, S_e^k, L(\triv), S_n) \cong \mathsf{H}_{q_1}(S_k) \otimes \mathsf{H}_{q_2}(S_{n - ek}),$ for some complex parameters $q_1, q_2 \in \cplx^\times$.  To show Theorem \ref{Wilcox}, it therefore suffices to show that $q_1 = 1$ and $q_2 = e^{-2 \pi i c}$.

By Remark \ref{reducible-reduction-prop}, the parameter $q_1$ can be computed by studying the inclusion $S_{e}^2 \subset S_{2e}$ and the parameter $q_2$ can be computed by studying the inclusion $1 \subset S_2$.  In the latter case, the associated central element $z_{T_1} \in \mathsf{H}_q(1) = \cplx$ is $1 - q = 1 - e^{-2 \pi i c}$, and therefore by Theorem \ref{quadratic-relations-theorem} the associated quadratic relation is $T^2 = (1 - q)T + q$, so $q_2 = q = e^{-2 \pi i c}$.

To obtain the parameter $q_1$, we need to analyze the inclusion $S_e^2 \subset S_{2e}$ and the associated element $z_{w_0w_0'} \in \mathsf{H}_q(S_e^2)$, where $w_0$ is the longest element of $S_{2e}$ and $w_0'$ is the longest element of $S_e^2$.

\begin{proposition} The decomposition of the element $T_{w_0w_0'}^2$ in the $T_w$-basis of $\mathsf{H}_q(S_{2e})$ is given by $$T_{w_0w_0'}^2 = \sum_{w \in X_e} (1 - q)^{a(w)}q^{b(w)}T_w$$ where $X_e \subset S_{2e}$ is the subset of elements $w \in S_{2e}$ such that the three conditions

(1) $w^2 = 1$

(2) $w(i) = i$ or $w(i) > e$ for $1 \leq i \leq e$

(3) $w(i) = i$ or $w(i) \leq e$ for $m < i \leq 2e$,\\
hold and where the functions $a, b : X_e \rightarrow \ints^{\geq 0}$ are defined by $$a(w) = \#\{i \in [1, e] : w(i) > e\}$$ and $$b(w) = -\#\{(i, j) : 1 \leq i < j \leq e, w(i) > w(j)\} + \sum_{i = 1}^e \begin{cases} e & w(i) = i\\ 2e - w(i) & w(i) > e.\end{cases}$$  In particular, the element $z_{w_0w_0'} \in \mathsf{H}_q(S_e^2) = \mathsf{H}_q(S_e)^{\otimes 2}$ is given by $$z_{w_0w_0'} = (1 - q)^eq^{ e \choose 2} \sum_{w \in S_e} q^{-l(w)}T_w \otimes T_{w^{-1}}.$$\end{proposition}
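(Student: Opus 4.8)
The plan is to make $w_0w_0'$ completely explicit, compute $T_{w_0w_0'}^2$ in the $T_w$-basis by a Bott--Samelson-style iteration, and then read off $z_{w_0w_0'}$ by restricting to the appropriate coset.

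First I would identify the element. With $W=S_{2e}$ and $W'=S_e\times S_e$ the standard parabolic on the blocks $\{1,\dots,e\}$ and $\{e+1,\dots,2e\}$, a direct check shows that $w_0w_0'$ is the block transposition $\tau$ with $\tau(i)=e+i$ and $\tau(e+i)=i$ for $1\le i\le e$; its inversions are exactly the pairs $(i,j)$ with $i\le e<j$, so $l(\tau)=e^2$, and moreover $l(w_0)=l(\tau)+l(w_0')$, whence $T_{w_0}=T_\tau T_{w_0'}$. I would record the reduced expression $\tau=u_1u_2\cdots u_e$ with $u_j=s_{e+j-1}s_{e+j-2}\cdots s_{j+1}s_j$ (so $u_j$ is the $(e+1)$-cycle $(j,\,e+j,\,e+j-1,\dots,j+1)$); since each $u_j$ is a descending consecutive product of $e$ generators, hence reduced, and $\sum_j l(u_j)=e^2=l(\tau)$, the concatenation is a reduced word for $\tau$. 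Finally, the index set $X_e$ is exactly the set of involutions realizing partial matchings between $\{1,\dots,e\}$ and $\{e+1,\dots,2e\}$, with $a(w)$ the number of matched pairs.

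The core of the proof is the computation of $T_\tau^2$. Writing $T_\tau^2=T_\tau\cdot T_{u_1}T_{u_2}\cdots T_{u_e}$, I would compute by successively right-multiplying $T_\tau$ by the generators $T_s$ occurring in the word $u_1\cdots u_e$, using $T_wT_s=T_{ws}$ when $l(ws)>l(w)$ and $T_wT_s=qT_{ws}+(1-q)T_w$ when $l(ws)<l(w)$ (the quadratic relation in the normalization $T_s^2=(1-q)T_s+q$). This is best organized as an induction over the blocks $u_1,\dots,u_e$ already processed, maintaining at each stage a formula of the shape claimed in the proposition for the partial product. The combinatorial content is that the length-decreasing steps while processing the $j$-th block correspond to the choice, for the strand $j$, of either leaving it fixed or matching it to one of the still-unmatched elements of $\{e+1,\dots,2e\}$: a matched strand $j$ contributes the factor $1-q$ together with powers of $q$ recording how far $j$ is transported and how many inversions it forms with the already-matched smaller strands, while a strand left fixed contributes $q^e$, and these powers assemble precisely into $b(w)$. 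One then checks that distinct histories never land on the same $w$, so that the coefficient of each surviving $T_w$ (necessarily $w\in X_e$) is the single monomial $(1-q)^{a(w)}q^{b(w)}$; as a sanity check the coefficient of $T_1$ comes out to $q^{e^2}=q^{l(w_0w_0')}$, matching Lemma~\ref{z-props-lemma}.

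Finally, to extract $z_{w_0w_0'}$: since $w_0w_0'$ normalizes $W'$ the left coset $W'(w_0w_0')$ equals $(w_0w_0')W'$ and $w_0w_0'$ is its minimal-length element, so $T_uT_{w_0w_0'}=T_{u(w_0w_0')}$ for all $u\in W'$, and hence by Definition~\ref{z-defs}, $z_{w_0w_0'}=\sum_{u\in W'}\bigl(\text{coefficient of }T_{u(w_0w_0')}\text{ in }T_{w_0w_0'}^2\bigr)\,T_u$. An element $w\in X_e$ lies in $W'(w_0w_0')$ iff $w\tau$ preserves both blocks iff $w$ swaps the two blocks, i.e.\ iff $a(w)=e$; such $w$ correspond bijectively to $\sigma\in S_e$ via $w(i)=e+\sigma(i)$ for $i\le e$, in which case $u:=w(w_0w_0')=\sigma^{-1}\times\sigma$ and the coefficient of $T_w$ simplifies to $(1-q)^e q^{\binom{e}{2}-l(\sigma)}$, using $\sum_{i=1}^e\bigl(2e-(e+\sigma(i))\bigr)=\binom{e}{2}$ and that the within-block inversion count of $w$ equals $l(\sigma)$. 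Summing over $\sigma$ and relabeling $\sigma\mapsto\sigma^{-1}$ (legitimate since $l(\sigma)=l(\sigma^{-1})$) gives $z_{w_0w_0'}=(1-q)^e q^{\binom{e}{2}}\sum_{w\in S_e}q^{-l(w)}T_w\otimes T_{w^{-1}}$, as claimed. The main obstacle is precisely the computation of $T_\tau^2$: verifying that the iterated quadratic relations produce the index set $X_e$ with no spurious terms and no cancellations, and that the accumulated powers of $q$ organize into the exact statistic $b(w)$. A careful choice of reduced word, together with control of the order in which strands are carried across the two blocks, is what makes this bookkeeping tractable.
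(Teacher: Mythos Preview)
Your proposal is correct and follows essentially the same approach as the paper: the paper's proof consists of the single sentence ``The expression for $T_{w_0w_0'}^2$ can be obtained by a simple inductive argument using the reduced expression $w_0w_0' = (s_e \cdots s_{2e - 1})(s_{e - 1} \cdots s_{2e - 2}) \cdots (s_1 \cdots s_e)$ and the relations defining the Hecke algebra $\mathsf{H}_q(S_{2e})$, from which the expression for $z_{w_0w_0'}$ follows immediately.'' The only difference is cosmetic---the paper uses ascending blocks $s_{e-k+1}\cdots s_{2e-k}$ whereas you use descending blocks $u_j=s_{e+j-1}\cdots s_j$---and you have spelled out in considerably more detail both the strand-matching combinatorics governing the induction and the extraction of $z_{w_0w_0'}$ from the coset $W'(w_0w_0')$.
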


\begin{proof} The expression for $T_{w_0w_0'}^2$ can be obtained by a simple inductive argument using the reduced expression $$w_0w_0' = (s_e \cdots s_{2e - 1})(s_{e - 1} \cdots s_{2e - 2}) \cdots (s_1 \cdots s_e)$$ and the relations defining the Hecke algebra $\mathsf{H}_q(S_{2e})$, from which the expression for $z_{w_0w_0'}$ follows immediately.\end{proof}

The image of the Verma module $\Delta_c(\triv)$ in $\oscr_c(S_e^2, \hfr_{S_e^2})$ under the $KZ$ functor is the 1-dimensional representation $KZ(\Delta_c(\triv))$ on which all of the generators $T_i \in \mathsf{H}_q(S_e^2)$ act by the identity.  In particular, the element $z_{w_0w_0'}$ acts by the scalar $(1 - q)q^{e \choose 2}\sum_{w \in S_e} q^{-l(w)} = (1 - q)q^{e \choose 2}P_{S_e}(q^{-1})$, where $P_{S_e}$ is the Poincar\'{e} polynomial of $S_e$.  By the well known identity $$P_{S_e} = \prod_{i = 1}^e \frac{1 - q^i}{1 - q}$$ and the fact that $q = e^{-2 \pi i c}$ is a primitive $e^{th}$ root of unity, it follows that $z_{w_0w_0'}$ acts by $0$ on $KZ(\Delta_c(\triv))$, and hence it follows that $z_{w_0w_0'}$ acts by $0$ on all simple objects in the block of $\mathsf{H}_q(S_e^2)$ containing $KZ(\Delta_c(\triv))$.  In particular, $\cz_{w_0w_0'}(L(\triv)) = 0$.  Note also that $l(w_0w_0') = e^2$, so $q^{l(w_0w_0')} = 1$.  By Theorem \ref{quadratic-relations-theorem}, it follows that $T_{S_e^2, L, S_{2e}}^2 = 1$.  Therefore, the parameter $q_1$ is 1, and the isomorphism $$\mathcal{H}(c, S_e^k, L, S_n) \cong \cplx[S_k] \otimes \mathsf{H}_q(S_{n - ek})$$ follows, as needed.

\subsection{Type B} \label{type-B} In this section we will illustrate our results and check Theorem \ref{simplified-coxeter-theorem} in the setting of the type $B$ Coxeter groups.  The results we obtain in type $B$ follow from the work of Shan-Vasserot \cite{SV}.

Recall that the Coxeter group $B_n$ is the semidirect product $S_n \ltimes (\ints/2\ints)^n$, where $S_n$ acts on $(\ints/2\ints)^n$ by permutation, and that it acts in its reflection representation $\hfr = \cplx^n$ by permutations and sign changes of the coordinates.  There are two conjugacy classes of reflections, the first associated to the coordinate hyperplanes $z_i = 0$ and the second associated to the hyperplanes $z_i = \pm z_j$.  A class function $c$ on the set of reflections therefore amounts to a choice of parameter $c_1 \in \cplx$ for the $z_i = 0$ hyperplanes and a choice of parameter $c_2 \in \cplx$ for the $z_i = \pm z_j$ hyperplanes.  Let $H_c(B_n, \hfr)$ be the associated rational Cherednik algebra.  We will choose the set of simple reflections $s_0, s_1, ..., s_{n - 1}$ so that $s_0$ is the reflection through the hyperplane $z_1 = 0$, negating the first coordinate, and so that $s_i$, for $0 < i < n$, is the reflection through the hyperplane $z_i = z_{i + 1}$, transposing the $i^{th}$ and $(i + 1)^{st}$ coordinates.

Recall that the irreducible representations of the Coxeter group $B_n$ are naturally labeled by pairs of partitions, or \emph{bipartitions}, $\lambda = (\lambda_1, \lambda_2) \vdash n$ of $n$ (see, for example, \cite{GP}).  In particular, the simple objects in $\oscr_c(B_n, \hfr)$ are also labeled by bipartitions, with the bipartition $\lambda$ corresponding to the irreducible representation $L(\lambda) := L(V_\lambda)$, where $V_\lambda$ is the associated irreducible representation of $B_n$ and $L(V_\lambda)$ is the unique simple quotient of the Verma module $\Delta_c(V_\lambda)$ attached to $V_\lambda$.  The representation theory of $H_c(B_n, \hfr)$ is much richer than that of $H_c(S_n, \hfr_{S_n})$, and in particular the latter algebra may admit many nonisomorphic irreducible finite-dimensional representations.

Any parabolic subgroup of $B_n$ is conjugate to a unique parabolic subgroup of the form $B_l \times S_{n_1} \times \cdots \times S_{n_k}$ for some nonnegative integers $k, l \geq 0$ and positive integers $n_1 \geq \cdots \geq n_k > 0$ with $l + \sum_i n_i \leq n$.  By \cite[Theorem 1.2]{BEG}, the only such parabolic subgroups whose rational Cherednik algebras admit nonzero finite-dimensional representations are those of the form $B_l \times S_e^k$.  Let $W'$ be such a parabolic subgroup, and let $L$ be an irreducible finite-dimensional representation of $H_c(W', \hfr_{W'})$.  Then $L$ is of the form $L(V_\lambda \otimes \triv_e^{\otimes k})$, where $\lambda$ is a bipartition of $l$, $V_\lambda$ is the associated irreducible representation of $B_l$, and $\triv_e$ denotes the trivial representation of $S_e$.  If $k > 0$ and such a finite-dimensional irreducible representation exists, the parameter $c_2$ must be of the form $r/e$ for some integer $r$ relatively prime to $e$.

The fixed space $\hfr^{W'} \subset \cplx^n$ consists of those points $(z_1, ..., z_n)$ such that both $z_i = 0$ for $1 \leq i \leq l$ and also for $0 \leq m < k$ we have $z_{l + 1 + me + i} = z_{l + 1 + me + j}$ for $1 \leq i, j \leq e$.  For $1 \leq i \leq k$ let $x_i$ denote the coordinate of $z_{l + 1 + ie + 1}$ in $\hfr^{W'}$, and for $1 \leq j \leq n - l - ke$ let $y_j$ denote the coordinate $z_{j + l + ke}$, so that $\hfr^{W'}$ is identified with $\cplx^k \oplus \cplx^{n - l - ke}$ where the $x_i$ give the standard coordinates for $\cplx^k$ and the $y_j$ give the standard coordinates for $\cplx^{n - l - ke}$.  The natural complement $N_{W'}$ to $W'$ in its normalizer $N_{B_n}(W')$ is isomorphic to $B_k \times B_{n - l - ek}$ compatibly with the natural reflection representation of the latter group on $\cplx^k \oplus \cplx^{n - l - ke}$.  In particular, $N_{W'} = N_{W'}^{ref}$.  Each parabolic subgroup $W'' \subset B_n$ containing $W'$ in corank 1 is conjugate to a unique parabolic subgroup appearing among the five following cases; the form of the fixed hyperplane $\hfr^{W''} \subset \hfr^{W'}$ is listed after the parabolic subgroup $W''$:

(1) $B_{l + k} \times S_e^{k - 1} \ \ \ \ \ \ \ \ \ \ \ \ \ \ \ \ \ \ \ \ \ \ \ \ \ \ \ \ x_i = 0$

(2) $B_l \times S_{2e} \times S_e^{k - 2} \ \ \ \ \ \ \ \ \ \ \ \ \ \ \ \ \ \ \ \ \ \ \ x_i = \pm x_j$

(3) $B_{l + 1} \times S_e^k \ \ \ \ \ \ \ \ \ \ \ \ \ \ \ \ \ \ \ \ \ \ \ \ \ \ \ \ \ \ \ y_i = 0$

(4) $B_l \times S_e^k \times S_2 \ \ \ \ \ \ \ \ \ \ \ \ \ \ \ \ \ \ \ \ \ \ \ \ \ \ \ y_i = \pm y_j$

(5) $B_l \times S_{e + 1} \times S_e^{k - 1} \ \ \ \ \ \ \ \ \ \ \ \ \ \ \ \ \ \ \ \ \ x_i = \pm y_j.$

\noindent The only such parabolic subgroups in which $W'$ is self-normalizing are those of type (5).  Furthermore, as the longest element of any Coxeter group of type $B$ acts by -1 on its reflection representation, it follows that endowing $V_\lambda \otimes \triv_e^{\otimes k}$ with the trivial representation of $N_{W'}$ gives equivariant structure to $L(V_\lambda \otimes \triv_e^{\otimes k})$.  In particular, $I = N_{W'} = N_{W'}^{ref}$, the cocycle $\mu \in Z^2(I, \cplx^\times)$ is trivial, and $\hfr^{W'}_{L-reg}$ is the complement in $\hfr^{W'}$ of the hyperplanes of the forms (1) - (4).  In particular, by Theorem \ref{coxeter-presentation} we have an isomorphism of algebras $$\mathcal{H}(c, B_l \times S_e^k, L(V_\lambda \otimes \triv_e^{\otimes k}), B_n) \cong \mathsf{H}_{q_1, q_2}(B_k) \otimes \mathsf{H}_{q_3, q_4}(B_{n - l - ke}),$$ where $q_i \in \cplx^\times$ is the complex number such that the monodromy operator (appropriately scaled) associated to the hyperplanes of type (i), as listed above, satisfies the quadratic relation $(T - 1)(T + q_i) = 0$.  Here $q_1$ and $q_3$ are associated to the reflections through hyperplanes $x_i = 0$ and $y_i = 0$, respectively, and $q_2$ and $q_4$ are associated to the reflections through hyperplanes $x_i = \pm x_j$ and $y_i = \pm y_j$, respectively.  In particular, Theorem \ref{simplified-coxeter-theorem} holds in type $B$.

By Remark \ref{reducible-reduction-prop} the parameter $q_2$ associated to the inclusion $B_l \times S_e^k \subset B_l \times S_{2e} \times S_e^{k - 2}$ can be computed using the inclusion $S_e^2 \subset S_{2e}$ and the finite-dimensional irreducible representation $L(\triv_e^{\otimes 2})$ of $H_c(S_e^2, \hfr_{S_e^2})$.  This case was treated in Section \ref{type-A}, and we have $q_2 = 1$.  Similarly, $q_4$ can by computed using the inclusion $1 \subset S_2$, where $S_2$ is generated by a reflection in $B_n$ associated to a hyperplane $z_i = \pm z_j$ for any $i, j > l + ke$, giving $q_4 = e^{-2 \pi i c_2}.$  The computation of the parameters $q_1$ and $q_3$ reduce to computing the parameters $q(c, B_k \times S_e, L(V_\lambda \otimes \triv_e), B_{k + e})$ and $q(c, B_k, L(V_\lambda), B_{k + 1}),$ respectively, which in turn can be computed using Theorem \ref{quadratic-relations-theorem}.

We now explain how to compute the parameter $q(c, B_n, L(V_\lambda), B_{n + 1})$ from only the parameter $c$ and the bipartition $\lambda \vdash n$.  Let $p = e^{-2 \pi i c_1}$ and $q = e^{-2 \pi i c_2}$, so that $\mathsf{H}_{p, q}(B_n)$ is the Hecke algebra appearing in the $KZ$ functor for $\oscr_c(B_n, \hfr_{B_n})$, where the parameter $p$ is associated with reflections through hyperplanes $z_i = 0$ and $q$ is associated with reflections through hyperplanes $z_i = \pm z_j$.  Let $w_0$ denote the longest element of $B_{n + 1}$, let $w_0'$ denote the longest element of $B_n$, and let $z_{w_0w_0'}$ denote the associated central element in $\mathsf{H}_{p, q}(B_n)$.  For $1 \leq i \leq n$, let $t_i \in B_n$ denote the reflection $t_i := s_{i - 1} \cdots s_1 s_0 s_1 \cdots s_{i - 1}$ negating the $i^{th}$ coordinate.

Fix a bipartition $\lambda = (\lambda^{(1)}, \lambda^{(2)}) \vdash n$ of $n$, and for $i = 1,2$ let $\lambda_1^{(i)} \geq \cdots \geq \lambda^{(i)}_{l_i} > 0$ be the parts of the partition $\lambda^{(i)}$.  Recall that we may view $\lambda$ as a pair of Young diagrams in the following way.  Refer to an element $b = (x, y, i) \in \ints^{> 0} \times \ints^{> 0} \times \{1, 2\}$ as a \emph{box}.  A finite subset $Y \subset \ints^{> 0} \times \ints^{> 0} \times \{1, 2\}$ is called a \emph{Young diagram} if whenever $Y$ contains the box $(x, y, i)$ it also contains all boxes of the form $(x', y', i)$ for positive integers $x', y'$ satisfying $1 \leq x' \leq x$ and $1 \leq y' \leq y$.  Let $YD(\lambda) \subset \ints^{> 0} \times \ints^{> 0} \times \{1, 2\}$ be the Young diagram consisting of those boxes $(x, y, i)$ such that $y \leq l_i$ and $x \leq \lambda^{(i)}_y$.  Define the \emph{content}, with respect to the parameters $p$ and $q$, of a box $b = (x, y, i)$ to be $q^{x - y}p^{-1}$ if $i = 1$ and to be $-q^{x - y}$ if $i = 2$.  Denote the content of $b$ by $ct_{p, q}(b)$.

\begin{definition} \label{zpq-def} Given a bipartition $\lambda \vdash n$ of $n$ and parameters $p, q \in \cplx^\times$ for the Hecke algebra $\mathsf{H}_{p, q}(B_n)$, define the scalar $z_{p, q}(\lambda) \in \cplx$ by $$z_{p, q}(\lambda) := (1 - p)q^n + (1 - q)^2q^{n - 1}p\sum_{b \in YD(\lambda)} ct_{p, q}(b).$$\end{definition}

\begin{remark} Our definition of content differs slightly from the definition of content appearing in \cite[Section 10.1.4]{GP} because we choose a different convention for the quadratic relations satisfied by the generators $T_i$ of $\mathsf{H}_{p, q}(B_n)$, i.e. that the quadratic relations should be divisible by $(T - 1)$ rather than $(T + 1)$.  This is natural from the perspective of the $KZ$ functor.  Our algebra $\mathsf{H}_{p, q}(B_n)$ is isomorphic to the algebra $\mathsf{H}_{p^{-1}, q^{-1}}(B_n)$ appearing in \cite{GP} under the isomorphism induced by the assignments $T_s \mapsto q_s^{-1}T_s$.  The inversion of the parameters explains the discrepancy in the definition of content.\end{remark}

\begin{proposition} \label{z-elt-type-B} In the notation of Lemma \ref{z-props-lemma}, $q^{l(w_0w_0')} = pq^{2n}$, and the central element $z_{w_0w_0'} \in \mathsf{H}_{p, q}(B_n)$ has the following expansion in the $T_w$-basis: $$z_{w_0w_0'} = (1 - p)q^n + (1 - q)^2\sum_{i = 1}^n q^{n - i}T_{t_i}.$$  Moreover, $z_{w_0w_0'}$ acts on any irreducible representation of $\mathsf{H}_{p, q}(B_n)$ lying in the block of $\mathsf{H}_{p, q}(B_n)\mhyphen\text{mod}_{f.d.}$ corresponding via the $KZ$ functor to the block of $\oscr_c(B_n, \hfr_{B_n})$ containing $L(V_\lambda)$ by the scalar $z_{p, q}(\lambda)$ defined in Definition \ref{zpq-def}.\end{proposition}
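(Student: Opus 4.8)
The plan is to treat the three assertions of the proposition in turn, after first pinning down $w_0w_0'$ concretely. Since the longest element of a Coxeter group of type $B$ acts by $-1$ on its reflection representation (and is therefore central), the longest element $w_0$ of $B_{n+1}$ negates all of $z_1,\dots,z_{n+1}$, the longest element $w_0'$ of $B_n=\langle s_0,\dots,s_{n-1}\rangle$ negates $z_1,\dots,z_n$, and these two commute; hence $w_0w_0'=t_{n+1}$, the reflection negating only $z_{n+1}$, with standard reduced expression $t_{n+1}=s_ns_{n-1}\cdots s_1s_0s_1\cdots s_{n-1}s_n$ of length $2n+1$. In particular conjugation by $w_0w_0'$ centralizes $W'=B_n$, so the diagram automorphism $\sigma$ is trivial and Theorem~\ref{quadratic-relations-theorem} applies in the form $T^2=z_{w_0w_0'}|_LT+q^{l(w_0w_0')}$. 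Reading off the reduced expression, $s_0$ (a short-root reflection, of the conjugacy class carrying parameter $p$) occurs once while $s_1,\dots,s_n$ (all conjugate, carrying parameter $q$) occur $2n$ times in all, so $q^{l(w_0w_0')}=pq^{2n}$; equivalently the inversion set of $t_{n+1}$ is $\Phi^+\setminus\Phi^+_{W'}$, namely the single short root $e_{n+1}$ together with the $2n$ long roots $e_i\pm e_{n+1}$, $1\le i\le n$. This disposes of the first assertion.

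For the expansion of $z_{w_0w_0'}$ in the $T_w$-basis I would argue by induction on $n$ using the recursion $T_{t_{k+1}}=T_{s_k}T_{t_k}T_{s_k}$ (valid since $s_kt_ks_k=t_{k+1}$ and the concatenated expressions remain reduced). Expanding $T_{s_k}^2=(1-q)T_{s_k}+q$ gives $T_{t_{k+1}}^2=(1-q)\,T_{s_k}(T_{t_k}T_{s_k}T_{t_k})T_{s_k}+q\,T_{s_k}T_{t_k}^2T_{s_k}$, and inserting the inductive expansion of $T_{t_k}^2$ (together with the quadratic relations for $T_{s_{k-1}}$ and, at the bottom, for $T_{s_0}$, each applied once, and repeated use of braid relations) rewrites everything in the $T_w$-basis. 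Collecting the basis elements $T_w$ with $w$ in the left coset $W'\cdot t_{n+1}$ --- equivalently those $w$ with $w(e_{n+1})=-e_{n+1}$ --- and writing this sub-sum as $z_{w_0w_0'}T_{t_{n+1}}$ with $z_{w_0w_0'}\in\mathsf{H}_{p,q}(B_n)$ yields the claimed formula $z_{w_0w_0'}=(1-p)q^n+(1-q)^2\sum_{i=1}^nq^{n-i}T_{t_i}$; the coefficient of $T_1$ recovers $z_1=q^{l(w_0w_0')}=pq^{2n}$, consistently with Lemma~\ref{z-props-lemma}. Equivalently, this formula amounts to the one-step recursion $z^{(n)}_{w_0w_0'}=q\,z^{(n-1)}_{w_0w_0'}+(1-q)^2T_{t_n}$ relating the elements attached to $B_{n-1}\subset B_n$ and to $B_n\subset B_{n+1}$, which is the form most convenient for the last step. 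The only subtlety here is tracking which cross-terms fall into the coset $W't_{n+1}$; this can be checked against $n=1$, where $z_{w_0w_0'}=(1-p)q+(1-q)^2T_{s_0}$.

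Finally, for the scalar $z_{w_0w_0'}|_{L(V_\lambda)}$ I would use that $z_{w_0w_0'}$ is central in $\mathsf{H}_{p,q}(B_n)$ (Lemma~\ref{z-props-lemma}, $\sigma$ trivial), so it acts by a scalar on the whole block corresponding under $KZ$ to the block of $L(V_\lambda)$, in particular on the image of the standard module $\Delta_c(V_\lambda)$. Since this image specializes from the generic Hecke algebra module, which is absolutely irreducible and isomorphic to $V_\lambda$ over $\rats(p,q)$, and since both sides of the asserted identity are Laurent polynomials in $p,q$, it suffices to compute the scalar in the split semisimple case. There the $T_{t_i}$ are the Jucys--Murphy elements of $\mathsf{H}_{p,q}(B_n)$: each $T_{t_i}$ centralizes $\mathsf{H}_{p,q}(B_{i-1})$ and hence acts diagonally in the seminormal basis of $V_\lambda$, with eigenvalue on the vector attached to a standard bitableau $\mathfrak t$ equal to $q^{\,i-1}p\cdot ct_{p,q}(b_i)$, where $b_i$ is the box of $\mathfrak t$ labelled $i$ and $ct_{p,q}$ is the content of Definition~\ref{zpq-def} (this matches the $n=1$ characters $T_{s_0}=1$ and $T_{s_0}=-p$, and is compatible with the recursion of the previous paragraph). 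Consequently $\sum_{i=1}^nq^{n-i}T_{t_i}$ acts by $\sum_i q^{n-i}\,q^{i-1}p\,ct_{p,q}(b_i)=q^{n-1}p\sum_{b\in YD(\lambda)}ct_{p,q}(b)$, and therefore $z_{w_0w_0'}$ acts by $(1-p)q^n+(1-q)^2q^{n-1}p\sum_{b\in YD(\lambda)}ct_{p,q}(b)=z_{p,q}(\lambda)$, as claimed. I expect the main obstacle to be exactly this last input --- establishing the precise Jucys--Murphy eigenvalue $q^{\,i-1}p\cdot ct_{p,q}(b)$ in the paper's normalization of the quadratic relations $(T-1)(T+q_s)=0$ and of the content function --- which, while standard for type-$B$ Hecke algebras, requires careful tracking of conventions; an alternative is to prove the equivalent recursion $z_{p,q}(\lambda)=q\,z_{p,q}(\mu)+(1-q)^2q^{n-1}p\,ct_{p,q}(\lambda/\mu)$ directly from the multiplicity-free branching rule $\mathrm{Res}^{B_n}_{B_{n-1}}V_\lambda=\bigoplus_\mu V_\mu$ and the recursion $z^{(n)}_{w_0w_0'}=q\,z^{(n-1)}_{w_0w_0'}+(1-q)^2T_{t_n}$.
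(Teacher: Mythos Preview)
Your proposal is correct and follows essentially the same approach as the paper. The paper likewise uses the reduced expression $s_n\cdots s_1 s_0 s_1 \cdots s_n$ for $w_0w_0'=t_{n+1}$ to read off $q^{l(w_0w_0')}=pq^{2n}$ and to obtain the $T_w$-expansion of $z_{w_0w_0'}$ by direct calculation; for the scalar, the paper also reduces to generic parameters by a deformation argument and then invokes the diagonal action of the $T_{t_i}$ in Hoefsmit's seminormal basis (citing \cite[Theorem~10.1.5]{GP}), which is precisely your Jucys--Murphy input.
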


\begin{proof} The expressions for $z_{w_0w_0'}$ and $q^{l(w_0w_0')}$ follow immediately from standard calculations in $\mathsf{H}_{p, q}(B_{n + 1})$ using the reduced expression $s_n \cdots s_1s_0s_1 \cdots s_n$ for $w_0w_0'$, where $s_0$ is the simple reflection through the hyperplane $z_1 = 0$ and, for $i > 0$, $s_i$ is the simple reflection through the hyperplane $z_i = z_{i + 1}$.  To show that $z_{w_0w_0'}$ acts by $z_{p, q}(\lambda)$ on the irreducibles in the block of $\mathsf{H}_{p, q}(B_n)\mhyphen\text{mod}_{f.d.}$ appearing in the theorem, it suffices to show that $z_{w_0w_0'}$ acts by $z_{p, q}(\lambda)$ on $KZ(\Delta_c(V_\lambda))$.  By a standard deformation argument, it suffices to prove this for generic parameters $p, q, c$.  For generic parameters, $KZ(\Delta_c(V_\lambda))$ is isomorphic to the irreducible representation $V_\lambda^{p, q}$ of $\mathsf{H}_{p, q}(B_n)$ described in \cite[Theorem 10.1.5]{GP} in terms of Hoefsmit's matrices, and that $z_{w_0w_0'}$ acts by the scalar $z_{p, q}(\lambda)$ on $V_\lambda^{p, q}$ then follows immediately from the explicit description of the diagonal action of the elements $T_{t_i}$ on the standard Young tableau basis of $V_\lambda^{p, q}$.\end{proof}

In particular, by Theorem \ref{quadratic-relations-theorem}, the canonical generator $T_{B_n, L(V_\lambda), B_{n + 1}}$ of the algebra $\mathcal{H}(c, B_n, L(V_\lambda), B_{n + 1})$ satisfies the same quadratic relation as the matrix $$\begin{pmatrix} 0 & pq^{2n} \\ 1 & z_{p, q}(\lambda) \end{pmatrix},$$ i.e. $$T^2 = z_{p, q}(\lambda)T + pq^{2n}.$$  Rescaling appropriately, one obtains the parameter $q(c, B_n, L(V_\lambda), B_{n + 1})$.  Note that when $q$ is a primitive $e^{th}$ root of unity, this quadratic relation depends only on the $e$-cores of the components of $\lambda$.  

\subsection{Type D} \label{type-D}  In this section we will show that Theorem \ref{simplified-coxeter-theorem} holds in type $D$ and that the study of the generalized Hecke algebras $\hcal(c, W', L, W)$ when $W$ is of type $D$ largely reduces to the case in which $W$ is of type $B$.

Recall that for $n \geq 4$ the reflection group $D_n$ of type $D$ and rank $n$ is the subgroup of $B_n$ of index 2 consisting of those elements acting on $\cplx^n$ with an even number of sign changes.  $D_n$ is an irreducible reflection group with reflection representation $\cplx^n$ in this way, generated by reflections through the hyperplanes $z_i = \pm z_j$ for $1 \leq i < j \leq n$.  If $s_0, ..., s_{n - 1}$ are the simple reflections for $B_n$ introduced in the previous section, then the reflections $s_1', s_1, s_2, ..., s_{n - 1}$, where $s_1' = s_0s_1s_0$ is the reflection through the hyperplane $z_1 = -z_2$, form a system of simple reflections for $D_n$ with respect to which it is a Coxeter group.

The irreducible complex representations of $D_n$ are easily described in terms of those of $B_n$ recalled in the previous section.  In particular, when $\lambda = (\lambda^1, \lambda^2)$ is a bipartition of $n$ for which $\lambda^1 \neq \lambda^2$, then the restriction of $V_\lambda$ to $D_n$ is irreducible, and $V_{(\lambda^1, \lambda^2)}$ and $V_{(\lambda^2, \lambda^1)}$ are isomorphic as representations of $D_n$.  When $\lambda^1 = \lambda^2$, i.e. when the bipartition $\lambda$ is \emph{symmetric}, the restriction of $V_\lambda$ to $D_n$ splits as a direct sum of two non-isomorphic irreducible representations $V_\lambda^+$ and $V_\lambda^-$.  All irreducible representations of $D_n$ appear in this way, and the only isomorphisms among these representations are those of the form $V_{(\lambda^1, \lambda^2)} \cong V_{(\lambda^2, \lambda^1)}$.

All reflections in $D_n$ are conjugate, so a parameter for the rational Cherednik algebra of type $D$ is determined by a single number $c \in \cplx$.   It follows immediately from the definition by generators and relations that the rational Cherednik algebra $H_c(D_n, \cplx^n)$ embeds naturally in the type $B$ algebra $H_{0, c}(B_n, \cplx^n)$, where in the latter algebra the parameter takes value $0$ on reflections through hyperplanes $z_i = 0$ and value $c$ on reflections through hyperplanes $z_i = \pm z_j$.  Let $q = e^{-2 \pi i c}$ be the parameter for the Hecke algebra $\mathsf{H}_q(D_n)$ whose category of finite-dimensional modules is the target of the $KZ$ functor.  Note similarly that the Hecke algebra $\mathsf{H}_q(D_n)$ embeds naturally as a subalgebra of the Hecke algebra $H_{1, q}(B_n)$ compatibly with the $T_w$ bases (see \cite[Section 10.4.1]{GP}); note that $T_{s_0}^2 = 1$.  This embedding is compatible with the $KZ$ functors in the obvious way.  It is shown in \cite{SS} that when the bipartition $\lambda$ is symmetric the irreducible representations $L_c(V_\lambda^\pm)$ are always infinite dimensional.  In particular, the finite-dimensional irreducible representations of $H_c(D_n, \cplx^n)$ always extend to irreducible representations of $H_{0, c}(B_n, \cplx^n)$, although not uniquely.

Suppose $W' \subset D_n$ is a standard parabolic subgroup such that $H_c(W', \hfr_{W'})$ admits a finite-dimensional irreducible representation $L$.  The irreducible parabolic subgroups of $D_n$ are of types $A$ and $D$.  We will now describe a procedure for producing a presentation of the algebra $\hcal(c, W', L, D_n)$ in the form appearing in Theorem \ref{simplified-coxeter-theorem}, and in particular we will see that Theorem \ref{simplified-coxeter-theorem} holds in type $D$.  Clearly, by tensoring with the sign character of $D_n$, we may assume $c > 0$.

First suppose that the decomposition of $W'$ into a product of irreducible parabolic subgroups involves no factors of type $D$.  Then by \cite[Theorem 1.2]{BEG} we can assume that $c = r/e$ for positive relatively prime integers $r \geq 1, e \geq 2$, that $W'$ is of the form $S_e^k$ for some integer $k > 0$ such that $ke \leq n$, and that $L = L(\triv_e^{\otimes k})$.  It follows from Remark \ref{lowest-weight-remark} that the inertia group $I$ equals the complement $N_{W'}$ to $W'$ in $N_{D_n}(W')$ and that the cocycle $\mu \in Z^2(N_{W'}, \cplx^\times)$ is trivial.  In particular, Theorem \ref{simplified-coxeter-theorem} holds in this case.  A detailed description of the group $N_{W'}$ and its maximal reflection subgroup $N_{W'}^{ref}$ (typically a proper subgroup of $N_{W'}$) may be found in \cite{How}.  As usual, the parameter $q_{W', L}$ associated to the Hecke algebra $\mathsf{H}_{q_{W', L}}(N_{W'}^{ref})$ can be computed using Theorem \ref{quadratic-relations-theorem}.  The parabolic subgroups $W'' \subset D_n$ containing $W'$ in corank 1 and in which $W'$ is not self-normalizing are of the form (1) $S_e^k \times S_2$, (2) $S_e^{k - 2} \times S_{2e}$, (3) (in the case $e = 2$) $S_2^{k - 3} \times D_4$, and (4) (in the case $e = 4$) $S_4^{k - 1} \times D_4$.  By Remark \ref{reducible-reduction-prop}, parameter computations in these cases reduce to the cases, respectively, (1) $1 \subset S_2$, (2) $S_e^2 \subset S_{2e}$, (3) $S_2^3 \subset D_4$, and (4) $S_4 \subset D_4$.  As discussed in Section \ref{type-A} about type $A$, the quadratic relation in case (1) is $(T - 1)(T + q) = 0$ with $q = e^{-2 \pi i c}$, and the quadratic relation in case (2) is $T^2 = 1$.  To compute the quadratic relation in case (3), we use Theorem \ref{quadratic-relations-theorem} again.  In particular, letting $w_0$ denote the longest element of $D_4$ and $w_0'$ the longest element of $S_2^3$, we have $q^{l(w_0w_0')} = (-1)^9 = -1$, and computations in the computer algebra package CHEVIE in GAP3 \cite{GAP1, GAP2} show that the central element $z_{w_0w_0'}$ acts on the trivial representation of $H_{-1}(S_2)^{\otimes 3}$ by the scalar 2.  By Theorem \ref{quadratic-relations-theorem} the quadratic relation appearing in case (3) is $T^2 = 2T - 1$, i.e. $(T - 1)^2 = 0$.  Similarly, one obtains the quadratic relation $(T - 1)^2 = 0$ in remaining case (4) as well.

Now, consider the remaining case in which the decomposition of $W'$ into a product of irreducible parabolic subgroups involves a factor of type $D$.  Then again by \cite[Theorem 1.2]{BEG} we can assume that $W'$ is conjugate to a parabolic subgroup of the form $D_l \times S_e^k$ for some integers $e \geq 2, l \geq 4, k \geq 0$ such that $l + ke \leq n$, and the finite-dimensional irreducible representation $L$ is isomorphic to $L_c(V_\lambda \otimes \triv_e^{\otimes k})$ for some bipartition $\lambda = (\lambda^1, \lambda^2)$ with $\lambda^1 \neq \lambda^2$.  The parameter $c$ must again be of the form $c = r/e$ for some positive integer $r$ relatively prime to $e$.  Furthermore, it follows from \cite[Lemma 4.2, Corollary 4.3]{Losevsupports} that $H_c(D_l, \cplx^l)$ admits no nonzero finite-dimensional representations when $e$ is odd, so we may assume that $e$ is even.  To see this, consider a parameter $c = r/e$ for the rational Cherednik algebra $H_c(D_n, \cplx^n)$, where $e > 2$ is an odd positive integer and $r$ an integer relatively prime to $e$.  The algebra $H_c(D_n, \cplx^n)$ admits nonzero finite-dimensional representations if and only if the algebra $H_{0, c}(B_n, \cplx^n)$ admits nonzero finite-dimensional representations.  In the notation from \cite[Section 4]{Losevsupports}, in this case we have $\kappa = -r/e$ and $(s_1, s_2) = (0, -\frac{e}{2r})$.  Indexes 1 and 2 are not equivalent under the equivalence relation $\sim_{(0, c)}$ as we have $s_2 - s_1 = -\frac{e}{2r} \notin \frac{1}{r}\ints = \kappa^{-1}\ints + \ints$, so by \cite[Lemma 4.2]{Losevsupports} the category $\oscr_{(0, c)}(B_n, \cplx^n)$ decomposes as a direct sum of outer tensor products of categories $\oscr$ associated to reflection groups $S_k$ with reflection representation $\cplx^k$, for various $k$, in a manner preserving supports \cite[Corollary 4.3]{Losevsupports}.  As the rational Cherednik algebras associated to the reducible reflection representations $(S_k, \cplx^k)$ have no nonzero finite-dimensional representations for any parameter values, it follows that the rational Cherednik algebra $H_{r/e}(D_n, \cplx^n)$ also has no nonzero finite-dimensional representations.  We will therefore assume that $e > 1$ is a positive even integer.

As the fixed space of $W'$ equals the fixed space of the parabolic subgroup $B_l \times S_e^k$ of $B_n \supset D_n$, it follows that $N_{D_n}(W') = D_n \cap N_{B_n}(B_l \times S_e^k)$.  As $\lambda_1 \neq \lambda_2$, the representation $V_\lambda \otimes \triv_e^{\otimes k}$ of $W'$ extends to a representation of $B_l \times S_e^k$, and we've seen in Section \ref{type-B} that such a representation extends to a representation of $N_{B_n}(B_l \times S_e^k)$.  In particular, it follows that the inertia group $I$ is maximal, i.e. equals $N_{W'}$, and that the cocycle $\mu \in Z^2(N_{W'}, \cplx^\times)$ is trivial, so Theorem \ref{simplified-coxeter-theorem} holds in this remaining case in type $D$.  As discussed in ``Case 1'' of the section ``Type D'' of \cite{How}, in this case $N_{W'}$ equals $N_{W'}^{ref}$ and is isomorphic to $B_k \times B_{n - l - ke}$ as a reflection group acting on $(\cplx^n)^{W'} \cong \cplx^k \oplus \cplx^{n - l - ke}$ in a manner completely analogous to the discussion in Section \ref{type-B}.  In particular, in this case we have $$\hcal(c, D_l \times S_e^k, L_c(V_\lambda \otimes \triv_e^{\otimes k}), D_n) \cong \mathsf{H}_{q_1, 1}(B_k) \otimes \mathsf{H}_{q_2, q}(B_{n - l - ke})$$ where $q_1$ and $q_2$ are associated to the short roots of $B_k$ and $B_{n - l - ke}$, respectively, $q = e^{-2 \pi i c}$, $q_1 = q(c, D_l \times S_e, L_c(V_\lambda \otimes \triv_e), D_{l + e})$, and $q_2 = q(c, D_l, L_c(V_\lambda), D_{l + 1}).$  The following result reduces the computation of these parameters to the type $B$:

\begin{proposition} In the setting of the previous paragraph, we have $$q(c, D_n, L_c(V_\lambda), D_{n + 1}) = q((0, c), B_n, L_{(0, c)}(V_\lambda), B_{n + 1})$$ and $$q(c, D_n \times S_e, L_c(V_\lambda \otimes \triv_e), D_{n + e}) = q((0, c), B_n \times S_e, L_{(0, c)}(V_\lambda \otimes \triv_e), B_{n + e}).$$\end{proposition}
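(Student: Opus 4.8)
The plan is to deduce both identities from Theorem~\ref{quadratic-relations-theorem} by transporting the type-$B$ computation of Proposition~\ref{z-elt-type-B} across the embeddings $H_c(D_m,\cplx^m)\hookrightarrow H_{0,c}(B_m,\cplx^m)$ and $\mathsf{H}_q(D_m)\hookrightarrow\mathsf{H}_{1,q}(B_m)$, which are compatible with the $KZ$ functors and respect the $T_w$-bases. I will carry this out for the first identity; the second is entirely parallel, with $D_{n+e}\supset D_n\times S_e$ and $B_{n+e}\supset B_n\times S_e$ in place of $D_{n+1}\supset D_n$ and $B_{n+1}\supset B_n$. Set $W'=D_n$ and $W=D_{n+1}$, realize $W\subset B_{n+1}$ acting on $\cplx^{n+1}$ with $D_n$ pointwise fixing a coordinate line, let $w_0,w_0'$ (resp.\ $w_0^B,{w_0'}^B$) be the longest elements of $D_{n+1},D_n$ (resp.\ $B_{n+1},B_n$), and put $v_D=w_0w_0'$, $v_B=w_0^B{w_0'}^B$.

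First I would identify the two normalizer elements. A direct computation with longest elements shows $v_B=t_0$, the coordinate reflection through the fixed line (so $t_0\notin B_n$), while $v_D=t_0\,t$, where $t$ is a coordinate reflection lying in $B_n\setminus D_n$; conjugation by $t$ induces on $D_n$ the nontrivial diagram automorphism $\tau$, which is precisely the automorphism of $H_c(D_n,\hfr_{D_n})$ by which $v_D=w_0w_0'$ acts. Since $t$ is a short simple reflection of $B_n$, its parameter in $\mathsf{H}_{1,q}(B_n)$ is $e^{-2\pi i\cdot 0}=1$, so $T_t^2=1$; as $t_0$ and $t$ commute with length-additive products in $B_{n+1}$, one obtains, computing inside $\mathsf{H}_{1,q}(B_{n+1})$ via the $T_w$-respecting inclusion of $\mathsf{H}_q(D_{n+1})$, that $T_{v_D}=T_{t_0}T_t$, that $T_{t_0}$ and $T_t$ commute, and hence
\[
(T_{v_D})^2 \;=\; T_{t_0}T_tT_{t_0}T_t \;=\; T_{t_0}^2\,T_t^2 \;=\; (T_{v_B})^2 .
\]
Moreover $\ell_{D_{n+1}}(v_D)=|\Phi^+(D_{n+1})|-|\Phi^+(D_n)|=2n$, so $q^{\ell(v_D)}=q^{2n}$, which matches the constant term of the type-$B$ relation: by Proposition~\ref{z-elt-type-B} that term is $p\,q^{2n}$, hence $q^{2n}$ at the relevant value $p=1$.

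Next I would compare the linear terms. Decomposing $\mathsf{H}_q(D_{n+1})\hookrightarrow\mathsf{H}_{1,q}(B_{n+1})$ by left $D_n$-cosets (which refine left $B_n$-cosets) and tracking supports, I would check that the left $B_n$-coset of $v_B$ is the disjoint union of the two left $D_n$-cosets $D_nv_B$ and $D_nv_D$, and that $(T_{v_D})^2$, being supported on $D_{n+1}$, meets this $B_n$-coset only inside $D_nv_D$; so the $D_nv_D$-component of $(T_{v_D})^2$ equals the $B_nv_B$-component of $(T_{v_B})^2=(T_{v_D})^2$. In the notation of Definition~\ref{z-defs}, writing $z^D$ and $z^B$ for the structure constants $z_{w_0w_0'}$ attached to $D_n\subset D_{n+1}$ and to $B_n\subset B_{n+1}$ respectively, and recalling $T_{v_D}=T_{v_B}T_t$ together with the fact that $T_{v_B}=T_{t_0}$ commutes with $\mathsf{H}_{1,q}(B_n)$, this reads $z^D\,T_t=z^B$ in $\mathsf{H}_{1,q}(B_n)$. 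On the other side, because $\lambda^1\neq\lambda^2$ the $\mathsf{H}_q(D_n)$-module $KZ(\Delta_c(V_\lambda))$ is the restriction of the irreducible $\mathsf{H}_{1,q}(B_n)$-module $KZ(\Delta_{(0,c)}(V_\lambda))$ (using that these modules do not split over $D_n$, as in \cite{SS}, and the $KZ$-compatibility of $H_c(D_n)\hookrightarrow H_{0,c}(B_n)$), and $L_c(V_\lambda)=L_{(0,c)}(V_\lambda)|_{H_c(D_n)}$; since the Euler element $\eu_{D_n}$ is the same whether computed in $H_c(D_n)$ or in $H_{0,c}(B_n)$ (the short parameter being $0$), the action of the group element $t\in B_n$ on $L_{(0,c)}(V_\lambda)$ is a degree-$0$ involution intertwining the $H_c(D_n)$-action with its $\tau$-twist, so it may be taken as the equivariance involution $n$ of Theorem~\ref{quadratic-relations-theorem}, with $KZ$-image the action of $T_t$. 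Invoking Lemma~\ref{Shan-lemma} (so that $\cz_{w_0w_0'}$ lifts $z^D$), the scalar $(\cz_{w_0w_0'}(L)\circ n)|_L$ equals the scalar by which $z^D\,T_t=z^B$ acts on $KZ(\Delta_{(0,c)}(V_\lambda))$, which by Proposition~\ref{z-elt-type-B} is precisely the linear coefficient $z_{1,q}(\lambda)$ of the type-$B$ relation. Hence the canonical generator satisfies $T^2=z_{1,q}(\lambda)\,T+q^{2n}$ on both sides, and the asserted equality of parameters follows.

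The genuinely delicate step — the one I expect to absorb most of the work — is the coset and support bookkeeping used to pass from $(T_{v_D})^2=(T_{v_B})^2$ to $z^D\,T_t=z^B$: one must verify carefully that, under the $T_w$-respecting inclusion $\mathsf{H}_q(D_m)\hookrightarrow\mathsf{H}_{1,q}(B_m)$, the relevant $W'$-coset decompositions of the two Hecke algebras are compatible in exactly the required way, and that the equivariance involution really is realized by the inner automorphism of $B_m$ up to the sign ambiguity noted after Theorem~\ref{quadratic-relations-theorem} (harmless, since $q(c,W',L,W)$ is only defined up to inversion). For the second identity the same argument applies; there the ``defect'' $v_B^{-1}v_D$ is again a product of short coordinate reflections inside the $B_n$-factor — trivial when $n$ is even (since $e$ is even, $D_{n+e}$ and $B_{n+e}$ then have the same longest element) and otherwise a commuting pair of such reflections, whose $T_w$'s still square to $1$ at $p=1$ — and the Euler elements again match because the short parameters vanish, so no new ideas are needed.
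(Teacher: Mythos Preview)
Your proposal is correct and follows essentially the same route as the paper's proof. Both arguments identify $v_D=s_0\,v_B$ (your $t$ is the paper's short simple reflection $s_0$), use $T_{s_0}^2=1$ in $\mathsf{H}_{1,q}(B_{n+1})$ to obtain $T_{v_D}=T_{s_0}T_{v_B}=T_{v_B}T_{s_0}$ and hence $T_{v_D}^2=T_{v_B}^2$, deduce $z^D T_{s_0}=z^B$ (the paper writes this as $z^D=T_{s_0}z^B$ and simply cites Proposition~\ref{z-elt-type-B} ``and the definitions'' rather than spelling out the coset argument), and then take the equivariance involution $n$ to be the action of $s_0$ coming from the $B_n$-extension of $V_\lambda$, so that $(\cz^D\circ n)|_L=(z^B)|_L=z_{1,q}(\lambda)$.

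One small slip in your discussion of the second identity: since $e$ is even, whenever $w_0^{D_{n+e}}$ differs from $w_0^{B_{n+e}}$ (namely when $n+e$ is odd, i.e.\ $n$ odd) the defect is the \emph{same} sign-change $s_0$ by which $w_0^{D_n}$ differs from $w_0^{B_n}$, and these cancel in the product; so in fact $v_D=v_B$ for all parities of $n$, and the defect is always trivial. This only makes the second case easier than you suggest and does not affect the argument.
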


\begin{proof} We consider $q(c, D_n, L_c(V_\lambda), D_{n + 1})$ first.  Let $l_D$ denote the length function on $D_{n + 1}$ with respect to the simple reflections $s_1, s_1', ..., s_n$ introduced above, and let $l_B$ denote the length function on $B_{n + 1}$ with respect to the simple reflections $s_0, s_1, ..., s_n$.  Let $w_{0, D}, w_{0, B}, w_{0, D}',$ and $w_{0, B}'$ denote the longest elements of the Coxeter groups $D_{n + 1}, B_{n + 1}, D_n$ and $B_n$, respectively.  Then $w_{0, D}w_{0, D}' = s_0w_{0, B}w_{0, B}' = w_{0, B}w_{0, B}'s_0.$  Regard $\mathsf{H}_q(D_n)$ as a subalgebra of $H_{1, q}(B_n)$ via the $T_w$-bases.  We then have $T_{w_{0, D}w_{0, D}'} = T_{s_0}T_{w_{0, B}w_{0, B}'} = T_{w_{0, B}w_{0, B}'}T_{s_0}$ and $T_{w_0'}^2 = 1$.  In particular, $T_{w_{0, D}w_{0, D}'}^2 = T_{w_{0, B}w_{0, B}'}^2$ and it follows from Proposition \ref{z-elt-type-B} and the definitions that $z_{w_{0, D}w_{0, D}'} = T_{s_0}z_{w_{0, B}w_{0, B}'} = z_{w_{0, B}w_{0, B}'}T_{s_0}$.  As the representation $V_\lambda$ extends to $B_n$, we can choose the operator by which $w_{0, B}w_{0, B}'$ acts on $L$ as in Theorem \ref{quadratic-relations-theorem} to be $T_{s_0}$.  By Theorem \ref{quadratic-relations-theorem}, the generator of monodromy $T \in \hcal(c, D_n, L_c(V_\lambda), D_{n + 1})$ satisfies the quadratic relation $$T^2 = (T_0z_{w_{0, D}w_{0, D}'})|_LT + q^{l_D(w_{{0, D}w_{0, D}'})} = z_{1, q}(\lambda)T + q^{2n}.$$  This is precisely the quadratic relation obtained for the generator of monodromy generating the algebra $T \in \hcal(c, D_n, L_c(V_\lambda), D_{n + 1})$, as shown in Section \ref{type-B}, and the first equality follows.

The second equality follows by a similar argument.\end{proof}

\subsection{Parameters for Generalized Hecke Algebras in Exceptional Types}

We will now describe the parameters arising for the generalized Hecke algebras in exceptional type.  In each row of the following Table \ref{parameters-table}, $W$ is an irreducible finite Coxeter group and $W' \subset W$ is a corank-1 parabolic subgroup of $W$ that is not self-normalizing in $W$.  The complex number $c$ is a parameter for the rational Cherednik algebra $H_c(W, \hfr)$ such that $H_c(W', \hfr_{W'})$ admits nontrivial finite-dimensional representations, and $\lambda$ is an irreducible representation of $W'$ such that $L(\lambda)$ is a finite-dimensional irreducible representation of $H_c(W', \hfr_{W'})$.  For a given $W'$, all $c$ of the form $1/d$ such that $H_c(W', \hfr_{W'})$ admits a finite-dimensional irreducible representation are given, and for each $W', c$ a complete list of lowest weights $\lambda$ with $\dim L(\lambda) < \infty$ is given. Finally, $q(c, W', L, W)$ is a complex number such that the monodromy operator $T$ associated to the tuple $(c, W', L(\lambda), W)$, after an appropriate rescaling if necessary, satisfies the quadratic relation $$(T - 1)(T + q(c, W', L, W)) = 0.$$  Where appropriate, $q(c, W', L, W)$ is given as a power of the ``$KZ$ parameter'' $q = e^{-2 \pi i c}$.  Table \ref{parameters-table} includes every case needed to give presentations for the generalized Hecke algebras arising in types $E, H$ and $I$; this data was obtained by using Theorem \ref{quadratic-relations-theorem} and computations with the computer algebra package CHEVIE in GAP3 \cite{GAP1, GAP2} as well as SAGE.  Type $F$ can be handled by these same methods, although the description of the relevant parameters $c$ and irreducible finite-dimensional representations for the parabolic subgroups of types $B$ and $C$ arising in this case is more complicated to display in a table.  We will give the counts of modules of given support in $\oscr_c(F_4, \hfr)$ in the unequal parameter case later in Section \ref{typeF4}.

In Table \ref{parameters-table} and below, we will list the parameter for groups $B_n$ in the form $(c_1, c_2) \in \cplx^2$, where $c_1$ specifies the value of the parameter on the short roots.  In the last row of the table, the parameter $(1/2, c_2)$ for the even dihedral group $I_2(2m)$ indicates that the parameter takes value $1/2$ on those reflections conjugate to the nontrivial element of the chosen parabolic subgroup $A_1$ and arbitrary value $c_2 \in \cplx$ on the remaining parameters.  The relevant parameter values and lists of finite-dimensional irreducible representations for the groups of type $D$ are obtained by a standard reduction to type $B$ (as in Section \ref{type-D}), where these lists are easily produced using the methods of \cite{Losevsupports}.  The labeling used for irreducible representations of the exceptional groups is compatible with that appearing in \cite{GGJL}; in particular, we denote the trivial representation by $\triv$, the reflection representation by $V$ (and its Galois conjugate in type $H$ by $\widetilde{V}$), and other representations are denoted in the form $\varphi_{x, y}$ where $x$ indicates the dimension of the representation and $y$ indicates its $b$-invariant, i.e. the lowest degree in the grading of the coinvariant algebra in which the representation appears.  The labels $\varphi_{x, y}$ are compatible with the labels appearing in the GAP3 computer algebra package.  This is a different labeling system than appears in some standard references, e.g. \cite{GP}, although it is simple to convert between this labeling system and others using the tables appearing in \cite[Appendix C]{GP}.  Irreducible representations of groups of type $D$ are labeled by (unordered) pairs of partitions, in the standard way.

\begin{remark} In all cases listed except the case of $E_7$ at parameter $1/10$, the associated monodromy operator $T$ has an eigenvalue equal to 1, and in particular no rescaling was needed to list the parameter $q(c, W', L(\lambda), W)$; the monodromy operator $T$ associated to irreducible representation $L_{1/10}(V)$ of $H_{1/10}(E_7, \hfr)$ associated to the inclusion $E_7 \subset E_8$ satisfies the quadratic relation $(T + e^{\pi i/5})^2 = 0$, which is of the form $(T - 1)^2 = 0$ after rescaling $T$.  In the cases in which $T$ has an eigenvalue equal to $1$, the parameter $q(c, W', L, W)$ is necessarily equal to $q^{l(w_0w_0')}$, where $q$ is the parameter appearing in the relevant $KZ$ functor and $q^{l(w_0w_0')}$ is as in Theorem \ref{quadratic-relations-theorem}, and this covers all other cases in the table.  We remark that there are other cases, not relevant for the exceptional groups, in which $T$ does not have an eigenvalue equal to 1 before rescaling; for example, $L_{(-1/6, 1/3)}(\emph{triv})$ is a finite-dimensional irreducible representation of $H_{(-1/6, 1/3)}(B_2)$ and the quadratic relation associated to the inclusion $B_2 \subset B_3$ is $(T + p)^2 = 0$, where $p = e^{-\pi i/3}.$\end{remark}

\begin{remark} In all cases we have computed, the parameter $q(c, W', L, W)$ depends only on $c, W'$, and $W$, and notably not on the finite-dimensional irreducible representation $L$.  This fact is reflected in Table \ref{parameters-table}, where we list all relevant lowest weights $\lambda$ for each pair $(W', c)$ in the same row.  It would be interesting to have a conceptual explanation for this fact.\end{remark}

\begin{longtable}{llllr}
\caption{Parameters for Generalized Hecke Algebras}
\label{parameters-table}\\
\endfirsthead
\caption{(continued)}\\
\endhead
$W'$ & $c$ & $\lambda$ & $W$ & $q(c, W', L(\lambda), W)$ \\
\hline \hline
$A_n \times A_n$ & $1/(n + 1)$ & $\triv$ & $A_{2n + 1}$ & $1$\\
\hline
$A_1^3$ & $1/2$ & $\triv$ & $D_4$ & $-1$\\
\hline
$A_3$ & $1/4$ & $\triv$ & $D_4$ & $-1$\\
\hline
$D_4$ & $1/6$ & $\triv$ & $D_5$ & $q^2$ \\
$D_4$ & $1/4$ & $\triv$ & $D_5$ & $1$ \\
$D_4$ & $1/2$ & $\triv, (3,1)$ & $D_5$ & $1$ \\
\hline
$A_5$ & $1/6$ & $\triv$ & $D_6$ & $-1$\\
\hline
$D_5$ & $1/8$ & $\triv$ & $D_6$ & $q^2$\\
\hline
$D_4 \times A_1$ & $1/2$ & $\triv \otimes \triv, (3, 1) \otimes \triv$ & $D_6$ & $-1$\\
\hline
$A_3^2$ & $1/4$ & $\triv$ & $D_7$ & -1\\
\hline
$D_6$ & $1/10$ & $\triv$ & $D_7$ & $q^2$\\
$D_6$ & $1/6$ & $\triv$ & $D_7$ & $1$\\
$D_6$ & $1/2$ & $\triv, (0, 3^2), (1, 5), (2, 4)$ & $D_7$ & $1$\\
\hline
$A_5$ & $1/6$ & $\triv$ & $E_6$ & $-1$\\
\hline
$A_6$ & $1/7$ & $\triv$ & $E_7$ & $1$\\
\hline
$D_6$ & $1/10$ & $\triv$ & $E_7$ & $q^3$\\
$D_6$ & $1/6$ & $\triv$ & $E_7$ & $-1$\\
$D_6$ & $1/2$ & $\triv, (0, 3^2), (1, 5), (2, 4)$ & $E_7$ & $-1$\\
\hline
$E_6$ & $1/12$ & $\triv$ & $E_7$ & $q^3$\\
$E_6$ & $1/9$ & $\triv$ & $E_7$ & $1$\\
$E_6$ & $1/6$ & $\triv, V$ & $E_7$ & $-1$\\
$E_6$ & $1/3$ & $\triv, V, \Lambda^2V$ & $E_7$ & $1$\\
\hline
$A_7$ & $1/8$ & $\triv$ & $E_8$ & $-1$\\
\hline
$D_7$ & $1/12$ & $\triv$ & $E_8$ & $-1$\\
$D_7$ & $1/4$ & $\triv, (2,5)$ & $E_8$ & $-1$\\
\hline
$E_7$ & $1/18$ & $\triv$ & $E_8$ & $q^3$\\
$E_7$ & $1/14$ & $\triv$ & $E_8$ & $q$\\
$E_7$ & $1/10$ & $V$ & $E_8$ & $-1$\\
$E_7$ & $1/6$ & $\triv, V, \varphi_{15, 7}, \varphi_{21, 6}$ & $E_8$ & $-1$\\
$E_7$ & $1/2$ & $\triv, V, \phi_{15, 7}, \phi_{21, 6}, \phi_{27, 2}, \phi_{35, 13}, \phi_{189, 5}$ & $E_8$ & $-1$\\
\hline
$A_2$ & $(c_1, 1/3)$ & $\triv$ & $B_3$ & $e^{-6 \pi i c_1}$\\
\hline
$A_1^2$ & $(1/2, 1/2)$ & $\triv$ & $B_3$ & $-1$\\
\hline
$A_2$ & $1/3$ & $\triv$ & $H_3$ & $1$\\
\hline
$A_1 \times A_1$ & $1/2$ & $\triv$ & $H_3$ & $-1$\\
\hline
$I_2(5)$ & $1/5$ & $\triv$ & $H_3$ & $1$\\
\hline
$H_3$ & $1/10$ & $\triv$ & $H_4$ & $-1$\\
$H_3$ & $1/6$ & $\triv$ & $H_4$ & $-1$\\
$H_3$ & $1/2$ & $\triv, V, \widetilde{V}$ & $H_4$ & $-1$\\
\hline
$A_3$ & $1/4$ & $\triv$ & $H_4$ & $-1$\\
\hline
$A_1$ & $(1/2, c_2)$ & $\triv$ & $I_2(2m)$ & $(-1)^{m - 1}e^{-2 mc_2\pi i}$\\
\hline
\end{longtable}

\subsection{Type $E$}\label{typeE}

\subsubsection{Generalized Hecke Algebras for $E_6$}  The following table list all of the generalized Hecke algebras arising from the rational Cherednik algebra $H_c(E_6, \hfr_{E_6})$ of type $E_6$ for parameters of the form $c = 1/d$ such that $\oscr_c(E_6, \hfr_{E_6})$ is not semisimple, i.e. for those integers $d > 1$ dividing one of the fundamental degrees $2, 5, 6, 8, 9$ and $12$ of $E_6$.  The first column indicates the parameter value $c$.  The second column, labeled $W'$, lists a unique representative of each conjugacy class of parabolic subgroups of $E_6$ for which a nonzero finite-dimensional representation appears at the parameter value specified in the title of the table; if a conjugacy class is missing, no nonzero finite-dimensional irreducible representations exist for that class.  The column labeled $\lambda$ gives a complete list of the lowest weights $\lambda$ of the finite-dimensional irreducible representations $L_c(\lambda)$ of $H_c(W', \hfr_{W'})$.  By inspection, we see that in each case the inertia group is maximal, the 2-cocycle $\mu$ is trivial, and in particular Theorem \ref{simplified-coxeter-theorem} holds for algebras of type $E_6$.  Furthermore, from Table \ref{parameters-table} we see that the parameters for the generalized Hecke algebra does not depend on the choice of lowest weight for the finite-dimensional representation; therefore in the column labeled $\hcal$ we give the generalized Hecke algebra $\hcal(c, W', \lambda, E_6)$ common to each of the $\lambda$ appearing in a given row.  In the final column labeled $\#\irr$, we give the number of irreducible representations with support labeled by $W'$, obtained as the product of the number of $\lambda$ appearing in a given row with the number of irreducible representations of the corresponding generalized Hecke algebra.  As there are 25 irreducible representations of the group $E_6$, these numbers add to 25 for each parameter value.

Throughout, $q$ denotes the ``$KZ$ parameter'' $q := e^{-2 \pi i c}$.  The exact descriptions of the generalized Hecke algebras follow easily from parameters in Table \ref{parameters-table} and Howlett's detailed descriptions of the groups $N_{W'}^{ref}$ and $N_{W'}^{comp}$ appearing in \cite{How}; any semidirect products appearing in the description of the algebras $\hcal$ are given by the diagram automorphisms indicated in \cite{How}.  By convention, we list the parameters of 2-parameter Hecke algebras by giving the parameter for the short roots first, e.g. $\mathsf{H}_{p, q}(B_3)$ indicates that parameter $p$ is associated with the 3 reflections given by short roots and that parameter $q$ is associated with the remaining 6 reflections given by long roots.

The finite-dimensional irreducible representations of $H_c(E_6, \hfr_{E_6})$, if they exist, appear in the rows labeled by $E_6$.  For all parameters except $c = 1/2$, the list of the lowest weights of the finite-dimensional irreducible representations of $H_c(E_6, \hfr_{E_6})$ is obtained from results of Norton \cite{Norton}.  For $c = 1/2$, our table shows that there are no such finite-dimensional irreducible representations.

\begin{longtable}{llllr}
\caption{Refined Filtration by Supports for $E_6$}
\label{E6-table}\\
$c$ & $W'$ & $\lambda$ & $\hcal$ & $\#\irr$\\
\hline
$1/12$ & 1 & $\triv$ & $\mathsf{H}_q(E_6)$ & 24\\
& $E_6$ & $\triv$ & $\cplx$ & 1\\
\hline
$1/9$ & 1 & $\triv$ & $\mathsf{H}_q(E_6)$ & 24\\
& $E_6$ & $\triv$ & $\cplx$ & 1\\
\hline
$1/8$ & 1 & $\triv$ & $\mathsf{H}_q(E_6)$ & 24\\
&$D_5$ & $\triv$ & $\cplx$ & 1\\
\hline
1/6 & 1 & $\triv$ & $\mathsf{H}_q(E_6)$ & 20\\
&$D_4$ & $\triv$ & $\mathsf{H}_{q^2}(A_2)$ & 2\\
&$A_5$ & $\triv$ & $\mathsf{H}_{-1}(A_1)$ & 1\\
&$E_6$ & $\triv, V$ & $\cplx$ & 2\\
\hline
1/5 & 1 & $\triv$ & $\mathsf{H}_q(E_6)$ & 23\\
& $A_4$ & $\triv$ & $\mathsf{H}_q(A_1)$ & 2\\
\hline
1/4&1 & $\triv$ & $\mathsf{H}_q(E_6)$ & 19\\
&$A_3$ & $\triv$ & $\mathsf{H}_{-1, q}(B_2)$ & 3\\
&$D_4$ & $\triv$ & $\mathsf{H}_1(A_2)$ & 3\\
\hline
1/3 & 1 & $\triv$ & $\mathsf{H}_q(E_6)$ & 13\\
&$A_2$ & $\triv$ & $\ints/2\ints \ltimes \mathsf{H}_q(A_2^2)$ & 5\\
&$A_2^2$ & $\triv$ & $\mathsf{H}_{1, q}(G_2)$ & 4\\
&$E_6$ & $\triv, V, \Lambda^2V$ & $\cplx$ & 3\\
\hline
1/2 & 1 & $\triv$ & $\mathsf{H}_q(E_6)$ & 8\\
&$A_1$ & $\triv$ & $\mathsf{H}_q(A_5)$ & 4\\
&$A_1^2$ & $\triv$ & $\mathsf{H}_{1, -1}(B_3)$ & 4\\
&$A_1^3$ & $\triv$ & $\mathsf{H}_{-1}(A_1) \otimes \mathsf{H}_1(A_2)$ & 3\\
&$D_4$ & $\triv, (3, 1)$ & $\mathsf{H}_1(A_2)$ & 6\\
\hline
\end{longtable}

\subsubsection{Generalized Hecke Algebras for $E_7$}  In this section we produce a table for $E_7$ analogous to Table \ref{E6-table}, following the same conventions.  Again, we only list those parameters $c = 1/d$ for positive integers $d > 1$ dividing a fundamental degree of $E_7$ - the degrees of $E_7$ are $2, 6, 8, 10, 12, 14,$ and $18$.  The group $E_6$ has 60 irreducible representations.  By inspection, we see that Theorem \ref{simplified-coxeter-theorem} holds in type $E_7$.

Note that there are two distinct conjugacy classes of parabolic subgroups of $E_7$ of type $A_5$, while inside $E_6$ and $E_8$ there is only 1.  We denote a representative of the conjugacy class of parabolic subgroups of type $A_5$ appearing already in $E_6$ by $A_5'$, and we denote a representative of the remaining conjugacy class by $A_5''$.

There are also two distinct conjugacy classes of parabolic subgroups of $E_7$ of type $A_1^3$, although one of these is absent in Howlett's table \cite{How}.  The two classes can be distinguished by containment in parabolic subgroups of type $D_4$; we denote the class of parabolic subgroups of type $A_1^3$ contained in a parabolic subgroup of $D_4$ by $(A_1^3)'$ and the remaining class by $(A_1^3)''$.  The class $(A_1^3)'$ is treated in Howlett's paper; the complement $N_{(A_1^3)'}$ to $(A_1^3)'$ in its normalizer in $E_7$ acts on $\hfr_{(A_1^3)'}$ as a reflection group of type $B_3 \times A_1$.  In the remaining case $(A_1^3)''$, the complement in the normalizer acts in the fixed space as a reflection group of type $F_4$.  This can be seen as follows.  Computations in GAP3 \cite{GAP1, GAP2} show that this complement has order 1152.  This group has a decomposition as a semidirect product $N^{comp} \ltimes N^{ref}$, where $N^{ref}$ is a real reflection group of rank at most 4 and $N^{comp}$ is a finite group acting on $N^{ref}$ by diagram automorphisms.  From the classification of finite reflection groups, the only possibilities giving rise to groups of order 1152 are $N^{ref} = F_4$ and $N^{comp} = 1$, or $N^{ref} = D_4$ and $N^{comp} = S_3$ where $S_3$ acts on $D_4$ by the full group of diagram automorphisms.  As $(A_1^3)''$ is contained in parabolic subgroups of different types $A_1^4$ and $A_1 \times A_3$ in which it is not self-normalizing, it follows that the hyperplanes in the reflection representation of $N^{ref}$ cannot all be conjugate.  This rules out $N^{ref} = D_4$, and we conclude that the representation of the complement in the space $\hfr_{(A_1^3)''}$ is identified with the reflection representation of $F_4$.

When the denominator of $c$ is greater than 2, the lowest weights listed for the finite-dimensional irreducible representations of $E_7$ were determined by Norton \cite{Norton}, and we list those representations in the table below.  When the denominator of $c$ equals 2, we see that there are exactly 7 isomorphism classes of finite-dimensional irreducible representations of $H_c(E_7, \hfr)$.  In \cite{GGJL}, all but 7 possible lowest weights for these irreducible representations were ruled out.  In particular, these 7 representations are in fact finite-dimensional, and this completes the classification of finite-dimensional irreducible representations of the rational Cherednik algebras of type $E_7$.

\begin{longtable}{llllr}
\caption{Refined Filtration by Supports for $E_7$}
\label{E7-table}\\
$c$ & $W'$ & $\lambda$ & $\hcal$ & $\#\irr$\\
\hline
$1/18$ & 1 & $\triv$ & $\mathsf{H}_q(E_7)$ & 59\\
&$E_7$ & $\triv$ & $\cplx$ & 1\\
\hline
$1/14$ & 1 & $\triv$ & $\mathsf{H}_q(E_7)$ & 59\\
&$E_7$ & $\triv$ & $\cplx$ & 1\\
\hline
$1/12$ & 1 & $\triv$ & $\mathsf{H}_q(E_7)$ & 58\\
&$E_6$ & $\triv$ & $\mathsf{H}_{q^3}(A_1)$ & 2\\
\hline
$1/10$ & 1 & $\triv$ & $\mathsf{H}_q(E_7)$ & 57\\
&$D_6$ & $\triv$ & $\mathsf{H}_{q^3}(A_1)$ & 2\\
&$E_7$ & $V$ & $\cplx$ & 1\\
\hline
$1/9$ & 1 & $\triv$ & $\mathsf{H}_q(E_7)$ & 58\\
&$E_6$ & $\triv$ & $\mathsf{H}_1(A_1)$ & 2\\
\hline
$1/8$ & 1 & $\triv$ & $\mathsf{H}_q(E_7)$ & 56\\
&$D_5$ & $\triv$ & $\mathsf{H}_{q^2}(A_1) \otimes \mathsf{H}_q(A_1)$ & 4\\
\hline
$1/7$ & 1 & $\triv$ & $\mathsf{H}_q(E_7)$ & 58\\
&$A_6$ & $\triv$ & $\mathsf{H}_1(A_1)$ & 2\\
\hline
$1/6$ & 1 & $\triv$ & $\mathsf{H}_q(E_7)$ & 43\\
&$D_4$ & $\triv$ & $\mathsf{H}_{q, q^2}(B_3)$ & 6\\
&$A_5$ & $\triv$ & $\mathsf{H}_{-1}(A_1^2)$ & 1\\
& $A_5'$ & $\triv$ & $\mathsf{H}_{-1, q}(G_2)$ & 3\\
&$D_6$ & $\triv$ & $\mathsf{H}_{-1}(A_1)$ & 1\\
&$E_6$ & $\triv, V$ & $\mathsf{H}_{-1}(A_1)$ & 2\\
&$E_7$ &$\triv, V, \varphi_{15, 7}, \varphi_{21, 6}$ &$\cplx$&4\\
\hline
$1/5$ & 1 & $\triv$ & $\mathsf{H}_q(E_7)$ & 54\\
&$A_4$ & $\triv$ &$\ints/2\ints \ltimes \mathsf{H}_q(A_2)$&6\\
\hline
$1/4$ & 1 & $\triv$ & $\mathsf{H}_q(E_7)$ & 40\\
&$A_3$ & $\triv$ & $\mathsf{H}_{-1, q}(B_3) \otimes \mathsf{H}_q(A_1)$ & 10\\
&$D_4$ & $\triv$ & $\mathsf{H}_{q, 1}(B_3)$ & 10\\
\hline
$1/3$ & 1 & $\triv$ & $\mathsf{H}_q(E_7)$ & 32\\
&$A_2$ & $\triv$ & $\ints/2\ints \ltimes \mathsf{H}_q(A_5)$ & 14\\
&$A_2^2$ & $\triv$ & $\mathsf{H}_{1, q}(G_2) \otimes \mathsf{H}_1(A_1)$ & 8\\
&$E_6$ & $\triv, V, \Lambda^2V$ & $\mathsf{H}_1(A_1)$ & 6\\
\hline
$1/2$ & 1 & $\triv$ & $\mathsf{H}_{-1}(E_7)$ & 12\\
&$A_1$ & $\triv$ & $\mathsf{H}_{-1}(D_6)$ & 6\\
&$A_1^2$ & $\triv$ & $\mathsf{H}_{1, -1}(B_4) \otimes \mathsf{H}_{-1}(A_1)$ & 6\\
&$(A_1^3)'$ & $\triv$ & $\mathsf{H}_{-1, 1}(B_3) \otimes \mathsf{H}_{-1}(A_1)$ & 3\\
&$(A_1^3)''$&$\triv$&$\mathsf{H}_{1, -1}(F_4)$&9\\
&$A_1^4$ & $\triv$ & $\mathsf{H}_{-1, 1}(B_3)$ & 3\\
&$D_4$ & $\triv, (3, 1)$ & $\mathsf{H}_{-1, 1}(B_3)$ & 6\\
&$D_4 \times A_1$ & $\triv, (3, 1) \otimes \triv$ & $\mathsf{H}_{-1, 1}(B_2)$ & 4\\
&$D_6$ & $\triv, (0, 3^2), (1, 5), (2, 4)$ & $\mathsf{H}_{-1}(A_1)$ & 4\\
&$E_7$ & $\triv, V, \phi_{15, 7}, \phi_{21, 6}, \phi_{27, 2}, \phi_{35, 13}, \phi_{189, 5}$ & $\cplx$ & 7\\
\hline
\end{longtable}

\subsubsection{Generalized Hecke Algebras for $E_8$}  Next we produce a table describing the generalized Hecke algebras arising from $E_8$.  Again, we only list those parameters $c = 1/d$ for positive integers $d > 1$ dividing one of the fundamental degrees $2, 8, 12, 14, 18, 20, 24,$ and $30$ of $E_8$.  There are 112 irreducible representations of the group $E_8$.  By inspection, we see that Theorem \ref{simplified-coxeter-theorem} holds in type $E_8$ as well.

When the denominator of $c$ is 2, we see that there are 12 isomorphism classes of finite-dimensional irreducible representations.  Comparing with the lists of ``potential'' lowest weights appearing in \cite{GGJL}, we are again able to give a complete list of the finite-dimensional irreducible representations in his case.  By similar comparisons with the results of \cite{GGJL}, we are able to obtain such lists in all cases except when the denominator of $c$ is 3, 4 or 18.  In those cases, we give the list of ``potential'' lowest weights from \cite{GGJL} and the number of those which are in fact finite-dimensional.  In each of these three cases, we see that exactly one of these ``potential'' finite-dimensional representations is in fact infinite-dimensional.  Furthermore, this problem is resolved fairly easily when the denominator of $c$ is not 3.  For denominator 18, Rouquier \cite{Ro} proved that the representation $L_{1/18}(V)$ is finite-dimensional, and therefore the remaining ``potential'' finite-dimensional representation $L_{1/18}(\phi_{28, 8})$, is in fact infinite-dimensional.  That $L_{1/18}(\phi_{28, 8})$ is infinite-dimensional can be seen independently by the observation that its lowest $\eu$-weight, 2/3, is not a nonpositive integer.  When the denominator of $c$ is 4, the entire decomposition matrix for the block of $\oscr_{1/4}(E_8, \hfr)$ containing the simple object $L_{1/4}(\phi_{28, 8})$ can be easily produced following methods of Norton \cite[Lemmas 3.5, 3.6]{Norton}, yielding the equality $$[L_{1/4}(\phi_{28, 8})] = [\Delta_{1/4}(\phi_{28, 8})] - [\Delta_{1/4}(\phi_{700, 16})] + [\Delta_{1/4}(\phi_{1344, 19})] - [\Delta_{1/4}(\phi_{700, 28})] + [\Delta_{1/4}(\phi_{28, 68})]$$ in the Grothendieck group of $\oscr_{1/4}(E_8, \hfr)$.  It follows that $\supp(L_{1/4}(\phi_{28, 8})) = E_8\hfr^{D_4}$ and in particular that $L_{1/4}(\phi_{28, 8})$ is infinite-dimensional, ruling out this ``potential'' finite-dimensional representation.

\begin{longtable}{llllr}
\caption{Refined Filtration by Supports for $E_8$}
\label{E8-table}\\
\endfirsthead
\caption{continued}\\
\endhead
$c$ & $W'$ & $\lambda$ & $\hcal$ & $\#\irr$\\
\hline
1/30&1&$\triv$&$\mathsf{H}_q(E_8)$&111\\
&$E_8$&$\triv$&$\cplx$&1\\
\hline
1/24&1&$\triv$&$\mathsf{H}_q(E_8)$&111\\
&$E_8$&$\triv$&$\cplx$&1\\
\hline
1/20&1&$\triv$&$\mathsf{H}_q(E_8)$&111\\
&$E_8$&$\triv$&$\cplx$&1\\
\hline
1/18&1&$\triv$&$\mathsf{H}_q(E_8)$&109\\
&$E_7$&$\triv$&$\mathsf{H}_{q^3}(A_1)$&2\\
&$E_8$&$V$&$\cplx$&1\\
\hline
1/15&1&$\triv$&$\mathsf{H}_q(E_8)$&110\\
&$E_8$&$\triv, V$&$\cplx$&2\\
\hline
1/14&1&$\triv$&$\mathsf{H}_q(E_8)$&110\\
&$E_7$&$\triv$&$\mathsf{H}_{q}(A_1)$&2\\
\hline
1/12&1&$\triv$&$\mathsf{H}_q(E_8)$&102\\
&$E_6$&$\triv$&$\mathsf{H}_{q^3, q}(G_2)$&5\\
&$D_7$&$\triv$&$\mathsf{H}_{-1}(A_1)$&1\\
&$E_8$&$\triv, \phi_{28, 8}, \phi_{35, 2}, \phi_{50, 8}$&$\cplx$&4\\
\hline
1/10&1&$\triv$&$\mathsf{H}_q(E_8)$&104\\
&$D_6$&$\triv$&$\mathsf{H}_{q^2, q^3}(B_2)$&4\\
&$E_7$&$V$&$\mathsf{H}_{-1}(A_1)$&1\\
&$E_8$&$\triv, V, \phi_{28, 8}$&$\cplx$&3\\
\hline
1/9&1&$\triv$&$\mathsf{H}_q(E_8)$&106\\
&$E_6$&$\triv$&$\mathsf{H}_{1, q}(G_2)$&6\\
\hline
1/8&1&$\triv$&$\mathsf{H}_q(E_8)$&100\\
&$D_5$&$\triv$&$\mathsf{H}_{q^2, q}(B_3)$&9\\
&$A_7$&$\triv$&$\mathsf{H}_{-1}(A_1)$&1\\
&$E_8$&$\triv, \phi_{160, 7}$&$\cplx$&2\\
\hline
1/7&1&$\triv$&$\mathsf{H}_q(E_8)$&108\\
&$A_6$&$\triv$&$\mathsf{H}_{1, q}(A_1^2)$&4\\
\hline
1/6&1&$\triv$&$\mathsf{H}_q(E_8)$&75\\
&$D_4$&$\triv$&$\mathsf{H}_{q^2, q}(F_4)$&13\\
&$A_5$&$\triv$&$\mathsf{H}_{-1, q}(G_2) \otimes \mathsf{H}_{-1}(A_1)$&3\\
&$D_6$&$\triv$&$\mathsf{H}_{1, -1}(B_2)$&2\\
&$E_6$&$\triv, V$&$\mathsf{H}_{-1, q}(G_2)$&6\\
&$E_7$&$\triv, V, \phi_{15, 2}, \phi_{21, 6}$&$\mathsf{H}_{-1}(A_1)$&4\\
&$E_8$&$\triv, V, \phi_{28, 8}, \phi_{35, 2}, \phi_{50, 8}$&&\\
&&$\phi_{56, 19}, \phi_{175, 12}, \phi_{300, 8}, \phi_{210, 4}$&$\cplx$&9\\
\hline
1/5&1&$\triv$&$\mathsf{H}_q(E_8)$&96\\
&$A_4$&$\triv$&$\ints/2\ints \ltimes \mathsf{H}_q(A_4)$&12\\
&$E_8$&$\triv, V, \phi_{28, 8}, \phi_{56, 19}$&$\cplx$&4\\
\hline
1/4&1&$\triv$&$\mathsf{H}_q(E_8)$&69\\
&$A_3$&$\triv$&$\mathsf{H}_{-1, q}(B_5)$&14\\
&$D_4$&$\triv$&$\mathsf{H}_{1, q}(F_4)$&20\\
&$A_3^2$&$\triv$&$\mathsf{H}_{1, -1}(B_2)$&2\\
&$D_7$&$\triv, (2, 5)$&$\mathsf{H}_{-1}(A_1)$&2\\
&$E_8$&$\triv, \phi_{35, 2}, \phi_{50, 8}, \phi_{210, 4}, \phi_{350, 14}$&$\cplx$&5\\
\hline
1/3&1&$\triv$&$\mathsf{H}_q(E_8)$&52\\
&$A_2$&$\triv$&$\ints/2\ints \ltimes \mathsf{H}_q(E_6)$&26\\
&$A_2^2$&$\triv$&$\ints/2\ints \ltimes \mathsf{H}_{1, q}(G_2)^{\otimes 2}$&14\\
&$E_6$&$\triv, V, \Lambda^2V$&$\mathsf{H}_{1, q}(G_2)$&12\\
&$E_8$&exactly 8 of $\triv, V, \phi_{28, 8}, \phi_{35, 2}$\\
&&$\phi_{50, 8}, \phi_{160, 7}, \phi_{175, 12}, \phi_{300, 8}, \phi_{840, 13}$&$\cplx$&8\\
\hline
1/2&1&$\triv$&$\mathsf{H}_{-1}(E_8)$&23\\
&$A_1$&$\triv$&$\mathsf{H}_{-1}(E_7)$&12\\
&$A_1^2$&$\triv$&$\mathsf{H}_{1, -1}(B_6)$&12\\
&$A_1^3$&$\triv$&$\mathsf{H}_{-1, 1}(F_4)\otimes \mathsf{H}_{-1}(A_1)$&9\\
&$A_1^4$&$\triv$&$\mathsf{H}_{-1, 1}(B_4)$&5\\
&$D_4$&$\triv, (3, 1)$&$\mathsf{H}_{1, -1}(F_4)$&18\\
&$D_4 \times A_1$ & $\triv, (3, 1) \otimes \triv$ & $\mathsf{H}_{-1, 1}(B_3)$ & 6\\
&$D_6$&$\triv, (0, 3^2), (1, 5), (2, 4)$&$\mathsf{H}_{-1, 1}(B_2)$&8\\
&$E_7$&$\triv, V, \phi_{15, 7}, \phi_{21, 6}, \phi_{27, 2}, \phi_{35, 13}, \phi_{189, 5}$&$\mathsf{H}_{-1}(A_1)$&7\\
&$E_8$&$\triv, V, \phi_{28, 8}, \phi_{35, 2}, \phi_{50, 8}, \phi_{175, 12},$\\
&&$\phi_{300, 8}, \phi_{210, 4}, \phi_{560, 5}, \phi_{840, 14}, \phi_{1050, 10}, \phi_{1400, 8}$&$\cplx$&12\\
\hline
\end{longtable}

\subsection{Type $H$}\label{typeH}

\subsubsection{Generalized Hecke Algebras for $H_3$}  Next we produce a table describing the generalized Hecke algebras arising from $H_3$.  Again, we only list those parameters $c = 1/d$ for positive integers $d > 1$ dividing one of the fundamental degrees $2, 6$ and 10 of $H_3$.  There are 10 irreducible representations of the group $H_3$.  By inspection, we see that Theorem \ref{simplified-coxeter-theorem} holds in type $H_3$ as well.

\begin{longtable}{llllr}
\caption{Refined Filtration by Supports for $H_3$}
\label{H3-table}\\
\endfirsthead
\caption{continued}\\
\endhead
$c$ & $W'$ & $\lambda$ & $\hcal$ & $\#\irr$\\
\hline
1/10 & 1 & $\triv$ & $\mathsf{H}_q(H_3)$ & 9\\
&$H_3$&$\triv$&$\cplx$&1\\
\hline
1/6 & 1 & $\triv$ & $\mathsf{H}_q(H_3)$ & 9\\
&$H_3$&$\triv$&$\cplx$&1\\
\hline
1/5 & 1 & $\triv$ & $\mathsf{H}_q(H_3)$ & 8\\
&$I_2(5)$&$\triv$&$\mathsf{H}_1(A_1)$&2\\
\hline
1/3 & 1 & $\triv$ & $\mathsf{H}_q(H_3)$ & 8\\
&$A_2$&$\triv$&$\mathsf{H}_1(A_1)$&2\\
\hline
1/2 & 1 & $\triv$ & $\mathsf{H}_q(H_3)$ & 5\\
&$A_1$&$\triv$&$\mathsf{H}_{-1}(A_1^2)$&1\\
&$A_1^2$&$\triv$&$\mathsf{H}_{-1}(A_1)$&1\\
&$H_3$&$\triv, V, \widetilde{V}$&$\cplx$&3\\
\hline
\end{longtable}

\subsubsection{Generalized Hecke Algebras for $H_4$}  Next we produce a table describing the generalized Hecke algebras arising from $H_4$.  Again, we only list those parameters $c = 1/d$ for positive integers $d > 1$ dividing one of the fundamental degrees $2, 12, 20$ and 30 of $H_4$.  There are 34 irreducible representations of the group $H_4$.  By inspection, we see that Theorem \ref{simplified-coxeter-theorem} holds in type $H_4$ as well.

When the denominator of $c$ is not equal to 2, the list of lowest weights $\lambda$ giving the finite-dimensional irreducible representations of $H_c(H_4, \hfr)$ are obtained from results of Norton \cite{Norton}.  When the denominator is 2, our count below shows that there are exactly 6 isomorphism classes of finite-dimensional irreducible representations of $H_c(H_4, \hfr)$.  In \cite[Section 5.6]{GGJL}, all except 6 of the possible lowest weights $\lambda$ for the finite-dimensional irreducible representations $L_c(\lambda)$ are ruled out, and in particular those ``potential'' lowest weights are in fact precisely the lowest weights of the finite-dimensional $L_c(\lambda)$.  This confirms a conjecture of Norton \cite{Norton} about the classification of these representations.

\begin{longtable}{llllr}
\caption{Refined Filtration by Supports for $H_4$}
\label{H4-table}\\
\endfirsthead
\caption{continued}\\
\endhead
$c$ & $W'$ & $\lambda$ & $\hcal$ & $\#\irr$\\
\hline
1/30 & 1 & $\triv$ & $\mathsf{H}_q(H_4)$ &33\\
&$H_4$&$\triv$&$\cplx$&1\\
\hline
1/20 & 1 & $\triv$ & $\mathsf{H}_q(H_4)$ &33\\
&$H_4$&$\triv$&$\cplx$&1\\
\hline
1/15 & 1 & $\triv$ & $\mathsf{H}_q(H_4)$ &32\\
&$H_4$&$\triv, \widetilde{V}$&$\cplx$&2\\
\hline
1/12 & 1 & $\triv$ & $\mathsf{H}_q(H_4)$ &33\\
&$H_4$&$\triv$&$\cplx$&1\\
\hline
1/10 & 1 & $\triv$ & $\mathsf{H}_q(H_4)$ &29\\
&$H_3$&$\triv$&$\mathsf{H}_{-1}(A_1)$&1\\
&$H_4$&$\triv, V, \widetilde{V}, \phi_{9, 6}$&$\cplx$&4\\
\hline
1/6 & 1 & $\triv$ & $\mathsf{H}_q(H_4)$ &30\\
&$H_3$&$\triv$&$\mathsf{H}_{-1}(A_1)$&1\\
&$H_4$&$\triv, V, \widetilde{V}$&$\cplx$&3\\
\hline
1/5 & 1 & $\triv$ & $\mathsf{H}_q(H_4)$ &24\\
&$I_2(5)$&$\triv$&$\mathsf{H}_{1, q}(I_2(10))$&6\\
&$H_4$&$\triv, V, \phi_{9, 2}, \phi_{16, 6}$&$\cplx$&4\\
\hline
1/4 & 1 & $\triv$ & $\mathsf{H}_q(H_4)$ &30\\
&$A_3$&$\triv$&$\mathsf{H}_{-1}(A_1)$&1\\
&$H_4$&$\triv, \phi_{9, 2}, \phi_{9, 6}$&$\cplx$&3\\
\hline
1/3 & 1 & $\triv$ & $\mathsf{H}_q(H_4)$ &26\\
&$A_2$&$\triv$&$\mathsf{H}_{1, q}(G_2)$&4\\
&$H_4$&$\triv, V, \widetilde{V}, \phi_{16, 3}$&$\cplx$&4\\
\hline
1/2 & 1 & $\triv$ & $\mathsf{H}_{-1}(H_4)$ &18\\
&$A_1$&$\triv$&$\mathsf{H}_{-1}(H_3)$&5\\
&$A_1^2$&$\triv$&$\mathsf{H}_{1, -1}(B_2)$&2\\
&$H_3$&$\triv, V, \widetilde{V}$&$\mathsf{H}_{-1}(A_1)$&3\\
&$H_4$&$\triv, V, \widetilde{V}, \phi_{9, 2}, \phi_{9, 6}, \phi_{25, 4}$&$\cplx$&6\\
\hline
\end{longtable}

\subsection{Type $F_4$ with Unequal Parameters} \label{typeF4} In this section we will both prove Theorem \ref{simplified-coxeter-theorem} for rational Cherednik algebras of type $F_4$ and count the number of irreducible representations in $\oscr_{c_1, c_2}(F_4, \hfr)$ of each possible support for all values of the parameters $c_1, c_2$, including in the case of unequal parameters.  As a corollary, comparing with the results of \cite{GGJL}, we classify the irreducible finite-dimensional representations of $H_{1/2, 1/2}(F_4, \hfr)$.  This confirms a conjecture of Norton \cite{Norton} about these representations and completes the classification of the irreducible finite-dimensional representations of rational Cherednik algebras of type $F_4$ with equal parameters, the other equal parameter cases having been treated by Norton.  We expect that comparing our counts with the results of \cite{GGJL} completes the classification of the finite-dimensional irreducible representations for many other parameter values as well, although we do not perform this comparison here.

First, we fix some notation for the Coxeter group $F_4$.  Our convention will be that the simple roots of $F_4$ are labeled $s_1, s_2, s_3, s_4$ where $s_1, s_2$ are short simple roots (with parameter $c_1 \in \cplx$) and $s_3, s_4$ are long simple roots (with parameter $c_2 \in \cplx$), and that $s_2, s_3$ are adjacent in the Dynkin diagram.  The $KZ$ parameters are given by $p := e^{-2 \pi i c_1}$ and $q := e^{-2 \pi i c_2}$.  We label standard parabolic subgroups, one from each conjugacy class, as follows:

$A_1' := \langle s_1 \rangle$

$A_1 := \langle s_4 \rangle$

$A_1' \times A_1 := \langle s_1, s_4 \rangle$

$A_2' := \langle s_1, s_2 \rangle$

$A_2 := \langle s_3, s_4 \rangle$

$B_2 := \langle s_2, s_3 \rangle$

$C_3 := \langle s_1, s_2, s_3 \rangle$

$B_3 := \langle s_2, s_3, s_4 \rangle$

$A_1' \times A_2 := \langle s_1, s_3, s_4 \rangle$

$A_2' \times A_1 := \langle s_1, s_2, s_4 \rangle$

Theorem \ref{simplified-coxeter-theorem} follows in the case $W = F_4$ by application of Remark \ref{lowest-weight-remark} to the cases in which $W' \subset W$ is one of the standard parabolic subgroups appearing above.  In particular, when $W'$ is a product of Coxeter groups of type $A$, the only irreducible representations $\lambda$ of $W'$ which can appear as the lowest weights of a finite-dimensional representation $L_c(\lambda)$ are linear characters, i.e. tensor products of either trivial or sign representations.  These representations always extend to representations of $N_{F_4}(W')$, and then the statement Theorem \ref{simplified-coxeter-theorem} follows from Remark \ref{lowest-weight-remark}.  The remaining parabolic subgroups $W'$ are $B_2, B_3$, and $C_3$.  In each of these cases, Howlett \cite{How} has shown that the normalizer $N_{F_4}(W')$ splits as a direct product $W' \times W''$, and Theorem \ref{simplified-coxeter-theorem} follows by Remark \ref{lowest-weight-remark} in this case as well.

The classification of the irreducible finite-dimensional representations of the rational Cherednik algebras $H_c(W', \hfr_{W'})$ where $W' \subset F_4$ is one of the proper nontrivial parabolic subgroups appearing above is well known.  Applying our results to each of these cases, we can count the number of irreducible representations in $\oscr_c(F_4, \hfr)$ with support labeled by any of these parabolic subgroups.  This information is presented in the following tables; the left column of each table specifies certain conditions on the parameters $c_1, c_2$ in terms of the $KZ$ parameters $p = e^{-2 \pi i c_1}$ and $q = e^{-2 \pi i c_2}$, and the right column of each table gives the number of irreducible representations in $\oscr_{c_1, c_2}(F_4, \hfr)$ with support labeled by the parabolic subgroup appearing in the title of the table.  Here $\Phi_d$ denotes the $d^{th}$ cyclotomic polynomial.

\begin{longtable}{lr}
\caption{Simple Modules for $H_{c_1, c_2}(F_4)$ labeled by $A_1'$}
\label{A1PF4supporttable}\\
\endfirsthead
\caption{continued}\\
\endhead
condition & \# simples labeled\\
\hline
$p = -1$ and $\Phi_1(q)\Phi_2(q)\Phi_3(q)\Phi_4(q) \neq 0$ & 9\\
\hline
$p = -1$ and $\Phi_1(q)\Phi_2(q) = 0$ & 3\\
\hline
$p = -1$ and $\Phi_3(q) = 0$ & 6\\
\hline
$p = -1$ and $\Phi_4(q) = 0$ & 8\\
\hline
otherwise & 0\\
\hline
\end{longtable}

\begin{longtable}{lr}
\caption{Simple Modules for $H_{c_1, c_2}(F_4)$ labeled by $A_1$}
\label{A1F4supporttable}\\
\endfirsthead
\caption{continued}\\
\endhead
condition & \# simples labeled\\
\hline
$q = -1$ and $\Phi_1(p)\Phi_2(p)\Phi_3(p)\Phi_4(p) \neq 0$ & 9\\
\hline
$q = -1$ and $\Phi_1(p)\Phi_2(p) = 0$ & 3\\
\hline
$q = -1$ and $\Phi_3(p) = 0$ & 6\\
\hline
$q = -1$ and $\Phi_4(p) = 0$ & 8\\
\hline
otherwise & 0\\
\hline
\end{longtable}

\begin{longtable}{lr}
\caption{Simple Modules labeled by $A_1' \times A_1$}
\label{A1PA1F4supporttable}\\
\endfirsthead
\caption{continued}\\
\endhead
condition & \# simples labeled\\
\hline
$p = q = -1$ & 1\\
\hline
otherwise & 0\\
\hline
\end{longtable}

\begin{longtable}{lr}
\caption{Simple Modules labeled by $A_2'$}
\label{A2PF4supporttable}\\
\endfirsthead
\caption{continued}\\
\endhead
condition & \# simples labeled\\
\hline
$\Phi_3(p) = 0$ and $\Phi_2(q)\Phi_3(q)\Phi_6(q)\Phi_{12}(q) \neq 0$ & 6\\
\hline
$\Phi_3(p) = 0$ and $\Phi_2(q)\Phi_6(q) = 0$ & 3\\
\hline
$\Phi_3(p) = 0$ and $\Phi_3(q) = 0$ & 4\\
\hline
$\Phi_3(p) = 0$ and $\Phi_{12}(q) = 0$ & 5\\
\hline
otherwise & 0\\
\hline
\end{longtable}

\begin{longtable}{lr}
\caption{Simple Modules labeled by $A_2$}
\label{A2F4supporttable}\\
\endfirsthead
\caption{continued}\\
\endhead
condition & \# simples labeled\\
\hline
$\Phi_3(q) = 0$ and $\Phi_2(p)\Phi_3(p)\Phi_6(p)\Phi_{12}(p) \neq 0$ & 6\\
\hline
$\Phi_3(q) = 0$ and $\Phi_2(p)\Phi_6(p) = 0$ & 3\\
\hline
$\Phi_3(q) = 0$ and $\Phi_3(p) = 0$ & 4\\
\hline
$\Phi_3(q) = 0$ and $\Phi_{12}(p) = 0$ & 5\\
\hline
otherwise & 0\\
\hline
\end{longtable}

\begin{longtable}{lr}
\caption{Simple Modules labeled by $B_2$}
\label{B2F4supporttable}\\
\endfirsthead
\caption{continued}\\
\endhead
condition & \# simples labeled\\
\hline
$p = -1$ and $q = -1$ & 2\\
\hline
$p^{\pm 1} = -q$ and $\Phi_3(q)\Phi_6(q) = 0$ & 2\\
\hline
$p^{\pm 1} = -q$ and $\Phi_3(q)\Phi_6(q) \neq 0$ & 4\\
\hline
otherwise & 0\\
\hline
\end{longtable}

\begin{longtable}{lr}
\caption{Simple Modules labeled by $B_3$}
\label{B3F4supportstable}\\
\endfirsthead
\caption{continued}\\
\endhead
condition & \# simples labeled\\
\hline
$p = -q^{\pm 2}$ and $\Phi_1(q)\Phi_6(q)\Phi_{12}(q) \neq 0$ & 2\\
\hline
$p = -q^{\pm 2}$ and $\Phi_6(q)\Phi_{12}(q) = 0$ & 1\\
\hline
$p = -1 = -q^{\pm 2}$ and $q = 1$ & 3\\
\hline
$p = -1$ and $\Phi_3(q) = 0$ & 1\\
\hline
otherwise & 0\\
\hline
\end{longtable}

\begin{longtable}{lr}
\caption{Simple Modules labeled by $C_3$}
\label{C3F4supportstable}\\
\endfirsthead
\caption{continued}\\
\endhead
condition & \# simples labeled\\
\hline
$q = -p^{\pm 2}$ and $\Phi_1(p)\Phi_6(p)\Phi_{12}(p) \neq 0$ & 2\\
\hline
$q = -p^{\pm 2}$ and $\Phi_6(p)\Phi_{12}(p) = 0$ & 1\\
\hline
$q = -1 = -p^{\pm 2}$ and $p = 1$ & 3\\
\hline
$q = -1$ and $\Phi_3(p) = 0$ & 1\\
\hline
otherwise & 0\\
\hline
\end{longtable}

\begin{longtable}{lr}
\caption{Simple Modules labeled by $A_1' \times A_2$}
\label{A1PA2F4supporttable}\\
\endfirsthead
\caption{continued}\\
\endhead
condition & \# simples labeled\\
\hline
$p = -1$ and $\Phi_3(q) = 0$ & 1\\
\hline
otherwise & 0\\
\hline
\end{longtable}

\begin{longtable}{lr}
\caption{Simple Modules labeled by $A_2' \times A_1$}
\label{A2PA1PF4supporttable}\\
\endfirsthead
\caption{continued}\\
\endhead
condition & \# simples labeled\\
\hline
$\Phi_3(p) = 0$ and $q = -1$ & 1\\
\hline
otherwise & 0\\
\hline
\end{longtable}

To count the irreducible finite-dimensional representations for given parameters $(c_1, c_2)$, we need first to count the irreducible representations in $\oscr_{c_1, c_2}(F_4, \hfr)$ of full support, which is achieved by counting the number of irreducible representations of the Hecke algebra $\mathsf{H}_{p, q}(F_4)$.  The following table, produced by computations in CHEVIE in GAP3 \cite{GAP1, GAP2} and in Mathematica, gives this number of irreducible representations in all cases up to obvious symmetries.  In particular, the isomorphism class as a $\cplx$-algebra of the Hecke algebra $\mathsf{H}_{p, q}(F_4)$ does not change when the order of the parameters $p, q$ is reversed or when either of the parameters is replaced with its inverse.  The following table then gives the number of irreducible representations of $\mathsf{H}_{p, q}(F_4)$ for a complete set of parameters $p, q \in \cplx^\times$ up to these symmetries.  The first column specifies a hypersurface in the $p,q$-plane at which the Hecke algebra $\mathsf{H}_{p, q}(F_4)$ is not semisimple, and the second column specifies additional conditions on $q$, and the final column gives the corresponding number of irreducible representations of $\mathsf{H}_{p, q}(F_4)$.

\begin{longtable}{llr}
\caption{Irreducible Representations of $\mathsf{H}_{p, q}(F_4)$}
\label{F4fullsupptable}\\
\endfirsthead
\caption{continued}\\
\endhead
condition & $q$ condition & \# simples\\
\hline
$p = -1$ & $q = 1$ & 9\\
& $q = -1$ & 8\\
& $q = $ roots of unity order 3 & 11\\
& $q = $ roots of unity of order 4 or 6& 14\\
& otherwise & 15\\
\hline
$p^{\pm 1} + q = 0$ & $q = \pm 1$ & 9\\
& $q = $ roots of unity of order 3 or 6& 13\\
& $q = $ roots of unity of order 8& 18\\
&otherwise&19\\
\hline
$p^{\pm 1} + q^2 = 0$ & $q = 1$ & 9\\
&$q = -1$ & 8\\
&$q = $ roots of unity of order 3 &13\\
&$q = $ roots of unity of order 4 or 6 &20\\
&$q = $ roots of unity of order 9 &22\\
&$q = $ roots of unity of order 10 &21\\
&$q = $ roots of unity of order 12 &17\\
&otherwise&23\\
\hline
$p^{\pm 1} = iq$ & $q = 1$ or $q = -i$ & 20\\
&$q = -1$ & 14\\
&$q = $ roots of unity of order 3&17\\
&$q = i$&14\\
&$q = $ roots of unity of order 6&23\\
&$q \in \{e^{2 \pi i/8}, e^{5 \cdot 2 \pi i/8}\}$&18\\
&$q \in \{e^{2 \pi i/12}, e^{5\cdot 2 \pi i/12}\}$&17\\
&$q \in \{e^{7\cdot 2 \pi i/12}, e^{11 \cdot 2 \pi i/12}\}$&23\\
&$q \in \{e^{7\cdot 2 \pi i/24}, e^{11\cdot 2 \pi i/24}, e^{19\cdot 2 \pi i/24}, e^{23\cdot 2 \pi i/24}\}$&23\\
&otherwise&24\\
\hline
$p$ is a root of unity of order 3 & $q = -1$ & 11\\
&$q = $ roots of unity of order 3& 15\\
&$q = $ roots of unity of order 6 & 13\\
&$q = $ roots of unity of order 12 & 17\\
&otherwise&19\\
\hline
$p$ is a root of unity of order 6&$q = 1$&22\\
&$q = -1$&14\\
&$q = $ roots of unity of order 3&13\\
&$q = $ roots of unity of order 6&20\\
&$q = $ roots of unity of order 12&23\\
&otherwise&24\\
\hline
$p^{\pm 1} = e^{2 \pi i/6}q$&$q = 1$&22\\
&$q = -1$&11\\
&$q = e^{2 \pi i/3}$&11\\
&$q = e^{2\cdot 2 \pi i/3}$&13\\
&$q = e^{2 \pi i/6}$&13\\
&$q = e^{5 \cdot 2\pi i/6}$&22\\
&$q \in \{e^{2 \pi i/9}, e^{4\cdot 2\pi i/9}, e^{7\cdot 2\pi i/9}\}$&22\\
&$q \in \{e^{2 \pi i/18}, e^{7\cdot 2 \pi i/18}, e^{13\cdot 2\pi i/18}\}$&22\\
&$q \in \{e^{2 \pi i/24}, e^{7 \cdot 2 \pi i/24}, e^{13\cdot2 \pi i/24}, e^{19\cdot 2\pi i/24}\}$&23\\
&otherwise&24\\
\hline
otherwise && 25\\
\hline
\end{longtable}

The following table consolidates the information in the previous tables and counts the number of irreducible finite-dimensional representations of $H_{c_1, c_2}(F_4, \hfr)$ for all parameters $c_1, c_2$ in terms of the $KZ$ parameters $p$ and $q$, up to the same symmetries mentioned above, i.e. up to exchanging and inverting $p$ and $q$, for which the category $\oscr_{c_1, c_2}(F_4, \hfr)$ is not semisimple.  The first column specifies conditions on the parameters $p$ and $q$.  The remaining columns give the counts of the number of irreducible representations in $\oscr_{c_1, c_2}(F_4, \hfr)$, for any parameters $c_1, c_2$ satisfying $p = e^{-2 \pi i c_1}$ and $q = e^{-2 \pi i c_2}$, with support labeled by the parabolic subgroup appearing at the top of the column.  In particular, the last column, labeled by $F_4$, counts finite-dimensional representations.  This last column is obtained from the others by subtracting their sum from 25, the number of irreducible complex representations of the Coxeter group $F_4$.  For those conditions on $p$ and $q$ specifying a certain finite set of points, we give a defining ideal for these points; for example, $(\Phi_2(p), \Phi_3(q))$ specifies that $p = -1$ and that $q$ is a primitive cube root of unity.  The remaining conditions specify certain curves with certain exceptional points removed.

\begin{longtable}{llllllllllllr}
\caption{Modules of Given Support in $H_{c_1, c_2}(F_4)$}
\label{F4supportstable}\\
\endfirsthead
\caption{continued}\\
\endhead
condition & 1 & $A_1'$ & $A_1$ & $A_1' \times A_1$ & $A_2'$ & $A_2$ & $B_2$ & $B_3$ & $C_3$ & $A_1' \times A_2$ & $A_2' \times A_1$ & $F_4$\\
\hline
$(\Phi_1(p), \Phi_2(q))$&9&.&3&.&.&.&4&.&3&.&.&6\\
$(\Phi_1(p), \Phi_4(q))$&20&.&.&.&.&.&.&2&.&.&.&3\\
$(\Phi_1(p), \Phi_6(q))$&22&.&.&.&.&.&.&.&.&.&.&3\\

$(\Phi_2(p), \Phi_2(q))$&8&3&3&1&.&.&2&2&2&.&.&4\\
$(\Phi_2(p), \Phi_3(q))$&11&6&.&.&.&3&.&1&.&1&.&3\\
$(\Phi_2(p), \Phi_4(q))$&14&8&.&.&.&.&.&.&.&.&.&3\\
$(\Phi_2(p), \Phi_6(q))$&14&9&.&.&.&.&.&.&.&.&.&2\\

$(\Phi_3(p), \Phi_3(q))$&15&.&.&.&4&4&.&.&.&.&.&2\\
$(\Phi_3(p), \Phi_6(q))$&13&.&.&.&3&.&2&.&2&.&.&5\\
$(\Phi_3(p), \Phi_{12}(q))$&17&.&.&.&5&.&.&1&.&.&.&2\\

$(\Phi_4(p), \Phi_4(q))$&19&.&.&.&.&.&4&.&.&.&.&2\\

$(\Phi_6(p), \Phi_{6}(q))$&20&.&.&.&.&.&.&1&1&.&.&3\\
$(\Phi_6(p), \Phi_{12}(q))$&23&.&.&.&.&.&.&.&.&.&.&2\\
\hline
$(\Phi_8(p), \Phi_8(q),$\\
$\Phi_1(pq)\Phi_1(pq^{-1}))$&24&.&.&.&.&.&.&.&.&.&.&1\\
\hline
$(\Phi_8(p), \Phi_8(q),$\\
$\Phi_2(pq)\Phi_2(pq^{-1}))$&18&.&.&.&.&.&4&.&.&.&.&3\\
\hline
$(\Phi_9(p), \Phi_{18}(q),$\\
$\Phi_2(p^2q)\Phi_2(p^2q^{-1}))$&22&.&.&.&.&.&.&.&2&.&.&1\\
\hline
$(\Phi_{10}(p), \Phi_{10}(q),$\\
$\Phi_2(pq^2)\Phi_2(p^{-1}q^2))$&21&.&.&.&.&.&.&2&2&.&.&.\\
\hline
$(\Phi_{12}(p), \Phi_{12}(q),$\\
$\Phi_1(pq)\Phi_1(p^{-1}q))$&24&.&.&.&.&.&.&.&.&.&.&1\\
\hline
$(\Phi_{24}(p), \Phi_{24}(q),$\\
$\Phi_4(pq), \Phi_6(p^{-1}q))$&23&.&.&.&.&.&.&.&.&.&.&2\\
\hline
$\Phi_2(p) = 0$ and\\
$(q^6 - 1)\Phi_4(q) \neq 0$&15&9&.&.&.&.&.&.&.&.&.&1\\
\hline
$\Phi_3(p) = 0$ and\\
$\Phi_2(q)\Phi_3(q)\Phi_6(q)\cdot$\\
$\Phi_{12}(q) \neq 0$&19&.&.&.&6&.&.&.&.&.&.&.\\
\hline
$\Phi_6(p) = 0$ and\\
$(q^6 - 1)\Phi_{12}(q) \neq 0$&24&.&.&.&.&.&.&.&.&.&.&1\\
\hline
$\Phi_2(pq) = 0$ and\\
$(q^6 - 1)\Phi_8(q) \neq 0$&19&.&.&.&.&.&4&.&.&.&.&2\\
\hline
$\Phi_2(pq^2) = 0$ and\\
$(q^{12} - 1)\Phi_9(q)\cdot$\\
$\Phi_{10}(q) \neq 0$&23&.&.&.&.&.&.&2&.&.&.&.\\
\hline
$\Phi_4(pq) = 0$ and\\
$(q^{12} - 1)\Phi_2(pq^{-1})\cdot$\\
$\Phi_6(pq^{-1}) \neq 0$&24&.&.&.&.&.&.&.&.&.&.&1\\
\hline
$\Phi_6(pq) = 0$ and\\
$(q^6 - 1)\Phi_4(pq^{-1})\cdot$\\
$\Phi_2(pq^{-2})\cdot$\\
$\Phi_2(p^{-2}q) \neq 0$&24&.&.&.&.&.&.&.&.&.&.&1\\
\hline
\end{longtable}

Table \ref{F4supportstable} shows in particular that there are exactly 4 distinct irreducible finite-dimensional representations of the rational Cherednik algebra $H_{1/2, 1/2}(F_4, \hfr)$.  In notation compatible with the labeling of the irreducible representations of the Coxeter group $F_4$ in GAP3, it is shown in \cite[Section 5.7.2]{GGJL} that with equal parameters $c_1 = c_2 = 1/2$ the lowest weights $\lambda$ such that the representation $L_{1/2, 1/2}(\lambda)$ is finite-dimensional are among the representations $\phi_{1, 0}, \phi'_{2, 4}, \phi''_{2, 4}$, and $\phi_{9, 2}.$  In particular, we have the following corollary, confirming Norton's conjecture on the classification of the irreducible finite-dimensional representations of $H_{1/2, 1/2}(F_4, \hfr)$:

\begin{corollary} The set of irreducible representations $\lambda$ of the Coxeter group $F_4$ for which the irreducible lowest weight representation $L_{1/2, 1/2}(F_4, \hfr)$ is finite-dimensional is $$\{\phi_{1, 0}, \phi'_{2, 4}, \phi''_{2, 4}, \phi_{9, 2}\}.$$\end{corollary}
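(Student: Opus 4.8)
The plan is to deduce the corollary by combining two inputs already in place: the count of finite-dimensional irreducibles in $\oscr_{1/2,1/2}(F_4,\hfr)$ furnished by Table \ref{F4supportstable}, and the upper bound on the possible lowest weights furnished by \cite{GGJL}. Recall that an irreducible $L_c(\lambda)$ in $\oscr_c(F_4,\hfr)$ is finite-dimensional exactly when its support is $F_4\hfr^{F_4}=\{0\}$, i.e. when it is labeled by a pair of the form $(F_4,L_c(\lambda))$ in the sense of Proposition \ref{simples}. At $c_1=c_2=1/2$ the $KZ$ parameters are $p=q=-1$, which is the row $(\Phi_2(p),\Phi_2(q))$ of Table \ref{F4supportstable}; the entry in the column headed $F_4$ of that row is $4$, so $H_{1/2,1/2}(F_4,\hfr)$ has exactly four isomorphism classes of finite-dimensional irreducible representations. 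On the other hand, \cite[Section 5.7.2]{GGJL} shows, via the necessary condition for finite-dimensionality proved there, that the only representations $\lambda$ of $F_4$ for which $L_{1/2,1/2}(\lambda)$ can be finite-dimensional are $\phi_{1,0},\phi'_{2,4},\phi''_{2,4}$, and $\phi_{9,2}$. Since $\lambda\mapsto L_c(\lambda)$ is a bijection $\irr(F_4)\to\irr(\oscr_{1/2,1/2}(F_4,\hfr))$, the four finite-dimensional simples have four distinct lowest weights, all lying in the four-element set $\{\phi_{1,0},\phi'_{2,4},\phi''_{2,4},\phi_{9,2}\}$; hence that set is exactly the set of such lowest weights, which is the claim.

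The key intermediate step is therefore to justify the entry ``$4$'' in the $F_4$ column of Table \ref{F4supportstable} at $p=q=-1$. I would obtain this from the identity $4 = \#\irr(F_4) - 21$, using $\#\irr(F_4)=25$, where $21$ is the number of irreducibles in $\oscr_{1/2,1/2}(F_4,\hfr)$ of strictly positive support. By Corollary \ref{counting} the latter decomposes as the number of simples of full support (read off Table \ref{F4fullsupptable} at $p=q=-1$, namely $8$) plus the contributions of the proper nontrivial parabolic subgroups: evaluating the conditions in Tables \ref{A1PF4supporttable}, \ref{A1F4supporttable}, \ref{A1PA1F4supporttable}, \ref{B2F4supporttable}, \ref{B3F4supportstable}, and \ref{C3F4supportstable} at $p=q=-1$ gives $3,3,1,2,2,2$ for supports labeled by $A_1'$, $A_1$, $A_1'\times A_1$, $B_2$, $B_3$, $C_3$, while Tables \ref{A2PF4supporttable}, \ref{A2F4supporttable}, \ref{A1PA2F4supporttable}, and \ref{A2PA1PF4supporttable} give $0$ for the remaining classes. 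Summing yields $8+3+3+1+2+2+2 = 21$, hence $4$ finite-dimensional simples.

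The main obstacle is not this final bookkeeping but the establishment of those auxiliary tables, which is the substantive content being invoked. Their entries rely on Theorem \ref{simplified-coxeter-theorem} for $F_4$ (proved in Section \ref{typeF4} using Remark \ref{lowest-weight-remark} and Howlett's normalizer computations \cite{How}), on the known classification of finite-dimensional irreducibles over $H_c(W',\hfr_{W'})$ for the proper parabolics $W'\subset F_4$, and on the CHEVIE/GAP3 and Mathematica computations of the parameters of the generalized Hecke algebras $\mathcal{H}(c,W',L,F_4)$ via Theorem \ref{quadratic-relations-theorem}, together with the counts of irreducibles of $\mathsf{H}_{p,q}(F_4)$. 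The one point that demands care in the write-up is the matching of representation labels: the $\phi_{x,y}$ labeling used in Table \ref{F4supportstable} and throughout Section \ref{typeF4} must be the same GAP3-compatible labeling used in \cite{GGJL} (this compatibility is asserted in the discussion preceding Table \ref{parameters-table}), since otherwise the cardinality argument still yields the count $4$ but not the explicit list.
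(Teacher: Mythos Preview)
Your proposal is correct and follows essentially the same approach as the paper: combine the count of $4$ finite-dimensional irreducibles at $p=q=-1$ from Table~\ref{F4supportstable} with the upper bound of four candidate lowest weights from \cite[Section 5.7.2]{GGJL}. You in fact give more detail than the paper by explicitly verifying the entry ``$4$'' via the subtraction $25-21$ from the auxiliary tables, which the paper leaves implicit in the construction of Table~\ref{F4supportstable}.
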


\subsection{Type $I$} \label{typeI} The only remaining case in which Theorem \ref{simplified-coxeter-theorem} needs to be verified is the case of the irreducible Coxeter groups of type $I$, i.e. the dihedral groups.  For $d \geq 3$, let $I_2(d)$ denote the dihedral Coxeter group with $2d$ elements.  Chmutova \cite{Chm} has computed character formulas for all irreducible representations $L_c(\lambda)$ in the category $\oscr_c(I_2(d), \hfr)$ for all parameters $c$, and in particular the supports of all irreducible representations in $\oscr_c(I_2(d), \hfr)$ are known in this case, so we will not produce tables counting the number of irreducible representations of given support in this case.

To see that Theorem \ref{simplified-coxeter-theorem} holds in type $I$, we need only consider the case of rank 1 parabolic subgroups, i.e. those of type $A_1$.  Let $d \geq 3$ be an integer and let $W' \subset I_2(d)$ be a parabolic subgroup of type $A_1$.  If the parameter $c_1$ attached to the reflection generating $A_1$ does not satisfy $c_1 \in 1/2 + \ints$, the rational Cherednik algebra $H_{c_1}(W', \hfr_{W'})$ does not admit any nonzero finite-dimensional representations.  Otherwise, there is a unique finite-dimensional representation $L$ of $H_{c_1}(W', \hfr_{W'})$ with lowest weight $\triv$ or $\sgn$, depending on whether $c_1 > 0$ or $c_1 < 0$, respectively.  In either case, $L$ is $N_{I_2(d)}(W')$-equivariant by Remark \ref{lowest-weight-remark}.  This completes the proof of Theorem \ref{simplified-coxeter-theorem}.

\end{document}